\definecolor{codegreen}{rgb}{0,0.6,0}
\definecolor{codegray}{rgb}{0.5,0.5,0.5}
\definecolor{codepurple}{rgb}{0.58,0,0.82}
\definecolor{backcolour}{rgb}{0.95,0.95,0.92}
\lstdefinestyle{mystyle}{
    backgroundcolor=\color{backcolour},   
    commentstyle=\color{codegreen},
    keywordstyle=\color{magenta},
    numberstyle=\tiny\color{codegray},
    stringstyle=\color{codepurple},
    basicstyle=\ttfamily\footnotesize,
    breakatwhitespace=false,         
    breaklines=true,                 
    captionpos=b,                    
    keepspaces=true,                 
    numbers=left,                    
    numbersep=5pt,                  
    showspaces=false,                
    showstringspaces=false,
    showtabs=false,                  
    tabsize=2
}
\DeclareFontFamily{U}{mathx}{}
\DeclareFontShape{U}{mathx}{m}{n}{<-> mathx10}{}
\DeclareSymbolFont{mathx}{U}{mathx}{m}{n}
\DeclareMathAccent{\widehat}{0}{mathx}{"70}
\DeclareMathAccent{\widecheck}{0}{mathx}{"71}
\title[Open book decompositions with page a four-punctured sphere]{Open book decompositions with page a four-punctured sphere}
\newtheorem{theorem}{Theorem}
\newtheorem{definition}[theorem]{Definition}
\newtheorem{proposition}[theorem]{Proposition}
\newtheorem{lemma}[theorem]{Lemma}
\newtheorem{corollary}[theorem]{Corollary}
\theoremstyle{remark}
\newtheorem{remark}[theorem]{Remark}
\newtheorem{question}[theorem]{Question}
\numberwithin{equation}{section}
\numberwithin{theorem}{section}
\numberwithin{figure}{section}
\numberwithin{table}{section}
\renewcommand{\epsilon}{\varepsilon}
\DeclareMathOperator{\Int}{int}
\author{Harahm Park}
\address{University of California, Los Angeles, Los Angeles, CA 90095}
\email{harahmpark@math.ucla.edu} 
\date{\today}
\thanks{The author
was supported by NSF grant DMS-2136090}
\pgfplotsset{compat=1.18}
\begin{document}
\DefTblrTemplate{firsthead, middlehead,lasthead}{default}{}
\setcounter{tocdepth}{1}
   
    \begin{abstract}
     In this paper, we study contact structures supported by open book decompositions whose pages are four-punctured spheres. The paper is split into two parts. In the first part, we find infinitely many overtwisted, right-veering monodromies on the four-punctured sphere. This is done using the techniques developed by Ito--Kawamuro in the papers \cite{ItoKawamuro, ItoKawamuroOvertwisted}. Although most of the monodromies that we show are overtwisted are pseudo-Anosov, we are also able to classify precisely which reducible monodromies on the four-punctured sphere are tight. In the second part of the paper, we reprove part of a result of Lekili \cite{Lekili} by classifying which reducible mondromies have non-zero Heegaard Floer invariant. This is done by using the bordered contact invariants of Min--Varvarezos \cite{MinVarvarezos}.
    \end{abstract}
 \maketitle
 
\tableofcontents

\section{Introduction} \label{sec:intro} 
Since Giroux introduced a dictionary between contact structures and open book decompositions in the 2002 ICM proceedings \cite{Giroux}, a core theme in 3-dimensional contact topology has been the identification of properties of contact structures from the data of open books which support them. (See also \cite{BHH} and \cite{LicataVertesi1,LicataVertesi2} for modern treatments of the Giroux correspondence.) It is not straightforward to determine from the data of an open book whether the supported contact structure is tight or has non-vanishing Ozsv\'{a}th--Szab\'{o} contact invariant \cite{OzsvathSzabo}.

In the case where the page of the open book is a disk, annulus, or pair-of-pants, it is straightforward to see that a monodromy either factors into positive Dehn twists or is not right-veering. Accordingly, the supported contact structure is Stein fillable--and hence tight with non-vanishing contact invariant--or overtwisted.

In the case where the page of the open book is a once-punctured torus, by independent work of Honda--Kazez--Mati\'{c} and Baldwin it is known that the supported contact structure is tight if and only if it has non-vanishing contact invariant. This occurs if and only if the monodromy is right-veering \cite{HKMRVI, HKMRVII, HKM, BaldwinTorus}.

In this paper we study the next most approachable case, which is when the page of the open book is a four-times punctured sphere \(\Sigma_{0,4}\). We can split this case up further by applying the Nielsen--Thurston classification to the monodromy of the open book. In the case that the monodromy is periodic, it must be a product of boundary-parallel Dehn twists. This subcase is then the same as that of the disk, annulus, and pair-of-pants. Hence we may restrict our attention to reducible and pseudo-Anosov monodromies. 

This paper is split into two parts. In Part 1, we find infinitely many right-veering, overtwisted monodromies on \(\Sigma_{0,4}\). Given a compact orientable surface \(\Sigma\), let \(\mathrm{Mod}(\Sigma,\partial \Sigma)\) denote the relative mapping class group of \(\Sigma\). In \cite{ItoKawamuroTight} the following theorem is proven. 

\begin{theorem}[Ito--Kawamuro]
\label{thm: Ito-Kawamuro}
If \(\Sigma\) is a compact planar surface with non-empty boundary and \(f\in \mathrm{Mod}(\Sigma,\partial \Sigma)\) is a monodromy whose fractional Dehn twist coefficients {\normalfont(}FDTCs{\normalfont)} are all strictly greater than \(1\), then the corresponding open book supports a tight contact structure. 
\end{theorem}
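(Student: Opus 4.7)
The plan is to prove the contrapositive: assuming the contact structure supported by $(\Sigma, f)$ is overtwisted, I would show that some boundary component must have fractional Dehn twist coefficient at most $1$. The cleanest way to detect overtwistedness on a planar page is Goodman's criterion, which says that overtwistedness is equivalent to the existence of a properly embedded essential arc $\alpha \subset \Sigma$ that is \emph{sobering} for $f$: after isotoping $f(\alpha)$ to minimize intersections with $\alpha$, the algebraic intersection number plus an appropriate boundary correction term is non-positive. So the plan reduces to showing that no such sobering arc can exist once every FDTC exceeds $1$.

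The main technical step is to quantify, in terms of FDTCs, how much the image $f(\alpha)$ must wrap around each boundary component. Here I would use the standard interpretation of the FDTC $c_i(f)$ at a boundary component $C_i$ via lifts to the universal cover (or equivalently via Thurston's translation number on the boundary of the hyperbolic structure, as developed by Honda--Kazez--Mati\'c). Concretely, if $\alpha$ has endpoint on $C_i$, then the image $f(\alpha)$ near $C_i$ must spiral at least $\lfloor c_i(f) \rfloor$ times in the positive direction; when $c_i(f) > 1$ this forces at least one full positive boundary twist of $\alpha$ near $C_i$. Iterating this around all boundary components on which $\alpha$ has endpoints produces a lower bound on the geometric (and algebraic) intersection number $i(\alpha, f(\alpha))$.

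Next I would combine this lower bound with the boundary correction appearing in Goodman's sobering inequality. The boundary correction counts how $f(\alpha)$ runs along $\partial \Sigma$ relative to $\alpha$, and this is precisely controlled from below by $\sum c_i(f)$ restricted to the endpoints of $\alpha$. The strict inequality $c_i(f) > 1$ at each relevant component should then be packaged into the statement that the full expression $i(\alpha, f(\alpha)) + (\text{boundary term})$ is strictly positive, contradicting the sobering hypothesis. Planarity of $\Sigma$ enters in a crucial way here, because Goodman's criterion applies cleanly in the planar setting and there is no genus contribution muddying the intersection count.

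The step I expect to be the main obstacle is the geometric intersection count: given an arbitrary essential arc $\alpha$ on $\Sigma_{g=0,n}$, producing a sharp lower bound on $i(\alpha, f(\alpha))$ in terms of the FDTCs requires passing to the universal cover of $\Sigma$ (or the compactified cover of its interior) and keeping careful track of how boundary-parallel powers of $f$ act on lifts of $\alpha$. One must also handle the case that $\alpha$ has both endpoints on the same boundary component, where double-counting of boundary-twist contributions could otherwise weaken the bound. Once this geometric estimate is in place, assembling it into the Goodman inequality and concluding tightness is straightforward.
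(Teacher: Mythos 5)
The paper does not prove Theorem \ref{thm: Ito-Kawamuro}; it cites \cite{ItoKawamuroTight}, where the argument is carried out with the machinery of open book foliations rather than with Goodman's sobering arcs, so the relevant comparison is to that source.

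There is a genuine gap at the very first step of your outline. You write that ``overtwistedness is equivalent to the existence of a properly embedded essential arc $\alpha \subset \Sigma$ that is sobering for $f$,'' and you then proceed to argue the contrapositive by ruling out sobering arcs. But Goodman's criterion is only a \emph{sufficient} condition for overtwistedness: a sobering arc produces an overtwisted disk, while overtwistedness does not in general produce a sobering arc, even for planar pages. So ruling out sobering arcs does not rule out overtwistedness, and the contrapositive structure of your argument collapses at the outset. What \emph{is} an honest-to-goodness equivalence is the existence of a \emph{transverse overtwisted disk} in the sense of Ito--Kawamuro (a disk $D$ in general position with respect to the open book whose open book foliation $\mathcal{F}_{\mathrm{ob}}(D)$ has $G_{--}$ a tree, $G_{++}$ a circle, and no closed regular leaves), and this is precisely the object the Ito--Kawamuro proof assumes to exist and then analyzes. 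Sobering arcs give rise to transverse overtwisted disks, but not conversely, which is exactly why the open book foliation framework is needed.

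The rest of your outline inherits this problem: the lower bounds on $i(\alpha, f(\alpha))$ in terms of the FDTCs and the resulting strict positivity of the sobering quantity are estimates \emph{about arcs}, whereas the actual argument must bound data of a transverse overtwisted disk, namely counts of positive and negative elliptic and hyperbolic singularities and the combinatorics of the regions they cut out; planarity enters through a region-decomposition argument for these foliations, not through the absence of a genus term in a sobering inequality. If you wanted to salvage a sobering-arc-style argument, you would first have to prove the missing converse (overtwisted implies a sobering arc exists on a planar page), which is not known and appears to be harder than the theorem you are aiming for.
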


On a four-punctured sphere \(\Sigma_{0,4}\), any monodromy has integral FDTCs. In addition, any monodromy on \(\Sigma_{0,4}\) which is right-veering and has minimum FDTC zero factors into positive Dehn twists. Thus we may restrict our attention to monodromies whose minimum FDTC is 1. We are able to find two infinite families of such monodromies which are overtwisted using the technique of \textit{open book foliations}, developed and used by Ito and Kawamuro in \cite{ItoKawamuro, ItoKawamuroOperations,ItoKawamuroOvertwisted,ItoKawamuroSelfLinking,ItoKawamuroTight,ItoKawamuroCoverings,ItoKawamuroFDTC,ItoKawamuroQuestion,ItoKawamuroPositive,ItoKawamuroQuasi,twistleft}. 

Although Ito and Kawamuro were the first to use the term ``open book foliation'', the concept goes back to work of Bennequin \cite{Bennequin} distinguishing contact structures on \(S^3\). Later, Birman and Menasco used similar ideas in \cite{BirmanMenascoI, BirmanMenascoII, BirmanMenascoIII, BirmanMenascoIV, BirmanMenascoV, BirmanMenascoVI} to study braids. They used the name \textit{braid foliations}. In her thesis \cite{Pavelescu}, Pevalescu generalized braid foliations to arbitrary contact 3-manifolds. 

The collection of monodromies we can show are overtwisted using this method is a bit difficult to describe; we postpone the precise statement to Theorems \ref{thm:main_one} and \ref{thm:main_two}. Most of the monodromies that we show are overtwisted are pseudo-Anosov. However, we are able to show enough reducible monodromies are overtwisted to classify precisely which reducible monodromies are tight. Let \(f\in \mathrm{Mod}(\Sigma_{0,4},\partial \Sigma_{0,4})\) be a reducible monodromy fixing an essential circle \(\gamma\subset \Sigma_{0,4}\), and let \(a_1,\ldots,a_4\) denote circles parallel to the boundary components of \(\Sigma_{0,4}\). Then, up to isotopy,
\begin{equation}f= \tau_{a_1}^{n_1}\tau_{a_2}^{n_2}\tau_{a_3}^{n_3}\tau_{a_4}^{n_4}\tau_{\gamma}^{n_\gamma},\label{eq:intro_monoddromy}\end{equation}
for some \(n_1,n_2,n_3,n_4, n_{\gamma}\in\mathbb{Z}\). Here \(\tau_{\gamma}\) denotes a positive, i.e., a right-handed Dehn twist about \(\gamma\). (In the case when \(n_\gamma=0\), the mondromy becomes periodic instead of reducible, but the results of Theorem \ref{thm:classify_tight_reducible} and \ref{thm: Lekili} are still applicable.)  
\begin{theorem}\label{thm:classify_tight_reducible}
{\normalfont(}Proof on page {\normalfont\pageref{proof:classify_tight_reducible})} If \(f\) is as in {\normalfont(\ref{eq:intro_monoddromy})}, then \(f\) is tight if and only if 
\begin{enumerate}[label={\normalfont (\roman*)}]
\item \(\min\{n_1,n_2,n_3,n_4\}\geq 2\), or 
\item \(\min\{n_1,n_2,n_3,n_4\} \geq \max\{-n_{\gamma},0\}\). 
\end{enumerate}
\end{theorem}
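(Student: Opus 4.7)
The plan is to split the argument into sufficiency (``if'') and necessity (``only if''). Sufficiency combines Theorem~\ref{thm: Ito-Kawamuro} with a lantern-relation computation, while necessity combines a Honda--Kazez--Mati\'{c} right-veering argument with the explicit overtwisted families from Theorems~\ref{thm:main_one} and~\ref{thm:main_two}.

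For the sufficiency direction, I first observe that the FDTC of $f$ at each boundary component $a_i$ is exactly $n_i$: indeed $\tau_\gamma$ is supported in the interior of $\Sigma_{0,4}$ and contributes nothing to the FDTC at any $a_i$, and similarly $\tau_{a_j}$ for $j\neq i$ contributes nothing at $a_i$. Condition~(i) then asserts that every FDTC is at least $2>1$, so Theorem~\ref{thm: Ito-Kawamuro} immediately yields tightness. Under condition~(ii), I would instead produce a positive factorization of $f$, making the open book Stein-fillable and hence the supported contact structure tight. When $n_\gamma\geq 0$ this is already visible in~(\ref{eq:intro_monoddromy}); when $n_\gamma\leq -1$ I would apply the lantern relation
\[\tau_{a_1}\tau_{a_2}\tau_{a_3}\tau_{a_4}\tau_\gamma^{-1}=\tau_\alpha\tau_\beta,\]
where $\alpha,\beta$ denote the remaining two essential simple closed curves on $\Sigma_{0,4}$, together with the centrality of the boundary-parallel twists $\tau_{a_1},\ldots,\tau_{a_4}$, to rewrite
\[f=\tau_{a_1}^{n_1+n_\gamma}\tau_{a_2}^{n_2+n_\gamma}\tau_{a_3}^{n_3+n_\gamma}\tau_{a_4}^{n_4+n_\gamma}(\tau_\alpha\tau_\beta)^{-n_\gamma}.\]
All exponents are nonnegative exactly when $\min\{n_i\}\geq -n_\gamma$, which is condition~(ii) in this regime.

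For the necessity direction, suppose neither (i) nor (ii) holds. If $\min\{n_i\}<0$, then the FDTC of $f$ at the relevant $a_i$ is negative, so $f$ is not right-veering and the contact structure is overtwisted by Honda--Kazez--Mati\'{c}. Otherwise $\min\{n_i\}\geq 0$, and combining this with the failure of both (i) and (ii) forces $n_\gamma\leq -1$ together with $\min\{n_i\}\leq\min\{1,-n_\gamma-1\}$. For each such residual right-veering $f$ I would match the parameters $(n_1,\ldots,n_4,n_\gamma)$ against a concrete member of the overtwisted families constructed in Theorems~\ref{thm:main_one} and~\ref{thm:main_two}, whose proofs exhibit an overtwisted disk via an open book foliation.

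The main obstacle lies in this last matching step. The sufficiency side is essentially bookkeeping: a direct invocation of Ito--Kawamuro in case~(i) and an algebraic lantern rewrite in case~(ii). Necessity, by contrast, requires that the two Part~1 theorems cover \emph{exactly} the right-veering reducible monodromies left over after the Stein-fillable and Ito--Kawamuro regimes, and that the correspondence between their hypotheses and the integer data $(n_1,n_2,n_3,n_4,n_\gamma)$ of~(\ref{eq:intro_monoddromy}) can be made entirely explicit. Verifying this coverage---in particular handling the edge cases $n_\gamma=-1$ with $\min\{n_i\}=0$ and $n_\gamma\leq -2$ with $\min\{n_i\}\in\{0,1\}$---is where the real content of the argument sits.
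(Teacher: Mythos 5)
Your overall plan coincides with the paper's: sufficiency via Ito--Kawamuro and a lantern factorization, necessity via right-veering plus the overtwisted families from Part~1. The lantern rewrite you give is essentially Proposition~\ref{prop: Stein fillable}. However, the proof as written has a genuine gap precisely where you say ``the real content sits,'' and moreover one of the assumptions implicit in your setup is wrong.

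The gap: you describe all the cases with $\min\{n_i\}\geq 0$, $n_\gamma\leq -1$, and $\min\{n_i\}\leq \min\{1,-n_\gamma-1\}$ as ``residual right-veering $f$'' to be matched against Theorems~\ref{thm:main_one} and~\ref{thm:main_two}. But the subcases with $\min\{n_i\}=0$ and $n_\gamma\leq -1$ are \emph{not} right-veering, even though every FDTC is non-negative: if $n_i=0$ and $\delta$ is an arc emanating from $a_i$ that crosses $\gamma$, then $\tau_\gamma^{n_\gamma}(\delta)$ with $n_\gamma<0$ moves $\delta$ to the left at $a_i$, and no other factor of $f$ compensates at $a_i$. (Having non-negative FDTCs is necessary but not sufficient for right-veering.) These cases are therefore handled by Honda--Kazez--Mati\'{c} exactly as in your $\min\{n_i\}<0$ case, not by the Part~1 theorems. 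Once this is observed, the only case left over that is actually right-veering is $\min\{n_i\}=1$ with $n_\gamma\leq -2$. After conjugating so $\gamma=c$, one gets $-p/q = 2n_\gamma\leq -4$, a single-term continued fraction $[2n_\gamma]$ with $r_0 = 2n_\gamma\neq -2$; this satisfies the hypotheses of Theorem~\ref{thm:main_one} directly, and Theorem~\ref{thm:main_two} is not needed at all. So the ``matching step'' is both smaller than your proposal suggests and hinges on the right-veering determination you glossed over.
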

 In Part 2, we reprove part of a result of Lekili by classifying which reducible monodromies on \(\Sigma_{0,4}\) have non-vanishing Heegaard Floer contact invariant. 

\begin{theorem}[Lekili]
\label{thm: Lekili}
The open book specified by the monodromy {\normalfont(\ref{eq:intro_monoddromy})} has non-vanishing invariant precisely when it is Stein fillable. This occurs if and only if
\[\min \{n_1,n_2,n_3,n_4 \}\geq \max \{-n_\gamma,0\}.\]
\end{theorem}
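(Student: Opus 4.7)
The plan is to split the statement in the standard two directions. For the ``if'' direction, suppose $\min\{n_1,\ldots,n_4\} \geq \max\{-n_\gamma,0\}$; I show that $f$ admits a positive factorization, yielding a Stein filling and hence $c(\xi) \neq 0$ by Ozsv\'ath--Szab\'o. When $n_\gamma\geq 0$, (\ref{eq:intro_monoddromy}) is already positive. When $n_\gamma = -k < 0$, let $\gamma_{13},\gamma_{14}$ denote the other two isotopy classes of essential simple closed curves on $\Sigma_{0,4}$; the lantern relation
\[
\tau_\gamma\,\tau_{\gamma_{13}}\,\tau_{\gamma_{14}} \;=\; \tau_{a_1}\tau_{a_2}\tau_{a_3}\tau_{a_4},
\]
combined with the centrality of boundary-parallel twists, rewrites
\[
f \;=\; \tau_{a_1}^{n_1-k}\tau_{a_2}^{n_2-k}\tau_{a_3}^{n_3-k}\tau_{a_4}^{n_4-k}(\tau_{\gamma_{13}}\tau_{\gamma_{14}})^{k},
\]
which is positive since each $n_i \geq k$ by hypothesis.

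For the ``only if'' direction, suppose the arithmetic condition fails. If some $n_i<0$, then $f$ has negative fractional Dehn twist coefficient at $a_i$, hence is not right-veering, so $\xi$ is overtwisted by Honda--Kazez--Mati\'c and $c(\xi)=0$. By Theorem \ref{thm:classify_tight_reducible}, the only remaining non-fillable cases where $\xi$ might be tight satisfy $\min n_i \geq 2$ and $n_\gamma < -\min n_i$ (so $n_\gamma\leq -3$). Here I would apply the bordered contact invariants of Min--Varvarezos \cite{MinVarvarezos}: the reducing curve $\gamma$ produces an incompressible torus $T_\gamma\subset M_f$, and cutting along $T_\gamma$ decomposes $(M_f,\xi_f)$ into two Seifert-fibered sutured contact pieces supported by pair-of-pants open books with monodromies $\tau_{a_1}^{n_1}\tau_{a_2}^{n_2}\tau_\gamma^{n'_\gamma}$ and $\tau_{a_3}^{n_3}\tau_{a_4}^{n_4}\tau_\gamma^{n''_\gamma}$ for an appropriate split $n'_\gamma+n''_\gamma=n_\gamma$. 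By the Min--Varvarezos pairing theorem, $c(\xi_f)$ is the box tensor product of the two bordered contact invariants over the torus algebra.

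The plan is to compute these bordered invariants explicitly---each restricted monodromy is a product of boundary twists, so the corresponding Seifert bordered invariant is tractable---identify the type-$D$ and type-$A$ generators realizing $c(\xi_1)$ and $c(\xi_2)$, and verify that their tensor product vanishes precisely when $\min n_i < -n_\gamma$. Heuristically, each negative $\tau_\gamma$ shifts the induced dividing data on $T_\gamma$ by one slope unit, and the positive $a_i$-twists can cancel this shift only when $n_i \geq -n_\gamma$; bordered Floer theory records the incompatibility as a vanishing pairing. The main obstacle is this bordered calculation in the ``tight but not fillable'' regime: the geometric picture is clear, but carefully extracting the correct bordered contact generators, tracking their bimodule actions under the full $\tau_\gamma^{n_\gamma}$ twist, and verifying the vanishing at exactly the threshold $\min n_i = -n_\gamma$ requires a delicate case analysis parallel to---but subtler than---the computations already performed in \cite{MinVarvarezos}.
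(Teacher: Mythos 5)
Your overall architecture is correct and matches the paper's: positive factorization via the lantern relation when $\min n_i \ge \max\{-n_\gamma,0\}$, non-right-veering when $\min n_i \le 0$ with $n_\gamma < 0$, and a bordered Floer pairing argument for the remaining region. The lantern manipulation you write down is exactly Proposition \ref{prop: Stein fillable}, modulo renaming $c,d$ as $\gamma_{13},\gamma_{14}$ (your phrase ``the other two isotopy classes of essential simple closed curves'' should be ``the other two curves in the lantern''---there are infinitely many essential simple closed curves on $\Sigma_{0,4}$).

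Two points of divergence, one cosmetic and one substantive. Cosmetically: you invoke Theorem \ref{thm:classify_tight_reducible} to discard the $\min n_i = 1$ subcase. This imports the whole Ito--Kawamuro/open-book-foliation machinery from Part~1 into a proof that the paper keeps independent of it. The paper's Theorem~\ref{thm: vanishing} handles the entire range $0 < \min n_i < -n_\gamma$ at once and reduces further, via naturality of $c(\xi)$ under $(-1)$-Legendrian surgery, to the single boundary case $\min n_i = -n_\gamma - 1$; no appeal to Part~1 is needed or wanted. Substantively: your proposed decomposition absorbs $\tau_\gamma^{n_\gamma}$ into the pair-of-pants monodromies as a split $n_\gamma = n'_\gamma + n''_\gamma$, so the bordered invariants of the two pieces would depend on $n_\gamma$. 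The paper instead fixes the two pieces to be $Y(n_1,n_2)$ and $Y(n_3,n_4)$---Legendrian knot complements in $Y(g_{n,m}) \cong L(n,-1)\#L(m,-1)$ with \emph{no} $\gamma$-twist absorbed---and encodes $\tau_\gamma^{n_\gamma}$ as $\lvert n_\gamma\rvert$ box-tensor factors of the Dehn-twist bimodule $\widehat{\mathit{CFDA}}(\tau)$. This keeps the two type-$A$ and type-$D$ invariants fixed while $n_\gamma$ varies, which is what makes the threshold computation tractable: one watches the generator $\mathbf{p}^{\otimes i}\otimes\mathbf{y}$ march through the type-$D$ decorated graph and checks when the term $\mathbf{x}'_i\otimes\mathbf{z}'_i$ in $\partial^{\boxtimes}(\mathbf{x}\otimes\mathbf{z}_i)$ dies.

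The genuine gap is that you stop at ``compute the bordered invariants and verify the pairing vanishes''---but that computation is the entire content of Sections~\ref{sec: splitting}--\ref{sec: gluing} (splitting along a pre-Lagrangian torus, identifying the LOSS invariant, pinning down $c_A$ and $c_D$ in the decorated graphs via the refined $\mathrm{spin}^c$ grading, and the explicit $\boxtimes$-product calculation with $\widehat{\mathit{CFDA}}(\tau)$). As written, the proposal is a plan, not a proof.
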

Lekili proves a slightly more general statement, as explained in Remark \ref{rem:Lekili}. This is done via Baldwin's capping off theorem \cite{BaldwinCap}. We note that Baldwin's capping off result is not valid in full generality, as counterexamples have been found by Min. However, the result holds when the capped-off manifold is a rational homology sphere. Together with the naturality of the contact invariant under Legendrian surgery, this is enough to give a proof of the result in Remark \ref{rem:Lekili}. 

We give an alternate proof of the above theorem using the contact invariants in bordered-sutured Floer homology due to Min--Varvarezos \cite{MinVarvarezos}. Precursors of this invariant include the bordered invariants of \cite{AFHLPV} and the sutured invariant of \cite{HKMsuture}. Bordered-sutured Floer homology is defined by Zarev in \cite{ZarevThesis}.

\subsection*{Conventions}
In the mapping class group, we use function notation without the symbol \(\circ\), so that the elements on the right are applied first. For \(\widehat{\mathit{HF}}\) and the associated bordered-sutured theory, we use \(\mathbb{Z}/2\) coefficients. 
\medskip

\noindent{\em Acknowledgments.}  I thank my advisor Ko Honda for his encouragement and support, and for helpful comments on earlier drafts of this paper. I thank Hyunki Min and Konstantinos Varvarezos for their thorough guidance in writing the second part of this paper. I thank Cagatay Kutluhan for help with the Sage programs \texttt{hf-hat-obd} and \texttt{hf-hat-obd-nice}, which were useful in writing the first part of this paper. Finally, I thank John Baldwin for his online problem list \url{https://floerhomologyproblems.blogspot.com}, which is what got me started working on this problem. 

\part{Overtwisted, right-veering monodromies}
\section{Curves, arcs, and the mapping class group} 
In this section we review some preliminaries about the four-punctured sphere which we will need to argue overtwistedness. We also establish some terminology. In Figure \ref{fig:page}, we give a picture of the four-punctured sphere \(\Sigma_{0,4}\) together with some embedded closed curves. 
\begin{figure}[ht]
    \centering
    \includegraphics[scale = 0.15]{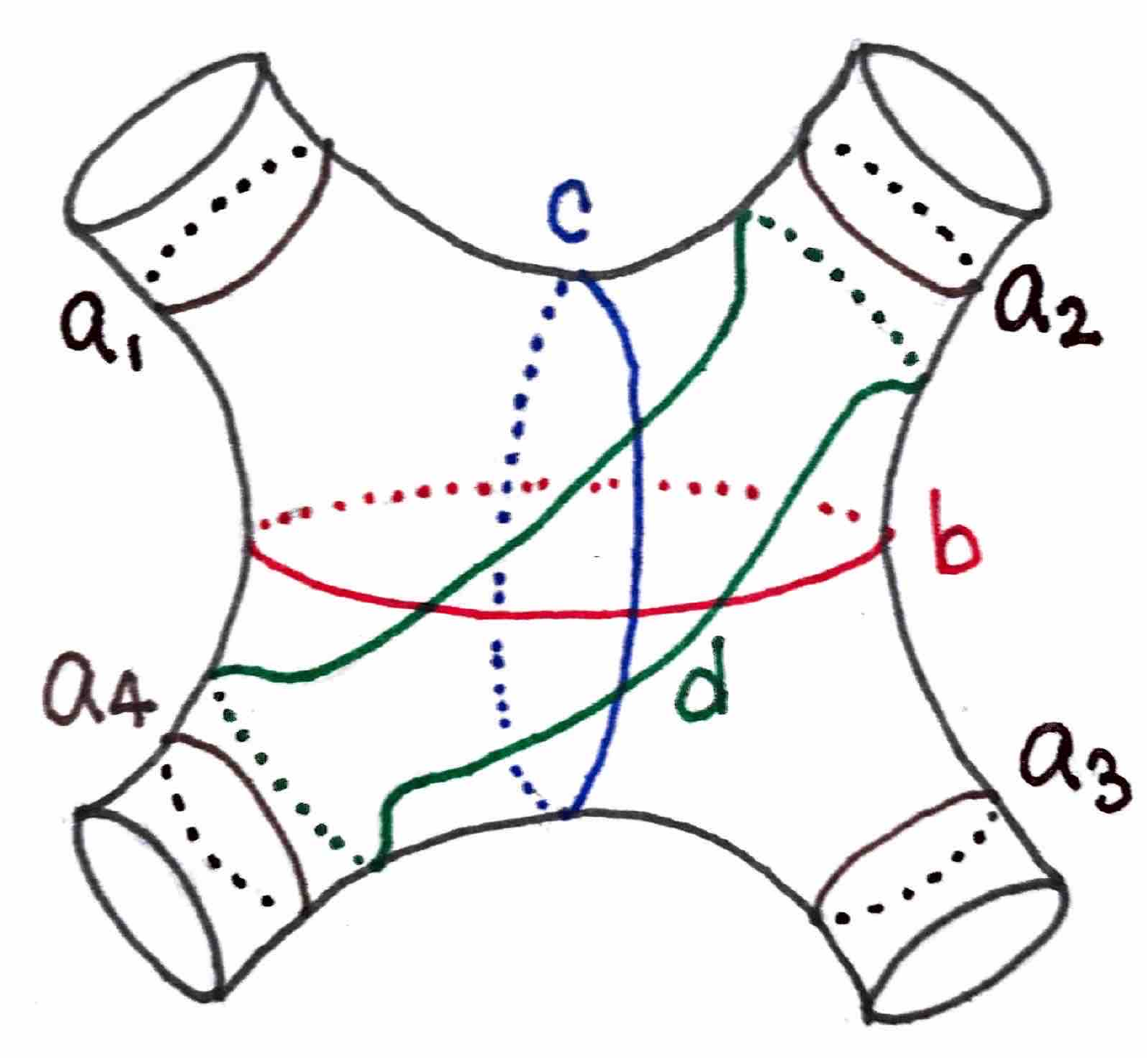}
    \caption{}
    \label{fig:page}
\end{figure}

Note \(\Sigma_{0,4}\) is covered by the plane \(\mathbb{R}^2\) punctured at the integer lattice \(\mathbb{Z}^2\), as shown in Figure \ref{fig:cover}.
\begin{figure}[ht]
    \centering
         \raisebox{-0.5\height}{\includegraphics[scale = 0.15]{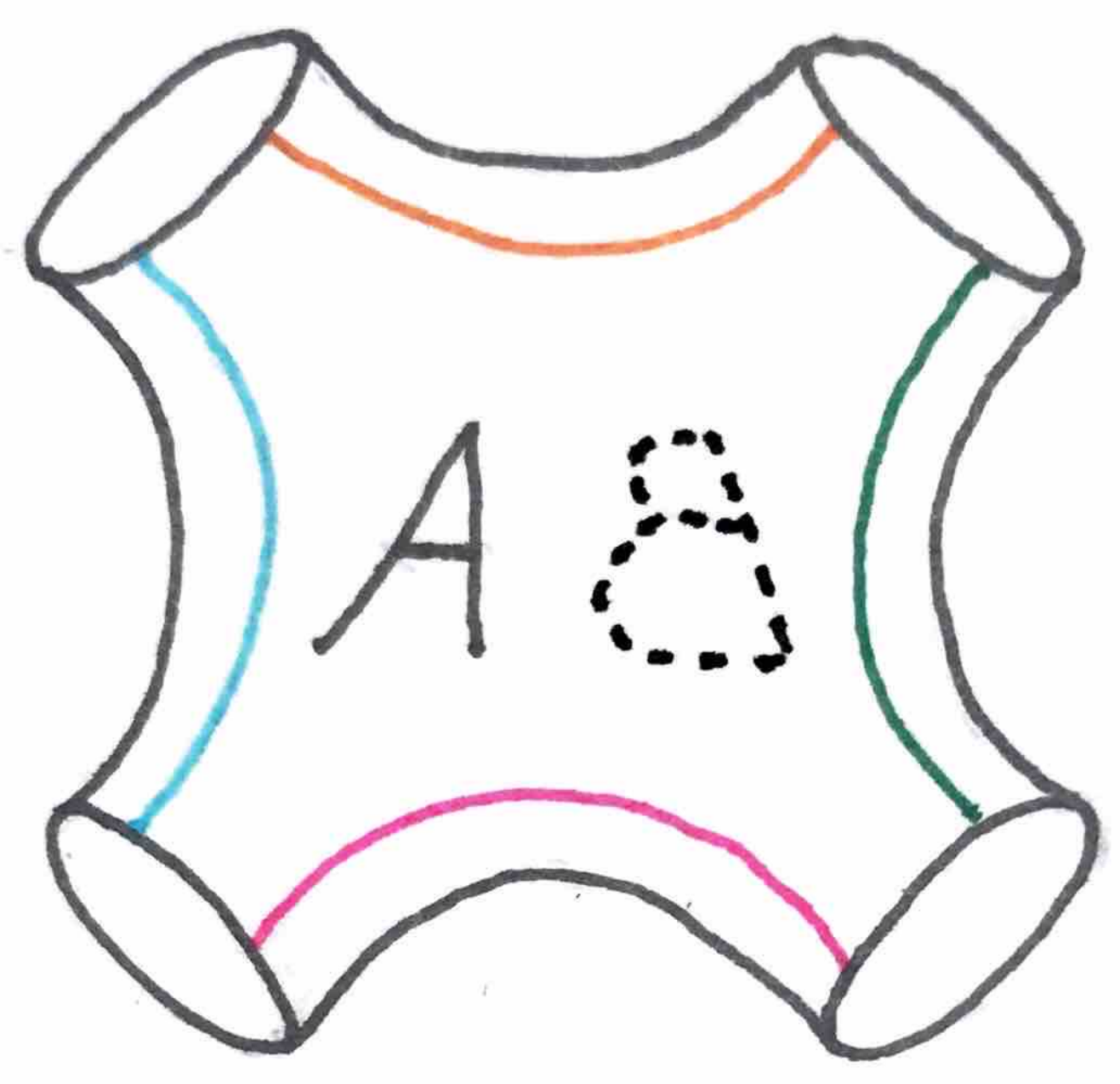}}\qquad  \raisebox{-0.5\height}{\includegraphics[scale = 0.25]{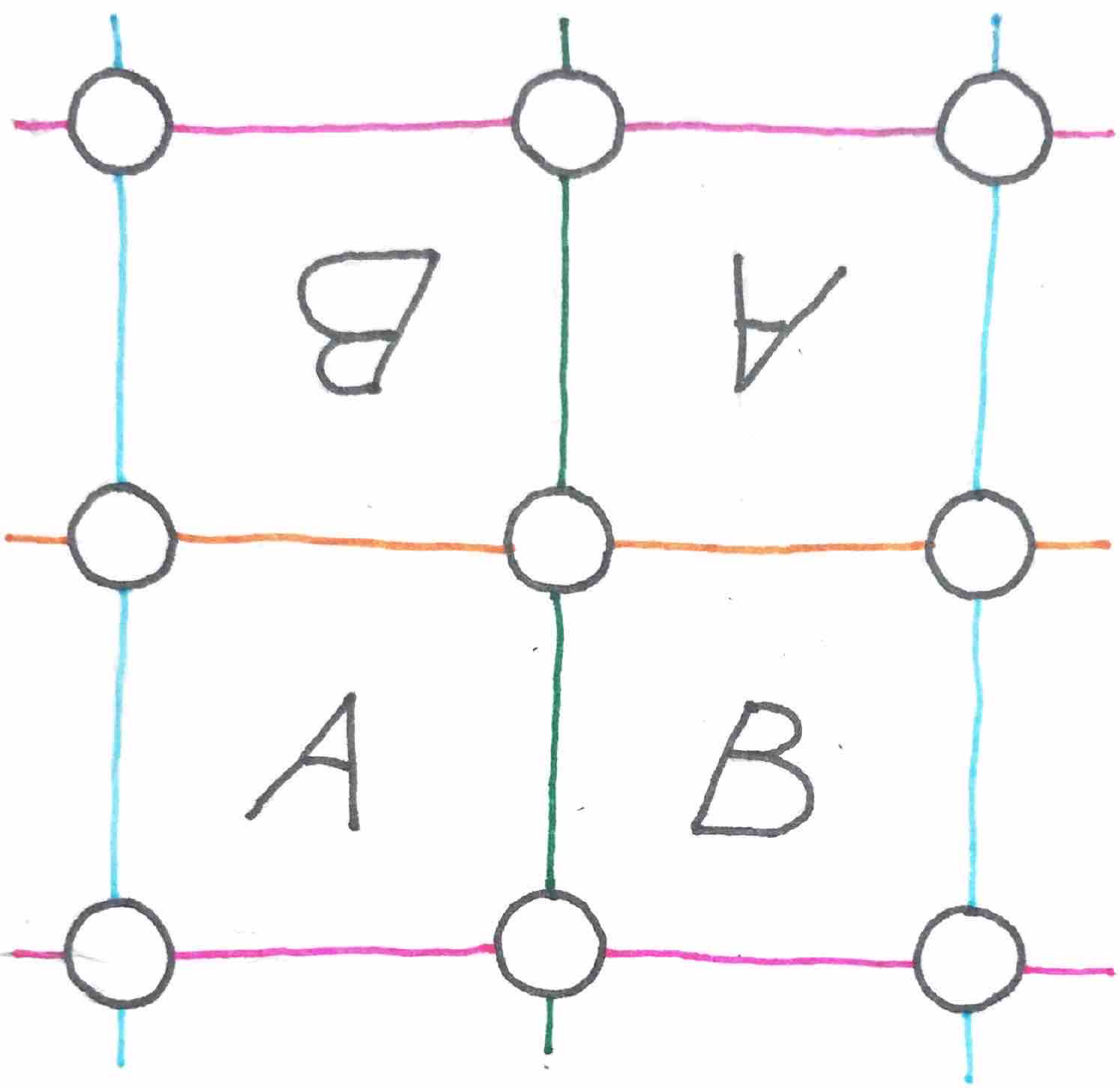}}
    \caption{}
    \label{fig:cover}
\end{figure}
We can assign essential embedded circles in \(\Sigma_{0,4}\) slopes in \(\mathbb{Q}\cup \{\infty\}\) by lifting them to lines in this cover. Here essential means not bounding an embedded disk and not being boundary parallel. We do this in such a way that the curve \(b\) has slope \(0\) and \(c\) has slope \(\infty\) in Figure \ref{fig:page}. Suppose \(s=p/q\) is a slope with \(p,q\in\mathbb{Z}\) relatively prime. By an \textit{arc of slope} \(s\), we mean a properly embedded arc in \(\Sigma_{0,4}\) whose lifts to the cover in Figure \ref{fig:cover} satisfies the following properties:
\begin{enumerate}[label=(\roman*)]
\item  After shrinking the punctures to points in \(\mathbb{Z}^2\), each lift has endpoints which differ by the vector \(\pm(p,q)\) (not a scalar multiple of this vector).
\item After shrinking punctures to points, each lift can be straightened to a line segment in the complement of the punctures.
\end{enumerate}

Using the assignment of embedded circles to slopes, we can think of isotopy classes of embedded circles in \(\Sigma_{0,4}\) as vertices in the \textit{Farey tesselation}; see Figure \ref{fig:Farey}. Two embedded circles are joined by an arc in the Farey tesselation if they can be isotoped to intersect transversely only twice–the minimum number in \(\Sigma_{0,4}\).
\begin{figure}[ht]
    \centering
         \begin{tikzpicture}
            \def\r{2.5}
             \draw [thick] (0,0) circle (\r);
             \filldraw [black] (\r,0) circle (2pt) node[anchor=west]{$\frac{0}{1}$};
             \filldraw [black] (0,\r) circle (2pt) node[anchor=south]{$\frac{1}{1}$};
             \filldraw [black] (-\r,0) circle (2pt) node[anchor=east]{$\frac{1}{0}$};
             \filldraw [black] (0,-\r) circle (2pt) node[anchor=north]{$-\frac{1}{1}\phantom{-}$};
             \filldraw [black] ({(3/5)*\r},{(4/5)*\r}) circle (2pt) node[anchor=south west]{$\frac{1}{2}$};
             \filldraw [black] ({-(3/5)*\r},{(4/5)*\r}) circle (2pt) node[anchor=south east]{$\frac{2}{1}$};
             \filldraw [black] ({-(3/5)*\r},{-(4/5)*\r}) circle (2pt) node[anchor=north east]{$-\frac{2}{1}$};
             \filldraw [black] ({(3/5)*\r},{-(4/5)*\r}) circle (2pt) node[anchor=north west]{$-\frac{1}{2}$};
             \filldraw [black] ({(4/5)*\r},{(3/5)*\r}) circle (2pt) node[anchor=south west]{$\frac{1}{3}$};
             \filldraw [black] ({-(4/5)*\r},{(3/5)*\r}) circle (2pt) node[anchor=south east]{$\frac{3}{1}$};
             \filldraw [black] ({-(4/5)*\r},{-(3/5)*\r}) circle (2pt) node[anchor=north east]{$-\frac{3}{1}$};
             \filldraw [black] ({(4/5)*\r},{-(3/5)*\r}) circle (2pt) node[anchor=north west]{$-\frac{1}{3}$};
             \filldraw [black] ({(5/13)*\r},{(12/13)*\r}) circle (2pt) node[anchor=south]{$\frac{2}{3}$};
             \filldraw [black] ({(-5/13)*\r},{(12/13)*\r}) circle (2pt) node[anchor=south]{$\frac{3}{2}$};
             \filldraw [black] ({(5/13)*\r},{-(12/13)*\r}) circle (2pt) node[anchor=north]{$-\frac{2}{3}$};
             \filldraw [black] ({(-5/13)*\r},{-(12/13)*\r}) circle (2pt) node[anchor=north]{$-\frac{3}{2}\phantom{-}$};
             \filldraw [black] ({(15/17)*\r},{(8/17)*\r}) circle (2pt) node[anchor= west]{$\frac{1}{4}$};
             \filldraw [black] ({(-15/17)*\r},{(8/17)*\r}) circle (2pt) node[anchor= east]{$\frac{4}{1}$};
             \filldraw [black] ({(15/17)*\r},{-(8/17)*\r}) circle (2pt) node[anchor= west]{$-\frac{1}{4}$};
             \filldraw [black] ({(-15/17)*\r},{-(8/17)*\r}) circle (2pt) node[anchor= east]{$-\frac{4}{1}$};
             \draw (-\r,0)--(\r,0);
             \draw (0,\r) arc (180:270:\r);
             \draw (-\r,0) arc (270:360:\r);
             \draw (0,-\r) arc (0:90:\r);
             \draw (\r,0) arc (90:180:\r);
             \draw [domain=0:180, variable = \t, samples=20, trig format=deg, smooth] plot ({\r*(1-(1/4+cos(\t)/4)^2-(sin(\t)/4)^2)/((1/4+cos(\t)/4)^2+(1+sin(\t)/4)^2)},{2*\r*(1/4+cos(\t)/4)/((1/4+cos(\t)/4)^2+(1+sin(\t)/4)^2)});
             \draw [domain=0:180, variable = \t, samples=20, trig format=deg, smooth] plot ({\r*(1-(1/4+cos(\t)/4)^2-(sin(\t)/4)^2)/((1/4+cos(\t)/4)^2+(1+sin(\t)/4)^2)},{-2*\r*(1/4+cos(\t)/4)/((1/4+cos(\t)/4)^2+(1+sin(\t)/4)^2)});
             \draw [domain=0:180, variable = \t, samples=20, trig format=deg, smooth] plot ({-\r*(1-(1/4+cos(\t)/4)^2-(sin(\t)/4)^2)/((1/4+cos(\t)/4)^2+(1+sin(\t)/4)^2)},{2*\r*(1/4+cos(\t)/4)/((1/4+cos(\t)/4)^2+(1+sin(\t)/4)^2)});
             \draw [domain=0:180, variable = \t, samples=20, trig format=deg, smooth] plot ({-\r*(1-(1/4+cos(\t)/4)^2-(sin(\t)/4)^2)/((1/4+cos(\t)/4)^2+(1+sin(\t)/4)^2)},{-2*\r*(1/4+cos(\t)/4)/((1/4+cos(\t)/4)^2+(1+sin(\t)/4)^2)});
             \draw [domain=0:180, variable = \t, samples=20, trig format=deg, smooth] plot ({\r*(1-(3/4+cos(\t)/4)^2-(sin(\t)/4)^2)/((3/4+cos(\t)/4)^2+(1+sin(\t)/4)^2)},{2*\r*(3/4+cos(\t)/4)/((3/4+cos(\t)/4)^2+(1+sin(\t)/4)^2)});
             \draw [domain=0:180, variable = \t, samples=20, trig format=deg, smooth] plot ({-\r*(1-(3/4+cos(\t)/4)^2-(sin(\t)/4)^2)/((3/4+cos(\t)/4)^2+(1+sin(\t)/4)^2)},{2*\r*(3/4+cos(\t)/4)/((3/4+cos(\t)/4)^2+(1+sin(\t)/4)^2)});
             \draw [domain=0:180, variable = \t, samples=20, trig format=deg, smooth] plot ({\r*(1-(3/4+cos(\t)/4)^2-(sin(\t)/4)^2)/((3/4+cos(\t)/4)^2+(1+sin(\t)/4)^2)},{-2*\r*(3/4+cos(\t)/4)/((3/4+cos(\t)/4)^2+(1+sin(\t)/4)^2)});
             \draw [domain=0:180, variable = \t, samples=20, trig format=deg, smooth] plot ({-\r*(1-(3/4+cos(\t)/4)^2-(sin(\t)/4)^2)/((3/4+cos(\t)/4)^2+(1+sin(\t)/4)^2)},{-2*\r*(3/4+cos(\t)/4)/((3/4+cos(\t)/4)^2+(1+sin(\t)/4)^2)});
             \draw [domain=0:180, variable = \t, samples=20, trig format=deg, smooth] plot ({\r*(1-(1/6+cos(\t)/6)^2-(sin(\t)/6)^2)/((1/6+cos(\t)/6)^2+(1+sin(\t)/6)^2)},{2*\r*(1/6+cos(\t)/6)/((1/6+cos(\t)/6)^2+(1+sin(\t)/6)^2)});
             \draw [domain=0:180, variable = \t, samples=20, trig format=deg, smooth] plot ({-\r*(1-(1/6+cos(\t)/6)^2-(sin(\t)/6)^2)/((1/6+cos(\t)/6)^2+(1+sin(\t)/6)^2)},{2*\r*(1/6+cos(\t)/6)/((1/6+cos(\t)/6)^2+(1+sin(\t)/6)^2)});
             \draw [domain=0:180, variable = \t, samples=20, trig format=deg, smooth] plot ({\r*(1-(1/6+cos(\t)/6)^2-(sin(\t)/6)^2)/((1/6+cos(\t)/6)^2+(1+sin(\t)/6)^2)},{-2*\r*(1/6+cos(\t)/6)/((1/6+cos(\t)/6)^2+(1+sin(\t)/6)^2)});
             \draw [domain=0:180, variable = \t, samples=20, trig format=deg, smooth] plot ({-\r*(1-(1/6+cos(\t)/6)^2-(sin(\t)/6)^2)/((1/6+cos(\t)/6)^2+(1+sin(\t)/6)^2)},{-2*\r*(1/6+cos(\t)/6)/((1/6+cos(\t)/6)^2+(1+sin(\t)/6)^2)});
             \draw [domain=0:180, variable = \t, samples=20, trig format=deg, smooth] plot ({\r*(1-(5/12+cos(\t)/12)^2-(sin(\t)/12)^2)/((5/12+cos(\t)/12)^2+(1+sin(\t)/12)^2)},{2*\r*(5/12+cos(\t)/12)/((5/12+cos(\t)/12)^2+(1+sin(\t)/12)^2)});
             \draw [domain=0:180, variable = \t, samples=20, trig format=deg, smooth] plot ({-\r*(1-(5/12+cos(\t)/12)^2-(sin(\t)/12)^2)/((5/12+cos(\t)/12)^2+(1+sin(\t)/12)^2)},{2*\r*(5/12+cos(\t)/12)/((5/12+cos(\t)/12)^2+(1+sin(\t)/12)^2)});
             \draw [domain=0:180, variable = \t, samples=20, trig format=deg, smooth] plot ({\r*(1-(5/12+cos(\t)/12)^2-(sin(\t)/12)^2)/((5/12+cos(\t)/12)^2+(1+sin(\t)/12)^2)},{-2*\r*(5/12+cos(\t)/12)/((5/12+cos(\t)/12)^2+(1+sin(\t)/12)^2)});
             \draw [domain=0:180, variable = \t, samples=20, trig format=deg, smooth] plot ({-\r*(1-(5/12+cos(\t)/12)^2-(sin(\t)/12)^2)/((5/12+cos(\t)/12)^2+(1+sin(\t)/12)^2)},{-2*\r*(5/12+cos(\t)/12)/((5/12+cos(\t)/12)^2+(1+sin(\t)/12)^2)});
             \draw [domain=0:180, variable = \t, samples=20, trig format=deg, smooth] plot ({\r*(1-(7/12+cos(\t)/12)^2-(sin(\t)/12)^2)/((7/12+cos(\t)/12)^2+(1+sin(\t)/12)^2)},{2*\r*(7/12+cos(\t)/12)/((7/12+cos(\t)/12)^2+(1+sin(\t)/12)^2)});
             \draw [domain=0:180, variable = \t, samples=20, trig format=deg, smooth] plot ({-\r*(1-(7/12+cos(\t)/12)^2-(sin(\t)/12)^2)/((7/12+cos(\t)/12)^2+(1+sin(\t)/12)^2)},{2*\r*(7/12+cos(\t)/12)/((7/12+cos(\t)/12)^2+(1+sin(\t)/12)^2)});
             \draw [domain=0:180, variable = \t, samples=20, trig format=deg, smooth] plot ({\r*(1-(7/12+cos(\t)/12)^2-(sin(\t)/12)^2)/((7/12+cos(\t)/12)^2+(1+sin(\t)/12)^2)},{-2*\r*(7/12+cos(\t)/12)/((7/12+cos(\t)/12)^2+(1+sin(\t)/12)^2)});
             \draw [domain=0:180, variable = \t, samples=20, trig format=deg, smooth] plot ({-\r*(1-(7/12+cos(\t)/12)^2-(sin(\t)/12)^2)/((7/12+cos(\t)/12)^2+(1+sin(\t)/12)^2)},{-2*\r*(7/12+cos(\t)/12)/((7/12+cos(\t)/12)^2+(1+sin(\t)/12)^2)});
             \draw [domain=0:180, variable = \t, samples=20, trig format=deg, smooth] plot ({\r*(1-(5/6+cos(\t)/6)^2-(sin(\t)/6)^2)/((5/6+cos(\t)/6)^2+(1+sin(\t)/6)^2)},{2*\r*(5/6+cos(\t)/6)/((5/6+cos(\t)/6)^2+(1+sin(\t)/6)^2)});
             \draw [domain=0:180, variable = \t, samples=20, trig format=deg, smooth] plot ({-\r*(1-(5/6+cos(\t)/6)^2-(sin(\t)/6)^2)/((5/6+cos(\t)/6)^2+(1+sin(\t)/6)^2)},{2*\r*(5/6+cos(\t)/6)/((5/6+cos(\t)/6)^2+(1+sin(\t)/6)^2)});
             \draw [domain=0:180, variable = \t, samples=20, trig format=deg, smooth] plot ({\r*(1-(5/6+cos(\t)/6)^2-(sin(\t)/6)^2)/((5/6+cos(\t)/6)^2+(1+sin(\t)/6)^2)},{-2*\r*(5/6+cos(\t)/6)/((5/6+cos(\t)/6)^2+(1+sin(\t)/6)^2)});
             \draw [domain=0:180, variable = \t, samples=20, trig format=deg, smooth] plot ({-\r*(1-(5/6+cos(\t)/6)^2-(sin(\t)/6)^2)/((5/6+cos(\t)/6)^2+(1+sin(\t)/6)^2)},{-2*\r*(5/6+cos(\t)/6)/((5/6+cos(\t)/6)^2+(1+sin(\t)/6)^2)});
             \draw [domain=0:180, variable = \t, samples=20, trig format=deg, smooth] plot ({\r*(1-(1/8+cos(\t)/8)^2-(sin(\t)/8)^2)/((1/8+cos(\t)/8)^2+(1+sin(\t)/8)^2)},{2*\r*(1/8+cos(\t)/8)/((1/8+cos(\t)/8)^2+(1+sin(\t)/8)^2)});
             \draw [domain=0:180, variable = \t, samples=20, trig format=deg, smooth] plot ({-\r*(1-(1/8+cos(\t)/8)^2-(sin(\t)/8)^2)/((1/8+cos(\t)/8)^2+(1+sin(\t)/8)^2)},{2*\r*(1/8+cos(\t)/8)/((1/8+cos(\t)/8)^2+(1+sin(\t)/8)^2)});
             \draw [domain=0:180, variable = \t, samples=20, trig format=deg, smooth] plot ({\r*(1-(1/8+cos(\t)/8)^2-(sin(\t)/8)^2)/((1/8+cos(\t)/8)^2+(1+sin(\t)/8)^2)},{-2*\r*(1/8+cos(\t)/8)/((1/8+cos(\t)/8)^2+(1+sin(\t)/8)^2)});
             \draw [domain=0:180, variable = \t, samples=20, trig format=deg, smooth] plot ({-\r*(1-(1/8+cos(\t)/8)^2-(sin(\t)/8)^2)/((1/8+cos(\t)/8)^2+(1+sin(\t)/8)^2)},{-2*\r*(1/8+cos(\t)/8)/((1/8+cos(\t)/8)^2+(1+sin(\t)/8)^2)});
             \draw [domain=0:180, variable = \t, samples=20, trig format=deg, smooth] plot ({\r*(1-(7/24+cos(\t)/24)^2-(sin(\t)/24)^2)/((7/24+cos(\t)/24)^2+(1+sin(\t)/24)^2)},{2*\r*(7/24+cos(\t)/24)/((7/23+cos(\t)/23)^2+(1+sin(\t)/24)^2)});
             \draw [domain=0:180, variable = \t, samples=20, trig format=deg, smooth] plot ({-\r*(1-(7/24+cos(\t)/24)^2-(sin(\t)/24)^2)/((7/24+cos(\t)/24)^2+(1+sin(\t)/24)^2)},{2*\r*(7/24+cos(\t)/24)/((7/23+cos(\t)/23)^2+(1+sin(\t)/24)^2)});
             \draw [domain=0:180, variable = \t, samples=20, trig format=deg, smooth] plot ({\r*(1-(7/24+cos(\t)/24)^2-(sin(\t)/24)^2)/((7/24+cos(\t)/24)^2+(1+sin(\t)/24)^2)},{-2*\r*(7/24+cos(\t)/24)/((7/23+cos(\t)/23)^2+(1+sin(\t)/24)^2)});
             \draw [domain=0:180, variable = \t, samples=20, trig format=deg, smooth] plot ({-\r*(1-(7/24+cos(\t)/24)^2-(sin(\t)/24)^2)/((7/24+cos(\t)/24)^2+(1+sin(\t)/24)^2)},{-2*\r*(7/24+cos(\t)/24)/((7/23+cos(\t)/23)^2+(1+sin(\t)/24)^2)});
         \end{tikzpicture}
    \caption{}
    \label{fig:Farey}
\end{figure}

 Recall from \cite[Section 2.2.5]{primer} that the mapping class group of \(\Sigma_{0,4}\) is 
\[\mathrm{Mod}(\Sigma_{0,4}) \cong (\mathbb{Z}/2\times \mathbb{Z}/2) \rtimes \mathrm{PSL}(2,\mathbb{Z}).\]
The projection \(\pi\colon \mathrm{Mod}(\Sigma_{0,4})\to \mathrm{PSL}(2,\mathbb{Z})\) records the action of \(\Sigma_{0,4}\) on simple closed curves. One way to see this action is to view the Farey tesselation as a tiling of the upper half space model of hyperbolic space. We then view \(\mathrm{PSL}(2,\mathbb{Z})\leq \mathrm{PSL}(2,\mathbb{R})\) as a subgroup of orientation-preserving hyperbolic isometries, i.e., M\"{o}bius transformations. The kernel of \(\pi\) is generated by two \textit{hyperelliptic involutions} which permute the boundary components by double transpositions; see Figure \ref{fig:involution}
\begin{figure}[ht]
\centering
\includegraphics[scale=0.15]{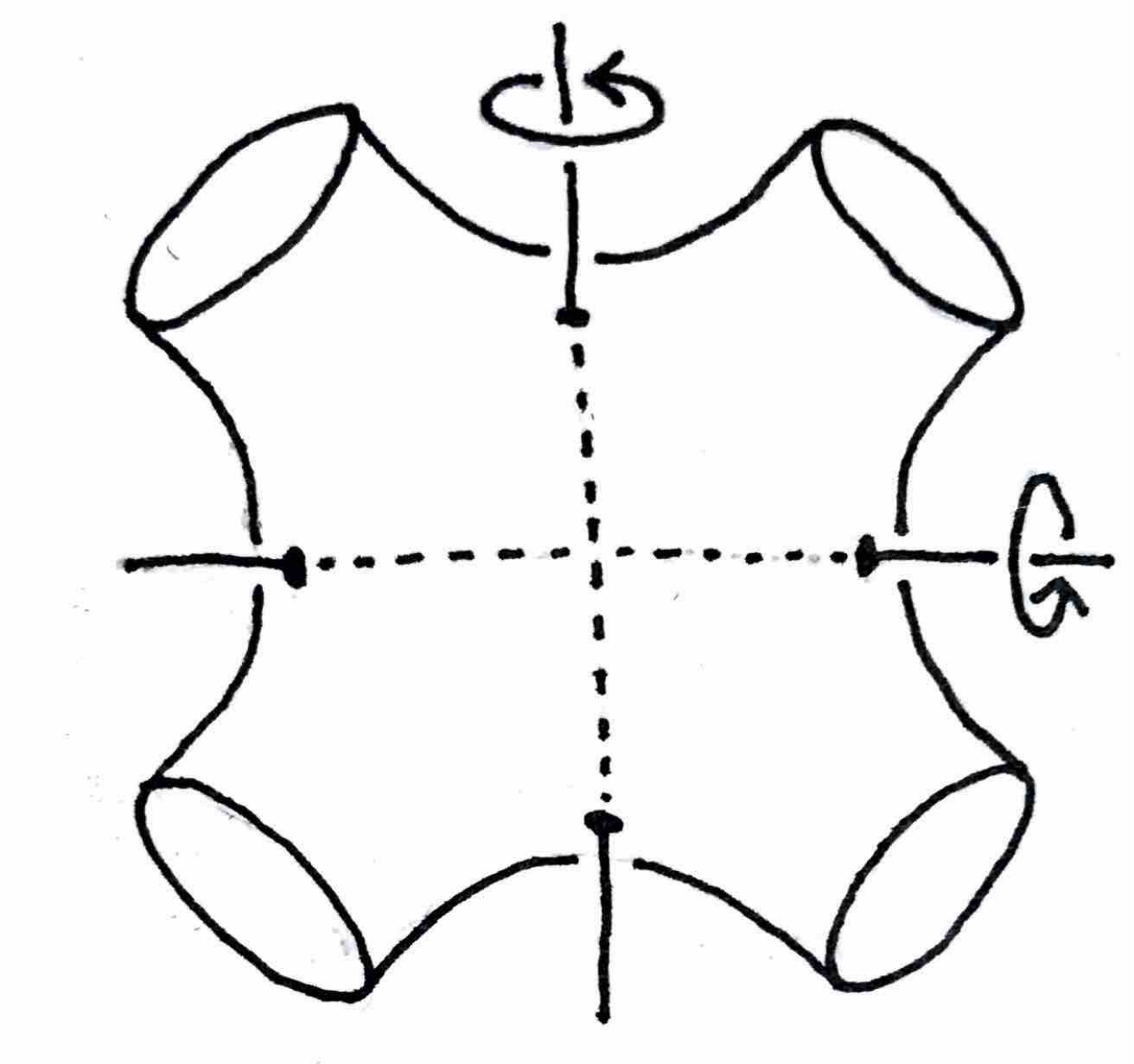}
\caption{}
\label{fig:involution}
\end{figure}

Next, from \cite{Lekili} we have a non-canonical isomorphism of the relative mapping class group
\[\mathrm{Mod}(\Sigma_{0,4},\partial \Sigma_{0,4})\cong \mathbb{Z}^4\times F_2,\]
where \(F_2\) is the free group on two elements. The \(\mathbb{Z}^4\) factor is canonical. However, there is no canonical choice of \(F_2\) factor. There is a natural map \[\mathrm{Mod}(\Sigma_{0,4},\partial \Sigma_{0,4})\to \mathrm{Mod}(\Sigma_{0,4}),\]
whose kernel is the \(\mathbb{Z}^4\) factor above. This factor is generated by boundary parallel Dehn twists. Next, the composition
\[\mathrm{Mod}(\Sigma_{0,4},\partial \Sigma_{0,4})\to \mathrm{Mod}(\Sigma_{0,4})\xrightarrow{\,\pi\,}\mathrm{PSL}(2,\mathbb{Z})\]
is injective on the \(F_2\) factor. We also call the above composition \(\pi\). One can verify that
\[\pi (\tau_b)=\pm\begin{bmatrix}1 & 2\\ 0 & 1\end{bmatrix},\qquad \pi(\tau_c) = \pm\begin{bmatrix}\phantom{-}1 & 0\\ -2 & 1\end{bmatrix}.\]
In addition,
\begin{equation}\pi(F_2)  = \bigg\{{\pm \begin{bmatrix}r & s\\ p& q\end{bmatrix}\in \mathrm{PSL}(2,\mathbb{Z}): r, q \text{ odd},\,p, s \text{ even} }\bigg\}\trianglelefteq \mathrm{PSL}(2,\mathbb{Z}).\label{eq:subgroup}\end{equation}
Note two elements of \(\mathrm{Mod}(\Sigma_{0,4},\partial \Sigma_{0,4})\) are the same if and only if they have the same image under \(\pi\) and have the same FDTCs.
\section{Overtwistedness}
To show overtwistedness, we first choose a good conjugacy class representative of the monodromy \(f\in \mathrm{Mod}(\Sigma_{0,4},\partial \Sigma_{0,4})\).
\begin{proposition}
\label{prop:conjugacy}
Let \(a_i,b,c,d\) be the curves in Figure {\normalfont \ref{fig:page}}. If, up to conjugacy by \(\mathrm{Mod}(\Sigma_{0,4})\), the monodromy \(f\) is not
\begin{equation}\tau_{a_1}^{n_1}\tau_{a_2}^{n_2}\tau_{a_3}^{n_3}\tau_{a_4}^{n_4}\tau_c^{n_c},\qquad n_c\geq 0, \label{eq:forbidden}\end{equation}
then, up to conjugacy, one may pick \(f\) so that
\[\pi(f) =\pm \begin{bmatrix}p'& q'\\ p & q\end{bmatrix}\]
where \(p,q,p',q'\geq 0\) and \(p>q\). 
\end{proposition}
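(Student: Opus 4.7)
The plan is to translate the problem into a statement about matrix conjugacy in $\mathrm{PSL}(2,\mathbb{Z})$ via the projection $\pi$, and then to do case analysis on the Nielsen--Thurston type of $\pi(f)$. The reduction to $\mathrm{PSL}(2,\mathbb{Z})$-conjugacy of $\pi(f) \in \pi(F_2)$ is valid because $\pi\colon \mathrm{Mod}(\Sigma_{0,4}) \twoheadrightarrow \mathrm{PSL}(2,\mathbb{Z})$ is surjective, its kernel acts trivially on $\pi(f)$, and the $\mathbb{Z}^4$ boundary-twist factor of $f$ is merely permuted under conjugation in $\mathrm{Mod}(\Sigma_{0,4})$, with no constraint imposed by the desired conclusion.

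Since $\pi(F_2)$ is free of rank $2$, every $g = \pi(f) \in \pi(F_2)$ is either $\pm I$, a non-trivial parabolic ($|\mathrm{tr}(g)|=2$), or hyperbolic ($|\mathrm{tr}(g)|>2$). The identity case places $f$ in the forbidden form with $n_c=0$. For a non-trivial parabolic $g$, $\mathrm{PSL}(2,\mathbb{Z})$-conjugation moves the fixed cusp of $g$ to $\infty$, giving a conjugate $\pm \begin{bmatrix} 1 & 2m \\ 0 & 1 \end{bmatrix}$ for some non-zero $m\in\mathbb{Z}$ (with $2m$ forced by the parity constraint defining $\pi(F_2)$). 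A further conjugation by $S = \begin{bmatrix}0 & -1 \\ 1 & 0\end{bmatrix}$ yields $\pm\begin{bmatrix} 1 & 0 \\ -2m & 1 \end{bmatrix}$. When $m > 0$ this equals $\pi(\tau_c^m)$ and $f$ is in the forbidden case; when $m < 0$ the conjugate $\pm\begin{bmatrix} 1 & 0 \\ 2|m| & 1 \end{bmatrix}$ already has the desired form, with $p = 2|m| > 1 = q$.

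The substantive case is the hyperbolic one. Here I would invoke the classical Markov (cutting-sequence) reduction for $\mathrm{PSL}(2,\mathbb{Z})$: every hyperbolic element is $\mathrm{PSL}(2,\mathbb{Z})$-conjugate to a positive alternating word $T^{a_1}(T')^{b_1}\cdots T^{a_k}(T')^{b_k}$, where $T = \begin{bmatrix} 1 & 1 \\ 0 & 1 \end{bmatrix}$, $T' = \begin{bmatrix} 1 & 0 \\ 1 & 1 \end{bmatrix}$, and $a_i, b_i \geq 1$. The orientation along the axis is chosen so that the cyclic word ends with a $T'$-block. Since $\pi(F_2)$ is normal in $\mathrm{PSL}(2,\mathbb{Z})$ and is characterized by even off-diagonal entries, any $\mathrm{PSL}(2,\mathbb{Z})$-conjugate of $g$ remains in $\pi(F_2)$, and an induction on the length of the product forces every $a_i, b_i$ to be even. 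A further induction on the positive product $T^{2a_1}(T')^{2b_1}\cdots T^{2a_k}(T')^{2b_k}$ shows that all entries are non-negative and that both diagonal entries are at least $1$. Writing the partial product $T^{2a_1}(T')^{2b_1}\cdots T^{2a_k}$ as $\begin{bmatrix} A & B \\ C & D \end{bmatrix}$ with $D \geq 1$, multiplication on the right by $(T')^{2b_k}$ gives bottom row $(C + 2b_k D,\, D)$, so $p - q = C + (2b_k - 1)D \geq 1$, yielding $p > q$.

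The main obstacle is invoking the Markov reduction with the correct parity consequences and choosing the orientation so that the cyclic word ends with a $T'$-block (otherwise one obtains $q > p$ instead). The existence of the positive alternating conjugate is standard (it is the cutting sequence of the hyperbolic axis across the Farey tessellation), and the parity of the exponents follows from the normality of $\pi(F_2)$ together with the explicit description of its elements. Once these ingredients are in place, the remainder of the argument is routine matrix computation.
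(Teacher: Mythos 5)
Your proposal takes a genuinely different route from the paper's. The paper conjugates $\pi(f)$ to have all entries of the same sign via an explicit Farey-tessellation eigenvector argument (choosing $A$ with columns of Farey-adjacent rational slopes sandwiching the expanding eigenslope), and then iterates conjugation by $T$ to shrink $q$ below $p$. You instead invoke the classical Markov/cutting-sequence reduction to put $\pi(f)$ into a positive alternating word in $T, T'$ and read off $p > q$ directly. Both are legitimate; the paper's version is more self-contained, while yours uses the cutting-sequence normal form as a black box and is arguably more transparent once that is granted. Your treatment of the identity and parabolic cases is also fine, though the phrase ``orientation along the axis'' is a slight misnomer for what is really a cyclic rotation (conjugation by a prefix of the word).

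There is, however, a genuine gap in the hyperbolic case: the assertion that ``an induction on the length of the product forces every $a_i, b_i$ to be even'' is false. Membership in $\pi(F_2) = \Gamma(2)/\{\pm I\}$ constrains only the image of the product in $\mathrm{PSL}(2,\mathbb{Z}/2)\cong S_3$, not each exponent separately. A concrete counterexample is
\[
T(T')^2\,T(T')^2 \;=\; \begin{bmatrix}11 & 4\\ 8 & 3\end{bmatrix},
\]
which lies in $\pi(F_2)$ (diagonal entries $11,3$ odd, off-diagonal $4,8$ even) yet has odd $T$-exponents. Your final inequality $p - q = C + (2b_k - 1)D \geq 1$ uses $2b_k - 1 \geq 1$ and therefore really needs the evenness exactly when $k=1$, where the partial product is just $T^{a_1}$ with $C=0$. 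The conclusion itself can be salvaged: for $k\geq 2$ the partial product $T^{a_1}(T')^{b_1}\cdots T^{a_k}$ already has $C\geq 1$, so $p-q = C + (b_k-1)D \geq 1$ without any parity input; and for $k=1$ the matrix is $T^{a_1}(T')^{b_1} = \begin{bmatrix}1+a_1b_1 & a_1\\ b_1 & 1\end{bmatrix}$, so membership in $\pi(F_2)$ forces $p = b_1$ even, hence $b_1\geq 2$ and $p>q=1$. But as written, the argument relies on a false lemma and needs to be repaired along these lines.
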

\begin{proof}
If \(f\) is not pseudo-Anosov and is not one of of the above forbidden monodromies, then up to conjugacy it is of the form 
\[\tau_{a_1}^{n_1}\tau_{a_2}^{n_2}\tau_{a_3}^{n_3}\tau_{a_4}^{n_4}\tau_c^{n_c},\qquad n_c<0.\]
Applying \(\pi\) to the above monodromy gives 
\[\pm \begin{bmatrix}\phantom{-}1 & 0\\ -2n_c & 1\end{bmatrix},\]
which is indeed in the desired form. Assume now that \(f\) is pseudo-Anosov. We first claim that up to conjugacy, one can choose \(f\) so that 
\begin{equation}\pi(f) =\pm \begin{bmatrix}p'& q'\\ p & q\end{bmatrix},\qquad p,q,p',q'\geq 0.\label{eq:positive}\end{equation}
We argue as in \cite[Lemma 4.2]{HKM}. Note that \(\pi(f)\) is a hyperbolic element of \(\mathrm{PSL}(2,\mathbb{Z})\). Let \(\lambda_1\) denote the slope of the eigenspace with eigenvalue of absolute value greater than \(1\). Let \(\lambda_2\) be the slope of the other eigenspace. In the Farey tesselation, let \(\mu_1\) be a rational slope on the counterclockwise edge from \(\lambda_2\) to \(\lambda_1\) and \(\mu_2\) a rational slope on the counterclockwise edge from \(\lambda_1\) to \(\lambda_2\), chosen so that there is an edge between \(\mu_1,\mu_2\). (Such a pair exists by considering continued fraction approximations of, say, \(\lambda_1\).) Let \(A\in \mathrm{PSL}(2,\mathbb{Z})\) be the unique element whose columns have slope \(\mu_1,\mu_2\), in order. Then \(A\pi(f)A^{-1}\) has entries all of the same sign. Since the map \(\pi\colon \mathrm{Mod}(\Sigma_{0,4})\to \mathrm{PSL}(2,\mathbb{Z})\) is onto, this completes the intermediate claim.  

Now suppose that \(f\) is chosen as in (\ref{eq:positive}). As \(f\) is not reducible, \(p\) cannot be \(0\), so must be strictly positive. Suppose that \(p\leq q\). We compute 
\[\begin{bmatrix}1 & 1\\ 0 & 1 \end{bmatrix}\pi(f)\begin{bmatrix}1 & 1\\ 0 & 1 \end{bmatrix}^{-1}=\pm  \begin{bmatrix}p+p' & q-p+q'-p'\\ p & q-p\end{bmatrix}.\]
All the entries on the right-hand-side are non-negative except perhaps \(q-p+q'-p'\). Furthermore the diagonal entries are strictly positive, since they are odd. As this matrix has determinant \(1\), the only way for \(q-p+q'-p'\) to be negative would be to have \(p=0\). This puts us back into the reducible case considered above. By iterating the above conjugation, we can eventually arrange so that \(q<p\). 
\end{proof}
\begin{remark}
Note the monodromies in (\ref{eq:forbidden}) are tight if and only if \(n_1,n_2,n_3,n_4\geq 0\). Indeed, if each \(n_i\) is non-negative, then the monodromy factors into positive Dehn twists. On the other hand, if some \(n_i<0\) then the monodromy is not right-veering.
\end{remark}

From here on out, \textit{we assume \(f\in \mathrm{Mod}(\Sigma_{0,4},\partial \Sigma_{0,4})\) is taken as in the above proposition}. That is to say, we assume
\[\pi(f)= \pm \begin{bmatrix}p' & q'\\ p & q\end{bmatrix}\]
where \(p,q,p',q'\geq 0\) and \(p>q\). Recall that \(p\) is even and \(q\) is odd. The fact that the above matrix has determinant \(1\) implies \(p\) and \(q\) share no common factors. 

In general, given relatively prime positive integers \(p>q>0\), one can consider a ``subtractive'' continued fraction expansion for \(-p/q\) of the form 
\[-\frac{p}{q}= r_0 - \dfrac{1}{r_1-\dfrac{1}{r_2-\cdots-\dfrac{1}{r_k}}}\]
where each \(r_j<-1\) is an integer. We will denote the right-hand side above by \([r_0,\ldots,r_k]\). Let 
\[N = \lvert(r_0+2)+(r_1+2)+\cdots +(r_{k-1}+2)+(r_k+1)\rvert.\]
For \(i=0,\ldots,N\), we inductively define rational numbers \(s_i \in \mathbb{Q}\) by first setting \(s_N = -p/q\). Next, for \(1\leq i\leq N\) if \(s_i\) has the continued fraction expansion 
\(s_i = [a_0,\ldots,a_\ell],\) with all \(a_j<-1\) , then we set \(s_{i-1}=[a_0,\ldots,a_\ell +1]\). Note if \(a_\ell = -2\), this is the same as \([a_0,\ldots,a_{\ell-1}+1]\). This process terminates with \(s_0 =-1\). 
\begin{lemma}\label{prop:fractions} Write \(s_i =-p_i/q_i\) in lowest terms with \(p_i,q_i>0\). {\normalfont(}By "lowest terms" we mean \(p_i,q_i\) do not share any common factors.{\normalfont)} In particular, \(p_N =p\) and \(q_N =q\).
\begin{enumerate}[label={\normalfont(\alph*)}]
\item For each \(1\leq i\leq N\), one has \(p_iq_{i-1}-p_{i-1}q_i =1\). Hence
\[\frac{p_i}{q_i}=\frac{p_{i-1}}{q_{i-1}}+\frac{1}{q_{i-1}q_i}.\]
\item We have strict monotonicity \[ -\frac{p}{q}=s_N <s_{N-1}<\cdots < s_1<s_0 =-1.\]
\item For each \(0\leq j\leq k\), define \(x_j,y_j\) recursively by 
\begin{alignat*}{4}
x_0 & = r_0,&\qquad x_1 &=r_0r_1-1,&\qquad x_j = r_jx_{j-1}-x_{j-2},&\quad j&\geq 2,\\
y_0 & = 1,&\qquad y_1 &=r_1,&\qquad y_j = r_jy_{j-1}-y_{j-2},&\quad j&\geq 2.
\end{alignat*}
Then 
\[\frac{x_j}{y_j}=[r_0,\ldots,r_j]\]
and 
\[x_j y_{j-1}-x_{j-1}y_j=-1,\qquad 1\leq j\leq k.\]
Consequently, the fraction \(x_j/y_j\) is in lowest terms.
\end{enumerate}
\end{lemma}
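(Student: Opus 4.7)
The plan is to prove (c) first by a standard continued-fraction induction, then deduce (a) from it by a case analysis on how the last entry of the expansion of \(s_i\) changes in passing to \(s_{i-1}\), and finally obtain (b) as an immediate consequence of (a).

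For (c), I would induct on \(j\). The identity \(x_j y_{j-1} - x_{j-1} y_j = -1\) is a telescoping computation: plugging in the recursions gives
\[x_j y_{j-1} - x_{j-1} y_j = (r_j x_{j-1} - x_{j-2}) y_{j-1} - x_{j-1}(r_j y_{j-1} - y_{j-2}) = -(x_{j-1} y_{j-2} - x_{j-2} y_{j-1}),\]
with base case \(x_1 y_0 - x_0 y_1 = (r_0 r_1 - 1) - r_0 r_1 = -1\). The equation \(x_j/y_j = [r_0, \ldots, r_j]\) also follows by induction, using that \([r_0, \ldots, r_j] = [r_0, \ldots, r_{j-2}, r_{j-1} - 1/r_j]\) together with the linearity of the recursion in the last entry; the lowest-terms claim is immediate since any common divisor of \(x_j\) and \(y_j\) must divide \(\pm 1\).

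For (a), I would examine the transition \(s_i \to s_{i-1}\) according to the last entry of the expansion \(s_i = [a_0, \ldots, a_\ell]\). Let \(x_j, y_j\) denote the convergents of \(s_i\)'s expansion and \(\tilde{x}_j, \tilde{y}_j\) those of \(s_{i-1}\)'s expansion. In the non-contraction case \(a_\ell \leq -3\), both expansions have length \(\ell+1\), differing only in the last convergent via \(\tilde{x}_\ell = x_\ell + x_{\ell-1}\) and \(\tilde{y}_\ell = y_\ell + y_{\ell-1}\); direct expansion then collapses \(x_\ell \tilde{y}_\ell - \tilde{x}_\ell y_\ell\) to \(x_\ell y_{\ell-1} - x_{\ell-1} y_\ell = -1\). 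In the contraction case \(a_\ell = -2\), the expansion of \(s_{i-1}\) is one entry shorter, with \(\tilde{x}_{\ell-1} = x_{\ell-1} + x_{\ell-2}\) and \(\tilde{y}_{\ell-1} = y_{\ell-1} + y_{\ell-2}\), and the combination \(x_\ell \tilde{y}_{\ell-1} - \tilde{x}_{\ell-1} y_\ell\) simplifies to \(-(x_{\ell-1} y_{\ell-2} - x_{\ell-2} y_{\ell-1}) = +1\). Part (b) is then an immediate consequence of (a): since \(q_i, q_{i-1} > 0\),
\[s_{i-1} - s_i = \frac{p_i}{q_i} - \frac{p_{i-1}}{q_{i-1}} = \frac{p_i q_{i-1} - p_{i-1} q_i}{q_i q_{i-1}} = \frac{1}{q_i q_{i-1}} > 0.\]

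The main obstacle in (a) will be reconciling signs: the pair \((x_\ell, y_\ell)\) from (c) is signed and its sign pattern alternates with the parity of \(\ell\), whereas \(p_i\) and \(q_i\) are positive by definition. A brief induction shows that \((x_j, y_j)\) has signs \((-,+), (+,-), (-,+), \ldots\) in \(j\), using that \(|x_j|\) and \(|y_j|\) are strictly increasing as a consequence of \(r_j \leq -2\) and the recursion. This lets one verify that both the \(-1\) from the non-contraction case and the \(+1\) from the contraction case produce \(p_i q_{i-1} - p_{i-1} q_i = +1\) after converting signed convergents back to \((p_i, q_i)\), since the parity of the expansion length changes by exactly one precisely in the contraction case, and this parity flip exactly compensates the sign discrepancy between the two cross-ratio computations.
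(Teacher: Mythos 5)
Your proof of part (c) is essentially the paper's: the same telescoping identity and the same inductive argument for $x_j/y_j = [r_0,\ldots,r_j]$. However, you have a sign error in the telescoping step: expanding gives
\[
(r_j x_{j-1}-x_{j-2})y_{j-1}-x_{j-1}(r_j y_{j-1}-y_{j-2}) = x_{j-1}y_{j-2}-x_{j-2}y_{j-1},
\]
\emph{without} the minus sign you wrote. If your version $x_j y_{j-1}-x_{j-1}y_j = -(x_{j-1}y_{j-2}-x_{j-2}y_{j-1})$ were correct, the cross-determinant would alternate as $(-1)^j$ rather than remaining constantly $-1$, contradicting the claim. The fix is trivial, but as written the chain of equalities is inconsistent with your own base case.

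Your route to (a) is genuinely different from the paper's. The paper proves (a) by an independent induction on the length of the continued fraction, stripping the \emph{leading} entry $a_0$ and never reducing the incremented expansion $[a_0,\ldots,a_\ell+1]$ to canonical form; this sidesteps any length bookkeeping. You instead deduce (a) from (c) by analyzing what happens to the final convergent as the \emph{last} entry is incremented. The non-contraction case ($a_\ell\leq -3$) is correct. But in the contraction case you claim that ``the expansion of $s_{i-1}$ is one entry shorter'' and that ``the parity of the expansion length changes by exactly one precisely in the contraction case.'' This is false whenever the expansion of $s_i$ ends in a run of two or more $-2$'s: for example, $s_i=[-3,-2,-2]=-7/3$ has length $3$ while $s_{i-1}=[-2]$ has length $1$, so the parity does \emph{not} flip. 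In that situation the cross-determinant also picks up an extra sign (it is $(-1)^{j+1}$ where $j$ is the cascade depth), and the two signs happen to cancel, so the final answer is still $+1$; but your argument, relying on the parity-flip claim, does not establish this. One way to repair it is to avoid the canonical form entirely and work with the raw expansion $[a_0,\ldots,a_{\ell-1}+1]$ (always one entry shorter, even if its last entry becomes $-1$); the formula $\tilde x_{\ell-1}=x_{\ell-1}+x_{\ell-2}$ still holds, and the sign of this convergent is governed by the dominant term $x_{\ell-1}$ (using $|x_{\ell-1}|>|x_{\ell-2}|$ from the $s_i$ expansion), not by the last entry of the $s_{i-1}$ expansion. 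But then the ``$r_j\leq -2$ implies sign pattern'' induction you cite does not apply directly to the $s_{i-1}$ expansion, and you must argue the sign of the last convergent separately. As it stands, this is a genuine gap.

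Your deduction of (b) from (a) is identical to the paper's.
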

\begin{proof}
We start with (a). If \(s_i=-n\) for some integer \(n\geq 2\) then \(s_{i-1}=1-n\). Hence
\[p_i q_{i-1}-p_{i-1}q_i = n\cdot 1 - (n-1)\cdot 1 =1.\]
We now induct on the length of the continued fraction expansion of \(s_i\). Suppose \(s_i = [a_0,\ldots,a_{\ell}]\) with \(a_j<-1\). By induction, if we write 
\[ -\frac{p'}{q'}=[a_1,\ldots,a_{\ell}],\qquad -\frac{p''}{q''}=[a_1,\ldots,a_{\ell}+1]\]
in lowest terms with \(p',q',p'',q''>0\), then \(p' q'' -p'' q' =1.\) Now 
\[s_i = a_0+\frac{1}{p'/q'}=\frac{a_0p'+q'}{p'},\qquad s_{i-1}=a_0+\frac{1}{p''/q''}=-\frac{a_0p''+q''}{p''}.\]
Note the last fraction in each equation is in lowest terms. Hence 
\[p_i =  -(a_0p'+q'),\qquad q_i = p',\qquad p_{i-1} = -(a_0p''+q''),\qquad q_{i-1} = p''. \]
We now calculate 
\[p_i q_{i-1}-p_{i-1}q_i = -(a_0p'+q')p'' + (a_0p''+q'')p'=q''p'-q'p''=1.\]
The last part of (a) now follows, since
\[\frac{p_{i-1}}{q_{i-1}}+\frac{1}{q_{i-1}q_i}=\frac{p_{i-1}q_i+1}{q_{i-1}q_i}=\frac{p_iq_{i-1}}{q_{i-1}q_i}=\frac{p_i}{q_i}.\]

Note (b) follows from the last equation in (a). For (c),  we first prove by induction that  
\[\frac{x_j}{y_j}=[r_0,\ldots,r_j].\]
Here it will be useful to allow \(r_j\) to be an arbitrary rational number. We first quickly verify 
\begin{align*}\frac{x_0}{y_0}&=\frac{r_0}{1}=[r_0],\\
\frac{x_1}{y_1}&=\frac{r_1r_0-1}{r_1}=r_0-\frac{1}{r_1}=[r_0,r_1],\\
\frac{x_2}{y_2}&=\frac{r_2r_1r_0 - r_2 -r_0}{r_2r_1 -1}=r_0 -\frac{r_2}{r_2r_1-1}=r_0-\frac{1}{r_1-1/r_2}=[r_0,r_1,r_2].\end{align*}
Next, for \(j\geq 3\) we have by induction that
\begin{align*}[r_0,\ldots,r_{j-1},r_j]&=\Big[r_0,\ldots,r_{j-1}-\frac{1}{r_j}\Big]=\frac{(r_{j-1}-1/r_j)x_{j-2}-x_{j-3}}{(r_{j-1}-1/r_j)y_{j-2}-y_{j-3}}=\frac{r_j(r_{j-1}x_{j-2}-x_{j-3})-x_{j-2}}{r_j(r_{j-1}y_{j-2}-y_{j-3})-y_{j-2}}\\&=\frac{r_jx_{j-1}-x_{j-2}}{r_jy_{j-1}-y_{j-2}}=\frac{x_j}{y_j}.\end{align*}
We now prove the second statement in (c) by induction. For the base cases \(j=1,2\) we verify 
\begin{align*}x_1 y_0 - x_0 y_1 &=(r_1r_0-1)\cdot 1 - r_0\cdot r_1 = -1,\\
x_2y_1 -y_1x_2&= (r_2r_1r_0-r_2-r_0)\cdot r_1 - (r_1r_0-1)\cdot (r_2r_1-1) = -1.\end{align*}
For \(j\geq 3\), one has by induction that 
\begin{align*}x_j y_{j-1}-x_{j-1}y_j &= (r_j x_{j-1}-x_{j-2})y_{j-1}-x_{j-1}(r_jy_{j-1}-y_{j-2})\\&= x_{j-1}y_{j-2}-x_{j-2}y_{j-1}=-1.\end{align*}
\end{proof}
In the next lemma, we consider some numerology which will be useful to us later. We continue to use the notation of Lemma \ref{prop:fractions}.
\begin{lemma}
\begin{enumerate}[label={\normalfont(\alph*)}]
\setcounter{enumi}{3}
\item The denominators \(q_0,\ldots,q_N\) are all odd if and only if \(k\) is even and
\[r_1=r_3 = r_5 = \cdots = r_{k-1}=-2.\]
In this situation, \(p=p_N\) has the same parity as
\[r_0 + r_2 + r_4+ \cdots + r_k.\]
\item If the condition in {\normalfont(d)} holds and \(p\) is even, then \(N\) is odd and the parity of \(p_i\) alternates. 
\item Suppose \(r_0\) is odd and \(k\geq 1\). The following conditions are equivalent. 
\begin{itemize}
\item For each \(i\geq \lvert r_0\rvert -1\), one of \(p_i\) or \(q_i\) is even. 
\item \(k\) is odd and 
\[r_2 = r_4 = \cdots = r_{k-1}=-2.\]
\end{itemize}
\item If the condition in {\normalfont(f)} holds, then for \(i\geq \lvert r_0\rvert -1\), the parity of \(p_i\) alternates.
\end{enumerate}

\end{lemma}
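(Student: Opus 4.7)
The plan is to reduce all four statements to a mod-$2$ parity analysis of the recurrence $y_j = a_j y_{j-1} - y_{j-2}$ from part (c), applied to the continued fraction expansion of each $s_i$ appearing in the sequence. Since $-1 \equiv 1 \pmod 2$, this becomes $y_j \equiv a_j y_{j-1} + y_{j-2} \pmod 2$, so the parity of the denominator $q_i$ depends only on the parities of the CF entries of $s_i$. I would organize $s_0, \ldots, s_N$ into \emph{blocks} on which the CF length is constant: within a block only the last entry $m$ varies, so $q_i = |m\, y_{\ell-1} - y_{\ell-2}|$ either stays constant mod $2$ (if $y_{\ell-1}$ is even) or alternates (if $y_{\ell-1}$ is odd). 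Between consecutive blocks a telescoping step strips a trailing run of $-2$'s and then increments the last surviving entry, and the parities on either side of the step must match up.

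For (d) I would induct on $k$. The base case $k = 0$ is immediate: the sequence is the integer walk $s_i = r_0 + N - i$, so every $q_i = 1$ and the (vacuous) condition holds, with $p = -r_0$ having the same parity as $r_0$. For the inductive step, first analyze the final block of CFs $[r_0, \ldots, r_{k-1}, m]$ with $m = -2, -3, \ldots, r_k$, namely $s_{N'+1}, \ldots, s_N$ where $N' = N + r_k + 1$. The mod-$2$ recurrence shows that all these $q_i$ are odd iff either $r_k = -2$ and $y_{k-2}$ is odd, or $r_k \leq -3$ and $y_{k-1}$ is even. The sequence then continues from $s_{N'}$, whose CF is obtained from $[r_0, \ldots, r_{k-1}]$ by a cascading telescope (removing trailing $-2$'s and incrementing once), and the inductive hypothesis applies. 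Case analysis assembles the combined condition to exactly ``$k$ even and $r_1 = r_3 = \cdots = r_{k-1} = -2$''. The parity of $p = x_k$ then comes from the parallel recurrence $x_j = a_j x_{j-1} - x_{j-2}$, $x_{-1} = 1$, $x_0 = r_0$: mod $2$ under the assumed condition, one computes $x_k \equiv r_0 + r_2 + \cdots + r_k$.

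Part (e) is a direct corollary. From $N = -\sum_{j<k}(r_j + 2) - (r_k + 1)$ one gets $N \equiv \sum r_j + 1 \pmod 2$, and $r_1 = r_3 = \cdots = -2 \equiv 0$ reduces this to $N \equiv r_0 + r_2 + \cdots + r_k + 1 \equiv p + 1 \pmod 2$, so $p$ even forces $N$ odd; alternation of $p_i$ parities is immediate from $p_i q_{i-1} - p_{i-1} q_i = 1$ together with all $q_i$ odd. For (f), the hypothesis $r_0 \leq -2$ odd gives $|r_0| \geq 3$; the initial segment $s_0 = -1, s_1 = -2, \ldots, s_{|r_0|-2} = r_0 + 1$ consists of integers with length-$1$ CFs and $q_i = 1$, so the condition ``one of $p_i, q_i$ even'' only becomes nontrivial from $i = |r_0| - 1$ onward, which is the first index where the CF attains length $\geq 2$. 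The same block analysis, now requiring exactly one of $p_i, q_i$ to be even at each index $\geq |r_0|-1$, yields the equivalence with ``$k$ odd and $r_2 = r_4 = \cdots = r_{k-1} = -2$''. Statement (g) is a pure parity consequence of (a): the parity patterns of $(p_{i-1}, q_{i-1}, p_i, q_i)$ compatible with exactly one of $p, q$ even at each index are further constrained by $p_i q_{i-1} + p_{i-1} q_i \equiv 1 \pmod 2$ to alternate, forcing $p_i$ to alternate parity.

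The main obstacle I anticipate is managing the telescoping cascades: a single step $s_i \to s_{i-1}$ can strip several consecutive $-2$'s from the end of the CF at once, skipping intermediate CFs that never appear as any $s_j$. One has to verify that the parity conditions on the CFs that \emph{do} appear assemble cleanly into the stated conditions on $r_0, \ldots, r_k$, and that the skipped CFs impose no extra constraints. Threading this bookkeeping carefully through the induction on $k$ is the most delicate part of the argument.
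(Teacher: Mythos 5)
Your core arithmetic is exactly what the paper uses: mod-$2$ reduction of the recurrence $y_j = r_j y_{j-1} - y_{j-2}$ from part (c), coprimality of consecutive numerators/denominators, and the determinant identity $p_i q_{i-1} - p_{i-1} q_i = 1$ for parts (e) and (g). Your treatments of (e) and (g) are complete, correct, and essentially identical to the paper's one-line deductions from (a).

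For (d) and (f) you take a genuinely different organizational route. You induct \emph{backward} on the CF length $k$: first analyze the final block $s_i = [r_0,\ldots,r_{k-1},m]$ with $m$ running from $-2$ to $r_k$ (whose denominator parities are governed by $m\, y_{k-1} + y_{k-2} \bmod 2$), then telescope to the shorter CF of $s_{N'}$ and invoke the inductive hypothesis. The paper instead does a forward scan: it tracks the parities of $y_0, y_1, \ldots$ inductively on the ``level'' $j(i)$, showing one at a time that $r_1 = -2$, then $r_3 = -2$, etc., by locating an index $i$ with $j(i) = 2\ell$ and $t_i = -2$ and computing that $q_i$ would then be even. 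Both work, but the paper's scan avoids the difficulty you correctly identify: the telescope from $[r_0,\ldots,r_{k-1},-2]$ can cascade through a run of trailing $-2$'s, so the IH constrains $b_1, b_3, \ldots$ of a CF whose length drops by $j+1$ when $r_{k-1} = \cdots = r_{k-j} = -2$ and $r_{k-j-1} \leq -3$. The odd/even indices of the $b$'s then shift relative to the $r$'s depending on the parity of $j$, so the ``case analysis assembles the combined condition'' step requires you to (i) split on that parity, (ii) in the mismatched-parity cases, show the block condition on $y_{k-1}, y_{k-2}$ fails, and (iii) check that the block condition is automatically satisfied in the matching case. I verified these sub-cases do close up, so your plan is sound, but this is exactly where the work lies and you have (by your own admission) left it as a sketch. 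The forward scan pays for itself by never needing to re-index across a cascade.
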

\begin{proof}

For (d), the statement is evidently true if \(k=0\), so assume \(k>0\). For each \(\lvert r_0\rvert -1\leq i\leq N\) let \(j(i)\in \{0,\ldots,k-1\}\) and \(t_i<-1\) be so that the continued fraction expansion of \(s_i\) is 
\[s_i = [r_0,\ldots, r_{j(i)},t_i].\]
Note \(t_i> r_{j(i)+1}\) except for the case \(t_N = r_k\). If \(j(i)\geq 1 \) note 
\[q_i = t_i y_{j(i)}-y_{j(i)-1}.\]
If \(j(i) =0\) then \(q_i = t_i\). Now, suppose every \(q_i\) is odd. By considering the indices \(i\) for which \(j(i)=0\), we see that if \(k> 0\), then in fact \(k\geq 2\) and \(r_1=y_1=-2\). Suppose by induction that for some \(\ell\),
\begin{enumerate}[label=(\roman*)]
\item \(k>2\ell-1\),
\item \(r_1 = r_3 = \cdots = r_{2\ell-1}=-2,\)
\item \(y_1,y_3,\ldots,y_{2\ell-1}\) are all even, 
\item \(y_0,y_1,\ldots,y_{2\ell-2}\) are all odd. 
\end{enumerate}
If \(k=2\ell\) then we are finished. If not, arguing by contradiction, suppose that \(a_{2\ell+1}<-2\). Then there is some index \(i\) with \(j(i) = 2\ell\) and \(t_i =-2\). Note \(q_i =-2y_{2\ell}-y_{2\ell -1}\) is even, which is a contradiction. For the same reason, we cannot have \(k=2\ell+1\). To finish the inductive step, note \(y_{2\ell}=a_{2\ell}y_{2\ell-1}-y_{2\ell-2}\) is odd and \(y_{2\ell+1}= -2y_{2\ell}-y_{2\ell-1}\) is even. 

Conversely, suppose \(k\) is even and \(r_1=r_3=\cdots =r_{k-1}=-2\). The same argument above shows \(y_1,y_3,\ldots,y_{k-1}\) are even and \(y_0,y_2,\ldots,y_k\) are odd. By the second statement in (c), note \(x_1,x_3,\ldots,x_{k-1}\) are odd. A similar induction shows \(x_{2\ell}\) has the same parity as \(r_0 + r_2 + \cdots + r_{\ell}\). In particular, this proves the statement about the parity of \(p=x_k\). Now for \(0\leq i< \lvert r_0 \rvert-1\) we have \(q_i =1\) is odd. For \(\lvert r_0\rvert-1 \leq i\leq N\), note \(j(i)\) is always odd, hence \(q_i =t_i y_{j(i)}-y_{j(i)-1}\) is odd. Part (e) follows from part (a) of Lemma \ref{prop:fractions}. 

For part (f), assume \(r_0\) is odd. Note the conditions in (f) are both true if \(k=1\), so we may assume \(k\geq 2\). We keep the notation \(j(i)\) and \(t_i\) as above. Suppose for each \(i\geq \lvert r_0\rvert -1 \), at least one of \(p_i\) or \(q_i\) is even. If it were the case that \(k=2\) or \(r_2<-2\), then there would exist an index \(i\) with \(j(i) =2\) and \(t_i =-2\). Then both
\[p_i =-2x_1 -r_0\qquad\text{and}\qquad q_i = -2y_1 -1 \]
are odd, which is a contradiction. Therefore \(k\geq 3\), \(r_2 = -2\), and consequently \(x_2,y_2\) are both odd. Suppose by induction that for some \(\ell\),
\begin{enumerate}[label=(\roman*)]
\item \(k>2\ell\),
\item \(r_2=r_4=\cdots = r_{2\ell}=-2\),
\item \(x_0,x_2,\ldots,x_{2\ell}\) are all odd, 
\item \(y_0,y_2,\ldots,y_{2\ell}\) are all odd. 
\end{enumerate}
If \(k=2\ell +1\) we are finished. If not, arguing by contradiction, suppose that either \(k=2\ell+2\) or \(a_{2\ell + 2}<-2\). Then there is some index \(i\) with \(j(i) = 2\ell+1\) and \(t_i =-2\). 
Then both
\[p_i =-2x_{2\ell+1} -x_{2\ell}\qquad\text{and}\qquad q_i = -2y_{2\ell+1} -y_{2\ell}\]
are odd, which is a contradiction. Therefore \(k>2\ell +2\), \(r_{2\ell+2}=-2\), and \(x_{2\ell+2},y_{2\ell+2}\) are odd. 

Conversely, suppose \(k\) is odd and \(r_2 =r_4=\cdots=r_{k-1}=-2\). Then necessarily \(x_0,x_2,\ldots,x_{2\ell}\) and \(y_0,y_2,\ldots,y_{2\ell}\) are all odd. By the second statement in (c), if \(i\) is odd, then \(x_i,y_i\) have opposite parity. For \(i\geq \lvert r_0 \rvert-1\), note \(j(i)\) is always even. Hence 
\[p_i =t_ix_{j(i)} -x_{j(i)-1}\qquad\text{and}\qquad q_i = t_iy_{j(i)} -y_{j(i)-1}\]
have opposite parity. In particular one of them is even. Part (g) follows from part (a) of Lemma \ref{prop:fractions}. 
\end{proof}
\begin{remark}
The fractions 
\[ -\frac{p}{q}=s_N <s_{N-1}<\cdots < s_1<s_0 =-1.\]
pick out the shortest counterclockwise path from \(-p/q\) to \(-1\) in the Farey tesselation. 
\end{remark} 
We are now ready to describe the first family of monodromies we are able to show are overtwisted. 
\begin{theorem}\label{thm:main_one}
Suppose \(f\in \mathrm{Mod}(\Sigma_{0,4},\partial \Sigma_{0,4})\) satisfies
\[\pi(f) = \begin{bmatrix}p' & q'\\ p & q\end{bmatrix}\]
with \(p,q,p',q'\geq 0\) and \(p>q\). Additionally, suppose the continued fraction expansion of \(-p/q\) is of the form
\[-\frac{p}{q}=[r_0,-2,r_2,-2,r_4,\ldots, r_{k-2},-2,r_k],\]
where \(r_j<-1\) and \(k\) is even. In addition suppose that there is some \(r_j\) which is not \(-2\). If \(f\) has minimum FDTC \(1\), then \(f\) is overtwisted. 
\end{theorem}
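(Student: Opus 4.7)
The plan is to construct a transverse overtwisted disk in the open book $(\Sigma_{0,4}, f)$ and then apply the detection criterion of Ito--Kawamuro \cite{ItoKawamuroOvertwisted}. By parts (d)--(e) of the preceding lemma, the continued fraction hypothesis on $-p/q$ guarantees that each intermediate slope $s_i = -p_i/q_i$ in the Farey path $s_0 = -1, s_1, \ldots, s_N = -p/q$ has odd denominator, with the numerators $p_i$ alternating in parity. Consequently each $s_i$ corresponds to an essential arc of a specific type on $\Sigma_{0,4}$, and by Lemma~\ref{prop:fractions}(a) consecutive slopes are Farey neighbors, so arcs of slope $s_i$ and $s_{i+1}$ can be arranged to intersect minimally. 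These arcs form the combinatorial skeleton for the disk I would build.

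Concretely, I would fix representative arcs $\alpha_i$ of slope $s_i$ for $i = 0, \ldots, N$ with endpoints determined by the parity pattern, and build a $2$-chain $D$ in the mapping torus $M_f$ by stitching together the bigons cut off between $\alpha_i$ and $\alpha_{i+1}$ on the page, closed up by a final piece that uses $f$ to identify $\alpha_N$ with an arc in the image of $\alpha_0$. That such a closure exists and yields an embedded disk relies on the action of $\pi(f)$ on the Farey tesselation: the inequality $p > q \geq 0$ together with the form of $\pi(f)$ sends the slope $-1$ forward past each of $s_1, \ldots, s_N$, matching the sequence of bigons head-to-tail. The minimum FDTC $= 1$ hypothesis then guarantees that $\partial D$ wraps exactly once around each component of the binding, placing exactly one negative elliptic singularity at each puncture in the open book foliation of $D$.

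The main obstacle is verifying that the resulting foliation on $D$ matches the precise Ito--Kawamuro definition of a transverse overtwisted disk: that $D$ is in fact a disk, and that its interior hyperbolic singularities combine with the negative elliptic boundary singularities to produce the correct sign pattern. Here the hypothesis that some $r_j \neq -2$ is crucial: in the excluded case $-p/q = [-2, -2, \ldots, -2]$, the matrix $\pi(f)$ is a power of $\pi(\tau_c)$, and combined with Proposition~\ref{prop:conjugacy} and the kernel $\mathbb{Z}^4$ of boundary twists, the open book reduces to one of the forbidden forms (\ref{eq:forbidden}), which can be tight. With at least one $r_j \neq -2$, I expect this degeneracy to disappear and force the appearance of a hyperbolic singularity of the ``bad'' negative sign, turning $D$ into a transverse overtwisted disk and hence proving $(\Sigma_{0,4}, f)$ overtwisted.
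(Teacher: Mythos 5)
Your high-level strategy---construct a transverse overtwisted disk and invoke the Ito--Kawamuro criterion---is the same as the paper's, and you correctly identify the numerology (all $q_i$ odd, $p_i$ of alternating parity) that makes the construction work. But the construction you sketch is not the paper's, and the details you supply don't hold up.

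First, ``stitching together bigons cut off between $\alpha_i$ and $\alpha_{i+1}$'' is far too vague to produce an embedded disk in general position, and it does not match what actually happens. The paper's disk is obtained from a movie presentation involving two families of properly embedded arcs with different roles: $n_3$ arcs $\xi_1,\ldots,\xi_{n_3}$ running from $B_2$ to $B_1$ and a single arc $\eta$ from $B_3$ to $B_4$, together with a collection of ``hairs'' whose interior endpoints trace out $\partial D$. Negative hyperbolic singularities are formed \emph{only} between $\eta$ and the $\xi_j$ (one each), and positive hyperbolic singularities are formed \emph{only} between arcs and hairs. That specific structure is what forces $G_{--}$ to be a star with $n_3$ leaves and $G_{++}$ a circle, which is what the Ito--Kawamuro criterion requires. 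A generic ``chain of bigons between consecutive slopes'' gives neither.

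Second, your claim that the FDTC $=1$ hypothesis makes ``$\partial D$ wrap exactly once around each component of the binding, placing exactly one negative elliptic singularity at each puncture'' is false. The disk in the paper has $n_3$ negative elliptic points on $B_2$ and exactly one on $B_3$; there are \emph{no} negative elliptics on $B_1$ or $B_4$ (all intersections with those bindings come from positive endpoints of arcs and endpoints of hairs, which are positive elliptics). The FDTC data is used quite differently: conjugating so $n_2=1$ fixes the number of $\xi$-arcs, and the remaining FDTCs $n_1, n_3, n_4$ are realized by controlling how many times arcs twist around $B_1$, $B_3$, $B_4$ during the movie.

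Third, the role of ``some $r_j \neq -2$'' is not to ``force a hyperbolic singularity of bad negative sign.'' The negative singularities between $\eta$ and the $\xi_j$ appear by design regardless of the $r_j$. The hypothesis instead excludes the case $-p/q = [-2,\ldots,-2]$, which by Proposition~\ref{prop:tight_all_twos} gives a Stein fillable monodromy, and it guarantees $N\geq 3$ so that Stage~2 of the movie is nontrivial (in particular so that the slope $s_{N-2}$ with $p_{N-2}$ even is available when $\eta$ needs to twist around $B_1$ to realize the FDTC there). You acknowledge the verification that the foliation is transverse-overtwisted as ``the main obstacle'' but then only gesture at it; that verification is in fact the heart of the proof and depends on the precise arc/hair/singularity structure, not on any degeneracy disappearing.
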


The strategy of the proof of Theorem \ref{thm:main_one} is to exhibit an explicit overtwisted disk via a movie presentation, as described in \cite{ItoKawamuro, ItoKawamuroOvertwisted}. We review the necessary details below. This construction is a modification of the movie presentation in Theorem 4.1 of \cite{ItoKawamuroOvertwisted}.

\begin{proof} We will call the binding component parallel to \(a_i\) by \(B_i\). Let \(n_i\) denote the FDTC of \(f\) at \(B_i\). Up to conjugacy by hyperelliptic involutions, we may assume that \(n_2=1\).  We will now draw the page as in Figure \ref{fig:square_holes}, with square boundary components.  
\begin{figure}[ht]
    \centering
    \includegraphics[scale = 0.2]{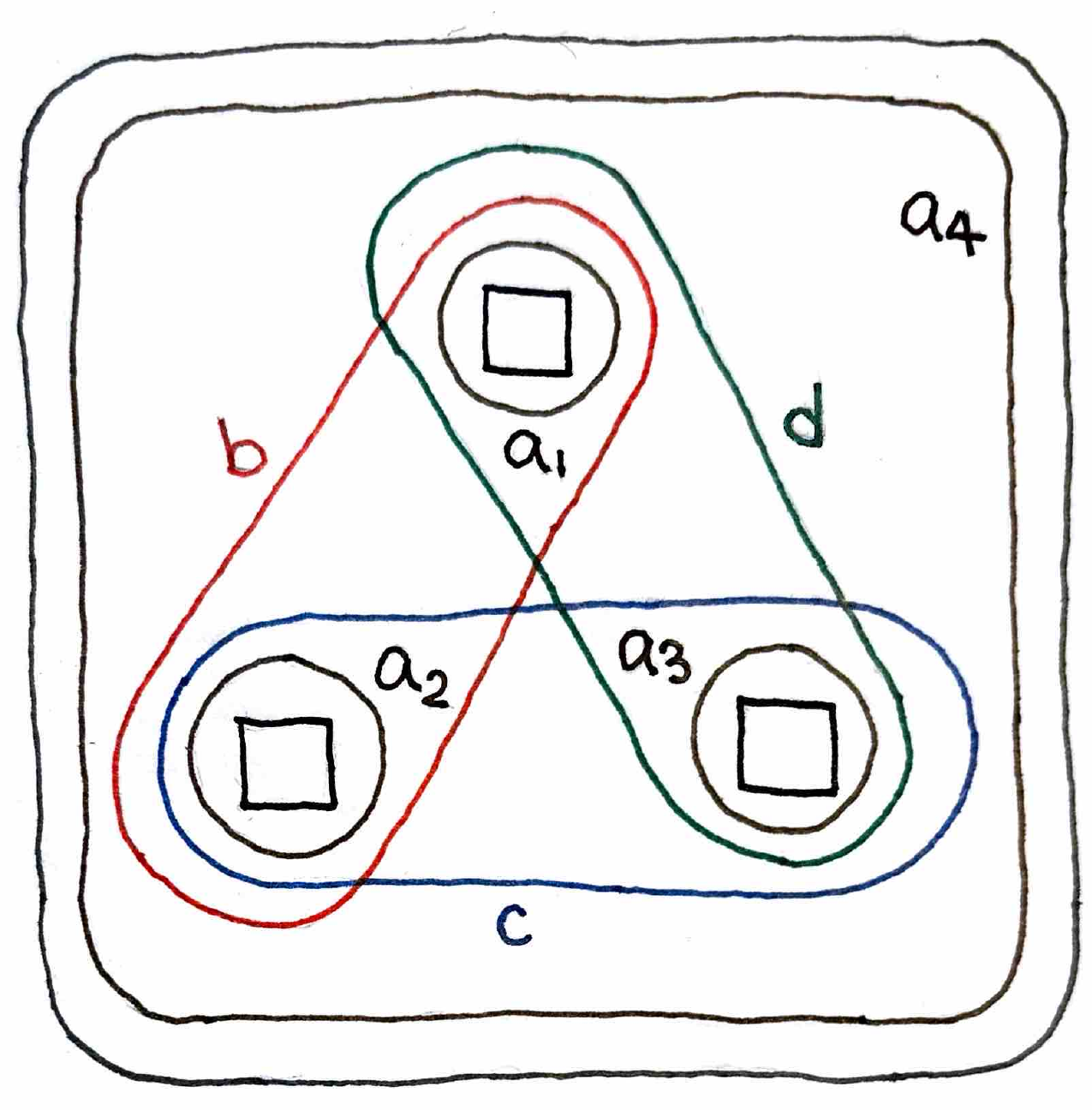}
    \caption{}
    \label{fig:square_holes}
\end{figure}

Denote the pages of the open book by \(\Sigma_{\varphi}\), where \(\varphi \in \mathbb{R}/2\pi\mathbb{Z}\). For \(\varphi \in [0,2\pi)\), identify \(\Sigma_{\varphi}\) with \(\Sigma_0\) by traveling forward along the Reeb vector field from \(\Sigma_0\) to \(\Sigma_{\varphi}\). Figure \ref{fig:page_zero} shows the intersection of the candidate overtwisted disk \(D\) with \(\Sigma_0\). 
\begin{figure}[ht]
\centering
    \includegraphics[scale = 0.2]{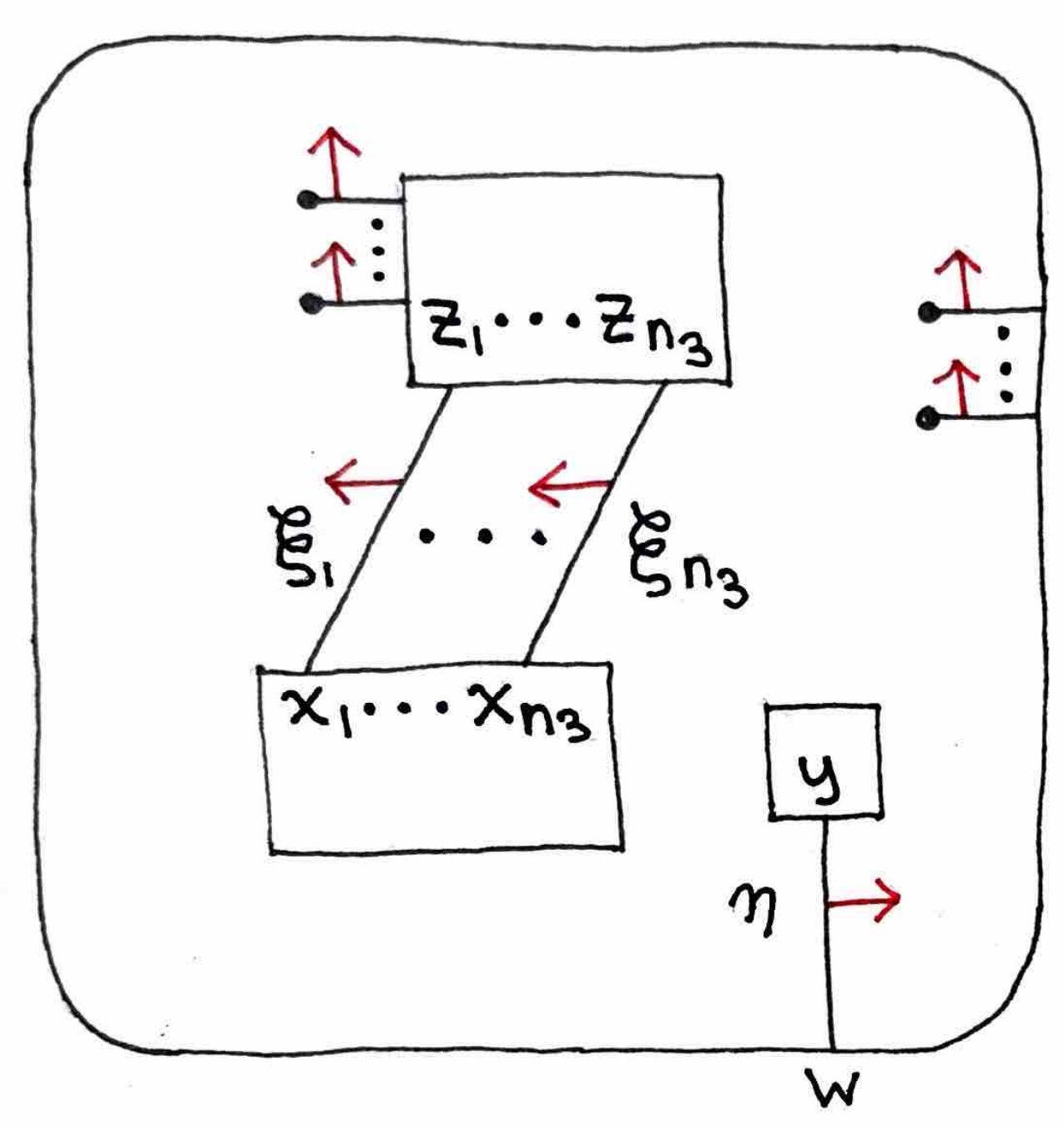}
    \caption{Boundary components have been stretched to fit labels.}
    \label{fig:page_zero}
\end{figure}
This intersection consists of: 
\begin{itemize}
\item properly embedded arcs \(\xi_{1},\ldots,\xi_{n_3}\) from \(B_2\) to \(B_1\) of slope \(0\),
\item a properly embedded arc \(\eta\) from \(B_3\) to \(B_4\) of slope \(0\),
\item several ``hairs'' (or antennae) with one endpoint in the interior of the page and the other endpoint on \(B_1\) or \(B_4\). 
\end{itemize}
The number of hairs needed will become clear during the course of the proof. The red arrows indicate the (co)orientation of \(D\). The endpoints of hairs in the interior of \(\Sigma\) are points on \(\partial D\). We insist that
\begin{itemize}
\item \(\partial D\) intersect all the pages transversely and positively, 
\item  the intersections of \(D\) with the binding \(B=B_1\cup \cdots \cup B_4\) are all transverse, and are locally modeled on Figure \ref{fig:elliptic}.
\end{itemize}
\begin{figure}[ht]
    \centering
    \includegraphics[scale =0.17]{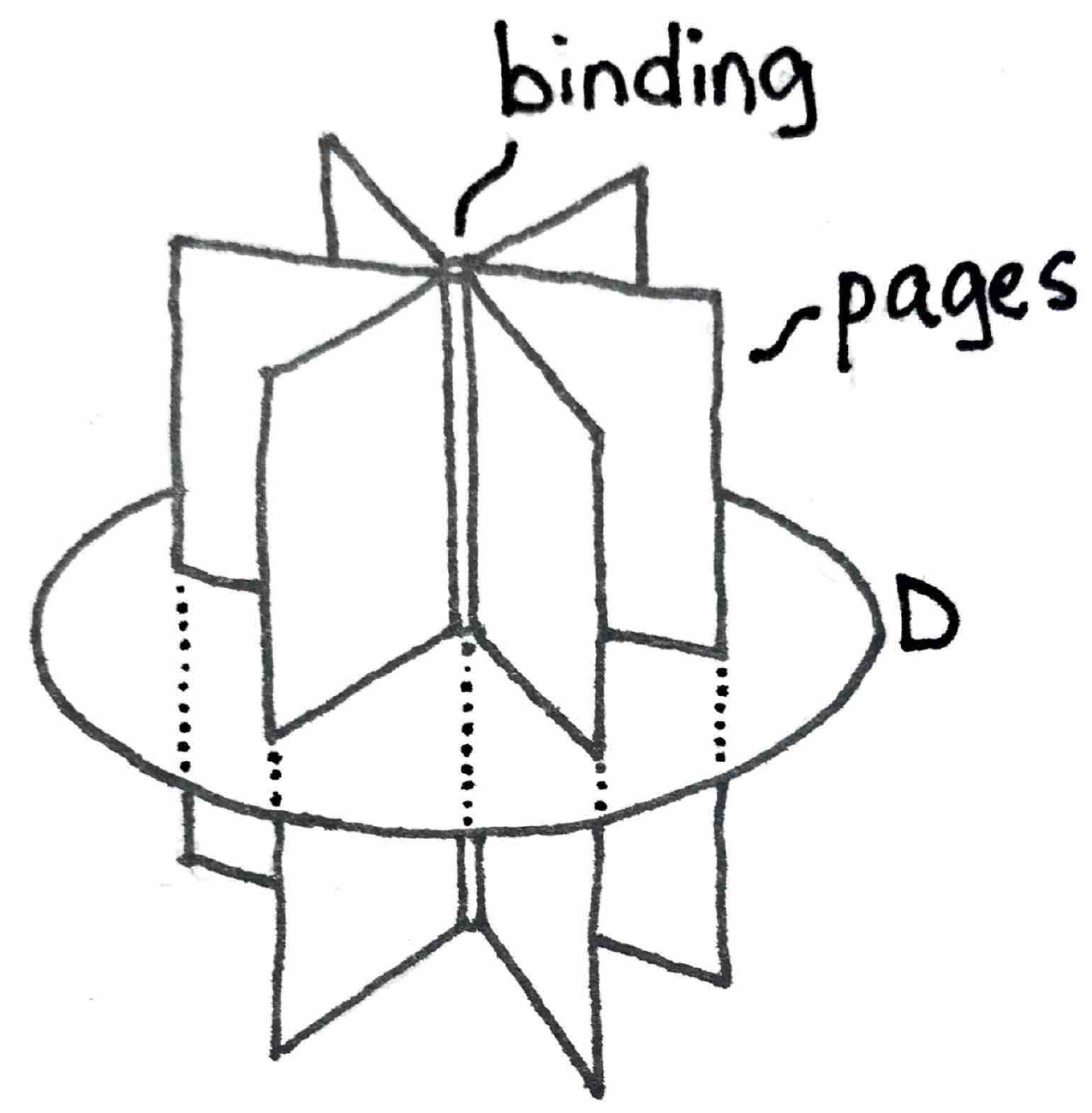}
    \captionof{figure}{A neighborhood of a point in \(D\pitchfork B\)}
    \label{fig:elliptic}
\end{figure}

Since \(B\) and \(D\) are both oriented, we can give each such intersection a sign. Note \(x_{1},\ldots,x_{n_3},y\) have negative sign, whereas \(z_1,\ldots,z_{n_3},w\) have positive sign. The endpoint of each hair on a binding component has positive sign. The open book induces a singular foliation on \(D\), called its \textit{open book foliation} \(\mathcal{F}_{\mathrm{ob}}(D)\). In this foliation, points of \(D\pitchfork B\) correspond to radial (elliptic) singularities.

We now show how the intersection of \(D\) and \(\Sigma_{\varphi}\) changes as we increase \(\varphi\) from \(0\) to \(2\pi\). Between finitely many critical values of \(\varphi\), the picture will change only by an isotopy which is the identity on \(\partial \Sigma\). At these critical values, the page will sweep past a single saddle (hyperbolic) singularity, as in Figure \ref{fig:saddle}.
\begin{figure}[ht]
\centering
\includegraphics[scale =0.2]{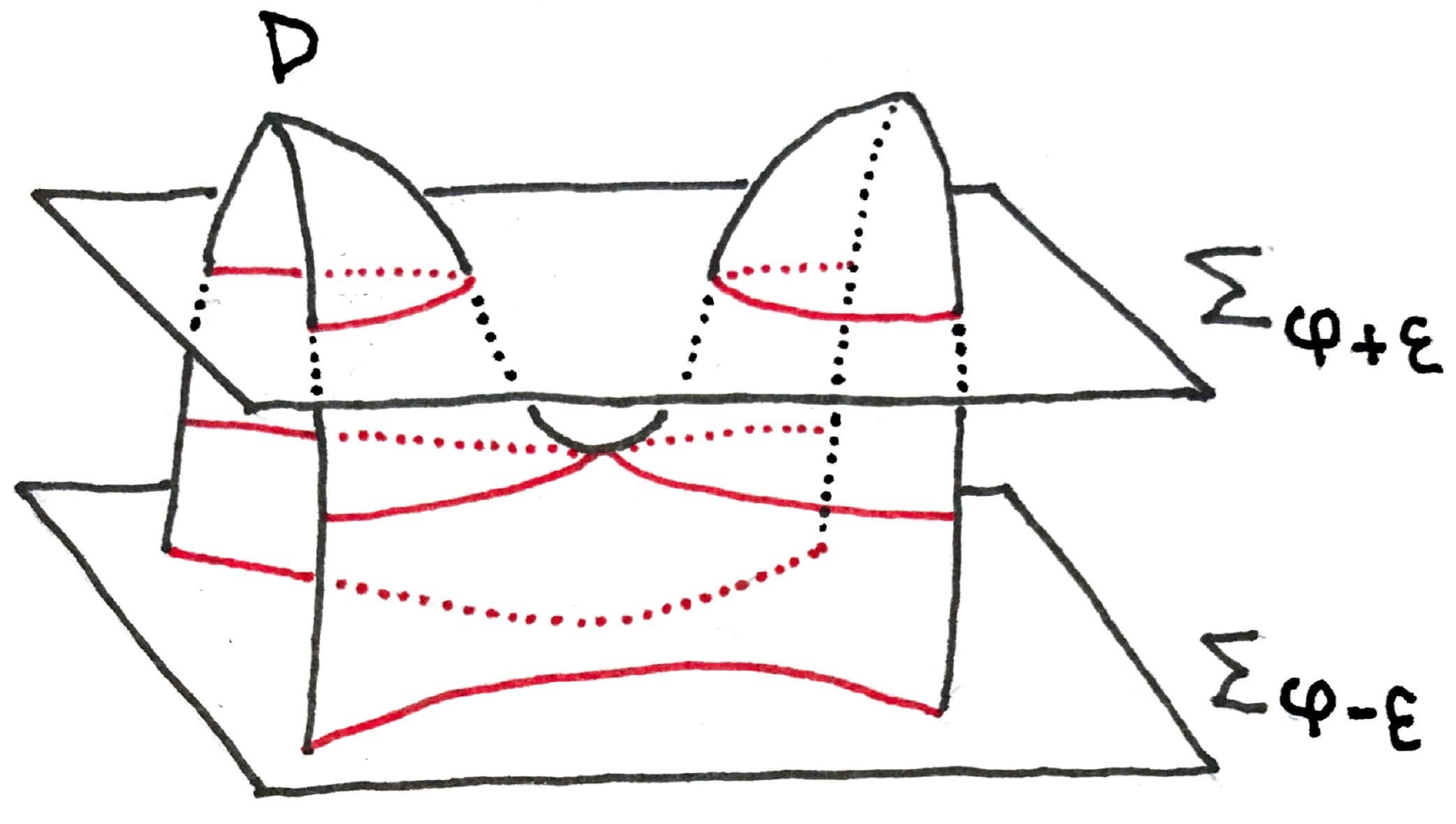}
    \captionof{figure}{A neighborhood of a saddle singularity }
    \label{fig:saddle}
\end{figure}
If \(p\in \Sigma_{\varphi}\cap D\) is such a singularity, then the sign of \(p\) is defined to be positive if the orientations on \(T_pD=T_p\Sigma\) induced by \(D\) and \(\Sigma\) agree, and is defined to be negative otherwise. The formation of such a singularity will be denoted by a dashed line between arcs, as indicated in Figure \ref{fig:dashed}.
\begin{figure}[ht]
    \centering
    \includegraphics[scale =0.2]{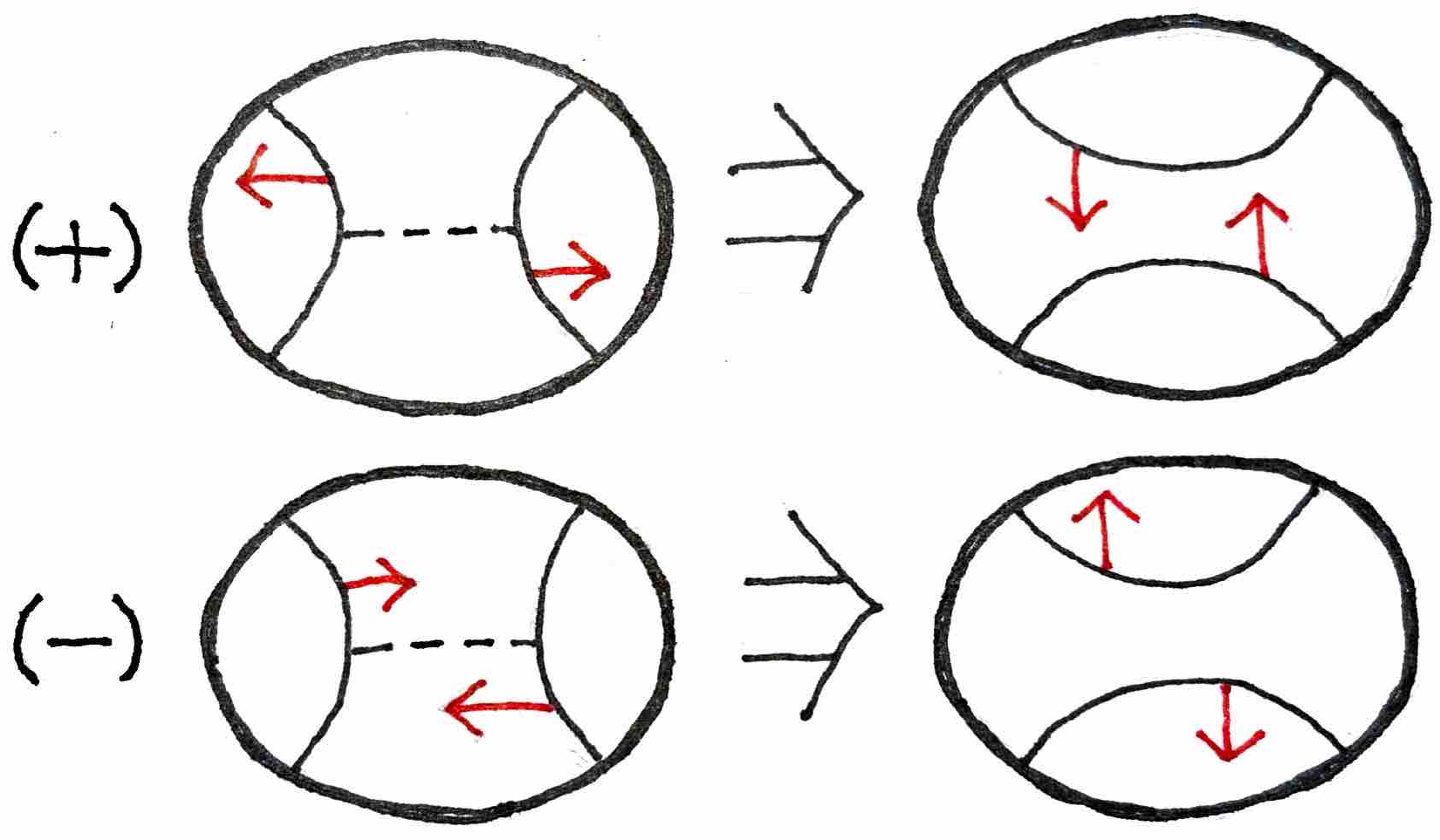}\qquad\qquad
    \caption{}
    \label{fig:dashed}
\end{figure}
Let us say an embedded disk \(D\) is in \textit{general position} if its boundary is a positive transverse knot, and the intersections of \(D\) with the binding and pages are as specified above. 

Away from singular values of \(\varphi\) there will always be \(n_3+1\) properly embedded arcs on \(\Sigma_{\varphi}\), each with an endpoint at one of the negative elliptic singularities \(x_{n_1},\ldots,x_{n_3},y\). \textit{We continue to refer to these arcs as \(\xi_1,\ldots,\xi_{n_3},\eta\).} We emphasize that these arcs will always be labeled according to their \textit{negative} endpoints. At various steps, positive hyperbolic singularities are formed between properly embedded arcs and hairs. Such a move always shifts the properly embedded arc (keeping the negative endpoint fixed) to the right, in the sense of \cite{HKMRVI}. The number of hairs should be chosen so that each hair is used exactly once.

We keep the notation 
\[-\frac{p}{q}=s_N<s_{N-1}<\cdots <s_1<s_0 =-1.\]
with \(s_i = p_i/q_i\) from Lemma \ref{prop:fractions}. Our current assumptions necessitate \(N\geq 3\). The name of the game is to shift the arcs on \(\Sigma_0\) to their image under \(f^{-1}\) using hyperbolic singularities. These fractions describe the slopes that the properly embedded arcs take throughout the process. As consecutive slopes form an integral basis of \(\mathbb{Z}^2\), these arcs do not intersect each other as they are shifted around. 

Since the arcs \(\Sigma_{2\pi-\epsilon}\) will be arranged to match up with the arcs on \(\Sigma_{0}\), the movie presentation below specifies an embedded surface whose boundary is traced out by the endpoints of hairs lying in the interior of the pages. The topological type of the surface is determined entirely by the movie presentation. Indeed, one can picture ``building up'' the surface from the arcs on \(\Sigma_{\varphi}\). After the movie presentation, we will describe how to see that this surface is indeed a disk.

We will describe the general case in text, and illustrate the case 
\[-\frac{p}{q}=-\frac{10}{7}=[-2,-2,-4]\]
with \(n_2 = 1\) and \(n_1=n_3=n_4=2\). Note in this case 
\[s_0 = -1,\quad s_1 = - \frac{4}{3},\quad s_2 = -\frac{7}{5},\qquad s_3 = -\frac{10}{7}.\]
The movie proceeds in two major stages. 
\subsection*{Stage 1}
We form a negative hyperbolic singularity using between \(\xi_1\) and \(\eta\). This is done in such a way that the resulting arcs both have slope \(-1\). See Figure \ref{fig:movie_one}.
\begin{figure}[ht]
    \centering
    \includegraphics[scale =0.35]{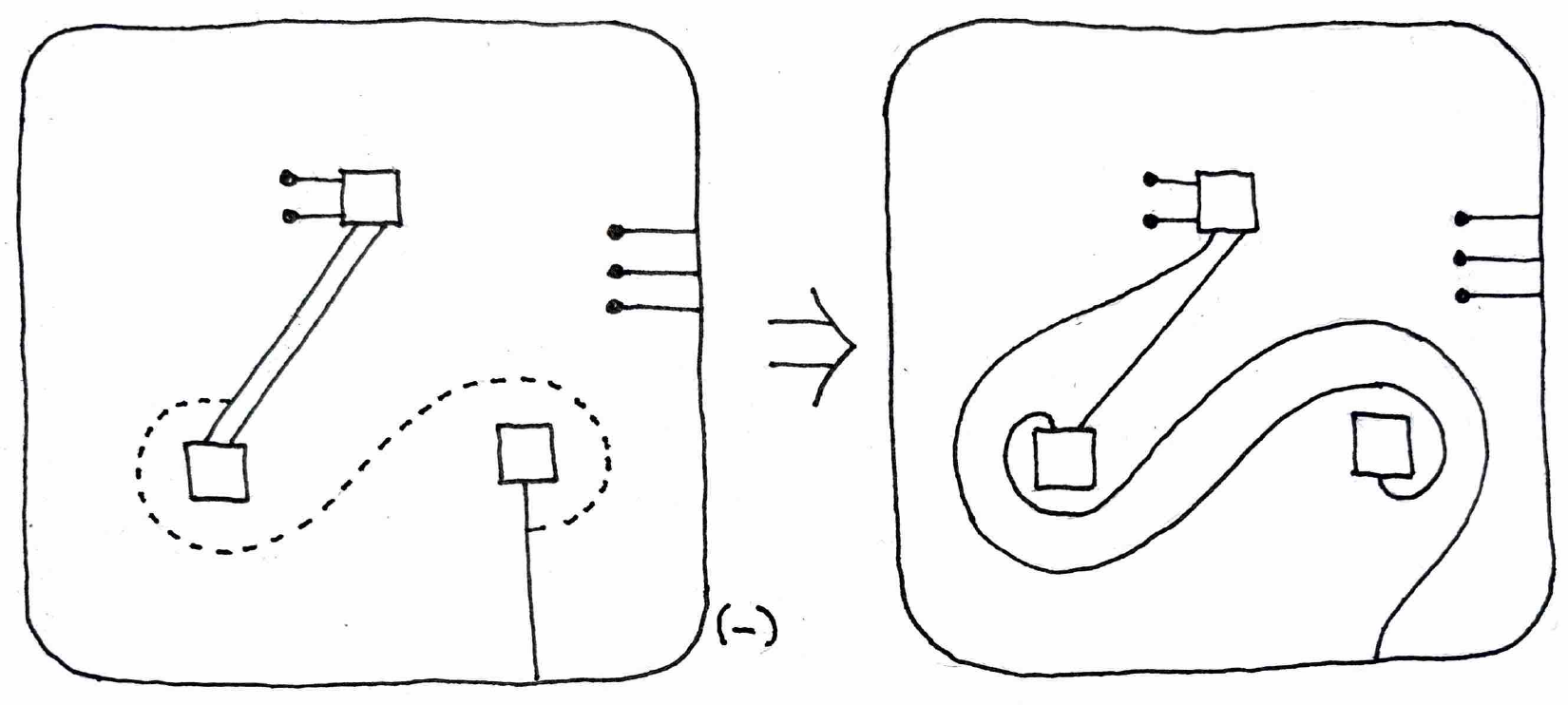}
    \caption{}
    \label{fig:movie_one}
\end{figure}

If \(n_3\geq 2\), use positive hyperbolic singularities with hairs to shift the positive endpoints of \(\xi_{n_3},\ldots,\xi_{2}\) (in this order) to \(B_4\) This is done in such a way that all properly embedded arcs now have slope \(-1\). (If \(s_1=-2\), one can skip this step and directly realize the negative hyperbolic singularities in the next step. However, this would change some subsequent steps, so let us take the current step even if \(s_1 = -2\).) See Figure \ref{fig:movie_two}.

\begin{figure}[ht]
    \centering
    \includegraphics[scale =0.35]{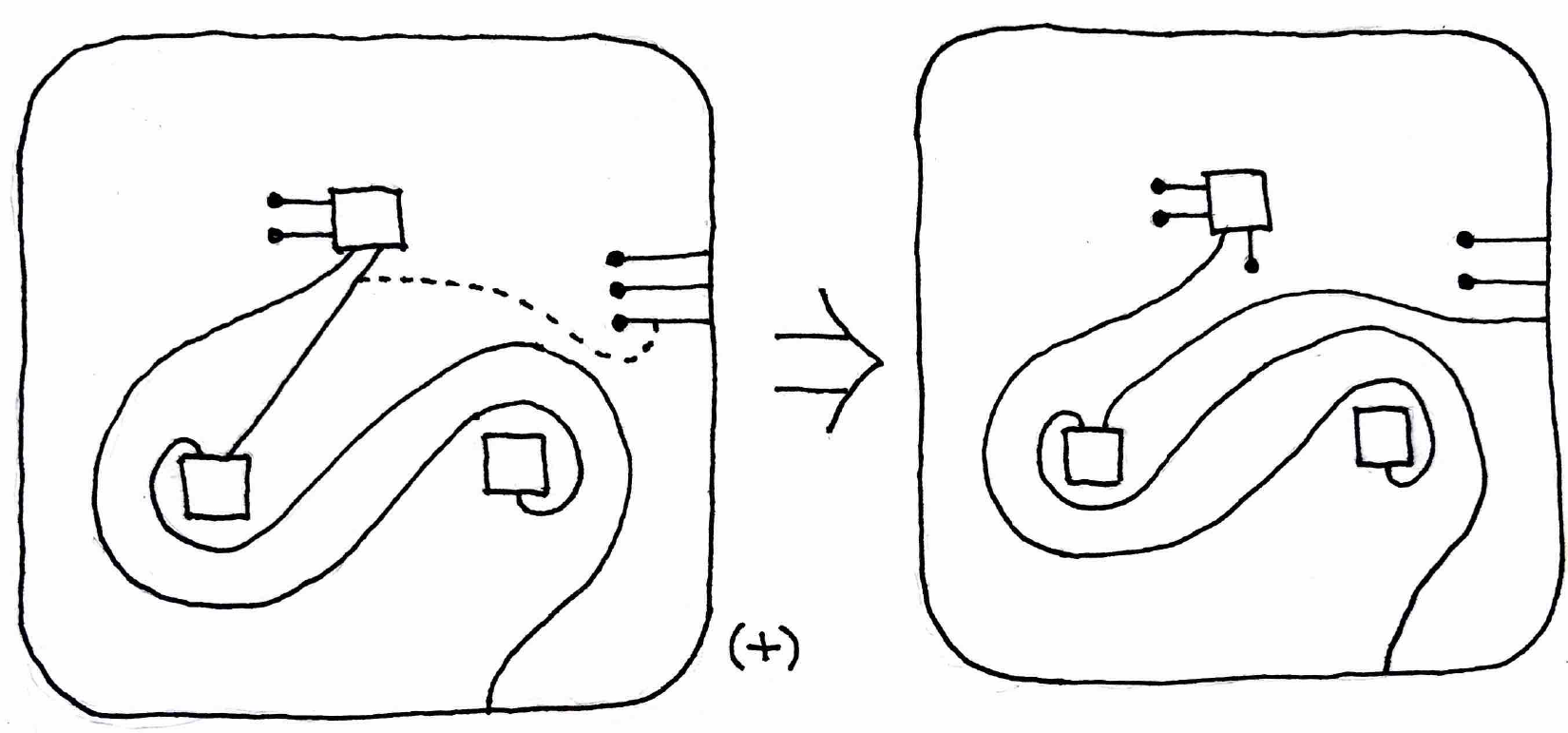}
    \caption{}
    \label{fig:movie_two}
\end{figure}
If \(n_3\geq 2\), form negative hyperbolic singularities between \(\eta\) and \(\xi_2,\ldots,\xi_{n_3}\), in this order. This is done in such a way that all properly embedded arcs, except for \(\xi_1\), have slope \(s_1\). Each additional negative singularity formed with \(\eta\) makes \(\eta\) twist to the left around \(B_3\). If \(n_3 =1\), use a positive hyperbolic singularity with a hair to shift \(\eta\) to have slope \(s_1\). Since \(p_1\) is even and \(q_1\) is odd, in all cases the positive endpoint of \(\eta\) is now on \(B_4\). See Figure \ref{fig:movie_three}.
\begin{figure}[ht]
    \centering
    \includegraphics[scale =0.35]{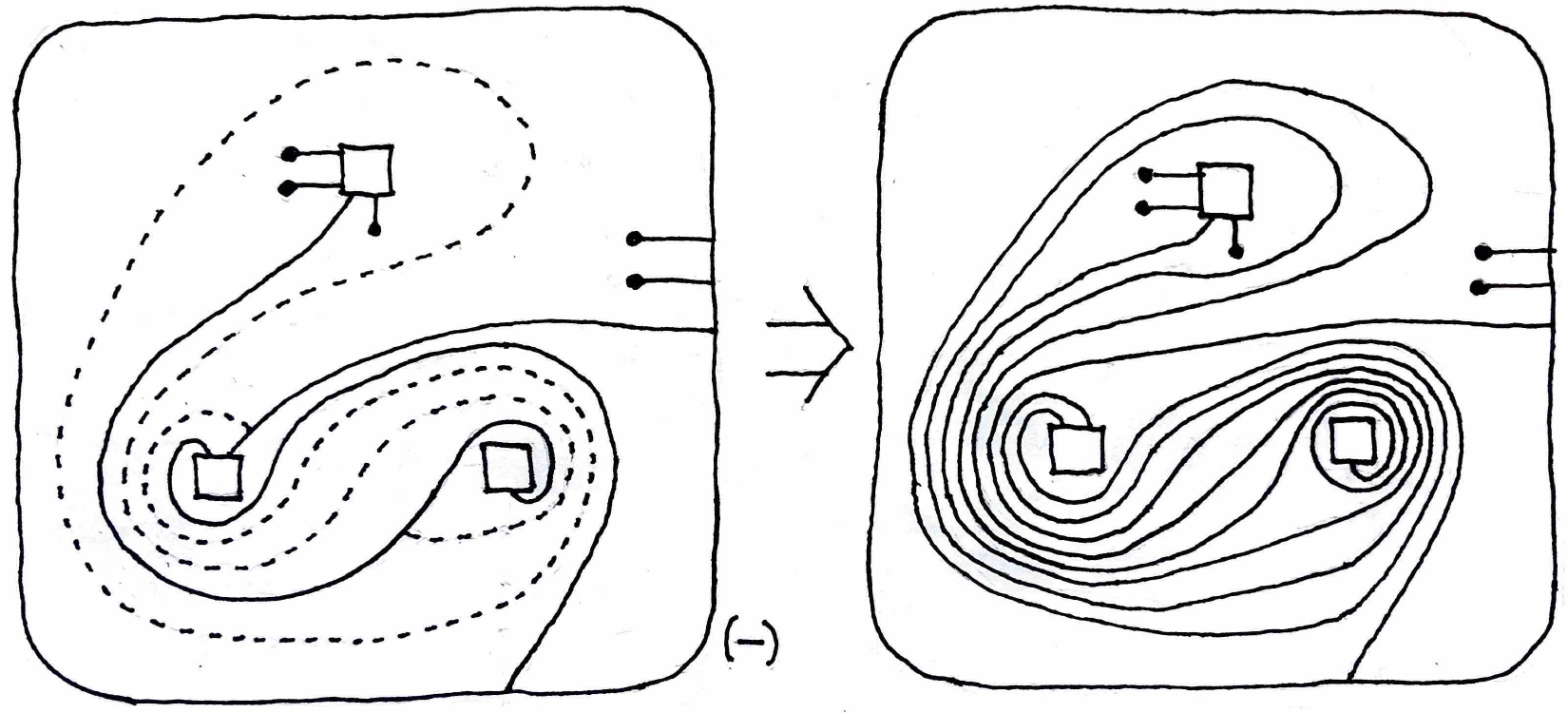}
    \caption{}
    \label{fig:movie_three}
\end{figure}

To conclude Stage 1, use a positive hyperbolic singularity with a hair to shift \(\xi_1\) to have slope \(s_1\). The positive endpoints of \(\xi_1,\ldots,\xi_{n_3}\) are all now on \(B_1\). See Figure \ref{fig:movie_four}.
\begin{figure}[ht]
    \centering
    \includegraphics[scale =0.35]{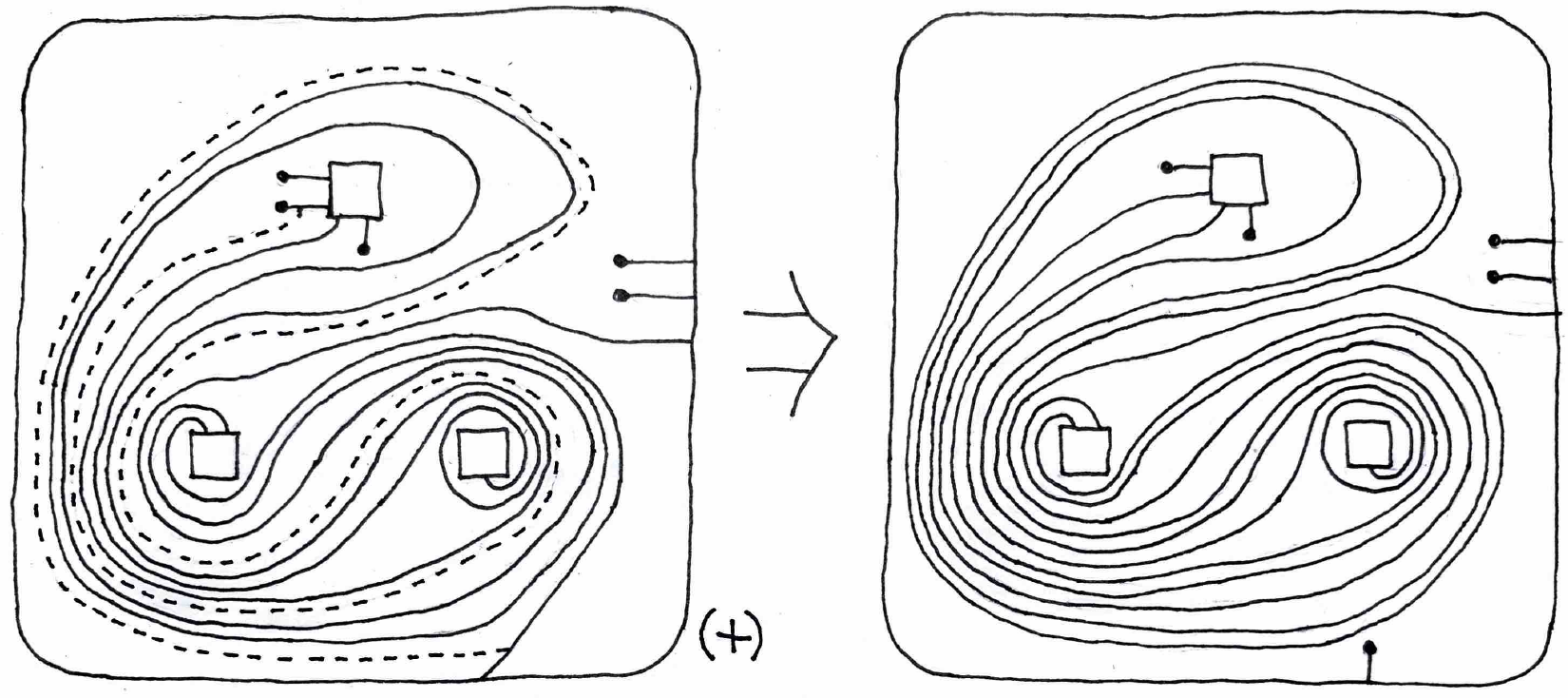}
    \caption{A taffy machine?}
    \label{fig:movie_four}
\end{figure}
We pause to take a look at what the open book foliation on \(D\) looks like at the end of stage 1 in our specific example. See Figure  \ref{fig:foliation_one}.
\begin{figure}[ht]
    \centering
    \includegraphics[scale =1]{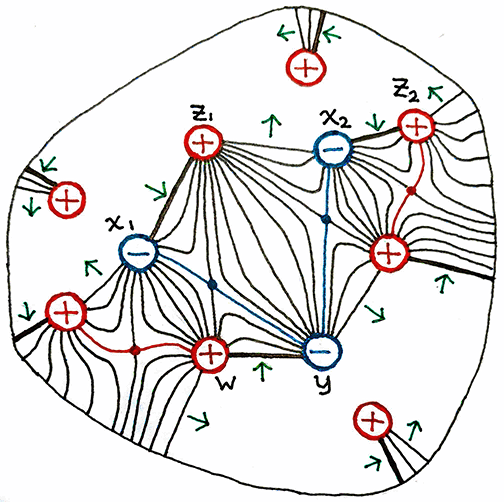}
    \caption{ Elliptic singularities are shown as hollowed circles. Hyperbolic singularities are shown as dots. Singularities are colored red or blue according to whether they are positive or negative, respectively. Arcs which are on \(\Sigma_0\) are bolded. Green arrows indicate the direction of increasing \(\varphi\).}
    \label{fig:foliation_one}
\end{figure}
\subsection*{Stage 2}
In this stage, we repeatedly use positive singularities with hairs to shift the arcs \[\xi_{n_3},\xi_{n_3-1},\ldots,\xi_2,\xi_1,\eta\] (in this order) first all to slope \(s_2\), then all to slope \(s_3\), and so on until all arcs have reached slope \(s_N =-p/q\). As each \(q_i\) is odd, the positive endpoint of a properly embedded arc is always shifted to \(B_1\) or \(B_4\). This prevents the situation where \(\eta\) and some \(\xi_{j}\) run parallel between \(B_2\) and \(B_3\), blocking each other from moving. (One can in fact be slightly greedier in the following manner: one can first shift the \(\xi_j\) to slope \(s_2\), then \(\eta\) to slope \(s_3\), then the \(\xi_j\) to slope \(s_4\), and so on. However, one must slow down at the final few steps as explained presently.)

Before the last few steps, the \(\xi_j\) have realized slope \(s_{N-1}\) and \(\eta\) has realized slope \(s_{N-2}\). Since \(p_{N-2}\) is even and \(p_{N-1}\) is odd,  all the positive endpoints of properly embedded arcs are on \(B_4\). When we move \(\eta\) to slope \(s_{N-1}\), we are therefore free to have \(\eta\) twist left around \(B_1\) however many times we like. This should be done in a manner so that when each \(\xi_j\) is shifted to slope \(s_N\) with positive endpoint \(z_j\), we can realize the FDTC \(n_1\) at \(B_1\). In the last step, we shift \(\eta\) to realize slope \(s_N\) with positive endpoint \(w\), realizing the FDTC \(n_4\) at \(B_4\). We show this process for the present case in Figure \ref{fig:stage_two}
\begin{center}
    \captionsetup{type=figure}
    \begin{longtblr}{
    colspec = {X[c,h]X[l]},
    }
    \includegraphics[scale = 0.3]{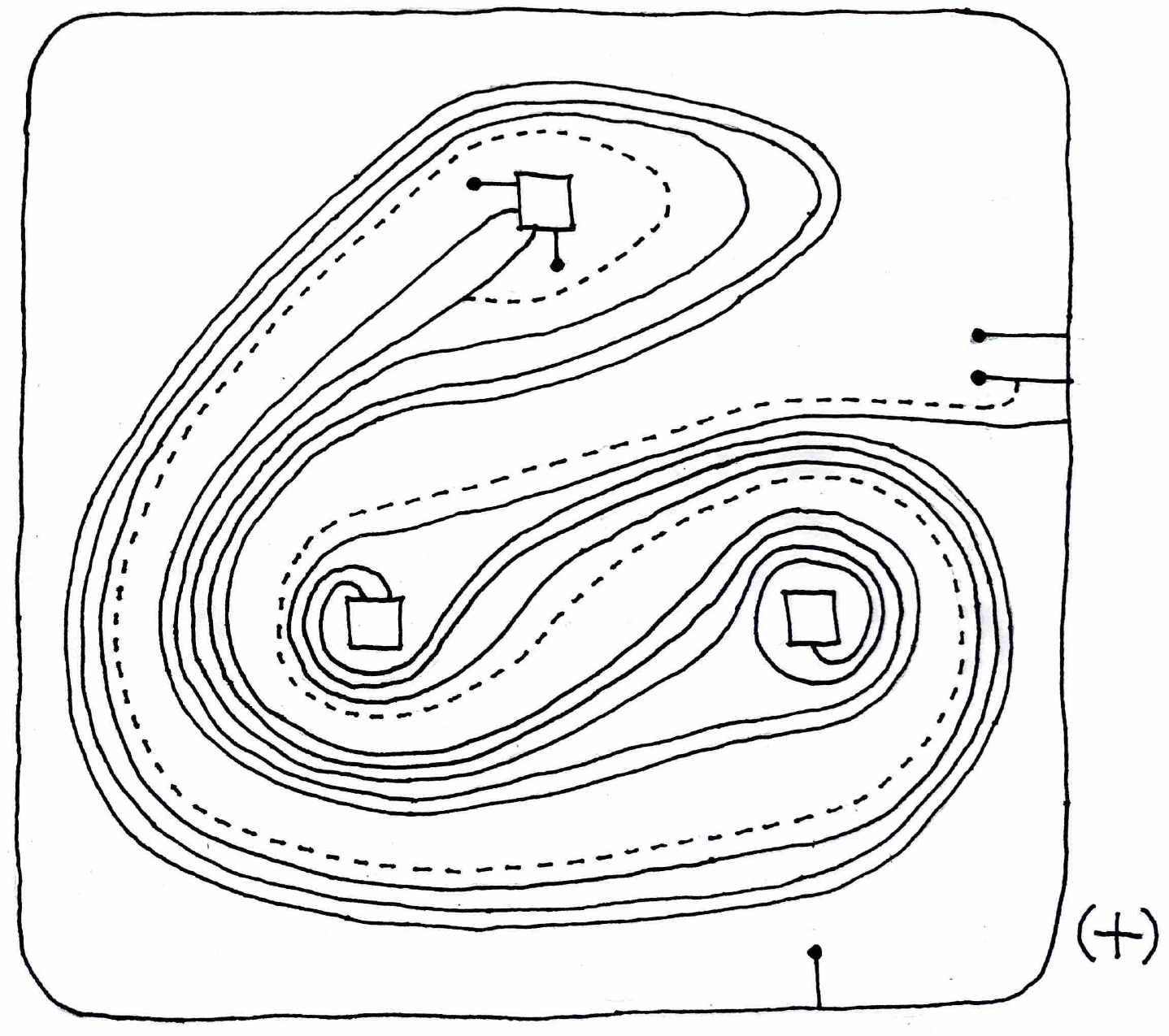} & Shift \(\xi_2\) to have slope \(-7/5\). \\
    \includegraphics[scale = 0.3]{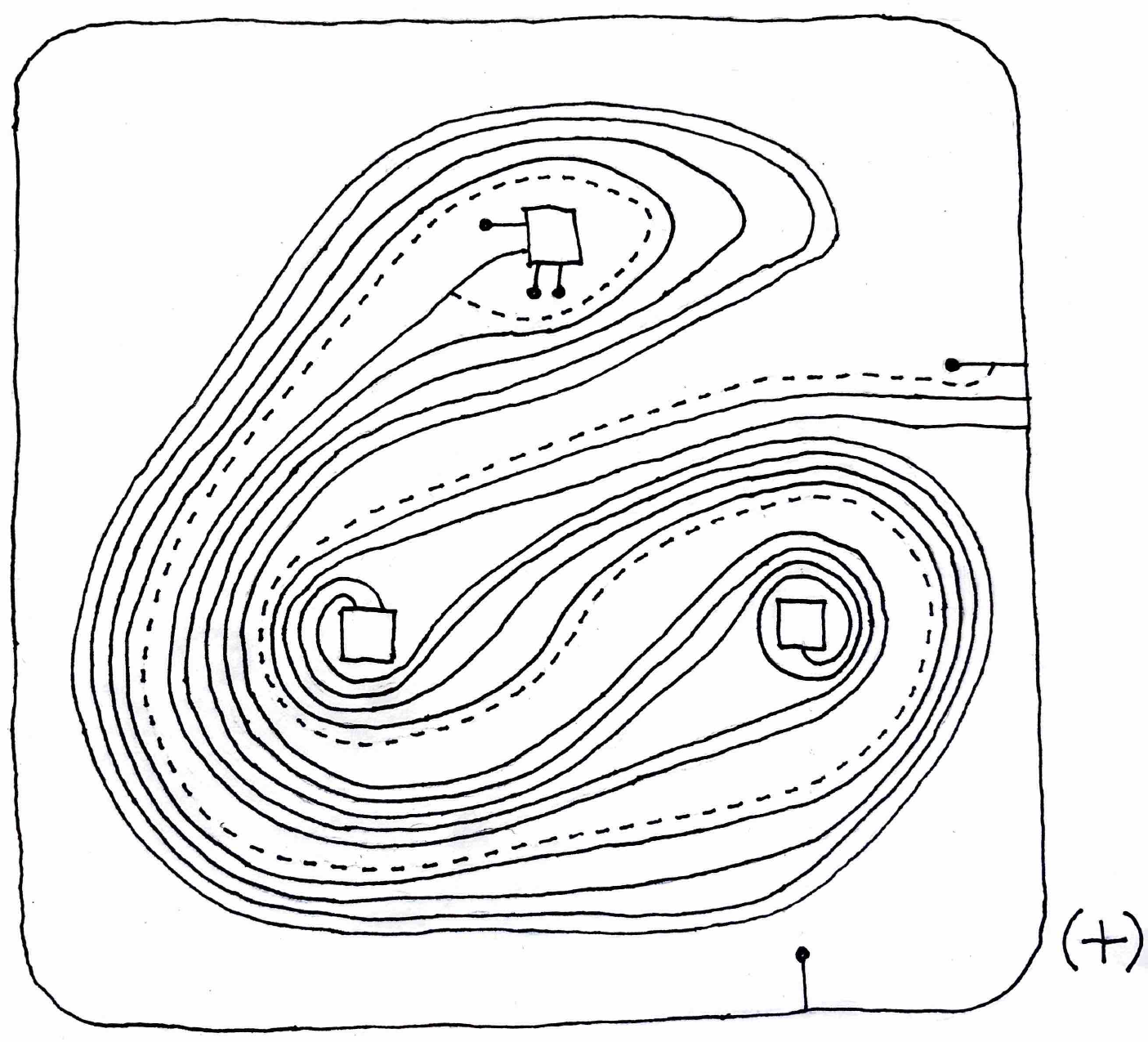} & Shift \(\xi_1\) to have slope \(-7/5\).\\
    \includegraphics[scale = 0.3]{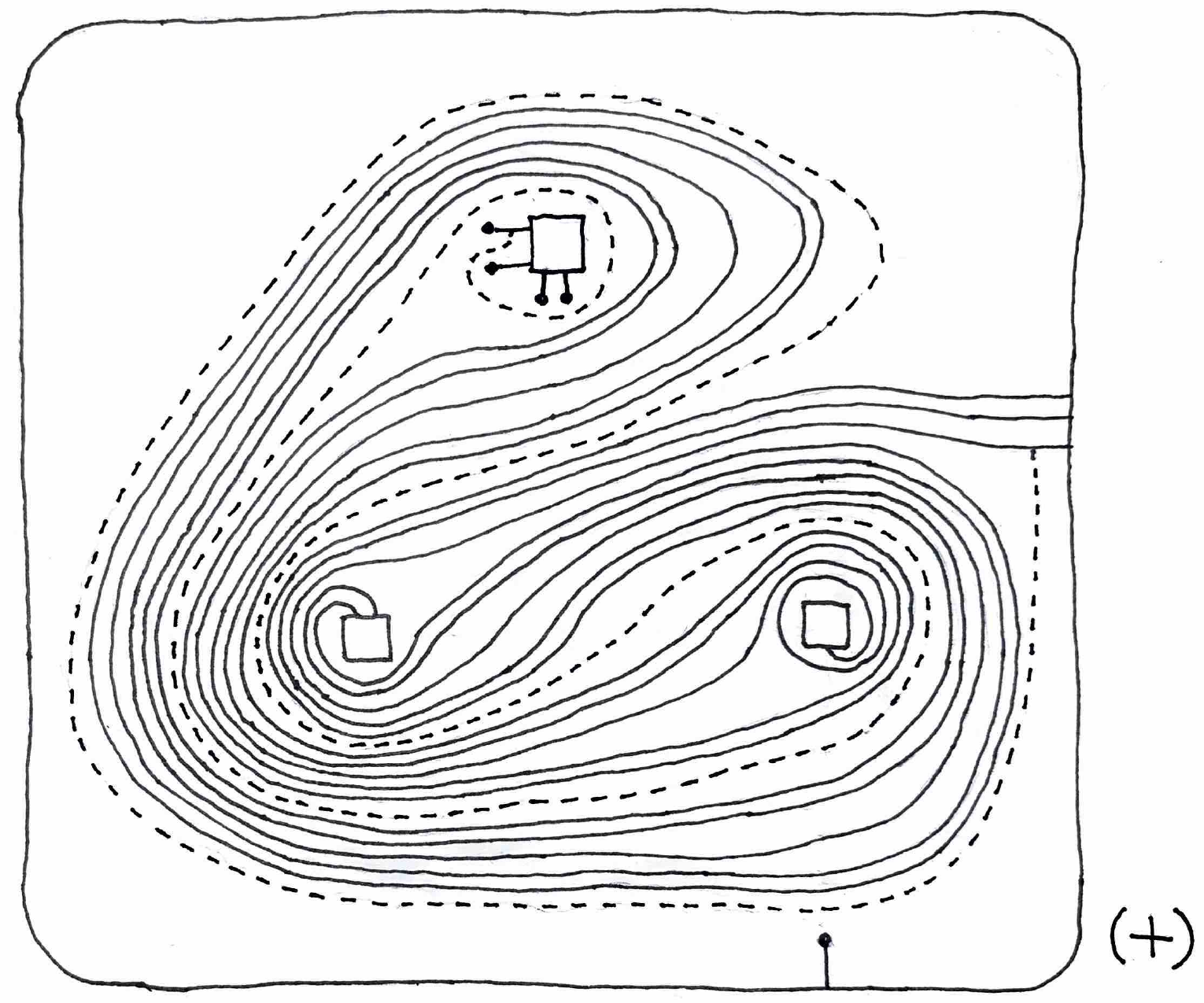} & Shift \(\eta\) to have slope \(-7/5\), twisting left around \(B_1\).\\
    \includegraphics[scale = 0.3]{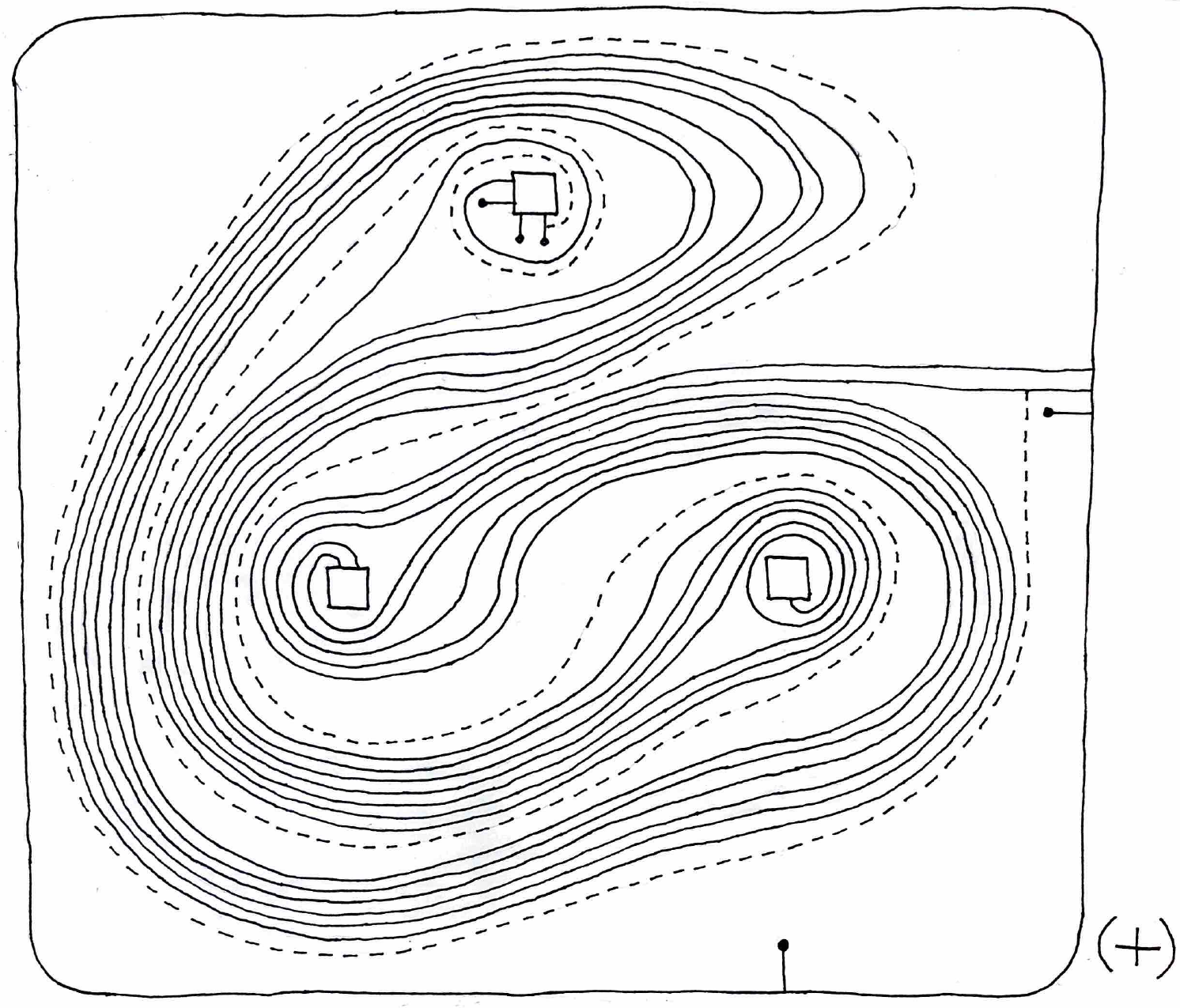} & Shift \(\xi_2\) to have slope \(-10/7\) and positive endpoint \(z_2\), realizing FDTC \(n_1=2\) at \(B_1\). \\
    \includegraphics[scale = 0.3]{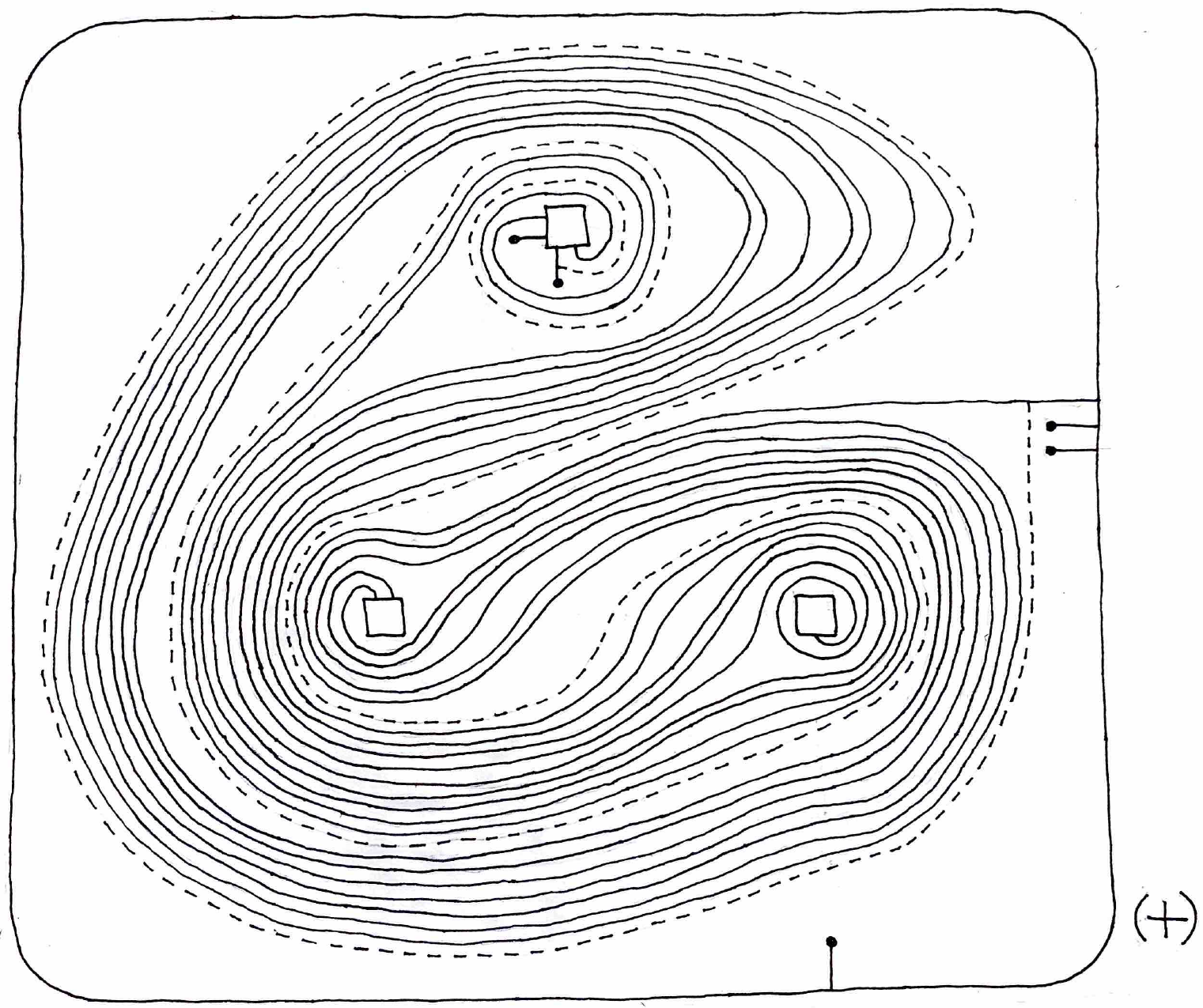} & Shift \(\xi_1\) to have slope \(-10/7\) and positive endpoint \(z_1\), realizing FDTC \(n_1=2\) at \(B_1\). \\
    \includegraphics[scale = 0.3]{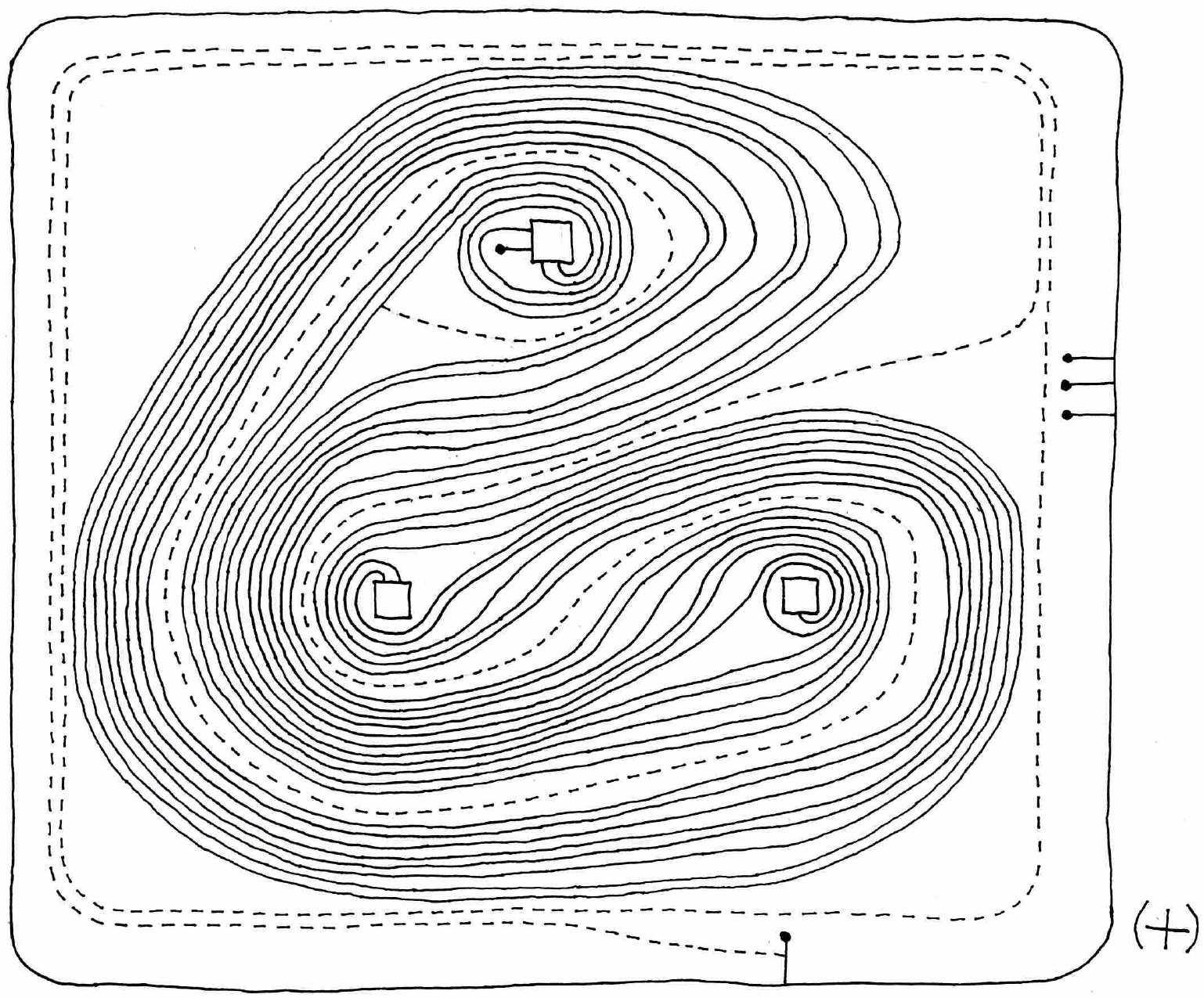} & Shift \(\eta\) to have slope \(-10/7\) and positive endpoint \(w\), realizing FDTC \(n_4=2\) at \(B_4\). \\
    \includegraphics[scale = 0.3]{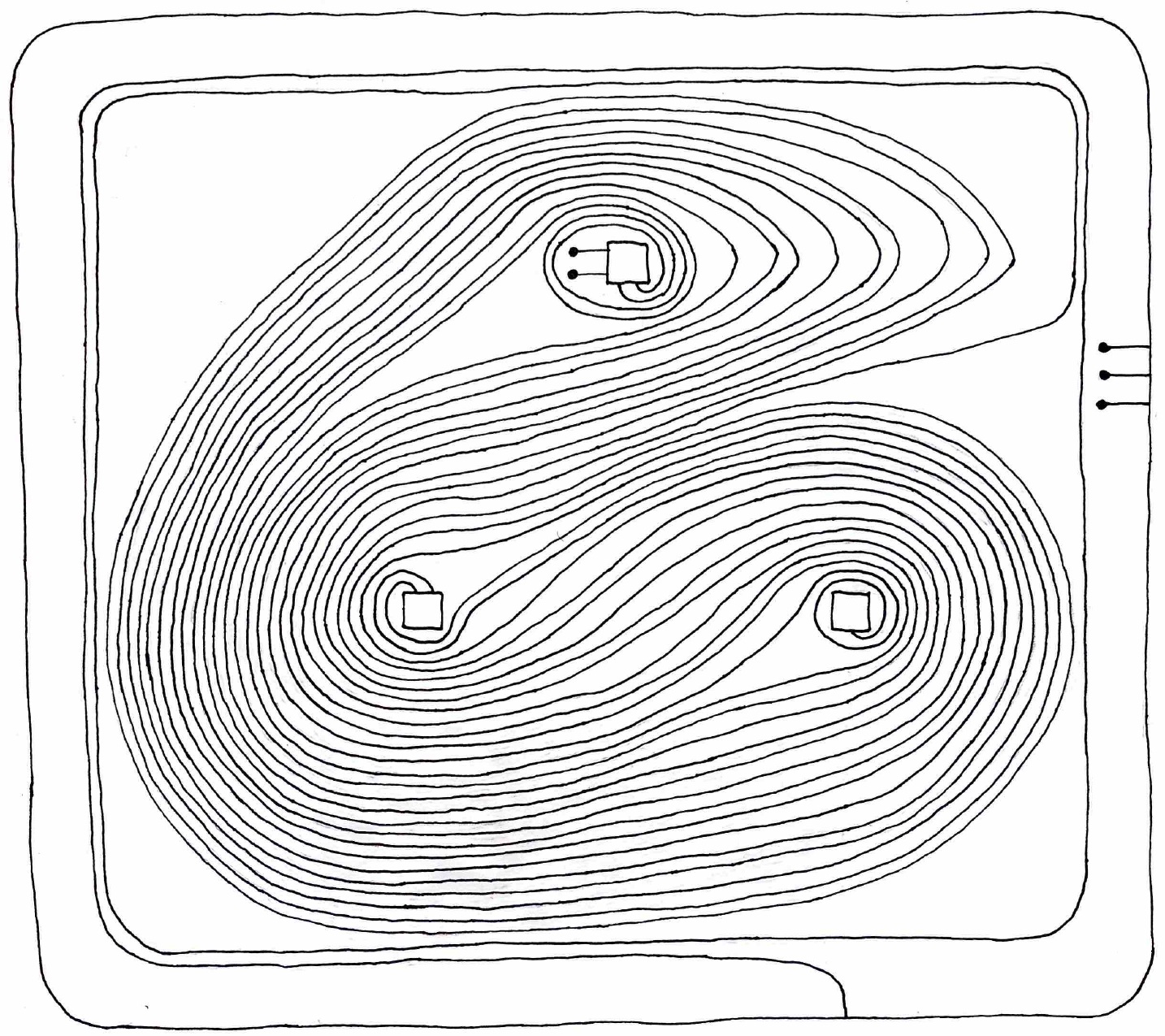} & The page \(\Sigma_{2\pi -\epsilon}\).
    \end{longtblr}
    \captionof{figure}{}
    \label{fig:stage_two}
\end{center}

Note applying the monodromy \(f\) to the last picture in Figure \ref{fig:stage_two}, one obtains Figure \ref{fig:page_zero}. Thus the arcs on \(\Sigma_{2\pi-\varepsilon}\) match up with those on \(\Sigma_0\). By tracing through the above movie, one finds that the arcs  trace out a disk \(D\) whose open book foliation is as in Figure \ref{fig:foliation_two}. 
\begin{figure}[ht]
    \centering
    \includegraphics[scale = 1]{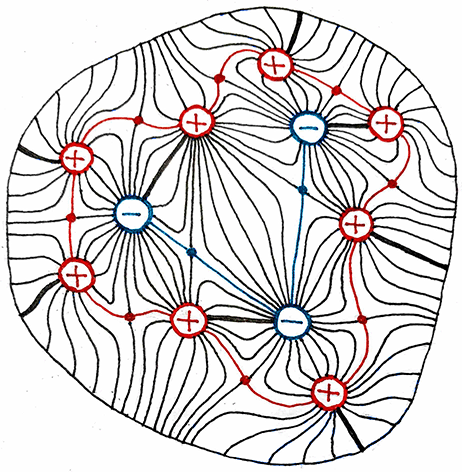}
    \caption{The same conventions as in Figure \ref{fig:foliation_one} are used. The \(G_{--}\) graph is shown in blue, and the \(G_{++}\) graph is shown in red.}
    \label{fig:foliation_two}
\end{figure}

As with characteristic foliations, the open book foliation on \(D\) is oriented via the orientation of \(D\) and the co-orientation of the pages. The convention is so that an oriented tangent vector to a leaf followed by the direction of increasing \(\varphi\) forms an oriented frame for \(D\) at non-singular points. 

One way to confirm that the above movie presentation always yields a disk instead of a higher genus surface is to note the following properties: 
\begin{itemize}
\item The open book foliation on the surface is outward pointing at the boundary.
\item The number of elliptic singularities is one more than the number of hyperbolic singularities.
\end{itemize}
The above two properties ensure the surface has Euler characteristic \(1\), so must be a disk. 

Let us call a hyperbolic singularity \textit{non-cyclic} if it has no separatrices which exit and then return to the same singularity. We recall some definitions from \cite{ItoKawamuro}. The \textit{\(G_{--}\) graph} of \(\mathcal{F}_{\mathrm{ob}}(D)\) consists of the union of unstable separatrices of negative, non-cyclic hyperbolic singularities. This is thought of as a graph whose vertices are negative elliptic singularities and ends of separatrices on \(\partial D\). (The negative hyperbolic singularities are thought of as part of the edges.) The vertices on \(\partial D\) are called \textit{fake vertices}. Analogously the \textit{\(G_{++}\) graph} is the union of stable separatrices of positive, non-cyclic hyperbolic singularities. These graphs are highlighted in blue and red in Figure \ref{fig:foliation_two}.

Finally, a \textit{transverse overtwisted disk} is a disk \(D\) in general position whose open book foliation \(\mathcal{F}_{\mathrm{ob}}(D)\) satisfies the following properties: 
\begin{itemize}
\item \(G_{--}\) is a connected tree with no fake vertices.
\item \(G_{++}\) is homeomorphic to \(S^1\) (and necessarily encircles \(G_{--}\)).
\item \(\mathcal{F}_{\mathrm{ob}}(D)\) contains no closed regular leaves. (This rules out the existence of cyclic hyperbolic singularities.)
\end{itemize}

The above construction always yields a transverse overtwisted disk whose \(G_{--}\) graph is a tree consisting of one central vertex \(y\) connected to \(n_3\) leaves (in the graph theory sense) \(x_1,\ldots,x_{n_3}\). The features of the movie presentation which ensure this are the following:
\begin{itemize}
\item The arc \(\eta\) has exactly one negative singularity with each of the arcs \(\xi_1,\ldots,\xi_{n_3}\). These are all of the singularities formed between properly embedded arcs. 
\item Each hair has exactly one positive singularity with an embedded arc and no other singularities. 
\end{itemize}

In \cite{ItoKawamuro}, it is shown that the existence of a transverse overtwisted disk in an open book implies the supported contact structure is overtwisted. Briefly, the argument is as follows. The fact that \(\mathcal{F}_{\mathrm{ob}}(D)\) is Morse--Smale implies there is a contact structure \(\xi\) compatible with the open book so that the induced characteristic foliation \(\mathcal{F}_{\xi}(D)\) has the following properties:
\begin{itemize}
\item As oriented singular foliations, \(\mathcal{F}_{\xi}(D)\) and \(\mathcal{F}_{\mathrm{ob}}(D)\) are topologically conjugate. 
\item Every hyperbolic singularity of \(\mathcal{F}_{\xi}(D)\) has the same sign as the corresponding singularity in \(\mathcal{F}_{\mathrm{ob}}(D)\). 
\end{itemize}
Applying the Giroux elimination lemma to cancel elliptic/hyperbolic pairs of the same sign then yields an overtwisted disk.
\end{proof}
\begin{remark}
The above construction does not quite fall under the definition of \textit{twist-left-veering arc system} introduced in \cite{twistleft}. In the setup of the above proof, if \(n_3=1\) then the properly embedded arcs \(\Gamma\) on the page \(\Sigma_{2\pi-\varepsilon}\) do indeed form a 2-twist left-veering arc system. If \(n_3\geq 2\), then \(\Gamma\) almost forms an \((n_3+1)\)-twist left-veering arc system, save for the fact that the boundary based region \(R(\Gamma,f(\Gamma))\) may be merely immersed instead of topologically embedded. In addition \(f(\Gamma)\) intersects this region, which is forbidden.

The appearance of \(R(\Gamma,f(\Gamma))\) depends only on \(n_3\) and the slope \(s_1\). It is a \((2n_3+2)\)-gon. If \(n_3=1\) it is always the same rectangle (regardless of \(s_1\)). In Figure \ref{fig:boundary_region} we show the appearance of \(R(\Gamma,f(\Gamma))\) in the case \(n_3 =1\), and also in the case \(n_3=2\), \(s_1 = -4/3\).
\begin{figure}[ht]
    \centering 
    \includegraphics[scale = 0.35]{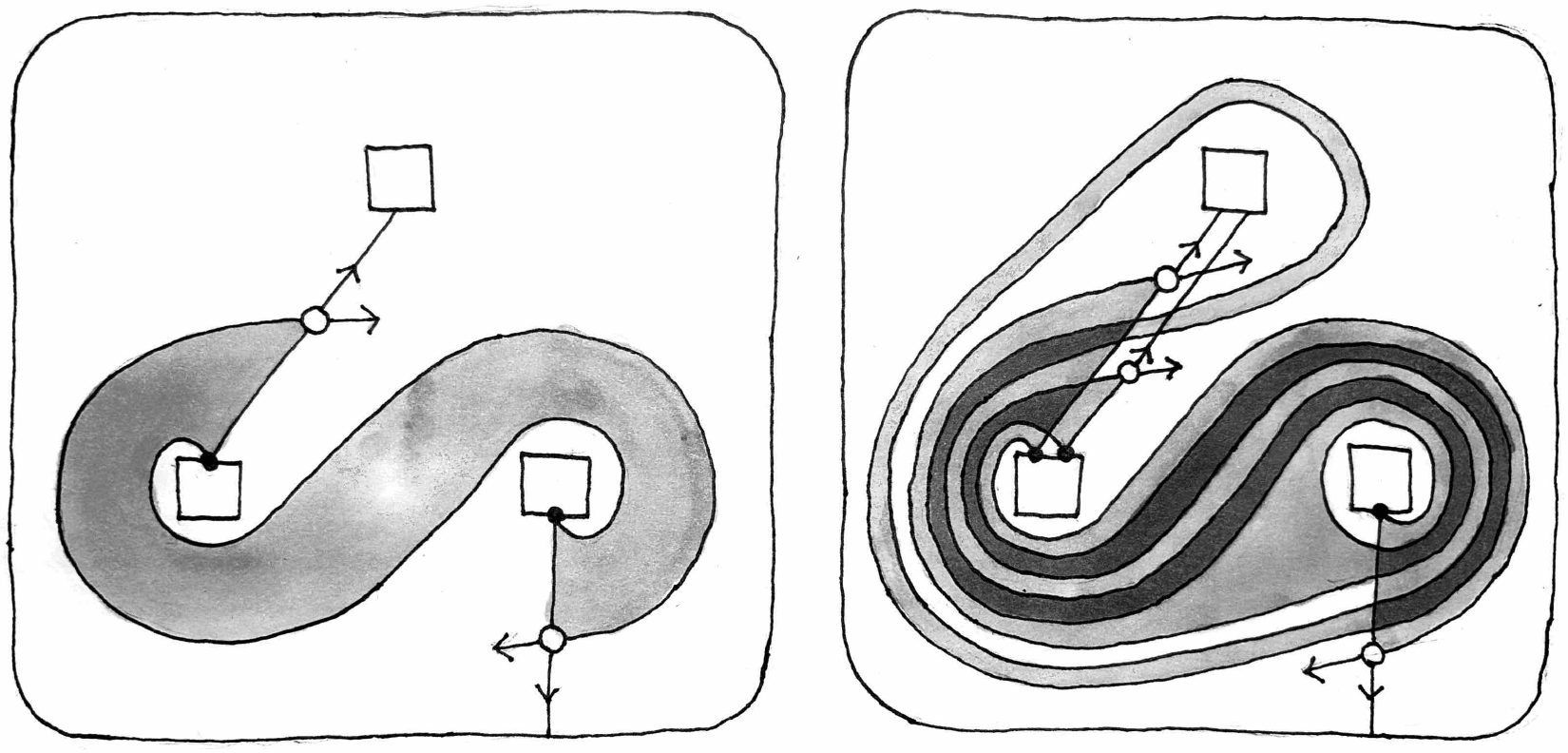}
    \caption{We show the boundary based region \(R(\Gamma,f(\Gamma))\) in two cases. On the left is the case \(n_3 =1\), where \(R(\Gamma,f(\Gamma))\) is an embedded rectangle. On the right is the case \(n_3=2\), \(s_1 = -4/3\), where \(R(\Gamma,f(\Gamma))\) is an immersed hexagon.}
    \label{fig:boundary_region}
\end{figure}
\end{remark}
The following proposition shows the condition that at least one \(r_j\) is not \(-2\) in Theorem \ref{thm:main_one} cannot be removed. 
\begin{proposition}\label{prop:tight_all_twos}
Suppose \(f\in \mathrm{Mod}(\Sigma_{0,4},\partial \Sigma_{0,4})\) satisfies
\[\pi(f) = \begin{bmatrix}p' & q'\\ p & q\end{bmatrix}\]
with \(p,q,p',q'\geq 0\) and \(p>q\). Additionally, suppose the continued fraction expansion of \(-p/q\) is of the form
\[-\frac{p}{q}=[\underbrace{-2,-2,\ldots,-2}_{k+1\text{\normalfont{ times}}}].\]
The number \(k=2\ell\) is even by the parities of \(p\) and \(q\). If the minimum FDTC of \(f\) is \(\geq 1\), then \(f\) factors into positive Dehn twists. 
\end{proposition}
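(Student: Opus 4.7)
My plan is to exhibit an explicit positive factorization of $f$ via the lantern relation. From the hypothesis $-p/q=[-2,-2,\ldots,-2]$ with $k+1=2\ell+1$ terms, we compute $p=2\ell+2$ and $q=2\ell+1$; combining the determinant condition $p'q-pq'=1$ with the characterization of $\pi(F_2)=\Gamma(2)$ in (\ref{eq:subgroup}) forces $\pi(f)=\pm\begin{bmatrix} mp-1 & mq-1 \\ p & q \end{bmatrix}$ for some positive odd integer $m$. A direct matrix computation, using the given formula for $\pi(\tau_b)$ together with $\pi(\tau_d)^n=\pm\begin{bmatrix}1-2n & 2n \\ -2n & 1+2n\end{bmatrix}$ where $d$ is the essential separating curve of slope $-1$, then verifies the identity $\pi(f)=\pi(\tau_b^{(m-1)/2}\tau_d^{\ell+1}\tau_b)$ in $\mathrm{PSL}(2,\mathbb{Z})$.

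Since $\pi$ is injective on the $F_2$-factor of $\mathrm{Mod}(\Sigma_{0,4},\partial\Sigma_{0,4})\cong\mathbb{Z}^4\times F_2$, the identity above determines $f$ up to boundary twists: there exist uniquely determined integers $n'_i$ with
\[f=\tau_{a_1}^{n'_1}\tau_{a_2}^{n'_2}\tau_{a_3}^{n'_3}\tau_{a_4}^{n'_4}\cdot \tau_b^{(m-1)/2}\tau_d^{\ell+1}\tau_b.\]
It remains to check $n'_i\geq 0$ for every $i$. The key input is the lantern relation $\tau_b\tau_c\tau_d=\tau_{a_1}\tau_{a_2}\tau_{a_3}\tau_{a_4}=:T$ on $\Sigma_{0,4}$. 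Using the centrality of $T$, the lantern rewrites as $\tau_d\tau_b=T\tau_c^{-1}$, and iteratively $\tau_d^{\ell+1}\tau_b=T^{\ell+1}\cdot(\tau_c^{-1}\tau_b^{-1})^\ell\tau_c^{-1}$, expressing the positive factor as a product of $T^{\ell+1}$ with an element of $F_2$.

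From this decomposition and the exact additivity of FDTC across the central factor $T^{\ell+1}$, combined with the inductive computation (via iterated lantern substitutions) that the residual $F_2$-element $\tau_b^{(m-1)/2}(\tau_c^{-1}\tau_b^{-1})^\ell\tau_c^{-1}$ has FDTC equal to $-\ell$ at each boundary component, one deduces that $\tau_b^{(m-1)/2}\tau_d^{\ell+1}\tau_b$ has FDTC equal to $1$ at every $B_i$. Consequently $n'_i=\mathrm{FDTC}_{B_i}(f)-1$, and the hypothesis $\min_i\mathrm{FDTC}_{B_i}(f)\geq 1$ forces $n'_i\geq 0$, giving the desired positive factorization. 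The main obstacle is the inductive FDTC computation for the residual $F_2$-element; this relies on the quasi-morphism property of FDTC, the fact that Dehn twists about essential non-boundary-parallel curves admit representatives with support disjoint from any chosen boundary collar, and the exact additivity of FDTC across central elements such as $T$.
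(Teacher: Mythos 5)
Your overall plan coincides with the paper's: factor $f$ as a product of boundary twists times a residual word in $\tau_b,\tau_c,\tau_d$; compute the fractional Dehn twist coefficients of the residual; deduce $n_i'\geq 0$; and read off a positive factorization. The algebraic bookkeeping in your first two paragraphs is correct --- your odd $m$ corresponds to the paper's $m'$ via $m=2m'+1$, and your positive factor $\tau_b^{(m-1)/2}\tau_d^{\ell+1}\tau_b$ is the paper's $\tau_b^{m'}\tau_d^{\ell}\tau_c^{-1}$ after one application of the lantern relation (they differ by the central full boundary twist $T$).

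However, there is a genuine gap at the step you flag as the main obstacle: the claim that $\tau_b^{(m-1)/2}(\tau_c^{-1}\tau_b^{-1})^\ell\tau_c^{-1}$ has FDTC exactly $-\ell$ at every boundary component. The tools you invoke do not establish this. Exact additivity across $T$ only reformulates the claim as ``$\tau_b^{(m-1)/2}\tau_d^{\ell}\tau_c^{-1}$ has FDTC $0$ at every boundary component,'' which is precisely the substantive assertion the paper proves, not a reduction of it. The quasi-morphism property gives $|\mathrm{FDTC}(gh)-\mathrm{FDTC}(g)-\mathrm{FDTC}(h)|\leq 1$, so applied to a three-term product of FDTC-$0$ elements such as $\tau_b^{m'}\cdot\tau_d^{\ell}\cdot\tau_c^{-1}$ it only bounds $|\mathrm{FDTC}|\leq 2$, which is not tight enough; compare with the paper's Proposition \ref{prop:tight_two_threes}, where the quasi-morphism trick does succeed, but only because there the element is a product of exactly two FDTC-$0$ factors together with a right-veering pseudo-Anosov constraint furnishing a lower bound, and your residual satisfies neither. ``Iterated lantern substitutions'' only rearrange central powers of $T$, which exact additivity already handles, so there is no independent inductive engine there either. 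The paper closes this gap concretely by direct arc-probing (Figure \ref{fig:FDTC_verify1}): it shows $\tau_b^{m'}\tau_d^{\ell}\tau_c^{-1}$ sends any arc of slope $0$ to the left and any arc of slope $\infty$ to the right --- the slope-$\infty$ case because such an arc is disjoint from $c$ and only sees the positive twists $\tau_b^{m'}\tau_d^\ell$, and the slope-$0$ case by drawing the image and observing that the leftward effect of $\tau_c^{-1}$ is never undone by the remaining positive twists. Having both a left-moving and a right-moving arc at each boundary component forces the FDTC there to be exactly $0$. You would need to carry out an arc computation of this kind to complete your argument.
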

\begin{proof}
In this case, one has 
\[\frac{p}{q}=\frac{k+2}{k+1},\]
and hence 
\[\pi(f) = \begin{bmatrix}k+1+2m(k+2)&k+2m(k+1) \\ k+2 & k+1\end{bmatrix}\qquad\text{for some }m\geq 0.\]
It follows that \(f\) factors as 
\[f=\tau_{a_1}^{n_1}\tau_{a_2}^{n_2}\tau_{a_3}^{n_3}\tau_{a_4}^{n_4}\tau_b^m \tau_d^{\ell} \tau_c^{-1}\qquad\text{for some }n_1,n_2,n_3,n_4,\]
where \(b,c,d\) are the closed curves in Figures \ref{fig:page} and \ref{fig:square_holes}. The monodromy \(\tau_b^m\tau_d^{\ell}\tau_c^{-1}\) has all FDTCs equal to \(0\), since it sends any arc of slope \(0\) to the left and any arc of slope \(\infty\) to the right. The latter statement is true since any arc of slope \(\infty\) is fixed by \(\tau_c^{-1}\), and so such an arc only ``sees'' positive Dehn twists. In Figure \ref{fig:FDTC_verify1}, we verify the statement about an arc of slope \(0\).
\begin{figure}[ht]
    \centering 
    \includegraphics[scale = 0.5]{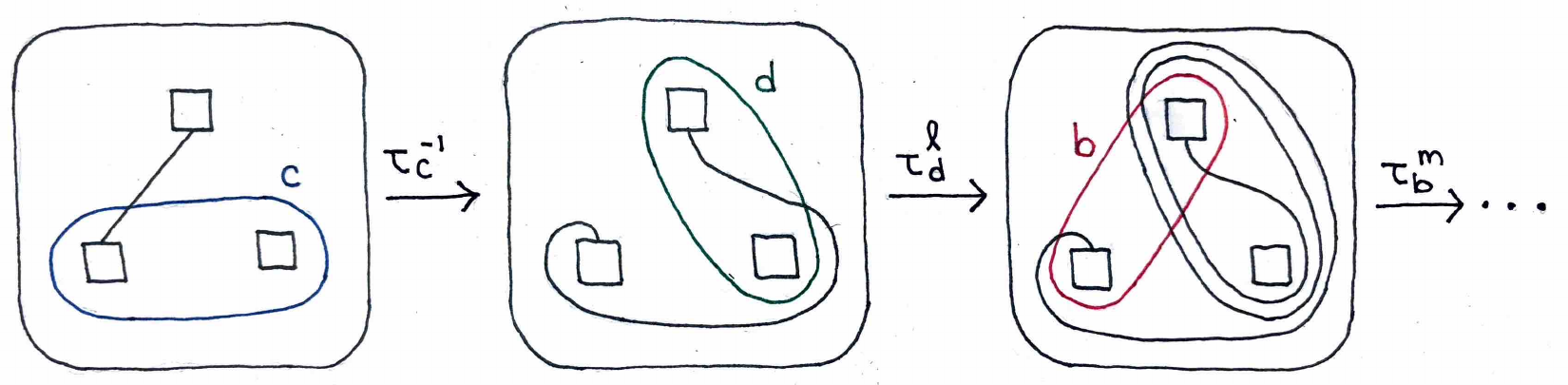}
    \caption{In the rightmost picture, it remains to perform \(m\) positive Dehn twists about the curve \(b\). It is clear that no matter how many positive Dehn twists are performed, the resulting arc will remain to the left of the original arc.}
    \label{fig:FDTC_verify1}
\end{figure}

Accordingly, the FDTCs of \(f\) are given by \(n_i\). Ergo 
\[\min \{n_1,n_2,n_3,n_4\} \geq 1. \]
We now use the lantern relation in \(\mathrm{Mod}(\Sigma_{0,4},\partial \Sigma_{0,4})\), which states
\[\tau_{a_1}\tau_{a_2}\tau_{a_3}\tau_{a_4}=\tau_b\tau_c\tau_d = \tau_c\tau_d \tau_b = \tau_d \tau_b\tau_c.\]
This allows us to factor \(f\) into positive Dehn twists as 
\begin{align*}
f &= \tau_{a_1}^{n_1}\tau_{a_2}^{n_2}\tau_{a_3}^{n_3}\tau_{a_4}^{n_4}\tau_b^m \tau_d^{\ell} \tau_c^{-1}\\
&= \tau_{a_1}^{n_1-1}\tau_{a_2}^{n_2-1}\tau_{a_3}^{n_3-1}\tau_{a_4}^{n_4-1}\tau_b^m \tau_d^k \tau_{a_1}\tau_{a_2}\tau_{a_3}\tau_{a_4}\tau_c^{-1}\\
& = \tau_{a_1}^{n_1-1}\tau_{a_2}^{n_2-1}\tau_{a_3}^{n_3-1}\tau_{a_4}^{n_4-1}\tau_b^m \tau_d^{\ell+1}\tau_b.\qedhere
\end{align*}
\end{proof}

With Theorem \ref{thm:main_one}, we can now prove Theorem \ref{thm:classify_tight_reducible}.
\begin{proof}[Proof of Theorem {\normalfont\ref{thm:classify_tight_reducible}}] \phantomsection\label{proof:classify_tight_reducible}
If \(f\) satisfies (i), then it is tight by Theorem \ref{thm: Ito-Kawamuro}. If \(f\) satisfies (ii), we show in Proposition \ref{prop: Stein fillable} that \(f\) factors into positive Dehn twists. Hence \(f\) is Stein fillable. Conversely, suppose \(f\) does not satisfy (i) or (ii). Then \(f\) is either not right-veering or is isotopic to  
\[\tau_{a_1}^{n_1}\tau_{a_2}^{n_2}\tau_{a_3}^{n_3}\tau_{a_4}^{n_4}\tau_{\gamma}^{n_{\gamma}},\qquad \min \{n_1,n_2,n_3,n_4\} = 1,\, n_{\gamma}\leq -2.\]
In the latter case, we may assume up to conjugacy that \(\gamma =c\). Then these monodromies all satisfy the criterion of Theorem \ref{thm:main_one} with 
\[-\frac{p}{q}= 2n_{\gamma},\]
so are overtwisted
\end{proof}

We now describe the second overtwisted family of monodromies.
\begin{theorem}\label{thm:main_two}
Suppose \(f\in \mathrm{Mod}(\Sigma_{0,4},\partial \Sigma_{0,4})\) satisfies
\[\pi(f) = \begin{bmatrix}p' & q'\\ p & q\end{bmatrix}\]
with \(p,q,p',q'\geq 0\) and \(p>q\). Additionally, suppose \(f\) has minimum FDTC \(1\) and one of the following conditions is satisfied: 
\begin{enumerate}[label={\normalfont(\arabic*)}]
\item The continued fraction expansion of \(-p/q\) is of the form
\[-\frac{p}{q}=[r_0,r_1,-2,r_3,-2,r_5,\ldots,r_{k-2},-2,r_k]\]
where \(r_j<-1\), \(r_1<-3\), and \(r_0\) and \(k\) are odd. {\normalfont(}The case \(k=1\) is allowed.{\normalfont)}
\item The continued fraction expansion of \(-p/q\) is of the form 
\[-\frac{p}{q} = [r_0,-3]\]
where \(r_0 \leq -5\) is odd and a binding component with FDTC \(1\) is connected by an arc of slope \(0\) to a binding component with FDTC \(\leq \frac{1}{2}(\lvert r_0\rvert-3)\). 
\end{enumerate}
Then \(f\) is overtwisted.
\end{theorem}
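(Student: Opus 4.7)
The plan is to construct, in each of cases (1) and (2), an explicit transverse overtwisted disk via a movie presentation that modifies the one used in the proof of Theorem \ref{thm:main_one}, then invoke \cite{ItoKawamuro} to conclude overtwistedness. The overall architecture will be identical: after assuming \(n_2 = 1\) by hyperelliptic involution, we begin on \(\Sigma_0\) with arcs \(\xi_1, \ldots, \xi_{n_3}, \eta\) of slope \(0\) together with hairs as in Figure \ref{fig:page_zero}, and sweep these arcs through the intermediate slopes \(s_0 = -1 < s_1 < \cdots < s_N = -p/q\) from Lemma \ref{prop:fractions} using a sequence of negative and positive hyperbolic singularities. The arcs will continue to be labeled throughout by their negative elliptic endpoints \(x_1, \ldots, x_{n_3}, y\), and as before Stage 1 will contribute the unique negative hyperbolic singularity to \(G_{--}\).

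The essential new ingredient is the parity pattern of Lemma \ref{prop:fractions} parts (f) and (g): under the continued fraction conditions in either (1) or (2), exactly one of \(p_i, q_i\) is even for each \(i \geq \lvert r_0 \rvert - 1\), and the parities of \(p_i\) alternate. As a consequence, when the arcs are shifted through the slopes \(s_{\lvert r_0 \rvert - 1}, s_{\lvert r_0 \rvert}, \ldots, s_N\), their positive endpoints alternate between lying on \(B_1, B_4\) and on \(B_2, B_3\) at successive slopes, rather than always remaining on \(B_1, B_4\) as in Theorem \ref{thm:main_one}. In case (1), the hypothesis \(r_1 < -3\) forces \(s_{\lvert r_0 \rvert}\) to have denominator at least \(3\), which will ensure that when the arcs of slope \(s_{\lvert r_0 \rvert - 1} = -\lvert r_0 \rvert\) are shifted one-at-a-time to slope \(s_{\lvert r_0 \rvert}\) they do not run parallel on the page and so do not block each other; the remaining Farey steps then proceed just as in Stage 2 of Theorem \ref{thm:main_one}. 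In case (2), the short expansion \([r_0, -3]\) has only two non-integer slopes at the end, and the FDTC hypothesis lets us arrange (up to hyperelliptic involution) \(n_2 = 1\) and \(n_1 \leq \tfrac{1}{2}(\lvert r_0 \rvert - 3)\) with the \(\xi_j\) connecting \(B_2\) to \(B_1\); the precise upper bound on \(n_1\) is exactly what allows the left-twisting around \(B_1\) needed to realize the FDTC at \(B_1\) to fit inside the last two Farey shifts without obstruction.

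In both cases we verify the resulting surface is a transverse overtwisted disk exactly as in the proof of Theorem \ref{thm:main_one}: the Euler characteristic count (one more elliptic than hyperbolic singularity) shows the surface is a disk, the \(G_{--}\) graph is a tree with central vertex \(y\) and leaves \(x_1, \ldots, x_{n_3}\), the \(G_{++}\) graph is a simple closed curve encircling \(G_{--}\), and the movie never introduces closed regular leaves. The main obstacle, and the reason the hypotheses \(r_1 < -3\) in case (1) and the FDTC bound in case (2) cannot be dropped, is arranging the arc shifts in Stage 2 so that the alternating parity pattern does not force two arcs into a parallel configuration from which neither can be extricated by a single hyperbolic singularity, and so that the prescribed FDTCs at every binding can be realized by left-twisting in the final few steps.
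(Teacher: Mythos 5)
Your high-level outline captures the right strategy—modify the movie presentation of Theorem \ref{thm:main_one} to account for the parity behavior from Lemma \ref{prop:fractions}(f) and (g)—but the specific construction you propose would not work, and the paper's actual proof uses a noticeably different setup that you don't recover.

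The central problem is your claim that the proof "begin[s] on \(\Sigma_0\) with arcs \(\xi_1,\ldots,\xi_{n_3},\eta\) of slope \(0\) together with hairs as in Figure \ref{fig:page_zero}" and then "the remaining Farey steps proceed just as in Stage 2 of Theorem \ref{thm:main_one}." The reason Stage 2 of Theorem \ref{thm:main_one} works is precisely that \emph{every} \(q_i\) is odd, forcing the positive endpoints to always lie on \(B_1\) or \(B_4\), and thereby ruling out the situation where \(\eta\) and some \(\xi_j\) run parallel between \(B_2\) and \(B_3\) and block each other. Under the hypotheses of Theorem \ref{thm:main_two}, the \(q_i\) are no longer all odd (the alternating parity is the whole point of parts (f) and (g)), so running the Theorem \ref{thm:main_one} movie unchanged would produce exactly the blocking configuration the earlier proof was engineered to avoid. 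You acknowledge the alternation of endpoints but do not supply any device for getting past the resulting obstruction—you only observe that \(r_1<-3\) prevents blocking at the single step from \(s_{|r_0|-1}\) to \(s_{|r_0|}\), which is neither the issue nor where it arises. The paper resolves this by changing the initial configuration entirely: it normalizes \(n_1=1\) (not \(n_2=1\)), uses \(n_3+1\) arcs \(\xi_j\) from \(B_1\) to \(B_2\) of slope \(0\) together with a single arc \(\eta\) from \(B_3\) to \(B_2\) of slope \(\infty\) (not slope \(0\)), and reorders the stages so the \(\xi_j\) are pre-shifted through the odd-indexed slopes \(s_1, s_3, \ldots, s_{|r_0|-2}\) before any negative singularities are formed. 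With this arrangement, the parity condition guarantees positive endpoints land on \(B_2\) or \(B_4\) for all \(i\ge|r_0|-1\), avoiding blocking throughout. Correspondingly your description of \(G_{--}\) as a tree with \(n_3\) leaves is off; the correct count is \(n_3+1\) leaves.

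For case (2) the gap is more serious: you assert that the FDTC bound "allows the left-twisting around \(B_1\) needed to realize the FDTC at \(B_1\) to fit inside the last two Farey shifts," but you offer no construction that would actually accomplish this, and the strategy of (1) genuinely does not extend directly. The paper introduces a dedicated \emph{twist move} on \(\eta\) (twisting right around \(B_3\) and left around \(B_2\)) which can be applied up to \(\tfrac{1}{2}(|r_0|-3)\) times, each application raising the realizable FDTC at \(B_2\) by one at the cost of an extra right twist around \(B_3\), which is then compensated by starting with \(n_2+n_3+1\) arcs \(\xi_j\). That mechanism—where the bound on the FDTC comes from the number of available twist moves and where the extra arcs absorb the side effect—is the content of the proof of (2), and it is missing from your proposal.Your high-level outline captures the right strategy---modify the movie presentation of Theorem \ref{thm:main_one} to account for the parity behavior from Lemma \ref{prop:fractions}(f) and (g)---but the specific construction you propose would not work, and the paper's actual proof uses a noticeably different setup that you do not recover.

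The central problem is your claim that the proof begins on \(\Sigma_0\) with the same arcs \(\xi_1,\ldots,\xi_{n_3},\eta\) of slope \(0\) as in Figure \ref{fig:page_zero}, and that ``the remaining Farey steps then proceed just as in Stage 2 of Theorem \ref{thm:main_one}.'' Stage 2 of Theorem \ref{thm:main_one} works precisely because \emph{every} \(q_i\) is odd, so the positive endpoints always land on \(B_1\) or \(B_4\), ruling out the situation where \(\eta\) and some \(\xi_j\) run parallel between \(B_2\) and \(B_3\) and block each other. Under the hypotheses of Theorem \ref{thm:main_two} the \(q_i\) are no longer all odd (the alternation in Lemma \ref{prop:fractions}(f)--(g) is exactly the point), so running the Theorem \ref{thm:main_one} movie unchanged would produce precisely the blocking configuration that construction was designed to avoid. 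You acknowledge the alternation of endpoints but supply no device for getting past the resulting obstruction; your remark that \(r_1<-3\) prevents blocking at the single step from \(s_{|r_0|-1}\) to \(s_{|r_0|}\) is neither the real obstruction nor where it arises. The paper instead changes the initial configuration: it normalizes \(n_1=1\) (not \(n_2=1\)), uses \(n_3+1\) arcs \(\xi_j\) from \(B_1\) to \(B_2\) of slope \(0\) together with a single arc \(\eta\) from \(B_3\) to \(B_2\) of slope \(\infty\) (not slope \(0\)), and reorders the stages so the \(\xi_j\) are pre-shifted through the odd-indexed slopes \(s_1,s_3,\ldots,s_{|r_0|-2}\) before any negative singularities are formed. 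With this arrangement the parity condition forces all positive endpoints onto \(B_2\) or \(B_4\) for \(i\geq|r_0|-1\), which is what actually eliminates the blocking. Correspondingly, your description of \(G_{--}\) as a tree with \(n_3\) leaves is off; the correct count is \(n_3+1\).

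For case (2) the gap is more serious: you assert that the FDTC bound ``allows the left-twisting around \(B_1\) needed to realize the FDTC at \(B_1\) to fit inside the last two Farey shifts,'' but you give no mechanism that would accomplish this, and the movie from (1) genuinely does not extend directly. The paper introduces a dedicated \emph{twist move} on \(\eta\)---twisting it right around \(B_3\) and left around \(B_2\)---which can be applied up to \(\tfrac{1}{2}(|r_0|-3)\) times, each application raising the realizable FDTC at \(B_2\) by one at the cost of an extra right twist around \(B_3\), compensated by starting with \(n_2+n_3+1\) arcs \(\xi_j\) (and hence a \(G_{--}\) tree with \(n_2+n_3+1\) leaves). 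That mechanism is the content of the proof of (2), and it is missing from your proposal.
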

\begin{remark}\label{re:two_term}By the parity of \(p\) and \(q\), if
\[-\frac{p}{q}=[r_0,r_1],\]
then both \(r_0\) and \(r_1\) are necessarily odd. 
\end{remark}
\begin{proof}[Proof of {\normalfont(1)}]
The proof strategy is the same as that of Theorem \ref{thm:main_one}, that is, we give a movie presentation for a transverse overtwisted disk \(D\). We draw the page in the same way as in Figure \ref{fig:square_holes}. As before, let \(n_i\) denote the FDTC of \(f\) at binding component \(B_i\). This time, we use conjugation by hyperelliptic involutions to ensure \(n_1 =1\). The intersection of \(D\) with \(\Sigma_0\) is shown in Figure \ref{fig:page_zero_again}.
\begin{figure}[ht]
\centering
    \includegraphics[scale = 0.2]{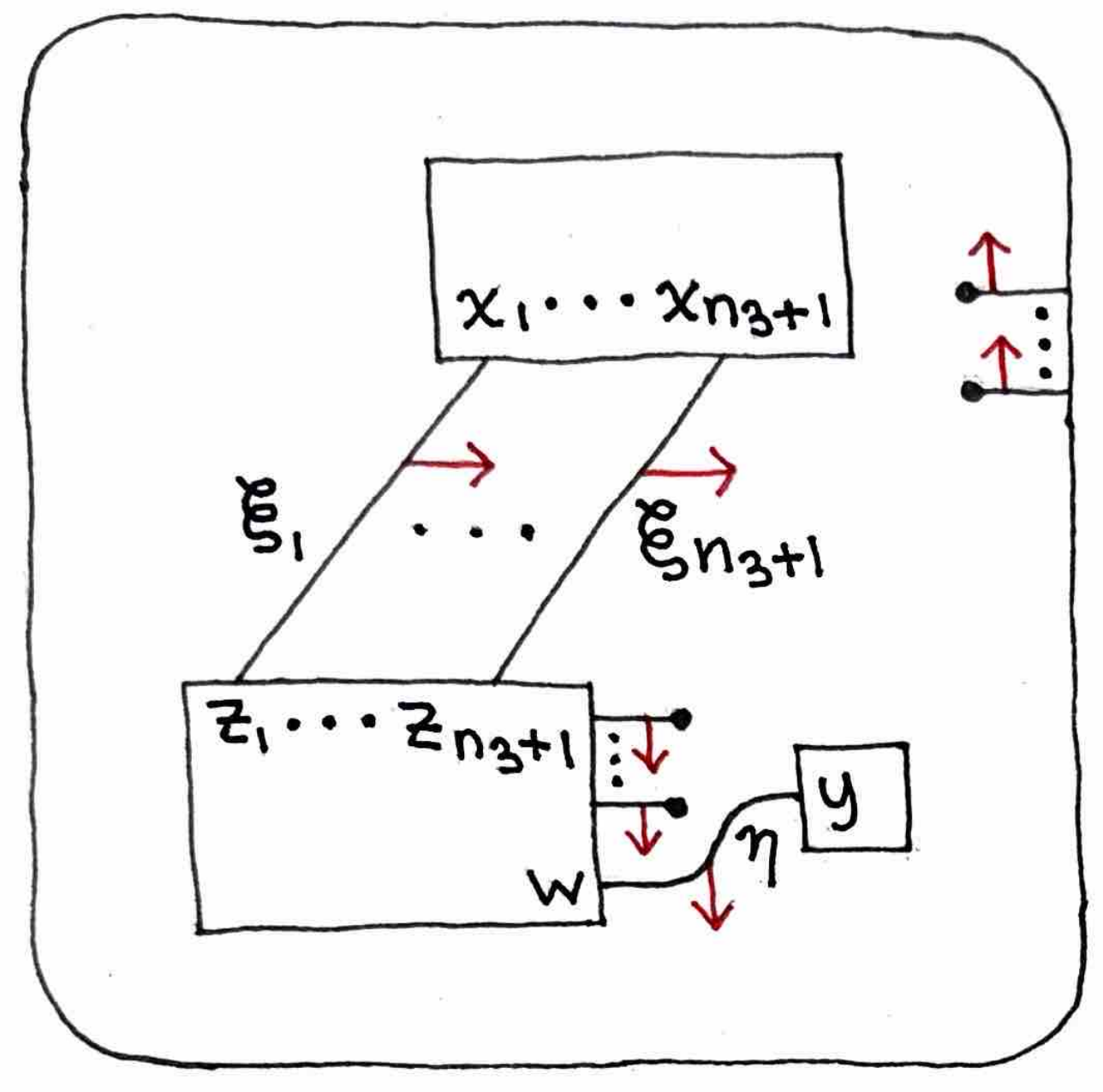}
    \caption{Boundary components have been stretched to fit labels.}
    \label{fig:page_zero_again}
\end{figure}
This intersection consists of: 
\begin{itemize}
\item properly embedded arcs \(\xi_1,\ldots,\xi_{n_3+1}\) from \(B_1\) to \(B_2\) of slope \(0\),
\item a properly embedded arc \(\eta\) from \(B_3\) to \(B_2\) of slope \(\infty\),
\item hairs with one endpoint in the interior of the page and the other endpoint on \(B_2\) or \(B_4\). 
\end{itemize}

As before, the red arrows indicate the (co)orientation of \(D\). Note \(x_1,\ldots,x_{n_3+1},y\) are negative intersections of \(D\) with the binding \(B\), whereas \(z_1,\ldots,z_{n_3+1},w\) have positive sign. (The positions of \(x_j\) and \(z_j\) have changed from the proof of Theorem \ref{thm:main_one}) The endpoint of each hair on a binding component has positive sign. 

Away from singular values of \(\varphi\) there will always be \(n_3+2\) properly embedded arcs on \(\Sigma_{\varphi}\), each with an endpoint at one of the negative elliptic singularities \(x_{n_1},\ldots,x_{n_3+1},y\). Let us refer to these arcs as \(\xi_1,\ldots,\xi_{n_3+1},\eta\). We keep the notation
\[-\frac{p}{q}=s_N<s_{N-1}<\cdots<s_1<s_0 =-1\]
with \(s_i = p_i/q_i\) from Lemma \ref{prop:fractions}. Our current assumptions necessitate \(N\geq \lvert r_0\rvert +2\). We describe the general case in text, and illustrate the case 
\[\pi(f) = \begin{bmatrix}17 & 6\\ 14 & 5\end{bmatrix}\]
with \(n_1=1\) and \(n_2=n_3=2\). (The value of \(n_4\) does not affect the movie presentation.) In this case,
\[-\frac{p}{q}=-\frac{14}{5}=[-3,-5]\]
and
\[s_0 =-1,\qquad s_1 = -2,\qquad s_2 = -\frac{5}{2},\qquad s_3 = -\frac{8}{3},\qquad s_4 = -\frac{11}{4},\qquad s_5 = -\frac{14}{5}.\]\par
Shift the arcs \(\xi_{1},\ldots,\xi_{n_3+1}\) in order using positive singularities with hairs to have the slopes \[s_1=-2,s_3=-4,\ldots, s_{\lvert r_0\rvert-2}=r_0+1.\]
We illustrate this process below for our current case.
\begin{center}
    \captionsetup{type=figure}
    \begin{longtblr}{
    colspec = {X[c,h]X[l]},
    }
    \includegraphics[scale = 0.2]{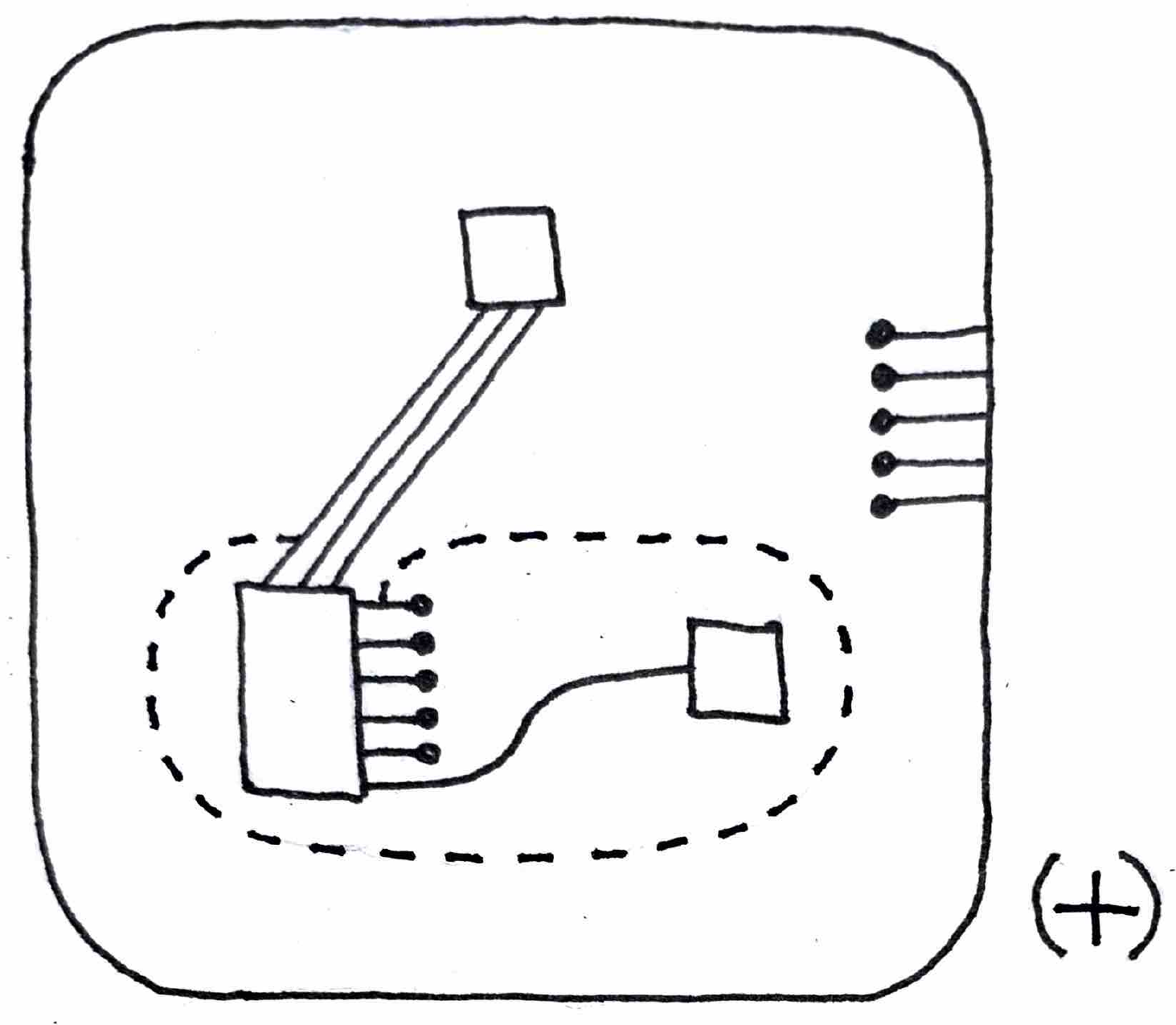} & Shift \(\xi_1\) to have slope \(-2\). \\
    \includegraphics[scale = 0.2]{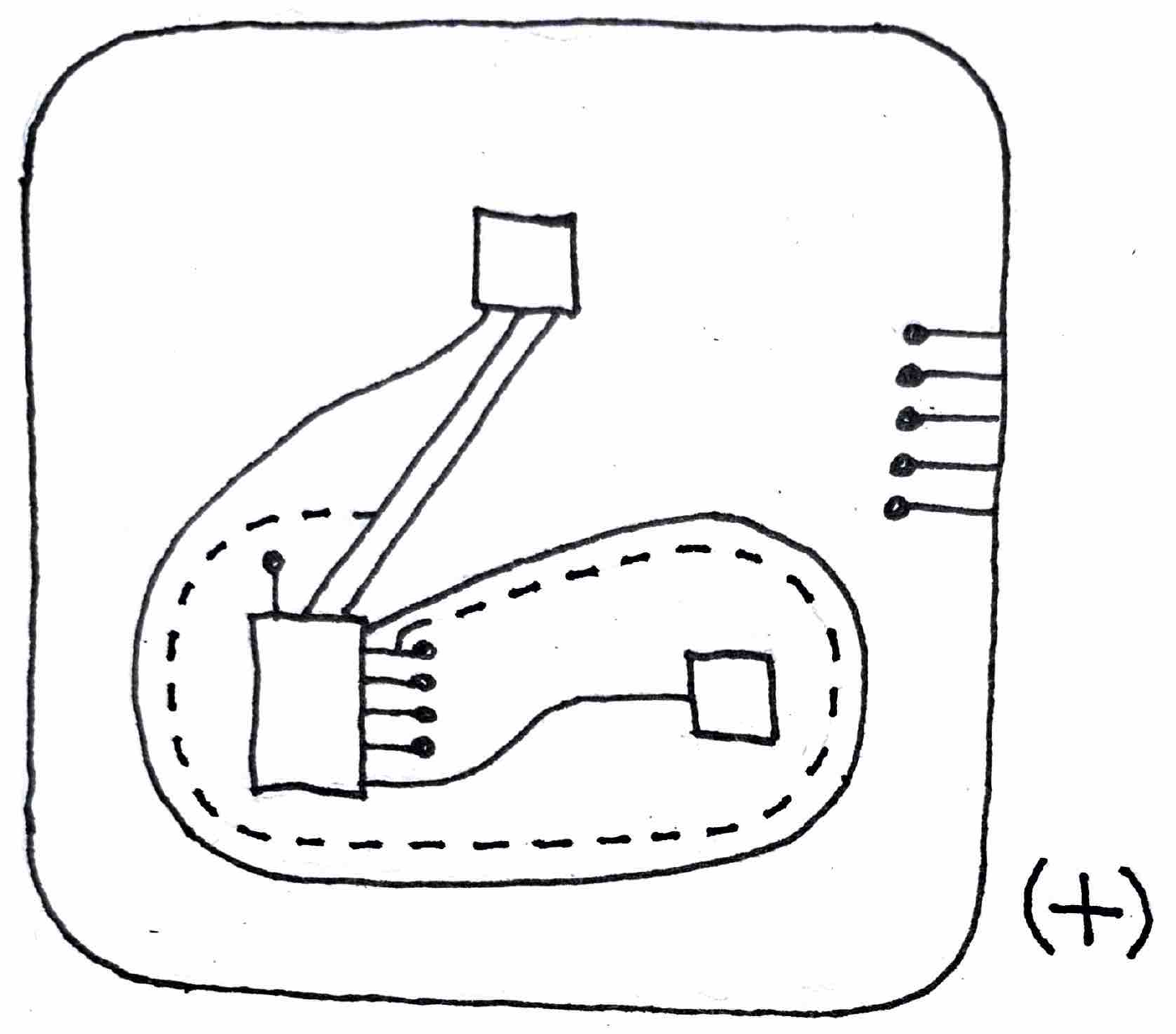} & Shift \(\xi_2\) to have slope \(-2\). \\
    \includegraphics[scale = 0.2]{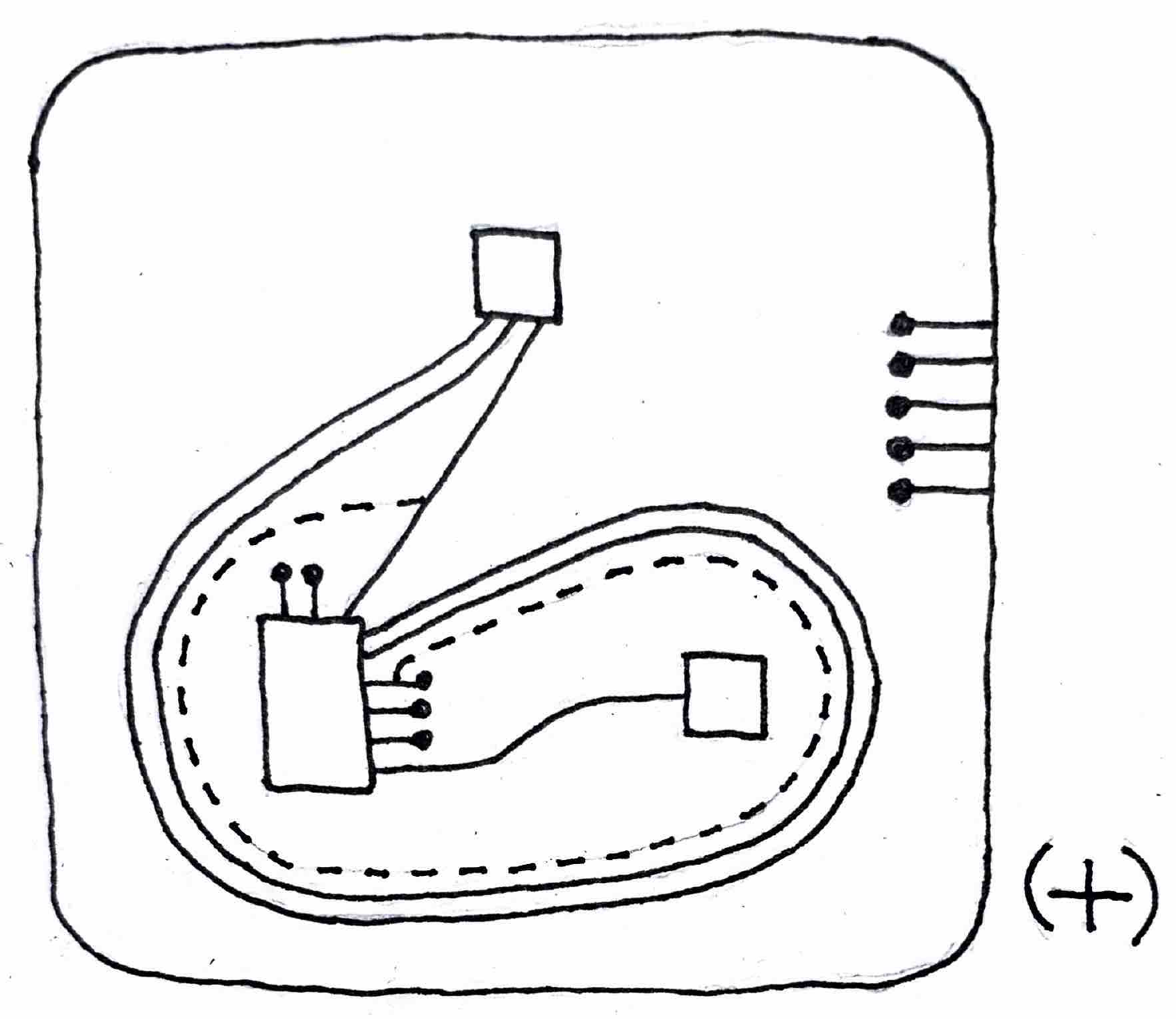} & Shift \(\xi_3\) to have slope \(-2\). 
    \end{longtblr}
    \captionof{figure}{}
    \label{fig:stage2_one}
\end{center}

During the next steps, we form negative singularities between \(\eta\) and \(\xi_{n_3+1},\ldots,\xi_1\), in this order. This is done in such a manner so that after the singularities are formed, 
\begin{itemize}
    \item \(\eta\) has slope \(s_{\lvert r_0\rvert -1}=r_0+\frac{1}{2}\),
    \item \(\xi_1,\ldots,\xi_{n_3}\) have slope \(s_{\lvert r_0\rvert}=r_0 +\frac{1}{3}\), 
    \item \(\xi_{n_3+1}\) has slope \(s_{\lvert r_0\rvert-2}=r_0+1\).
\end{itemize}
Each additional negative singularity formed with \(\eta\) makes \(\eta\) twist to the left around \(B_3\). We show this below.
\begin{center}
    \captionsetup{type=figure}
    \begin{longtblr}{
    colspec = {X[c,h]X[l]},
    }
    \includegraphics[scale = 0.25]{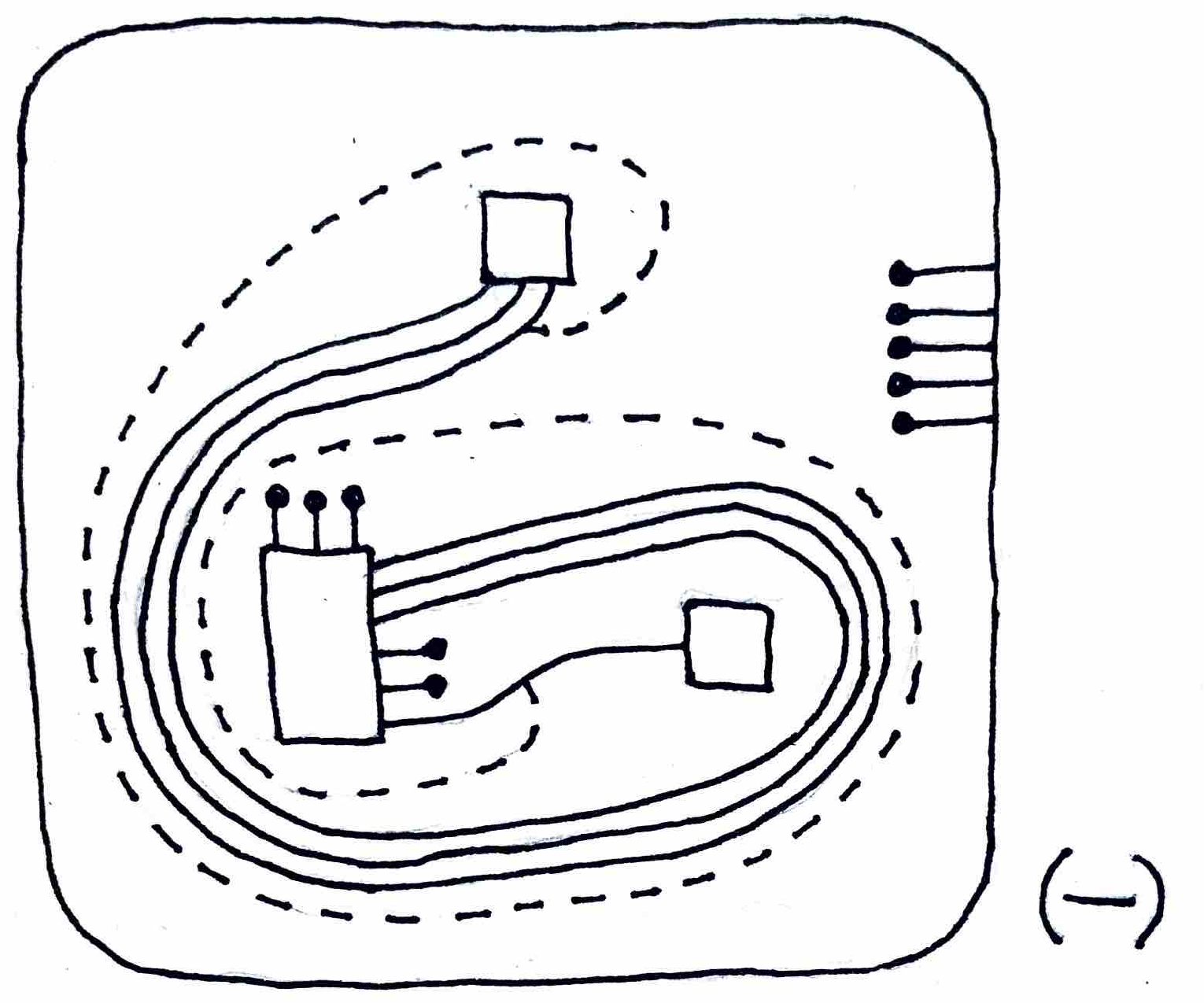} & Form a negative singularity between \(\xi_{3}\) and \(\eta\).\\
    \includegraphics[scale = 0.25]{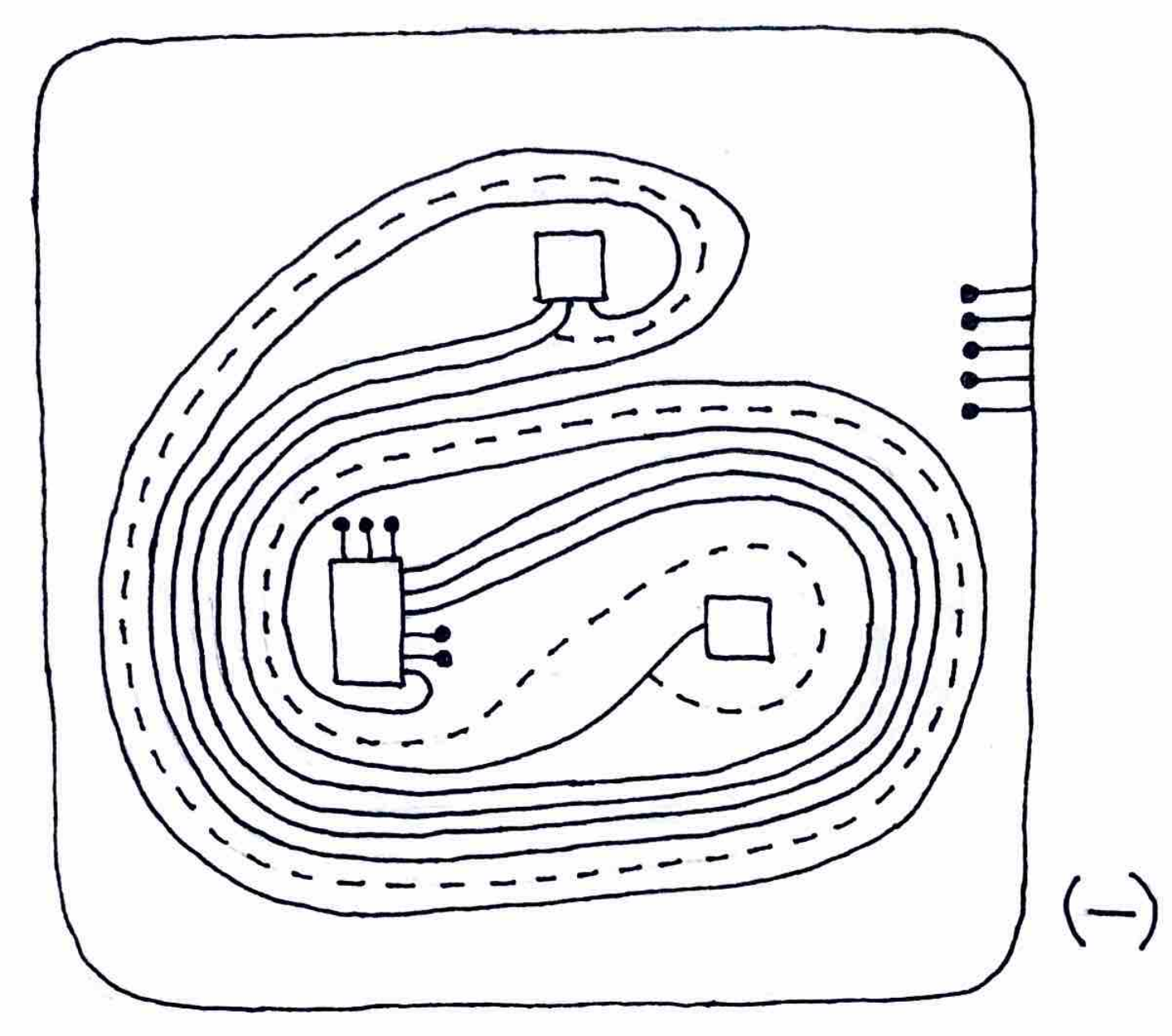} & Form a negative singularity between \(\xi_{2}\) and \(\eta\).\\
    \includegraphics[scale = 0.25]{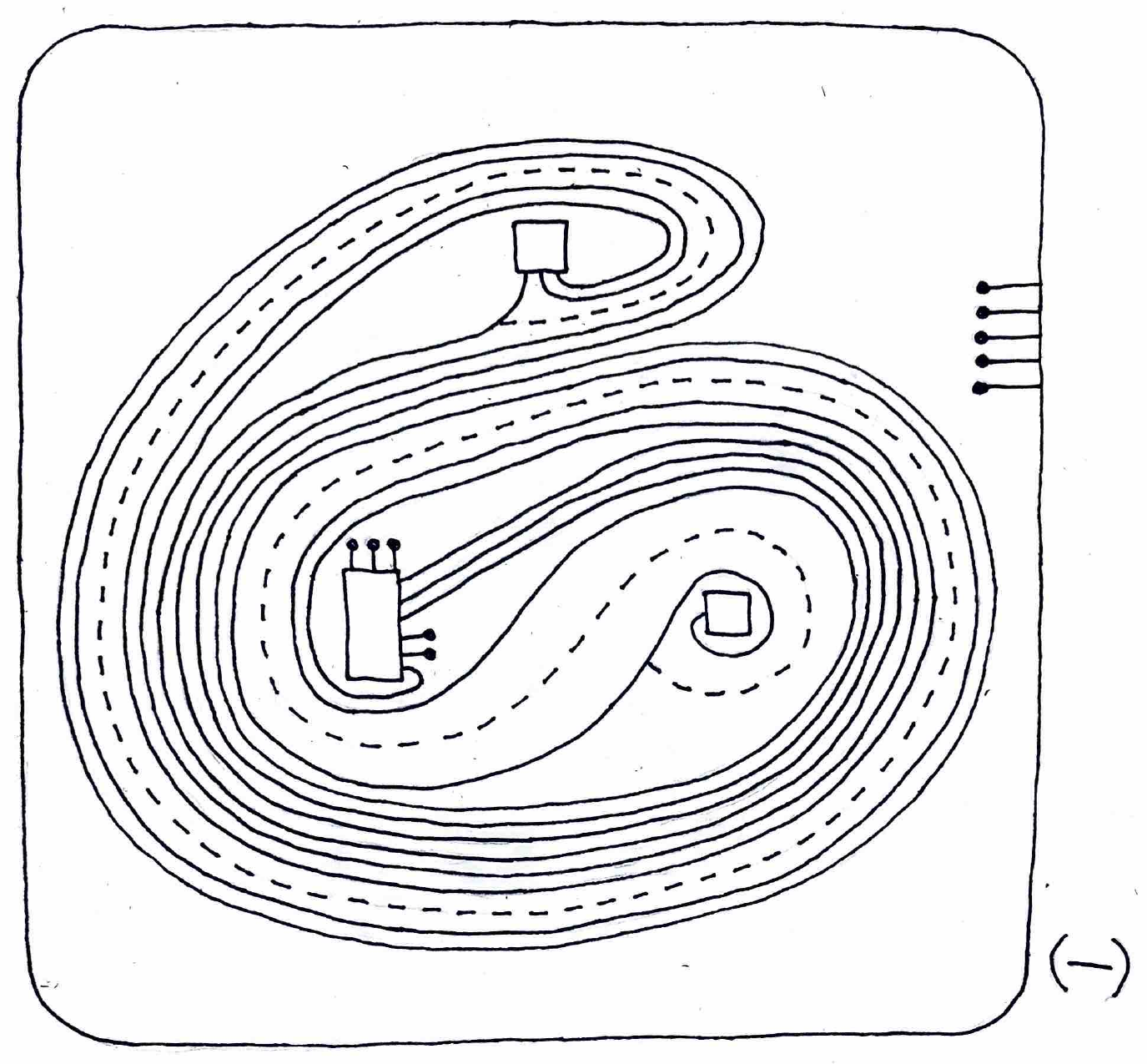} & Form a negative singularity between \(\xi_{1}\) and \(\eta\).
    \end{longtblr}
    \captionof{figure}{}
    \label{fig:stage2_one_cont}
\end{center}

Next, we shift \(\xi_{n_3+1},\eta\) (in this order) to have slope \(s_{\lvert r_0\rvert}\). Subsequently, we repeatedly shift the arcs 
\(\xi_{1},\xi_{2},\ldots, \xi_{n_3+1},\eta\) in order to have the slopes \(s_{\lvert r_0\rvert +1},\ldots,s_N,\). For \(\lvert r_0\rvert-1 \leq i\leq N\), one of \(p_i\) or \(q_i\) is even. Consequently, the positive endpoint of each properly embedded arc is shifted to \(B_2\) or \(B_4\). This prevents the situation where \(\eta\) and some \(\xi_j\) run parallel between \(B_1\) and \(B_3\), block each other from moving. At the end, we shift \(\eta\) to have slope \(-p'/q'\). As in the proof of Theorem \ref{thm:main_one}, during this process we have enough freedom to realize appropriate FDTCs about each of the binding components. 

We show this below for our current case. As a matter of short-hand, we will use a single red arc to denote \(\xi_1,\xi_2,\xi_3\) altogether once they are all parallel. We will allow ourselves to shift this red arc in a single step, with the understanding that this is an abbreviation for first shifting \(\xi_1\), then \(\xi_2\), then \(\xi_3\) as in the first three steps above. Red hairs will also be used to abbreviate three hairs sitting next to each other.
\begin{center}
    \captionsetup{type=figure}
    \begin{longtblr}{
    colspec = {X[c,h]X[l]},
    }
    \includegraphics[scale = 0.3]{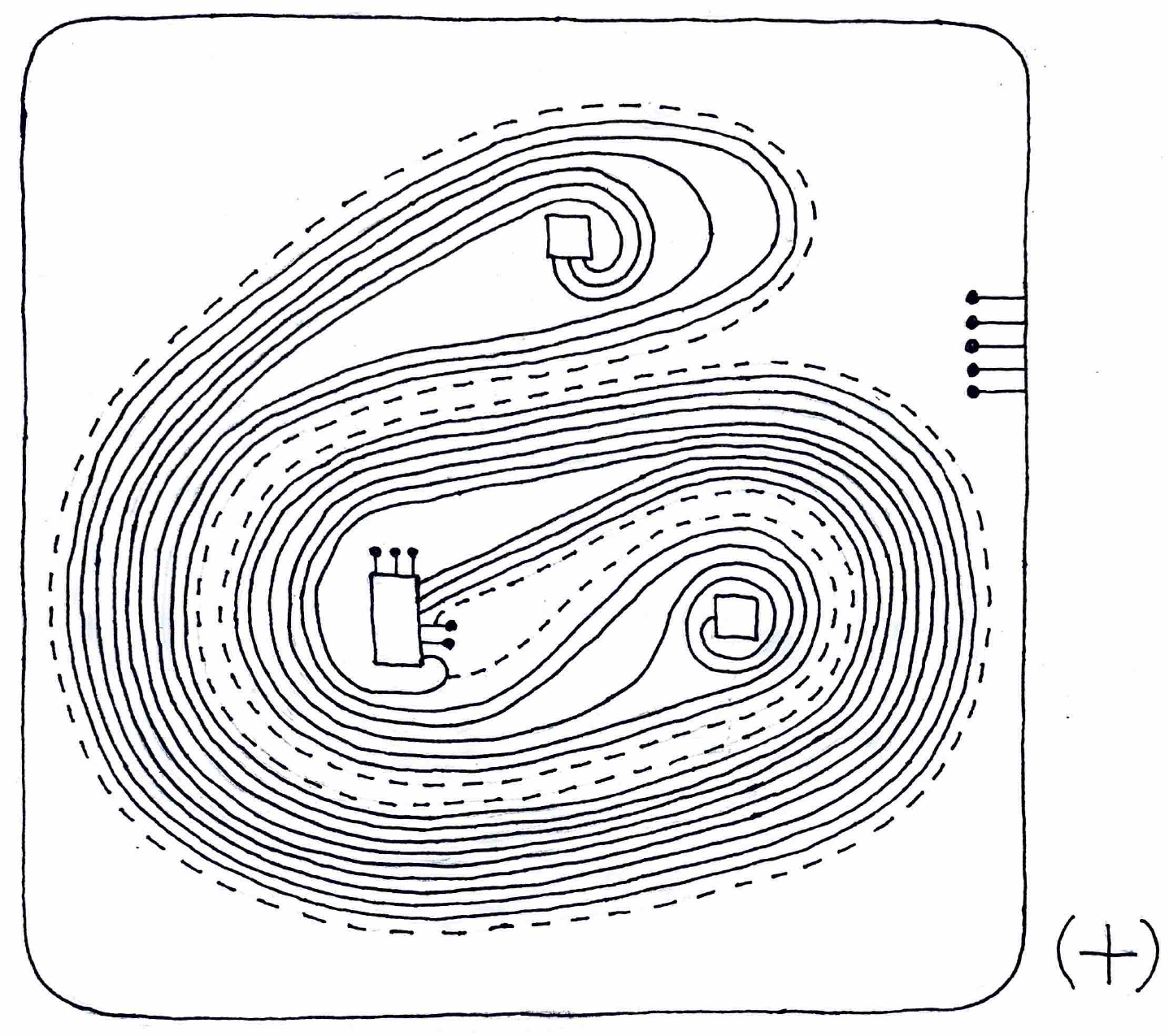} & Shift \(\xi_3\) to have slope \(-8/3\).\\
    \includegraphics[scale = 0.9]{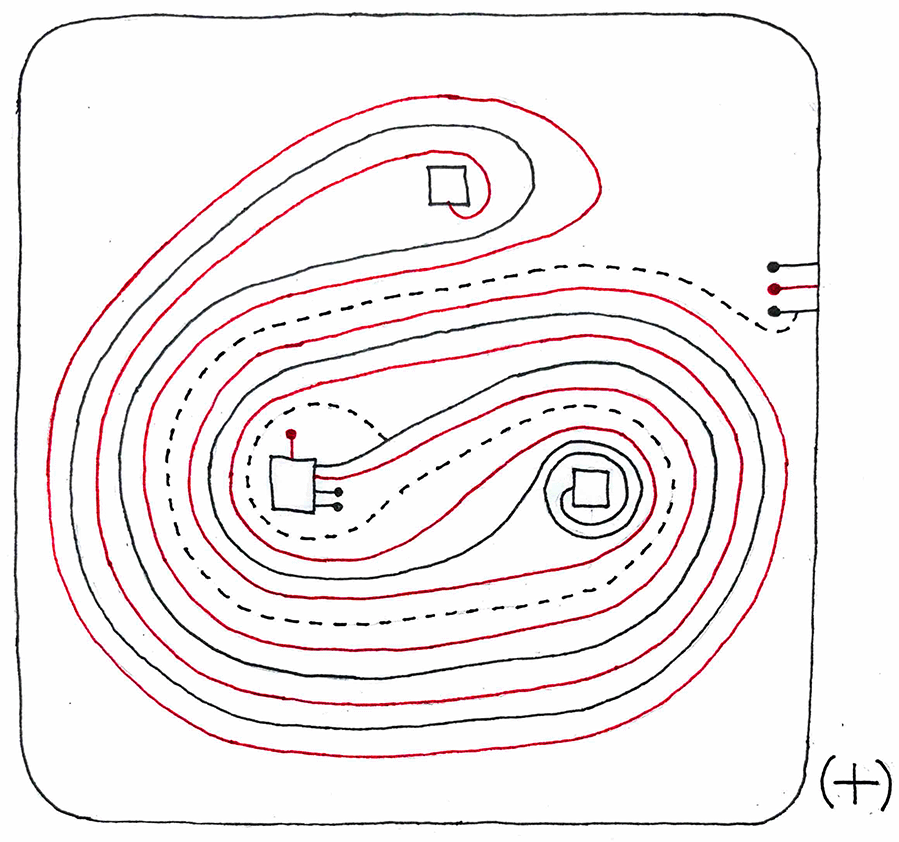} & Shift \(\eta\) to have slope \(-8/3\).\\
    \includegraphics[scale = 0.9]{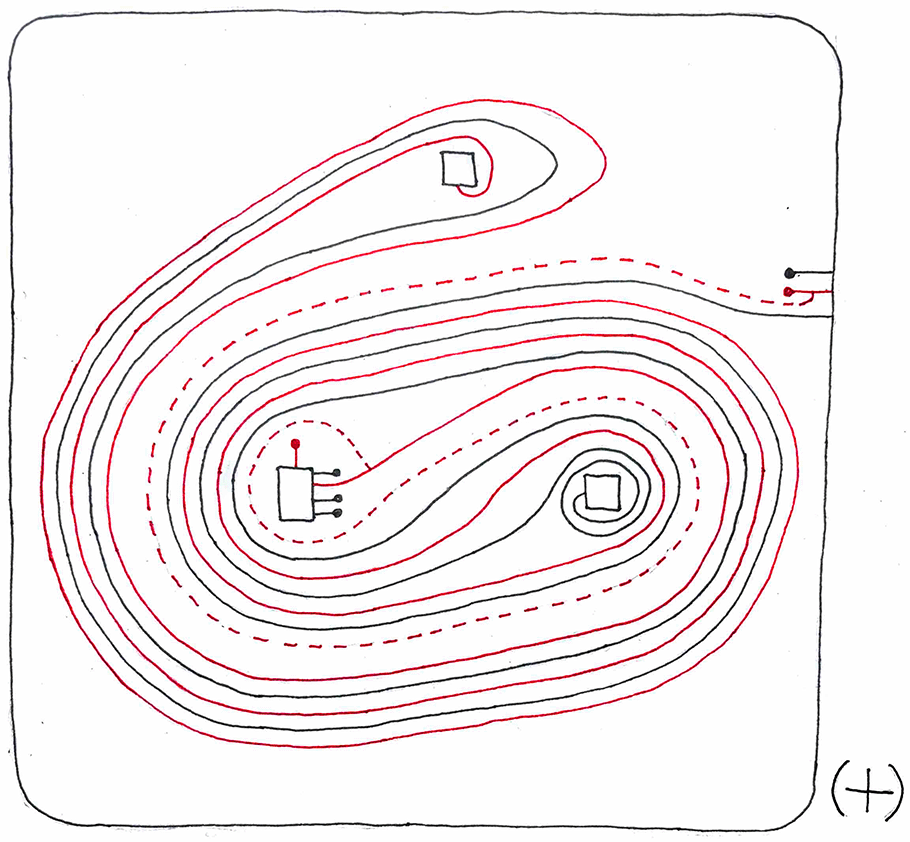} & Shift \(\xi_1,\xi_2,\xi_3\) to have slope \(-11/4\).\\
    \includegraphics[scale = 0.9]{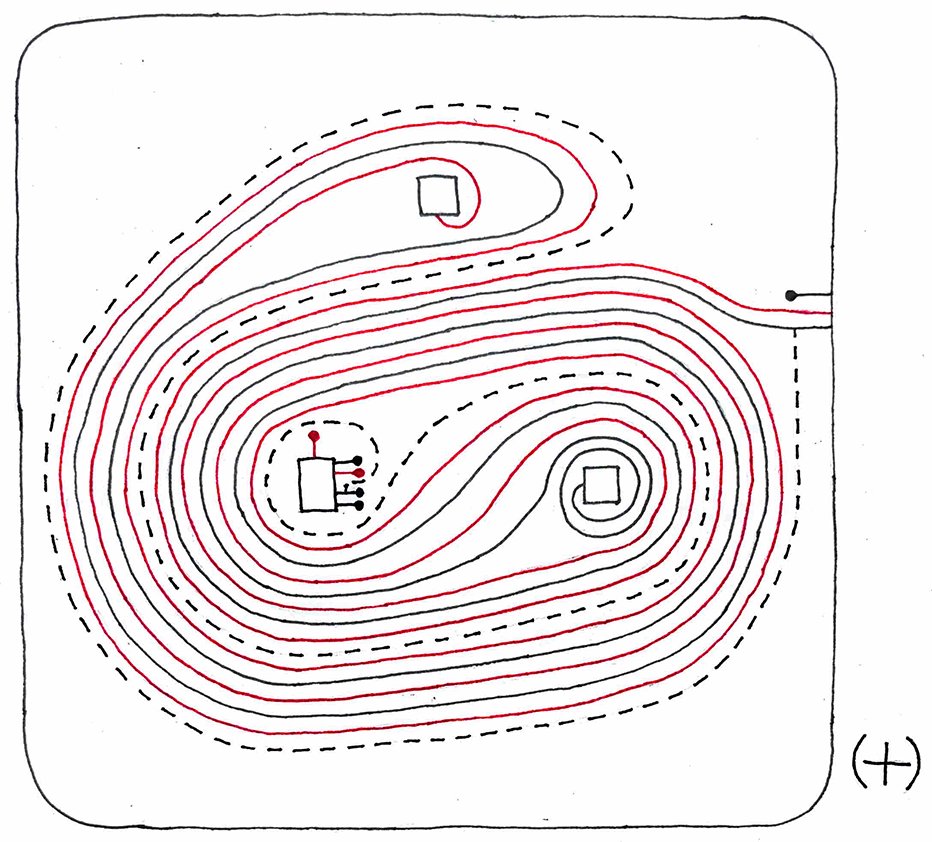} & Shift \(\eta\) to have slope \(-11/4\), twisting left around \(B_2\).\\
    \includegraphics[scale = 0.9]{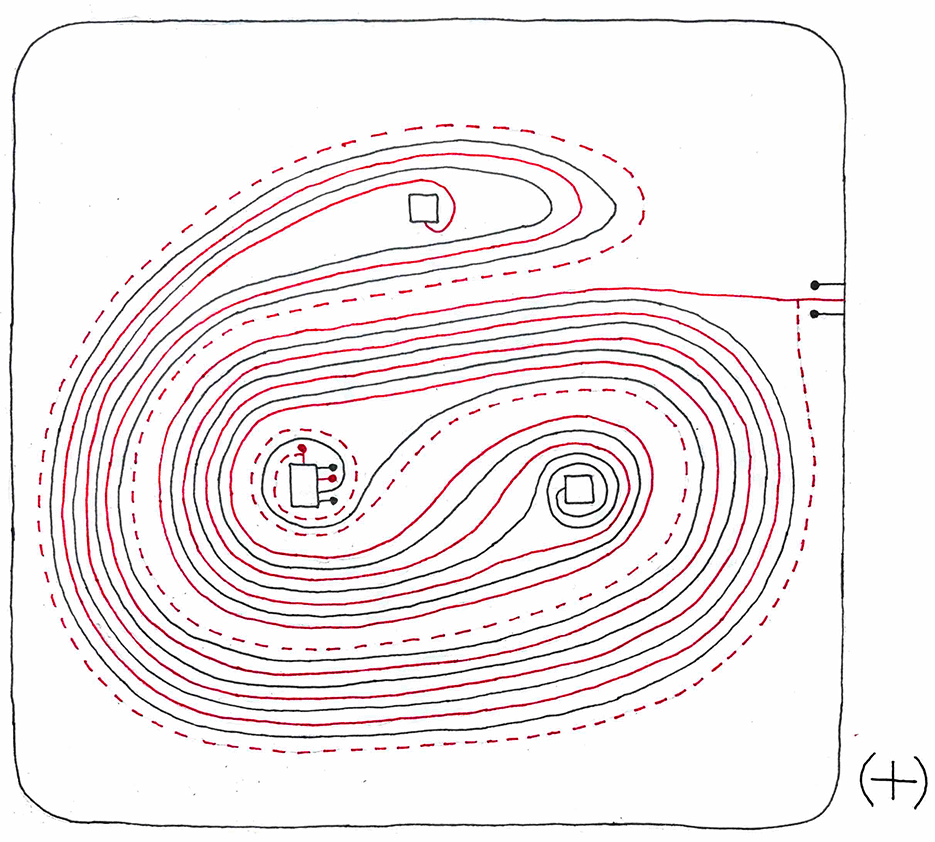} & Shift \(\xi_1,\xi_2,\xi_3\) to have slope \(-14/5\) and positive endpoint \(z_1,z_2,z_3\), realizing FDTC \(n_2=2\) at \(B_2\).\\
    \includegraphics[scale = 0.9]{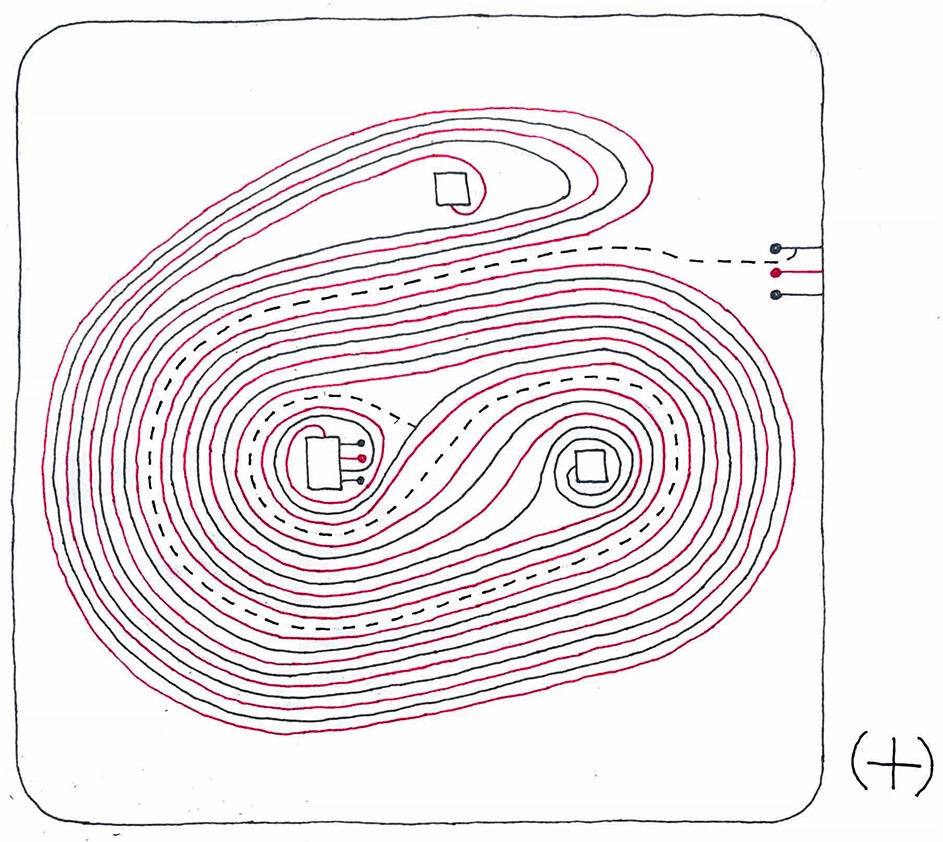} & Shift \(\eta\) to have slope \(-14/5\).\\
    \includegraphics[scale = 0.9]{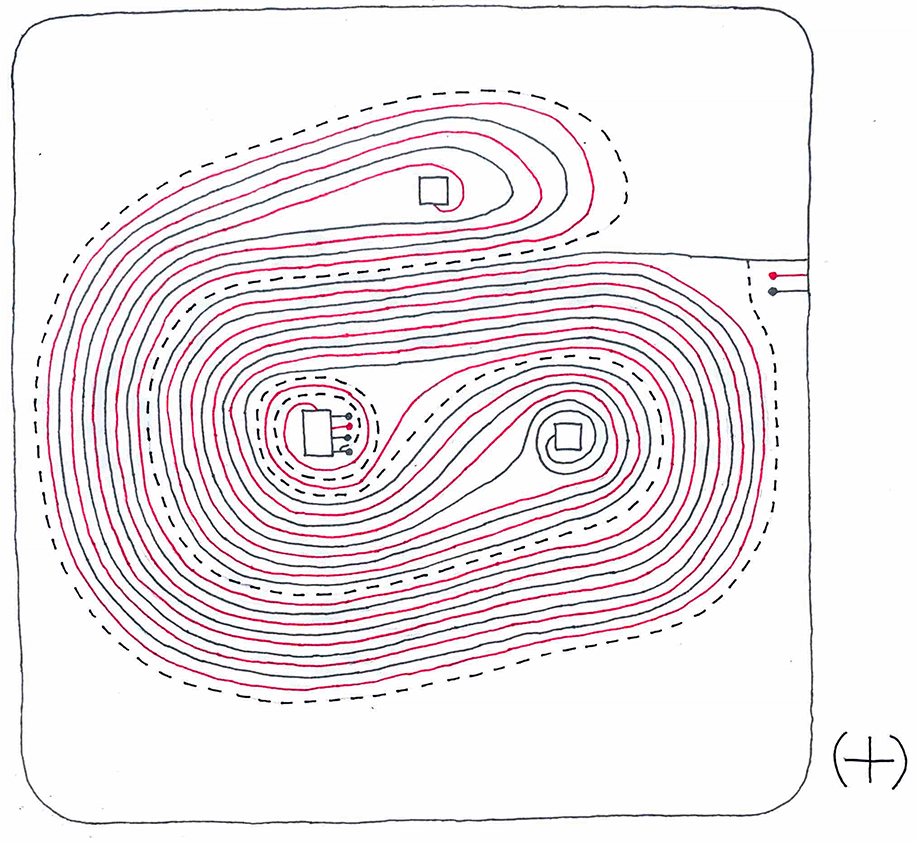} & Shift \(\eta\) to have slope \(-17/6\) and positive endpoint \(w\), realizing FDTC \(n_2=2\) at \(B_2\).\\
    \includegraphics[scale = 0.9]{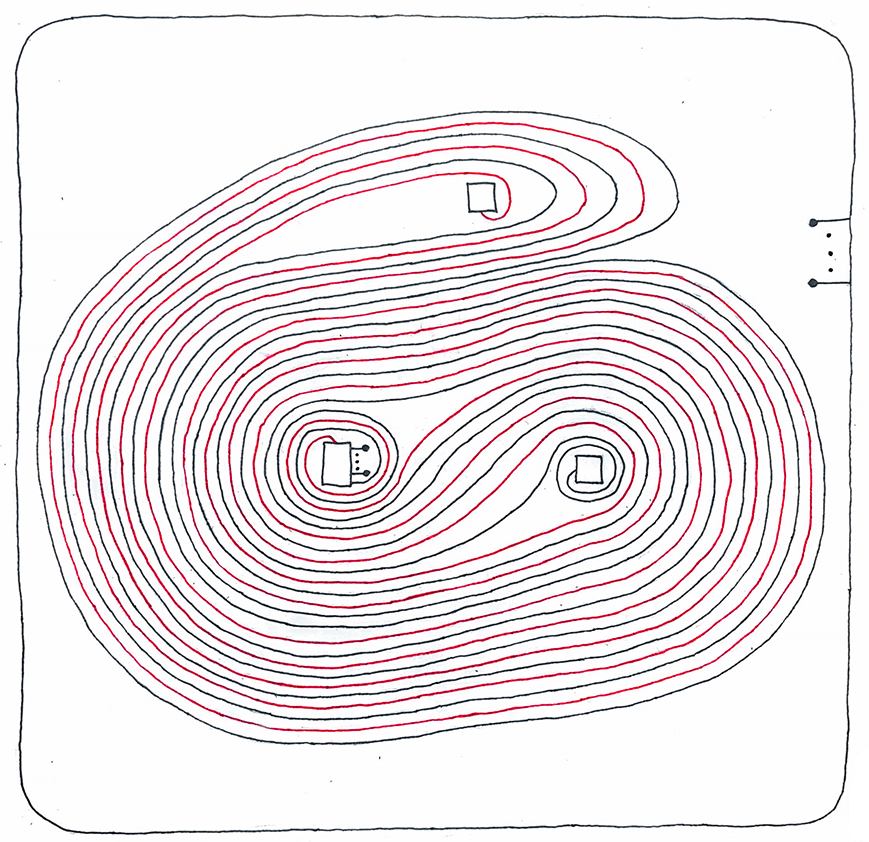} & The page \(\Sigma_{2\pi-\epsilon}\).
    \end{longtblr}
    \captionof{figure}{}
    \label{fig:stage2_two}
\end{center}

The above movie presentation specifies a transverse overtwisted disk \(D\) whose \(G_{--}\) graph consists of a single central vertex \(y\) connected to \(n_3+1\) leaves \(x_1,\ldots,x_{n_3+1}\). 
\end{proof}
\begin{proof}[Proof of {\normalfont(2)}]
We continue to use the same notation as above. We conjugate \(f\) so that \(n_1=1\) and \(n_2\leq \frac{1}{2}(\lvert r_0\rvert -3)\). If we mimic the proof strategy of (1) in this case, we find that the arcs \(\xi_1,\ldots,\xi_{n_3}\) have their final slopes after the negative singularities are formed. However, at this stage they have not realized the appropriate FDTC at \(B_2\). To remedy this, we perform a certain twist move on \(\eta\) which twists \(\eta\) to the right around \(B_3\) and to the left around \(B_2\). This allows us to twist the \(\xi_j\) left around \(B_2\) before the negative singularities are formed.

We have \(\frac{1}{2}(\lvert r_0\rvert -3)\) opportunities to perform this twist move. Each time it is applied, the realizable FDTC on \(B_2\) is increased by 1 (hence the bound on \(n_2\)). However, in exchange \(\eta\) also twists to the right one more time around \(B_3\). We compensate for this by starting with \(n_2 + n_3 + 1\) arcs \(\xi_1,\ldots,\xi_{n_2+n_3+1}\). (We still use the same labeling scheme as in Figure \ref{fig:page_zero_again}.) Other than this, the movie presentation is unchanged from the proof of (1). Below we show the simplest case, when 
\[\pi(f) = \begin{bmatrix}19 & 4 \\ 14 & 3\end{bmatrix}\]
and \(n_1=n_2=n_3=1\). (The value of \(n_4\) does not affect the movie presentation.) In this case
\[-\frac{p}{q}=-\frac{14}{3}=[-5,-3]\]
and 
\[s_0 = -1,\quad s_1=-2,\quad s_2=-3,\quad s_3=-4, \quad s_4 = -\frac{14}{3}.\]
\begin{center}
    \captionsetup{type=figure}
    \begin{longtblr}{
    colspec = {X[c,h]X[l]},
    }
    \includegraphics[scale = 0.2]{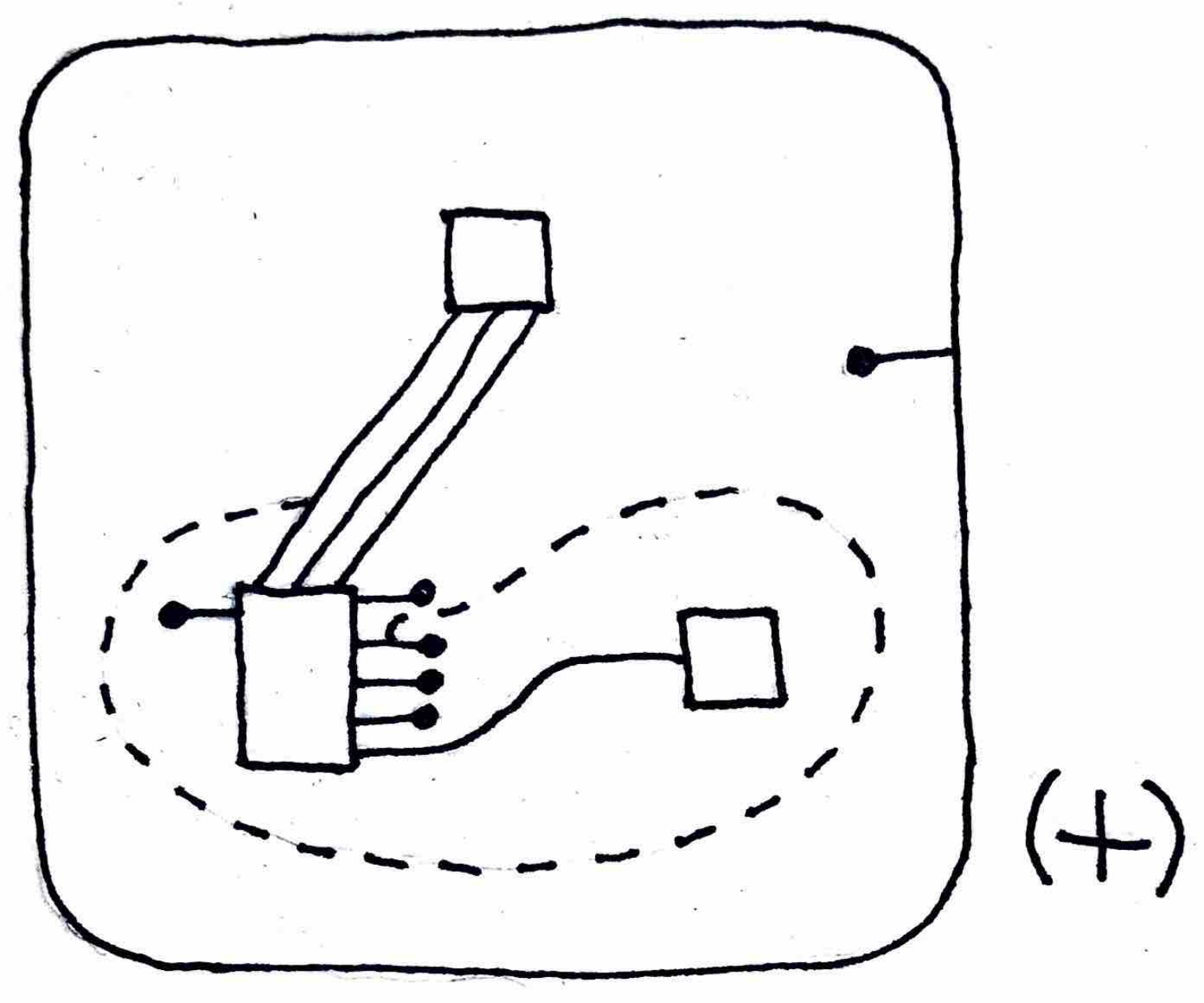} & Shift \(\xi_1\) to have slope \(-2\). \\
    \includegraphics[scale = 0.2]{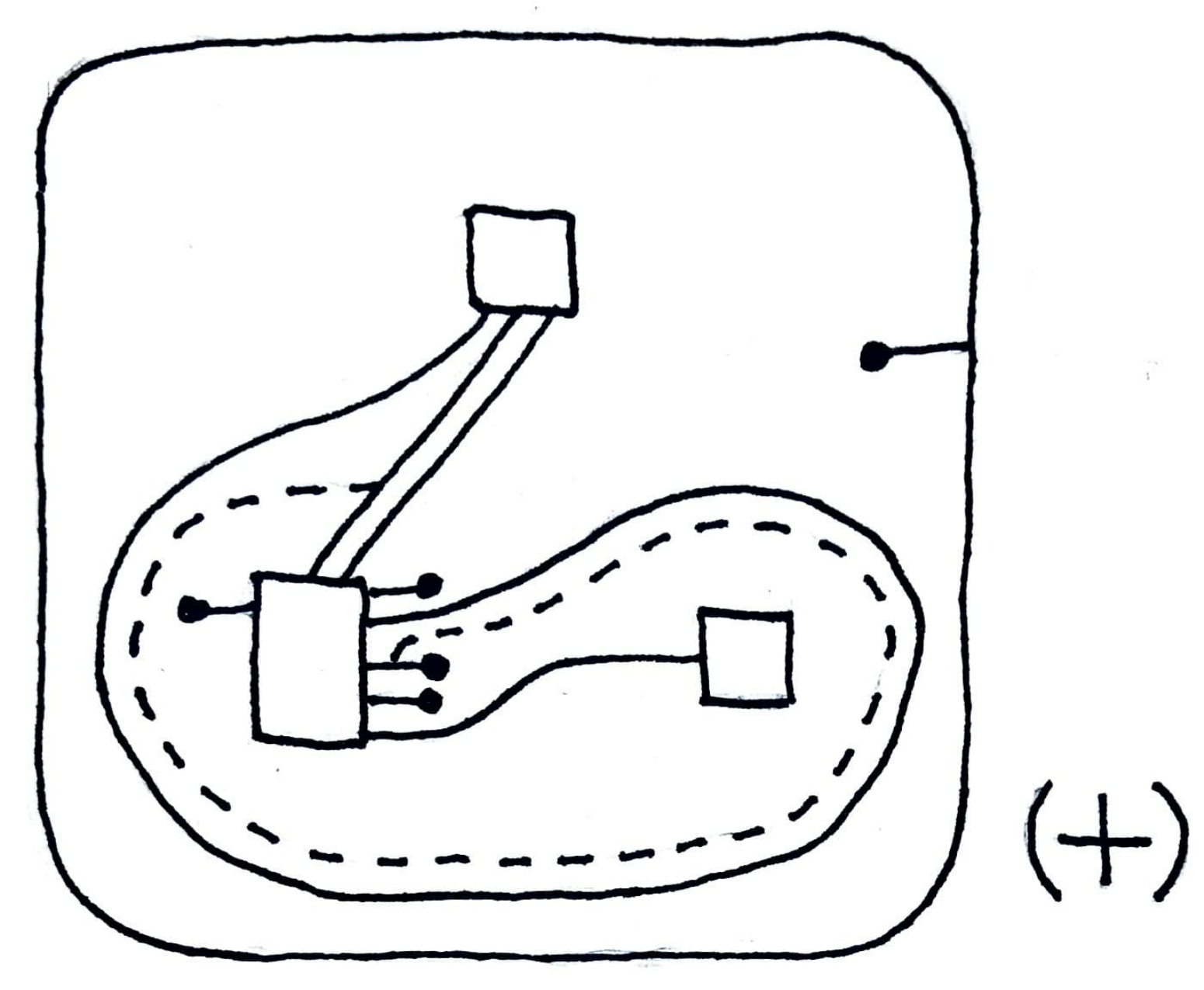} & Shift \(\xi_2\) to have slope \(-2\). \\
    \includegraphics[scale = 0.2]{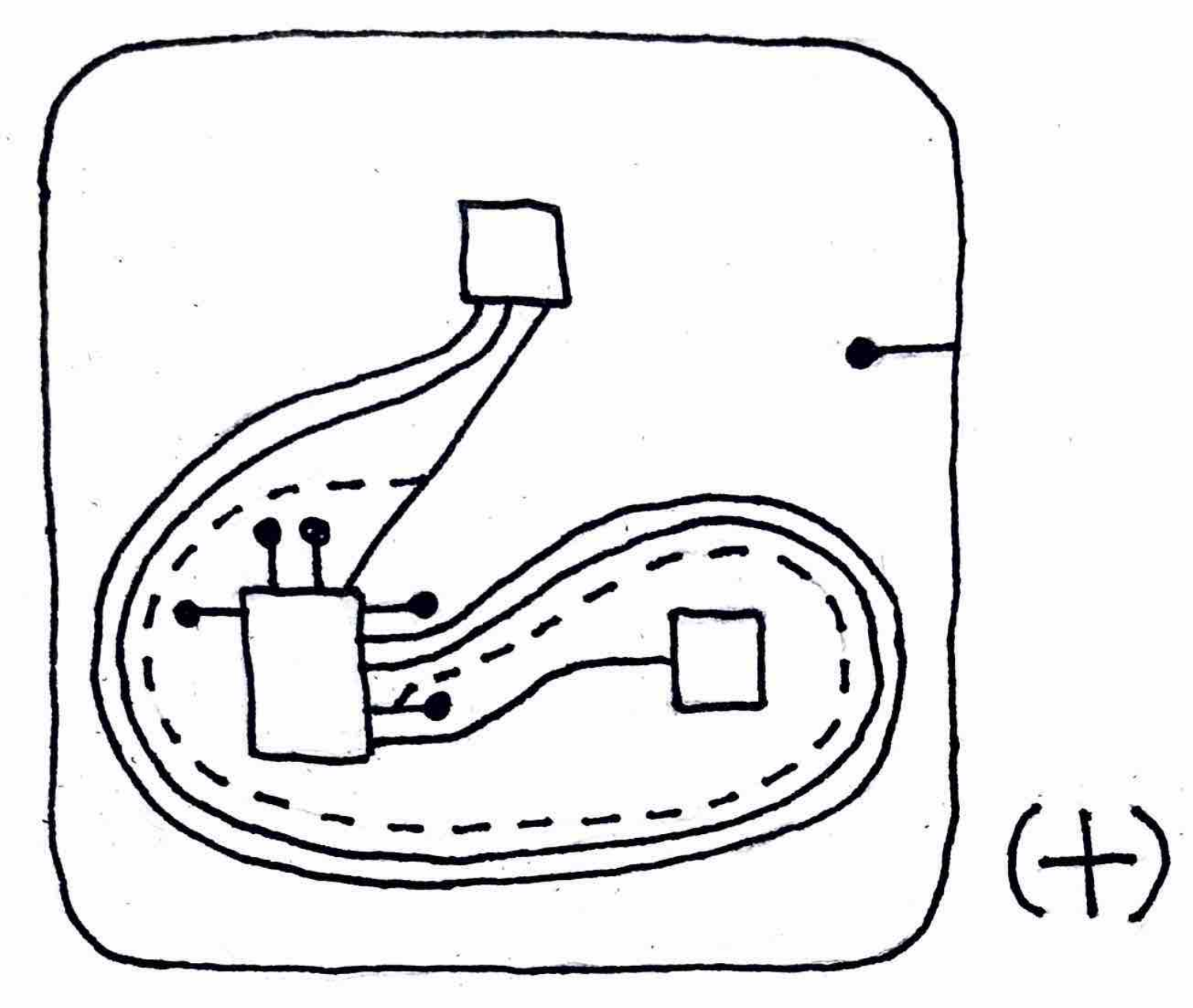} & Shift \(\xi_3\) to have slope \(-2\). \\
    \includegraphics[scale = 0.2]{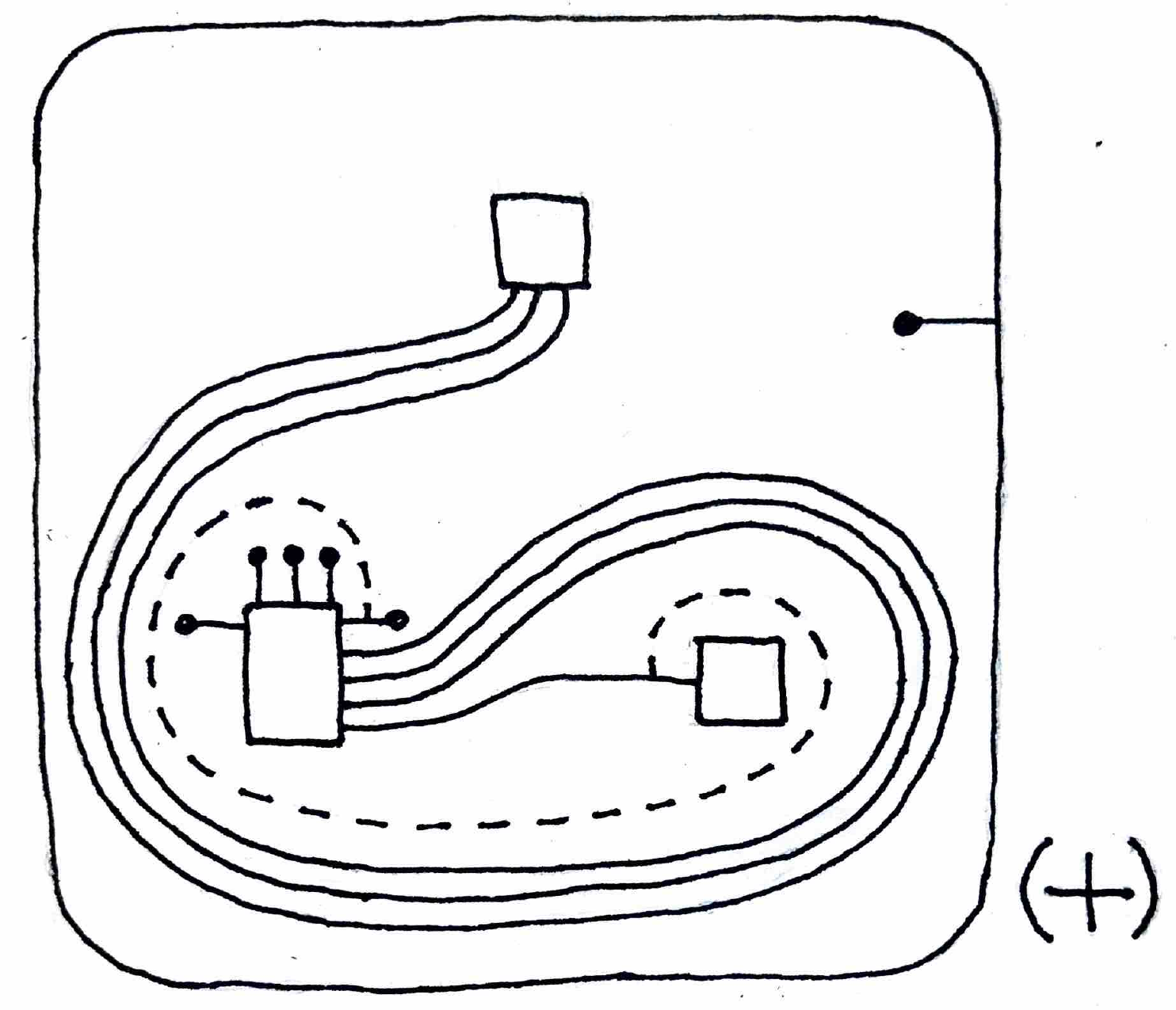} & Perform the twist move on \(\eta\).\\
    \includegraphics[scale = 0.25]{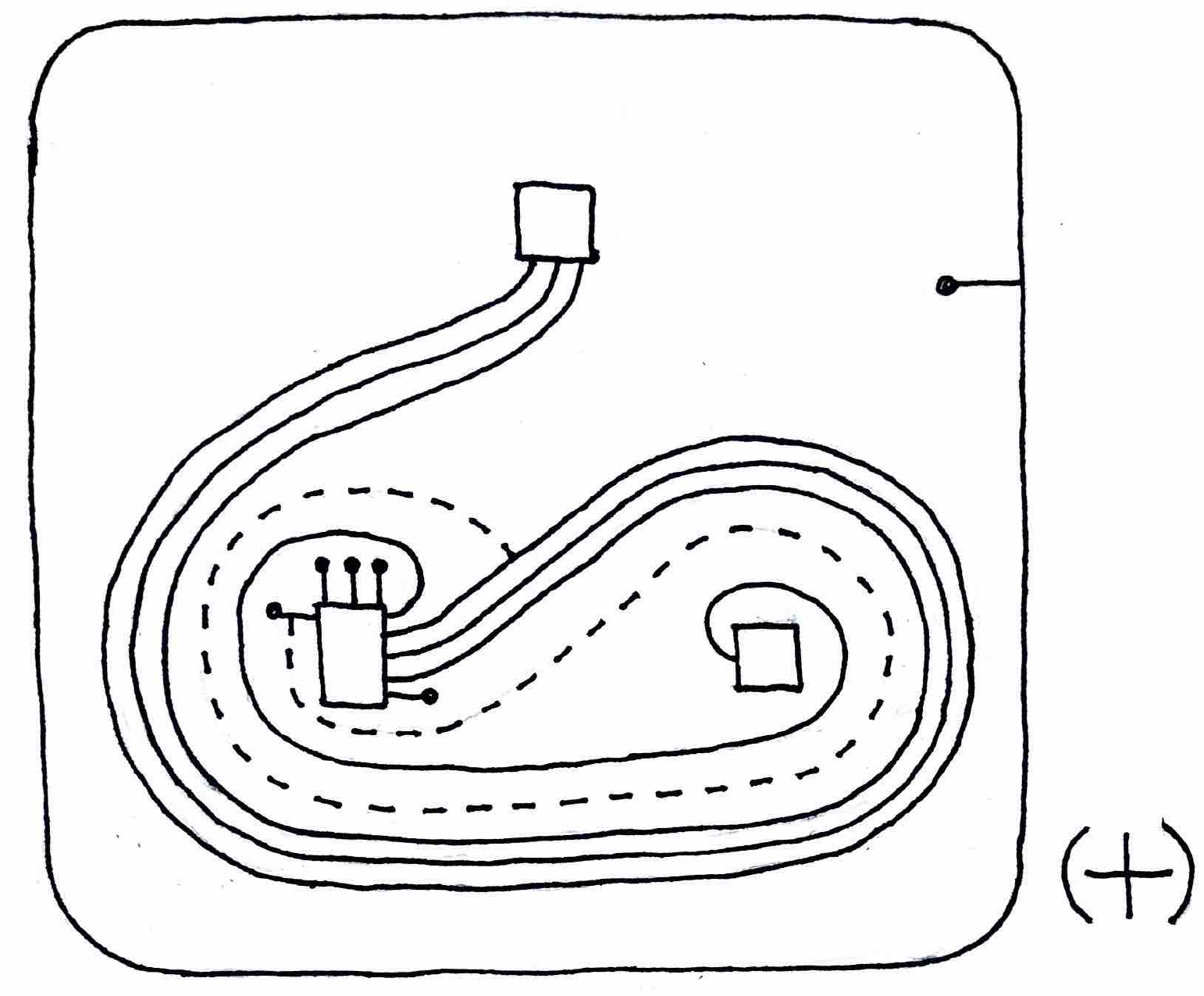} & Shift \(\xi_1\) to have slope \(-4\), twisting left around \(B_2\).\\
    \includegraphics[scale = 0.25]{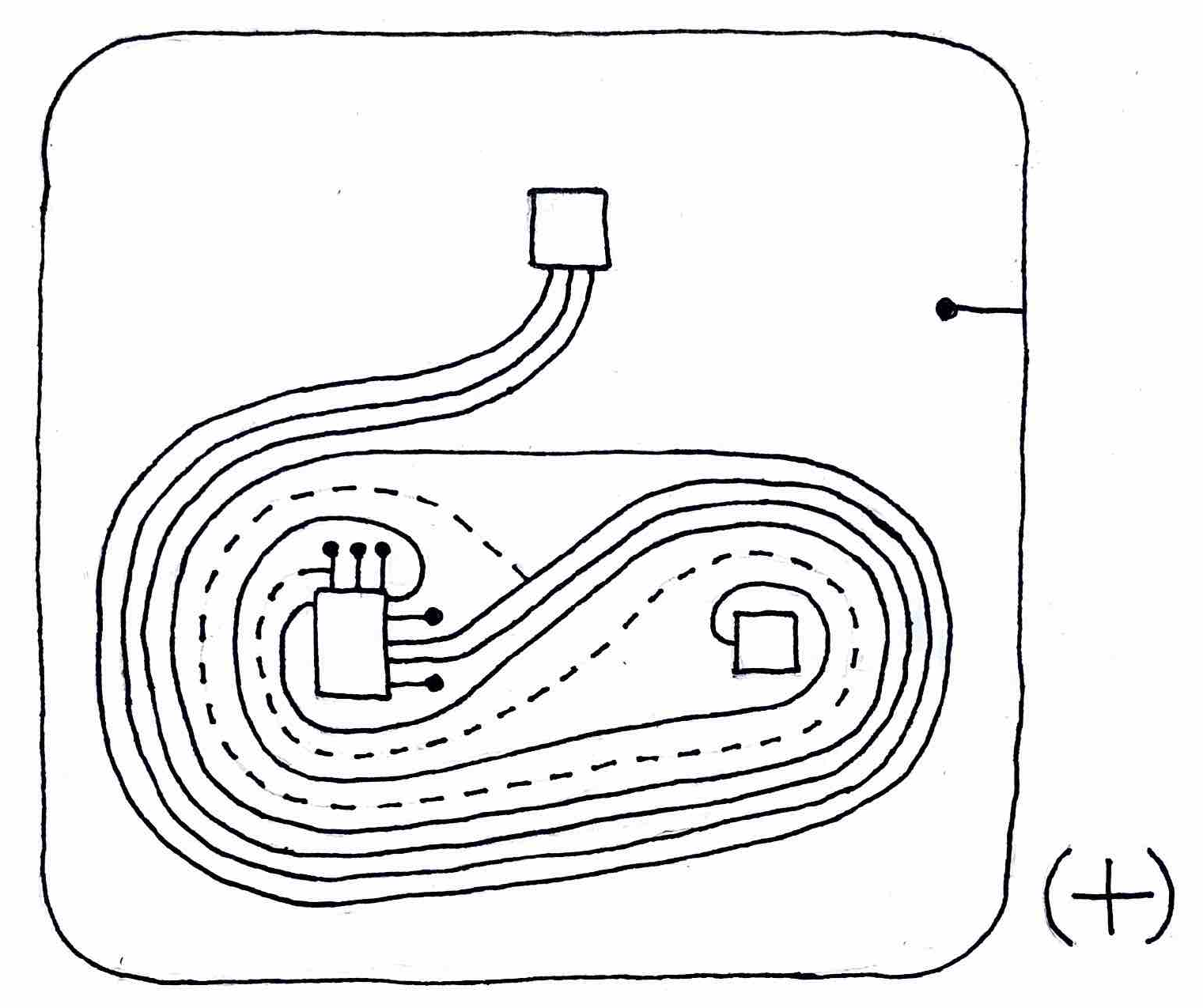} & Shift \(\xi_2\) to have slope \(-4\) and positive endpoint \(z_1\) [sic], twisting left around \(B_2\).\\
    \includegraphics[scale = 0.25]{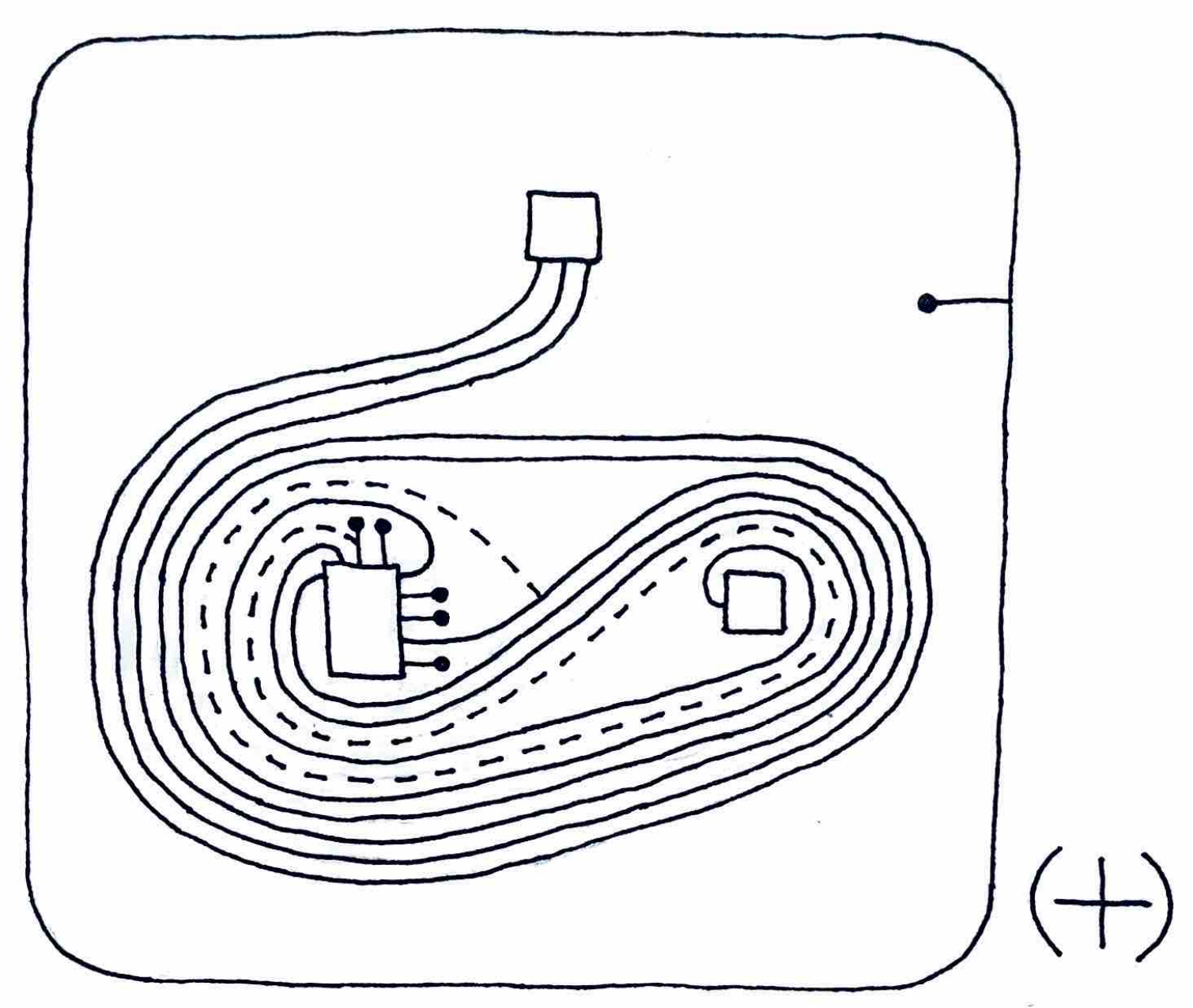} & Shift \(\xi_3\) to have slope \(-4\) and positive endpoint \(z_2\) [sic], twisting left around \(B_2\).\\
    \includegraphics[scale = 0.3]{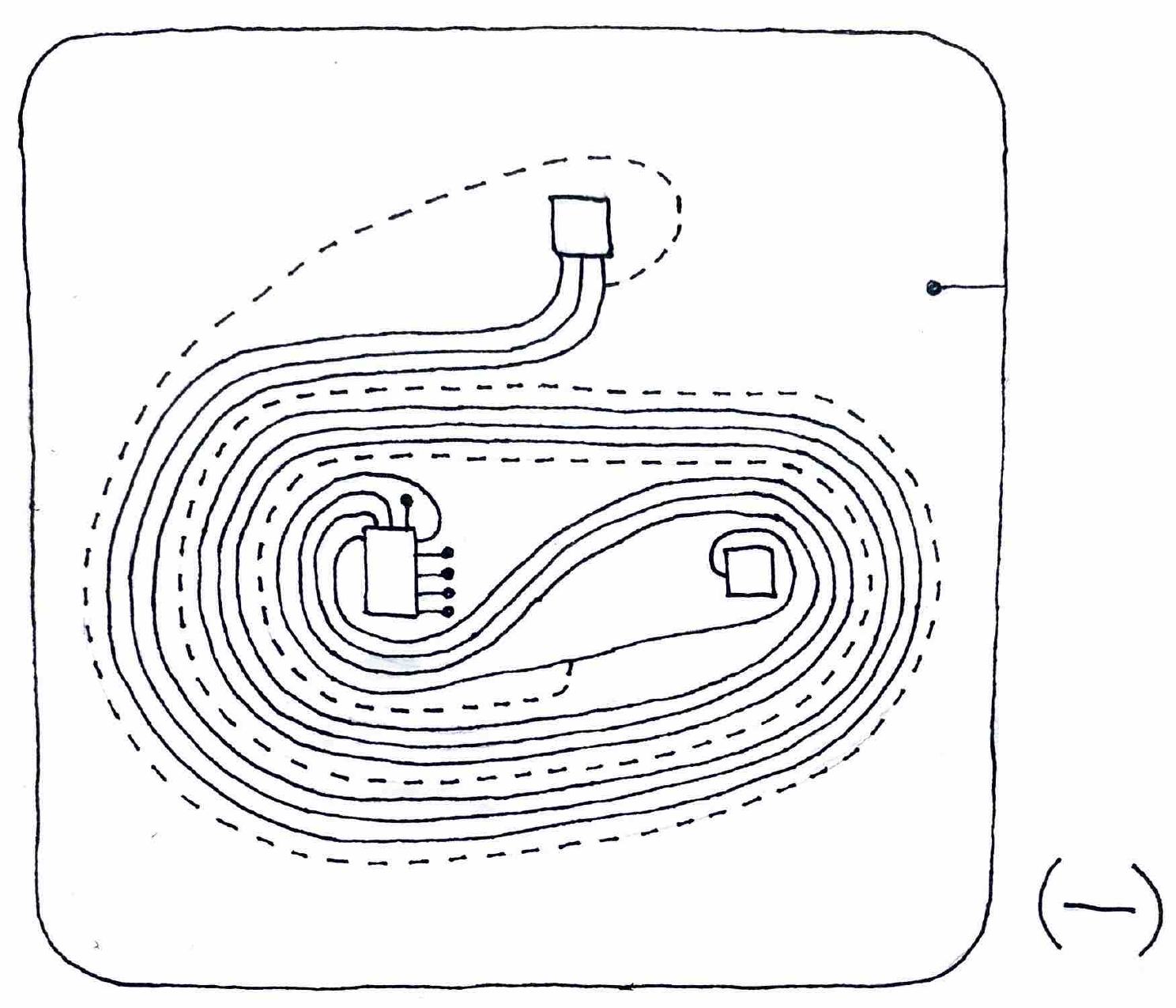} & Form a negative singularity between \(\xi_{3}\) and \(\eta\).\\
    \includegraphics[scale = 0.3]{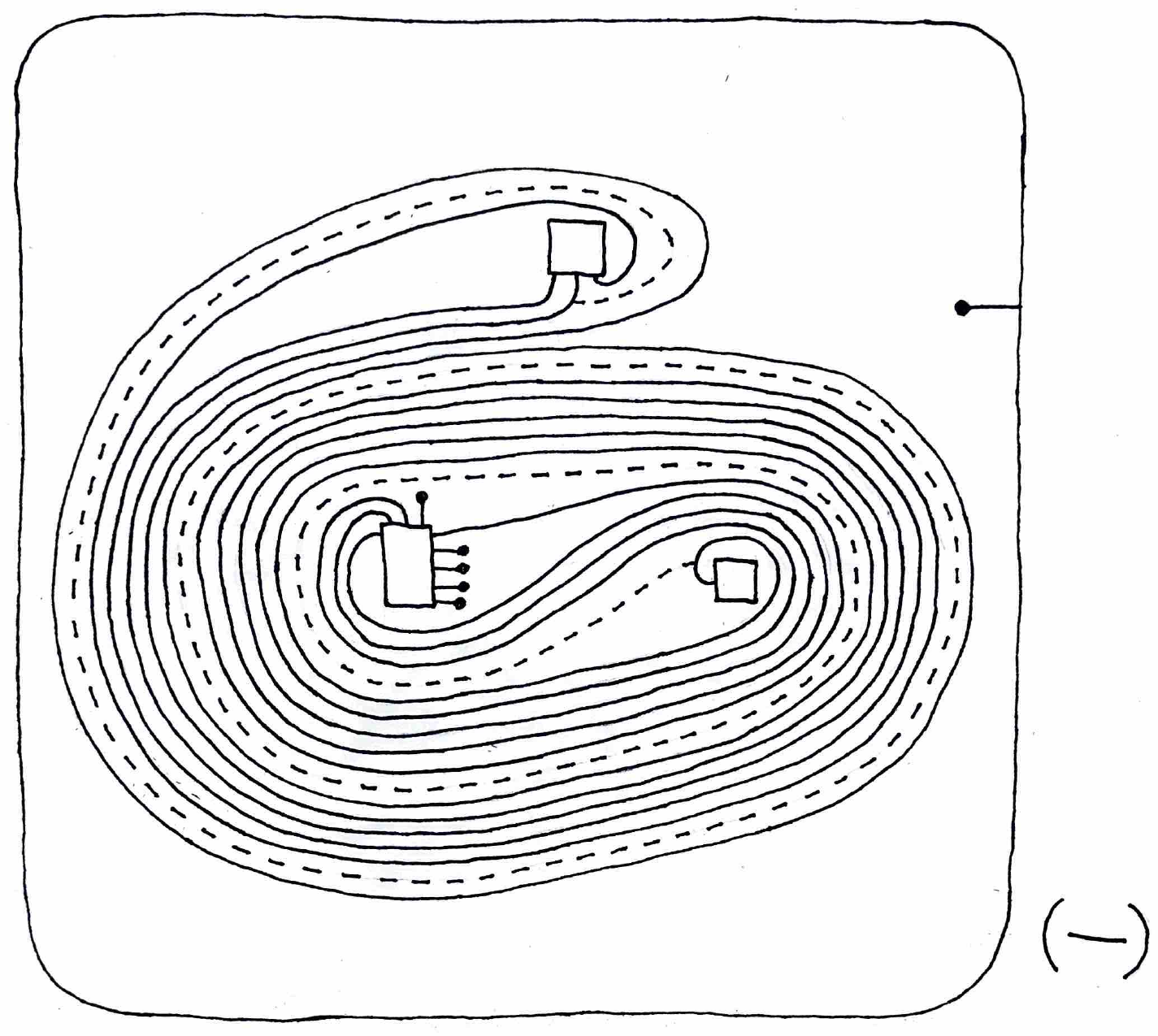} & Form a negative singularity between \(\xi_{2}\) and \(\eta\).\\
    \includegraphics[scale = 0.3]{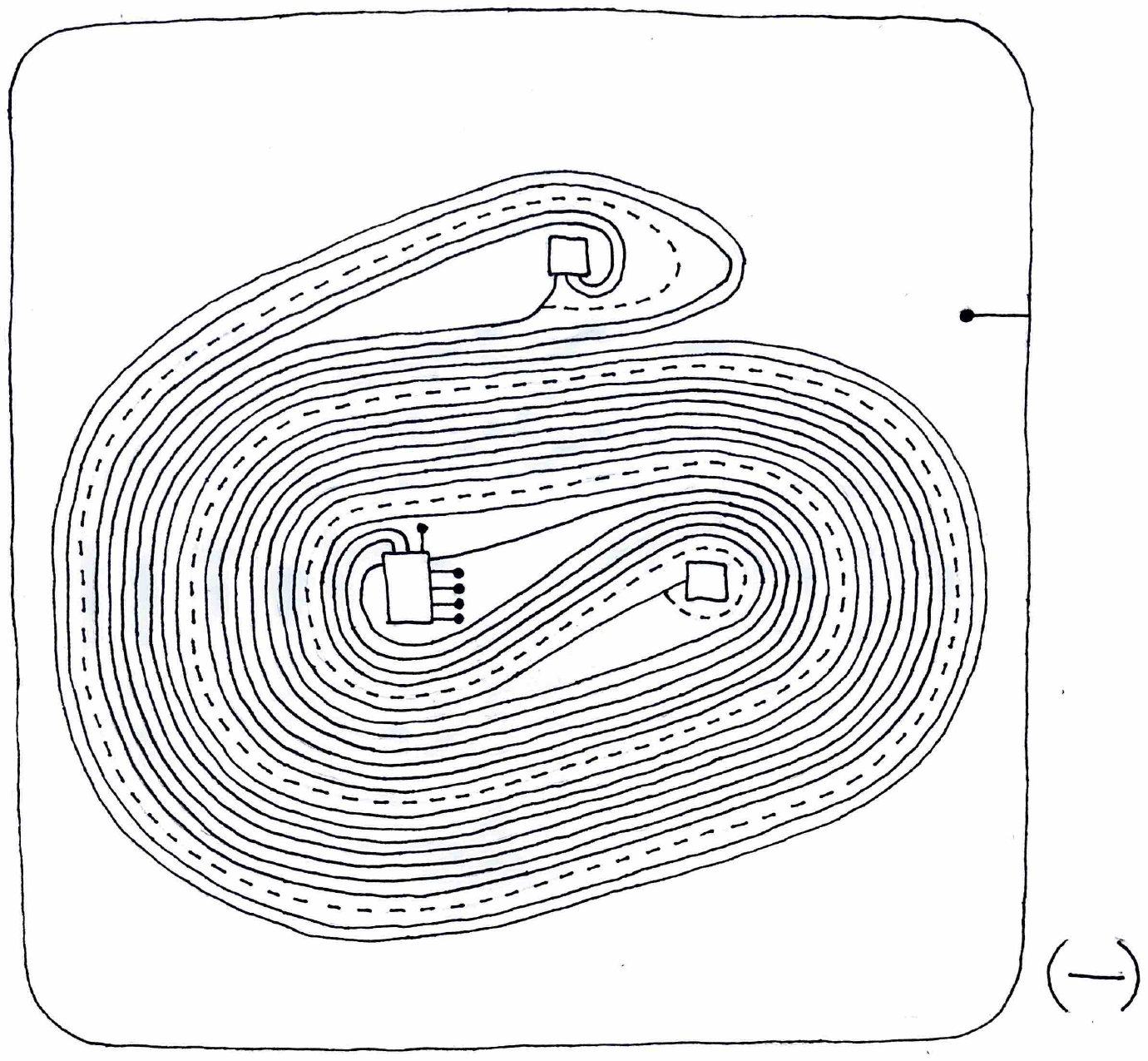} & Form a negative singularity between \(\xi_{1}\) and \(\eta\).\\
    \includegraphics[scale = 0.3]{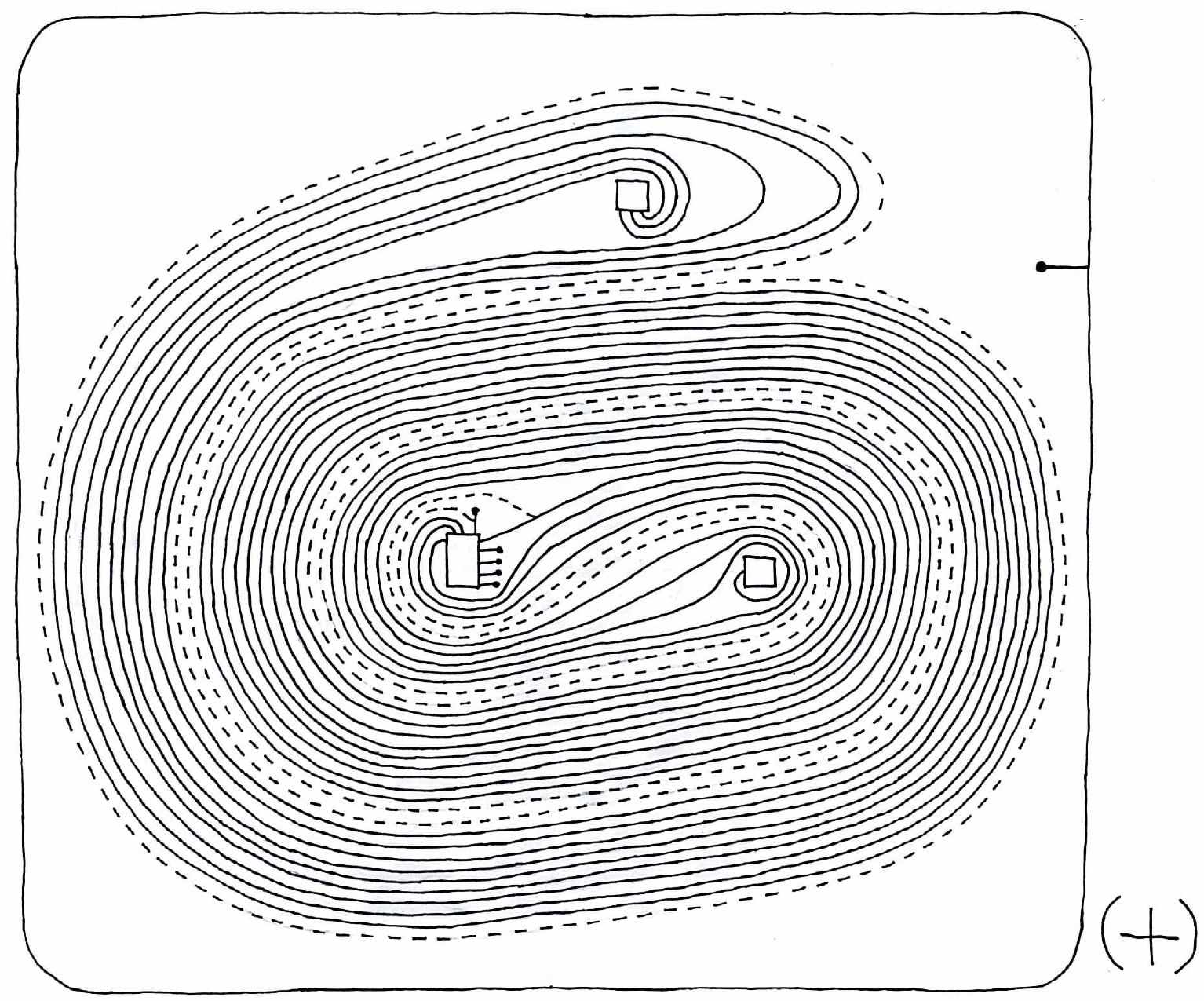} & Shift \(\xi_3\) to have slope \(-14/3\) and positive endpoint \(z_3\), realizing FDTC \(n_2=1\) at \(B_2\).\\
    \includegraphics[scale = 0.9]{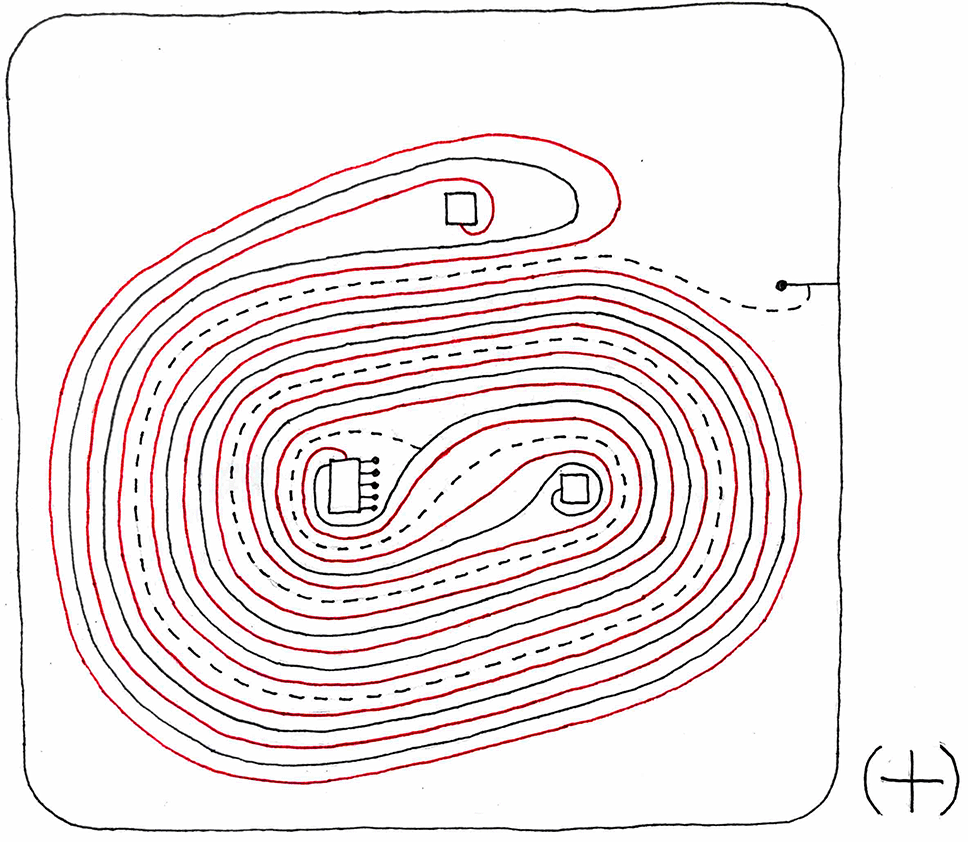} & Shift \(\eta\) to have slope \(-14/3\). (Henceforth we represent \(\xi_1,\xi_2,\xi_3\) with a single red arc.)\\
    \includegraphics[scale = 0.9]{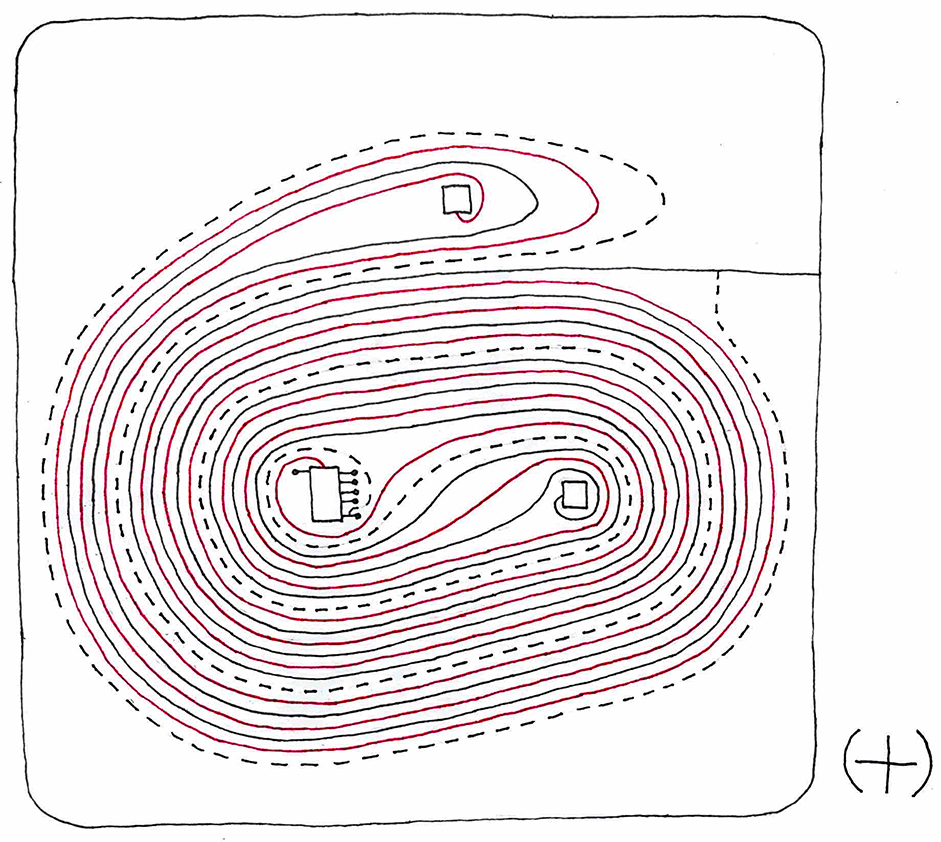} & Shift \(\eta\) to have slope \(-10/4\) and positive endpoint \(w\), realizing FDTC \(n_2=1\) at \(B_2\).\\
    \includegraphics[scale = 0.9]{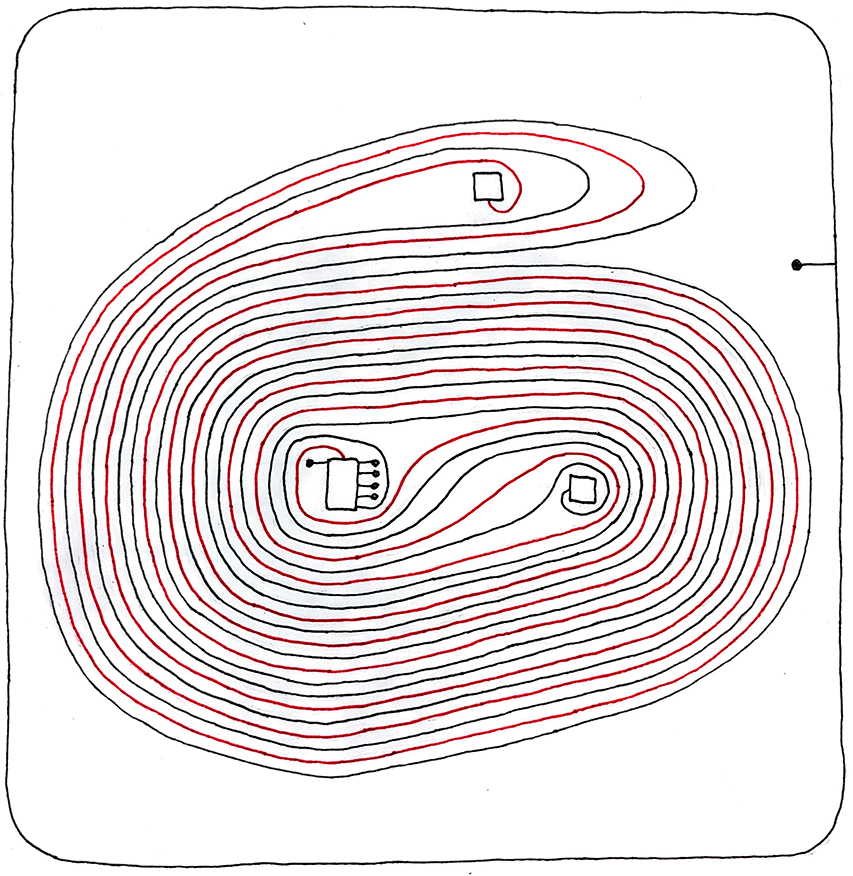} & The page \(\Sigma_{2\pi-\epsilon}\).
    \end{longtblr}
    \captionof{figure}{}
    \label{fig:movie3}
\end{center}

The above movie presentation specifies a transverse overtwisted disk whose \(G_{--}\) graph consists of a single central vertex connected to \(n_2+n_3+1\) leaves. 
\end{proof}
\begin{remark}\label{re:boundary_region_2}
The movie presentation in the proof of (1) is constructed from a boundary-based region depending only on \(n_3\) and \(r_0\). It is an immersed \((2n_3+4)\)-gon. In Figure \ref{fig:boundary_region_2}, we show the boundary-based region in the case \(r_0=3\) and \(n_3=2\).
\begin{figure}[ht]
\centering
    \includegraphics[scale = 0.3]{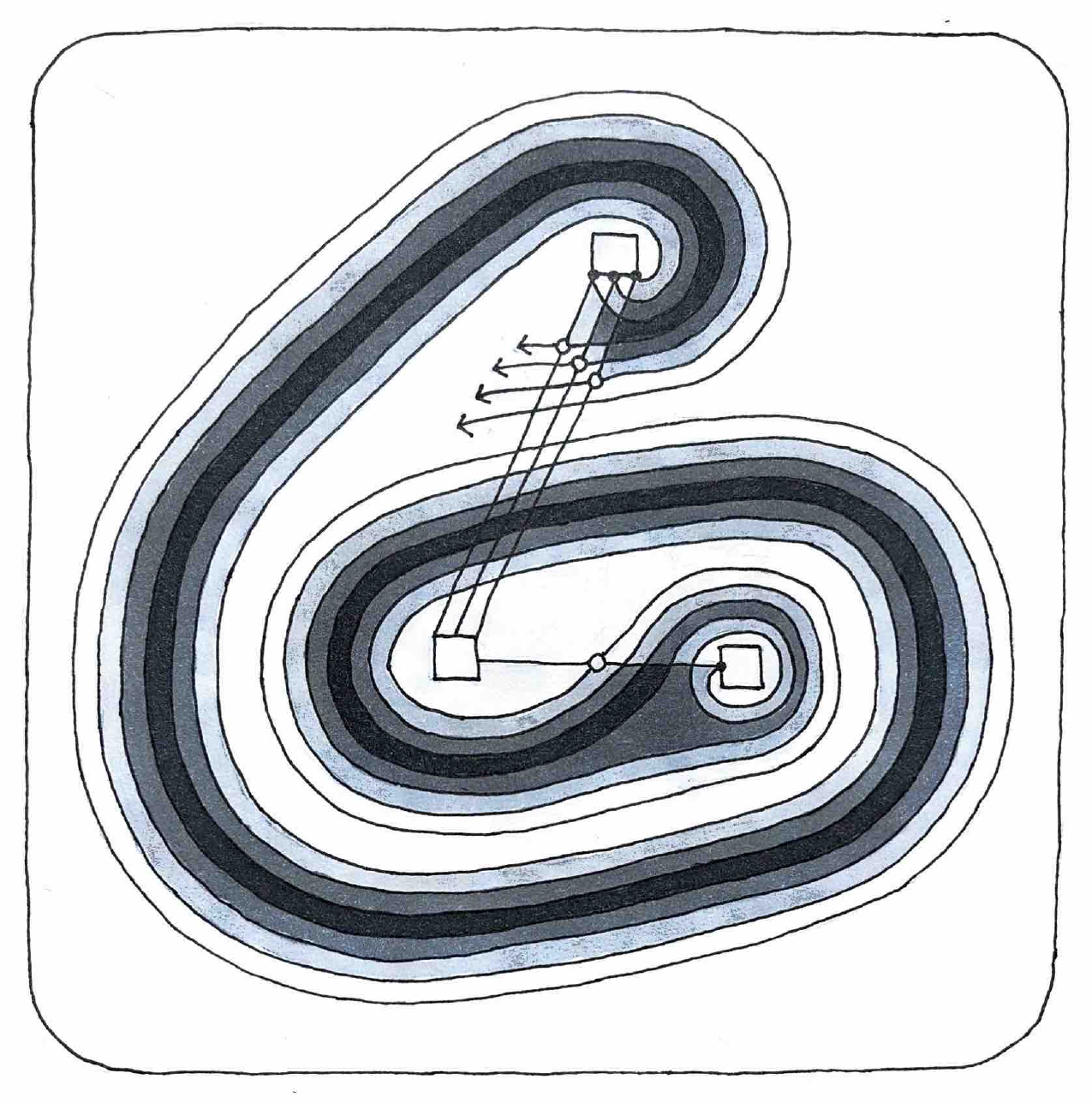}
    \caption{The boundary-based region in the proof of (1) when \(n_3=2\) and \(r_0=3\). It is an immersed octagon. }
    \label{fig:boundary_region_2}
\end{figure}
\end{remark}
Observe that the case \(-p/q=[-3,-3]\) is excluded from Theorem \ref{thm:main_two}. The next proposition shows that in this case, the monodromy is Stein fillable.  
\begin{proposition}\label{prop:tight_two_threes}
Suppose \(f\in \mathrm{Mod}(\Sigma_{0,4},\partial \Sigma_{0,4})\) satisfies
\[\pi(f) = \begin{bmatrix}p' & q'\\ 8 & 3\end{bmatrix}\]
with \(p',q'\geq 0\). If the minimum FDTC of \(f\) is \(\geq 1\), then \(f\) factors into positive Dehn twists. 
\end{proposition}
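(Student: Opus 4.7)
The plan is to exhibit a positive Dehn twist factorization of $f$ following the same template as Proposition \ref{prop:tight_all_twos}. First I would parameterize $\pi(f)$: the bottom row $(8,3)$, combined with $p',q'\geq 0$, determinant $1$, and the parity constraint (namely $p'$ odd, $q'$ even) inherited from the subgroup $\pi(F_2)$ in (\ref{eq:subgroup}), forces
\[(p',q')=(16m+11,\,6m+4)\]
for a unique integer $m\geq 0$.

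A direct matrix computation shows
\[\pi(\tau_b^{m+1}\tau_c\tau_b\tau_d\tau_b)=\pm\begin{bmatrix}16m+11 & 6m+4 \\ 8 & 3\end{bmatrix},\]
where $d$ is the slope-$1$ curve appearing in the lantern relation $\tau_b\tau_c\tau_d=\tau_{a_1}\tau_{a_2}\tau_{a_3}\tau_{a_4}$. By injectivity of $\pi$ on the $F_2$ factor of $\mathrm{Mod}(\Sigma_{0,4},\partial\Sigma_{0,4})\cong\mathbb{Z}^4\times F_2$, this gives
\[f=\tau_{a_1}^{n_1}\tau_{a_2}^{n_2}\tau_{a_3}^{n_3}\tau_{a_4}^{n_4}\,\tau_b^{m+1}\tau_c\tau_b\tau_d\tau_b\]
for some integers $n_i$, and the proposition reduces to showing $n_i\geq 0$ for each $i$.

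To translate the FDTC hypothesis into these inequalities, I would substitute $\tau_d=\tau_c^{-1}\tau_b^{-1}\tau_{a_1}\tau_{a_2}\tau_{a_3}\tau_{a_4}$ (solved from the lantern) into the interior factor to obtain
\[\tau_b^{m+1}\tau_c\tau_b\tau_d\tau_b=\tau_{a_1}\tau_{a_2}\tau_{a_3}\tau_{a_4}\cdot\tau_b^{m+1}\tau_c\tau_b\tau_c^{-1}.\]
Hence the FDTC of $f$ at $B_i$ equals $n_i+1+c_i$, where $c_i$ is the FDTC at $B_i$ of the pure interior product $\tau_b^{m+1}\tau_c\tau_b\tau_c^{-1}=\tau_b^{m+1}\tau_{\tau_c(b)}$, a positive factorization into Dehn twists along the essential curves $b$ (slope $0$) and $\tau_c(b)$ (slope $-2$). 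I would verify $c_i=0$ at each boundary by tracking the action of $\tau_b^{m+1}\tau_{\tau_c(b)}$ on a slope-$0$ arc through the pages of the open book, in the same spirit as the FDTC verification illustrated in Figure \ref{fig:FDTC_verify1}.

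Once $c_i=0$ is in hand, the hypothesis that the minimum FDTC is at least $1$ becomes $n_i+1\geq 1$, i.e., $n_i\geq 0$ for every $i$, and the factorization above becomes a product of positive Dehn twists. The main obstacle is the $c_i=0$ computation for $\tau_b^{m+1}\tau_{\tau_c(b)}$: the naive quasimorphism bound (using that $\tau_b$ and $\tau_{\tau_c(b)}$ individually have FDTC $0$, together with the integrality of FDTCs on $\Sigma_{0,4}$) only confines $c_i$ to $\{0,1\}$, so the explicit arc-tracking step is essential to rule out $c_i=+1$.
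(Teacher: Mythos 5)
Your overall plan—parameterize $\pi(f)$ by a single integer $m\geq 0$, express $f$ as boundary twists times an explicit product of positive Dehn twists, and convert the FDTC hypothesis into lower bounds on the boundary exponents—is the same as the paper's, and your matrix computation and the lantern manipulation
\[\tau_b^{m+1}\tau_c\tau_b\tau_d\tau_b = \tau_{a_1}\tau_{a_2}\tau_{a_3}\tau_{a_4}\,\tau_b^{m+1}\tau_e,\qquad e=\tau_c(b),\]
are both correct. However, the step you flag as the main obstacle contains a genuine error, not just a gap to be filled.

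You expect the FDTC $c_i$ of the interior product $\tau_b^{m+1}\tau_e$ to vanish at each boundary and hope the arc-tracking will rule out $c_i=1$. In fact $c_i=1$ for every $i$, so the arc-tracking would return the opposite of what your plan needs. The reason is that $\tau_b^{m+1}\tau_e$ is a product of positive Dehn twists, hence right-veering, and its image under $\pi$ has trace $\pm(16m+14)$, which has absolute value $>2$; so $\pi(\tau_b^{m+1}\tau_e)$ is a hyperbolic element of $\mathrm{PSL}(2,\mathbb{Z})$ and $\tau_b^{m+1}\tau_e$ is pseudo-Anosov. By \cite[Proposition~3.1]{HKMRVI} a right-veering pseudo-Anosov monodromy has strictly positive FDTC at every boundary component. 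Together with your quasimorphism bound $c_i\leq 1$ and integrality of FDTCs on $\Sigma_{0,4}$, this forces $c_i=1$ rather than $0$.

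Consequently $\mathrm{FDTC}(f)=n_i+1+c_i=n_i+2$ and the hypothesis only yields $n_i\geq -1$. Your factorization $\tau_{a_1}^{n_1}\cdots\tau_{a_4}^{n_4}\,\tau_b^{m+1}\tau_c\tau_b\tau_d\tau_b$ may therefore include a negative boundary twist, and your plan does not close as written. The repair is to absorb one round of the lantern relation into the boundary factors and conclude from
\[f = \tau_{a_1}^{n_1+1}\cdots\tau_{a_4}^{n_4+1}\,\tau_b^{m+1}\tau_e,\qquad n_i+1\geq 0,\]
which is precisely the factorization the paper uses. The paper also replaces your proposed arc-tracking with the right-veering pseudo-Anosov argument above to pin down the FDTC of $\tau_b^{m+1}\tau_e$; this is worth adopting, since the arc-tracking illustrated in Figure~\ref{fig:FDTC_verify1} is substantially more tractable for the reducible maps of Proposition~\ref{prop:tight_all_twos} than for a pseudo-Anosov map like $\tau_b^{m+1}\tau_e$.
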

\begin{proof}
In this case, 
\[\pi(f) = \begin{bmatrix} 16m + 11 & 6m + 4\\ 8 & 3 \end{bmatrix}\qquad\text{for some }m\geq 0.\]
It follows that \(f\) factors as 
\[f = \tau_{a_1}^{n_1}\tau_{a_2}^{n_2}\tau_{a_3}^{n_3}\tau_{a_4}^{n_4}\tau_b^{m+1}\tau_e \qquad\text{for some }n_1,n_2,n_3,n_4,\]
where \(b\) is as in Figure \ref{fig:square_holes} and \(e\) be an embedded circle of slope \(-2\) as shown in Figure \ref{fig:slope_minus_two}.
\begin{figure}[ht]
\centering
    \includegraphics[scale = 0.2]{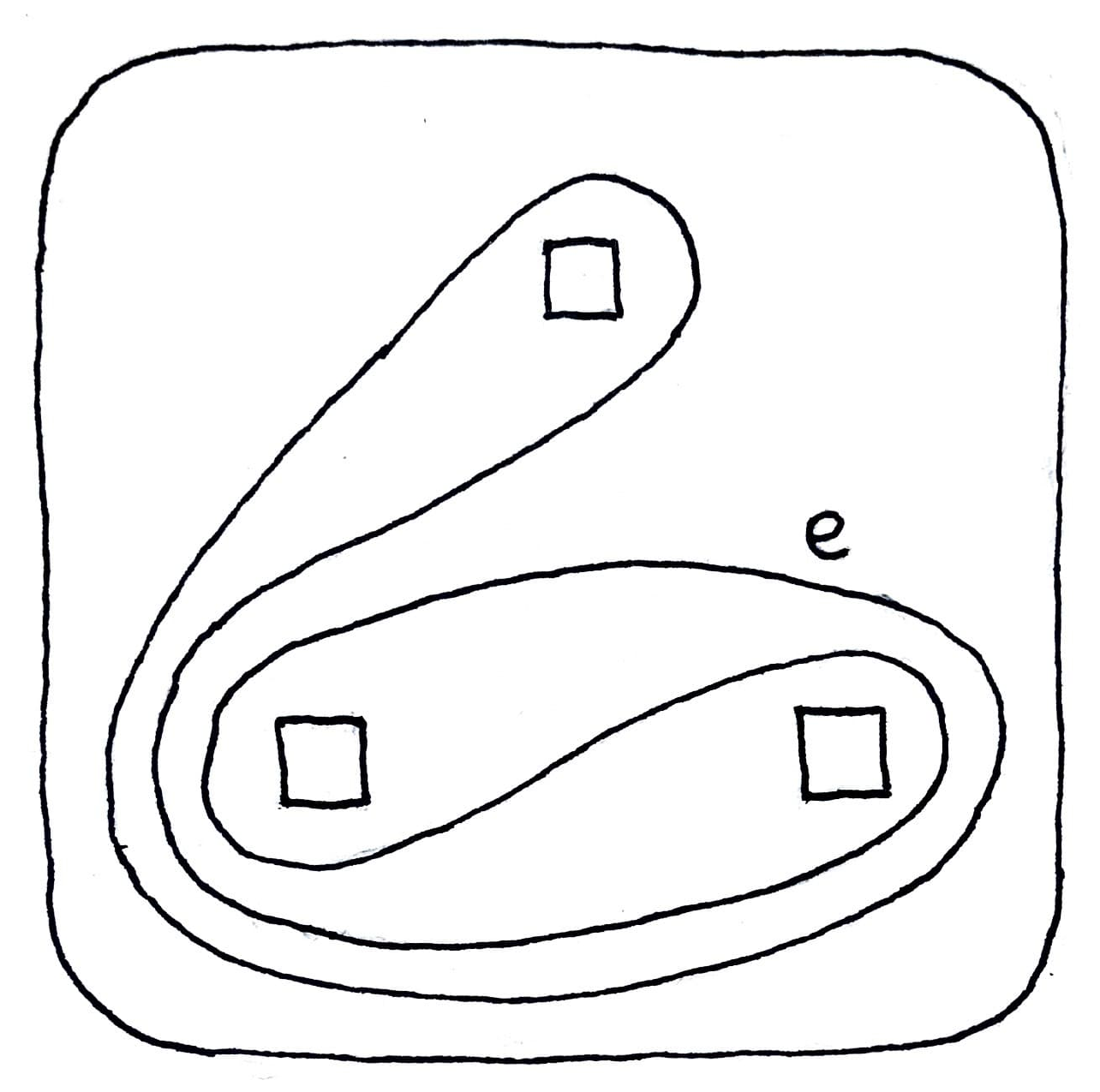}
    \caption{}
    \label{fig:slope_minus_two}
\end{figure}

 One can check that all the FDTCs of \(\tau_b^{m+1}\tau_e\) are equal to \(1\) by probing with arcs. However, this is a bit tedious, so we opt for a different method. Both \(\tau_{b}^{m+1}\) and \(\tau_e\) have all FDTCs equal to \(0\). Using that FDTCs are a quasi-morphism with defect \(1\) \cite[Corollary 4.17]{ItoKawamuroFDTC}, it follows that the FDTCs of \(\tau_b^{m+1}\tau_e\) all lie in \(\{-1,0,1\}\). On the other hand, \(\tau_b^{m+1}\tau_e\) is a right-veering pseudo-Anosov homeomorphism, so must have all FDTCs strictly positive \cite[Proposition 3.1]{HKMRVI}. We conclude  all the FDTCs of \(\tau_b^{m+1}\tau_e\) are \(1\). Accordingly, the FDTCs of \(f\) are \(n_1+1,\ldots,n_4+1\). In particular, \(n_1,\ldots,n_4\geq 0\), so \(f\) factors into positive Dehn twists. 
\end{proof}
\begin{remark}
The author does not know if the condition on FDTCs in (2) of Theorem \ref{thm:main_two} can be removed.
\end{remark}
\begin{remark}
To find the overtwisted monodromies that appear in Theorem \ref{thm:main_two}, the author was helped by the Sage program \texttt{hf-hat-obd} from \cite{computing}. More specifically, the author inputted various monodromies with FDTCs all equal to 1 into \texttt{hf-hat-obd}. When the continued fraction decomposition of \(-p/q\) is as in Remark \ref{re:two_term}, the program \texttt{hf-hat-obd} found an annulus domain which kills the Heegaard Floer contact invariant. (This domain is more-or-less the immersed image of the boundary-based region discussed in Remark \ref{re:boundary_region_2} when \(n_3 =1\).) Using this annulus, the author found the movie presentations used in the proof of Theorem \ref{thm:main_two}. 

The program \texttt{hf-hat-obd-nice} from the same paper indicated that of the remaining right-veering pseudo-Anosov mondoromies on \(\Sigma_{0,4}\), there are both ones with vanishing and non-vanishing Heegaard Floer invariant. Some data is included in Appendix \ref{sec:data}.
\end{remark}
\begin{question}
Which right-veering pseudo-Anosov monodromies on \(\Sigma_{0,4}\) not covered by Theorems \ref{thm:main_one} and \ref{thm:main_two} are overtwisted?
\end{question}
\part{The contact invariant for reducible monodromies}
\section{Proof of Theorem \ref{thm: Lekili}} 
The rest of the paper is dedicated to reproving Theorem \ref{thm: Lekili} using bordered contact invariants. Henceforth we take \(f\in \mathrm{Mod}(\Sigma_{0,4},\partial \Sigma_{0,4})\) to be a reducible monodromy fixing the curve \(b\) in Figure \ref{fig:page}, which we show again in Figure \ref{fig: open book}.
\begin{figure}[ht]
\centering
    \includegraphics[scale = 0.15]{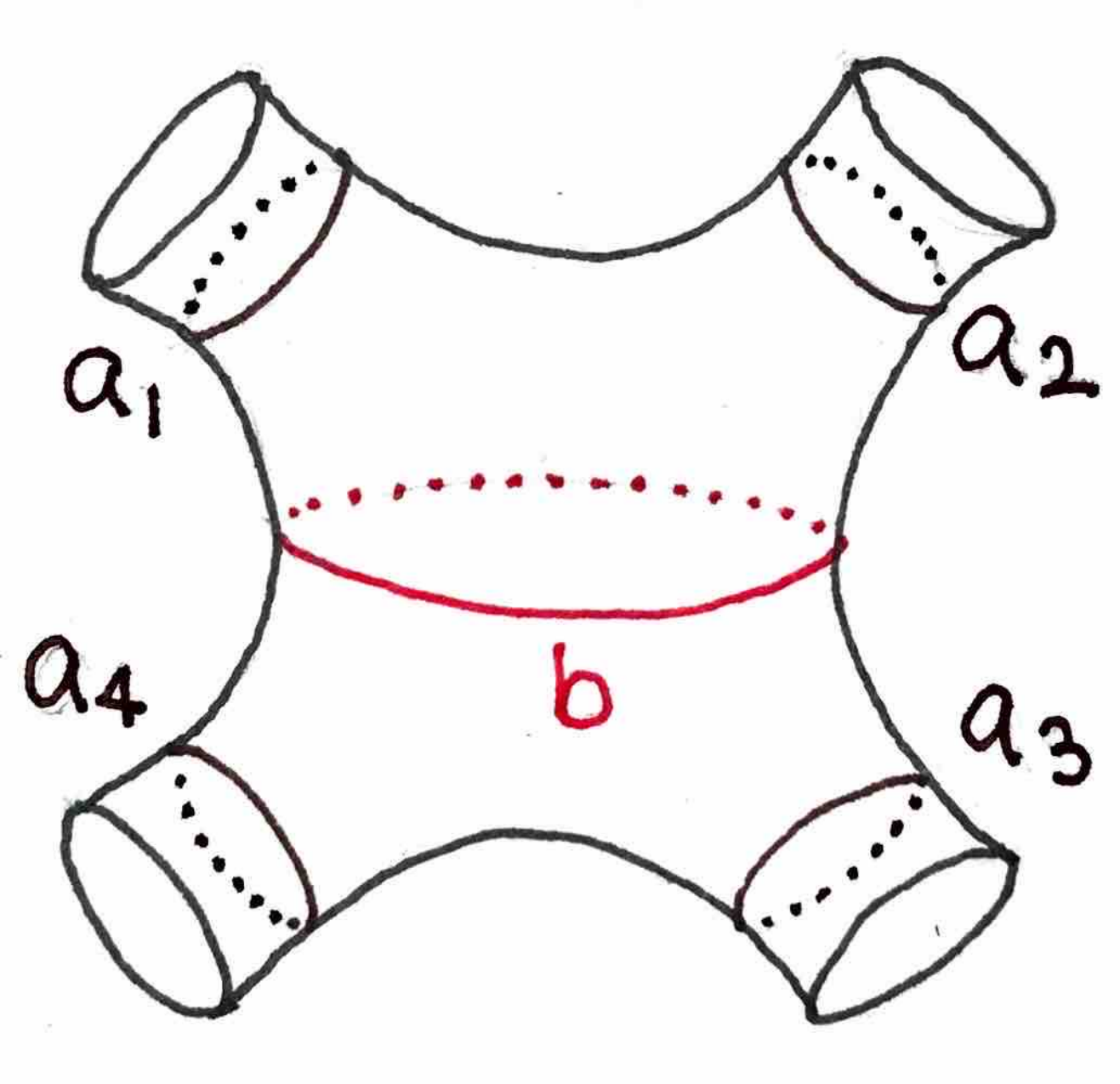}
    \caption{}
    \label{fig: open book}
\end{figure}
Hence \(f\) takes the form shown in (\ref{eq: monodromy}).
\begin{equation}f = \tau_{a_1}^{n_1}\tau_{a_2}^{n_2}\tau_{a_3}^{n_3}\tau_{a_4}^{n_4}\tau_b^{n_b}\qquad\text{for some }n_1,n_2,n_3,n_4,n_b.\label{eq: monodromy}\end{equation}
\begin{proposition}
\label{prop: Stein fillable}
The monodromy {\normalfont (\ref{eq: monodromy})} is Stein fillable if 
\[\min \{n_1,n_2,n_3,n_4\} \geq \max\{-n_b,0\}.\]
\end{proposition}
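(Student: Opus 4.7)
The plan is to give an explicit positive factorization of $f$ and then invoke Giroux's correspondence together with the Loi--Piergallini and Eliashberg results identifying open books with positive factorizations as boundaries of Stein domains. The technical heart is to use the lantern relation on $\Sigma_{0,4}$ to convert the (possibly negative) power of $\tau_b$ into a product of positive Dehn twists, at the cost of some of the boundary-parallel twists.

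The key algebraic input is the lantern relation
\[
\tau_{a_1}\tau_{a_2}\tau_{a_3}\tau_{a_4} = \tau_b\tau_c\tau_d
\]
in $\mathrm{Mod}(\Sigma_{0,4},\partial\Sigma_{0,4})$, which rearranges to $\tau_{a_1}\tau_{a_2}\tau_{a_3}\tau_{a_4}\tau_b^{-1}=\tau_c\tau_d$, together with the fact that the boundary-parallel Dehn twists $\tau_{a_1},\tau_{a_2},\tau_{a_3},\tau_{a_4}$ lie in the center of $\mathrm{Mod}(\Sigma_{0,4},\partial\Sigma_{0,4})$ (so in particular commute with each other and with $\tau_b$). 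If $n_b\geq 0$ the factorization (\ref{eq: monodromy}) is already positive and there is nothing to prove, so assume $n_b<0$ and set $k=-n_b>0$. The hypothesis then becomes $n_i\geq k$ for each $i=1,2,3,4$.

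Using the centrality of the $\tau_{a_i}$, I would rewrite
\[
f \;=\; \tau_{a_1}^{n_1-k}\tau_{a_2}^{n_2-k}\tau_{a_3}^{n_3-k}\tau_{a_4}^{n_4-k}\bigl(\tau_{a_1}\tau_{a_2}\tau_{a_3}\tau_{a_4}\tau_b^{-1}\bigr)^{k}
\;=\; \tau_{a_1}^{n_1-k}\tau_{a_2}^{n_2-k}\tau_{a_3}^{n_3-k}\tau_{a_4}^{n_4-k}(\tau_c\tau_d)^{k},
\]
where the second equality is $k$ applications of the lantern relation. Every exponent on the right is non-negative by the assumption $n_i\geq k$, so this exhibits $f$ as a product of positive Dehn twists about non-boundary-parallel and boundary-parallel curves on $\Sigma_{0,4}$.

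A positive factorization implies that the supported contact structure is Stein fillable: the open book describes a Lefschetz fibration over $D^2$ with bounded fiber and only positive vanishing cycles, whose total space carries a Stein structure filling the contact manifold (Loi--Piergallini, Akbulut--Ozbagci, Giroux). I do not expect a genuine obstacle in this argument; the only thing to be careful about is the commutativity step, which is why I would explicitly record that boundary-parallel twists commute with every element of $\mathrm{Mod}(\Sigma_{0,4},\partial\Sigma_{0,4})$ before collecting the four $\tau_{a_i}^{k}$ next to $\tau_b^{-k}$.
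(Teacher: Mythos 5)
Your proof is correct and is essentially the same as the paper's own argument: after disposing of the trivial case $n_b\geq 0$, you peel off $k=-n_b$ copies of the boundary-parallel twists (using their centrality) and apply the lantern relation $\tau_{a_1}\tau_{a_2}\tau_{a_3}\tau_{a_4}\tau_b^{-1}=\tau_c\tau_d$ exactly $k$ times, yielding the same positive factorization $\tau_{a_1}^{n_1+n_b}\tau_{a_2}^{n_2+n_b}\tau_{a_3}^{n_3+n_b}\tau_{a_4}^{n_4+n_b}(\tau_c\tau_d)^{-n_b}$ that appears in the paper. The only cosmetic difference is that you make the centrality of the $\tau_{a_i}$ explicit before collecting terms, which is a reasonable bit of bookkeeping the paper leaves implicit.
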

\begin{proof} If \(n_b\geq 
0\), then the monodromy 
\[f = \tau_{a_1}^{n_1} \tau_{a_2}^{n_2} \tau_{a_3}^{n_3} \tau_{a_4}^{n_4} \tau_b^{n_b}\]
is evidently factored into positive Dehn twists, so the corresponding contact structure is Stein fillable. If \(n_b<0\), using the lantern relation we can write
\begin{align*}f &= \tau_{a_1}^{n_1} \tau_{a_2}^{n_2} \tau_{a_3}^{n_3} \tau_{a_4}^{n_4} =\tau_{a_1}^{n_1+n_b} \tau_{a_2}^{n_2+n_b}\tau_{n_3}^{n_3+n_b}\tau_{a_4}^{n_4 + n_b}(\tau_b^{-1} \tau_{a_1}\tau_{a_2}\tau_{a_3}\tau_{a_4})^{-n_b} 
\\&= \tau_{a_1}^{n_1 +n_b}\tau_{a_2}^{n_2 +n_b}\tau_{a_3}^{n_3 + n_b}\tau_{a_4}^{n_4+n_b}(\tau_c\tau_d)^{-n_b}.
\end{align*}
 We see again that \(f\) factors into positive Dehn twists.
\end{proof}
\begin{remark}\label{rem:Lekili}
In \cite{Lekili}, Lekili uses a more intricate argument with the lantern relation to prove that any monodromy of the form 
\[\tau_{a_1}^{n_1}\tau_{a_2}^{n_2}\tau_{a_3}^{n_3}\tau_{a_4}^{n_4}\tau_c^{C_k}\tau_b^{B_k}\cdot\ldots\cdot\tau_c^{C_1}\tau_b^{B_1}\]
factors into positive Dehn twists if
\[\min\{n_1,n_2,n_3,n_4\}\geq \sum_{i=1}^k\max \{-B_i,-C_i,0\}.\]
Any monodromy in the relative mapping class group \(\mathrm{Mod}(\Sigma,\partial \Sigma)\) can be put uniquely into the above form. Using this fact together with Baldwin's capping off result, Lekili shows that any monodromy of the form
\[\tau_{a_1}^{n_1}\tau_{a_2}^{n_2}\tau_{a_3}\tau_{a_4}\tau_b^{n_b}\tau_c^{n_c}\]
is Stein fillable if and only if it has non-vanishing contact invariant. This occurs precisely when 
\[\min \{n_1,n_2,n_3,n_4\} \geq \min\{0,-n_b,-n_c\}.\]
\end{remark}
 Now, if 
\begin{enumerate}[label = (\roman*)]
\item \(\min \{n_1,n_2,n_3,n_4\}<0\) or 
\item \(n_b<0\) and \(\min \{n_1,n_2,n_3,n_4 \}= 0\),
\end{enumerate}
then \(f\) is not right-veering, hence the corresponding contact structure is overtwisted. To prove Theorem \ref{thm: Lekili} it remains to show the following. 
\begin{theorem}\label{thm: vanishing}
Let \(f\) be the monodromy in {\normalfont(\ref{eq: monodromy})}. Then \(f\) has vanishing Heegaard Floer contact invariant whenever
\[0<\min \{n_1,n_2,n_3,n_4\} < -n_b .\]
\end{theorem}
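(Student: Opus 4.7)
The plan is to cut $Y$ along a convex torus associated to the fixed curve $b$, apply the Min--Varvarezos pairing theorem to express $c(Y,\xi)$ as a box-tensor product of bordered contact invariants, and then exhibit an explicit boundary in the tensor product that kills this class.

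First, since $b \subset \Sigma_{0,4}$ is fixed by $f$, the annulus $b \times [0,1]$ descends to a convex torus $T \subset Y$. Cutting $Y$ along $T$ produces two bordered-sutured contact manifolds $(M_1,\Gamma_1,\xi_1)$ and $(M_2,\Gamma_2,\xi_2)$, each supported by a pair-of-pants open book with monodromy that is a product of only positive Dehn twists: $\tau_{a_1}^{n_1}\tau_{a_2}^{n_2}$ on $M_1$ and $\tau_{a_3}^{n_3}\tau_{a_4}^{n_4}$ on $M_2$. The twist $\tau_b^{n_b}$ is absorbed into the regluing of $M_1$ and $M_2$ along $T$; when $n_b < 0$ it contributes a block of $|n_b|$ negative bypass attachments along $T$.

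Second, by \cite{MinVarvarezos} each $(M_i,\xi_i)$ carries a bordered contact invariant $c(M_i,\xi_i)$ in the appropriate type $A$ (respectively type $D$) bordered-sutured Floer module for the torus boundary $T$, and the Min--Varvarezos pairing theorem gives
\[ c(Y,\xi) \;=\; c(M_1,\xi_1) \boxtimes c(M_2,\xi_2) \in \widehat{\mathit{HF}}(-Y). \]
Each $c(M_i,\xi_i)$ admits a concrete description, because a pair-of-pants open book with all-positive monodromy has a standard and very simple bordered-sutured Heegaard diagram in which the contact generator is a specific rightmost intersection point. The exponents $n_1,\ldots,n_4$ control how the contact generator sits inside its module, and the twist $\tau_b^{n_b}$ shifts the identification on the gluing side by $n_b$ units in the $b$-direction.

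Third, under the hypothesis $0 < \min\{n_1,n_2,n_3,n_4\} < -n_b$, I would exhibit a concrete null-bounding chain in the box tensor product. The idea is that the $|n_b|$ negative bypass layers along $T$ provide enough depth to push the contact generator of whichever side contains the minimizing $a_i$ onto a generator lying in a higher sutured region, after which a short polygonal domain (likely a bigon or rectangle) bounds the contact generator on that side. Counting this domain in the box pairing differential exhibits $c(Y,\xi)$ as a boundary, with the strict inequality $\min\{n_i\} < -n_b$ being exactly what is needed for the domain to exist. The main obstacle is the explicit homological-algebra computation: pinning down convenient bordered-sutured Heegaard diagrams for the two pair-of-pants pieces, tracking the $\tau_b^{n_b}$-action on the gluing (most cleanly realized as $|n_b|$ consecutive negative bypass attachments along $T$), and producing the bounding domain that realizes vanishing when $-n_b > \min\{n_i\}$.
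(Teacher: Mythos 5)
Your high-level strategy matches the paper's: cut $Y(f)$ along the torus swept out by the reducing curve $b$, express $c(f)$ as a box tensor product of bordered contact invariants of the two pieces, and exhibit an explicit boundary in the tensor product. The two pieces you describe are essentially the paper's $Y(n_1,n_2)$ and $Y(n_3,n_4)$ (the complements of a Legendrian knot in the pair-of-pants open books $Y(g_{n_1,n_2})$, $Y(g_{n_3,n_4})$).

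There are two issues. First, a genuine error: you say the twist $\tau_b^{n_b}$ is realized as $|n_b|$ negative bypass attachments along $T$. It is not. A bypass/basic-slice attachment changes the dividing set on the boundary, i.e., adds contact material. But cutting $Y(f)$ along the pre-Lagrangian torus over $\sigma \simeq b$ already accounts for all of $Y(f)$; what $\tau_b^{n_b}$ affects is only the \emph{regluing map}, so on the bordered side it is a reparameterization of the torus boundary, realized by tensoring with $|n_b|$ copies of the Dehn-twist Type DA bimodule $\widehat{\mathit{CFDA}}(\tau)$. (The paper does use Stipsicz--V\'ertesi basic-slice maps, but only in identifying the bordered contact class via the LOSS invariant, not to encode $\tau_b$.) Conflating the reparameterization bimodule with bypass layers would lead you to the wrong object to tensor with.

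Second, and more fundamentally, the proof as written has a gap that you yourself flag as ``the main obstacle'': you never actually identify the bordered contact classes or exhibit the null-homologous chain. Identifying $c_A$ and $c_D$ for $Y(n,m)$ is not a one-line observation about ``a specific rightmost intersection point''; the paper spends its Section 4 pinning these down via the LOSS invariant on explicit Heegaard diagrams and a delicate refined-grading argument, because the decorated graphs have multiple generators in the relevant $\mathrm{spin}^c$ classes. And the bounding chain is not ``a short polygonal domain'' in a single diagram: in the paper one writes $c(f) = \mathbf{x}\otimes\mathbf{p}^{\otimes |n_b|}\otimes\mathbf{y}$, identifies an auxiliary generator $\mathbf{z}_i$, and verifies that $\partial^{\boxtimes}(\mathbf{x}\otimes\mathbf{z}_i) = \mathbf{x}\otimes\mathbf{y}_i$ precisely when $i > n_1$. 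The inequality enters because for $i \le n_1$ there is a cancelling term $\mathbf{x}_i'\otimes\mathbf{z}_i'$ contributed by a higher $A_\infty$-action $m_{i+1}(\mathbf{x},\rho_{12},\dots,\rho_{12},\rho_1)$, and the exponent $n_1$ governs how many $\rho_{12}$'s the Type A module for $Y(n_1,n_2)$ can absorb before this action vanishes. This interplay between the $A_\infty$-structure and the number of Dehn-twist bimodule factors is the crux of the argument; without it, there is no proof that the contact class vanishes exactly when $\min\{n_i\} < -n_b$.
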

In fact, by the naturality of the contact invariant under \((-1)\) Legendrian surgery it suffices to prove the contact invariant vanishes whenever 
\[0<\min \{n_1,n_2,n_3,n_4\} =-n_b-1,\]
but the general case will not be much harder. We prove Theorem \ref{thm: vanishing} in the subsequent sections. 

\section{Splitting the open book}\label{sec: splitting}
Given a compact surface \(\Sigma\) with non-empty boundary and a monodromy  \(f\in\mathrm{Mod}(\Sigma,\partial \Sigma)\), let us write \(Y(f)\) for the corresponding open book. We implicitly think of this as a contact manifold equipped with a contact structure supported by the open book. We also write \(c(f)\in \widehat{\mathit{HF}}\big({-Y(f)}\big)\) for the Heegaard Floer invariant for this contact structure. Our main strategy going forward is as follows. We may choose a curve \(\sigma\) isotopic to \(b\) which has an annulus neighborhood fixed pointwise by \(f\), as shown in Figure \ref{fig: cutting}.  
\begin{figure}[ht]
\centering
    \includegraphics[scale = 0.15]{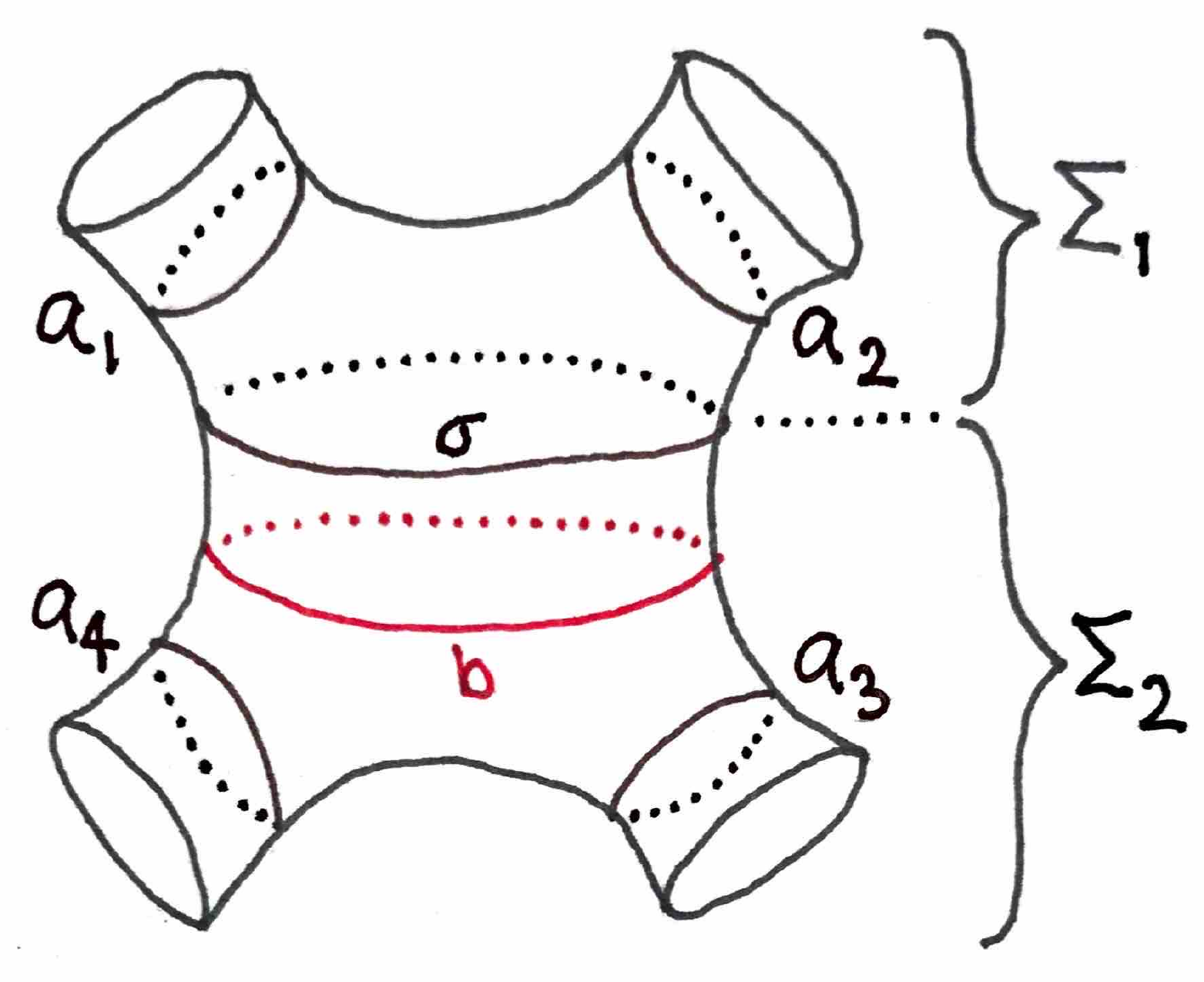}
    \caption{}
    \label{fig: cutting}
\end{figure}

The curve \(\sigma\) traces out a torus in \(Y(f)\). We may simultaneously Legendrian realize \(\sigma\) on each page, so that this torus becomes pre-Lagrangian in the corresponding contact manifold. Although this is a standard procedure, we outline the details below as we will need these details later. 
\begin{proposition}
\label{prop: full}
One can pick a compatible contact structure on \(Y(f)\) for which the curve \(\sigma\) traces out a pre-Lagrangian torus. 
\end{proposition}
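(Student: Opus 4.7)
The plan is to use the Thurston--Winkelnkemper construction, choosing the page 1-form carefully in the annular region where $f$ acts as the identity. Parametrize the annular neighborhood $A$ of $\sigma$ (pointwise fixed by $f$) by coordinates $(s,\theta) \in [-\delta,\delta] \times \mathbb{R}/\mathbb{Z}$, with $\sigma = \{s=0\}$, and fix standard collar coordinates near each boundary component of $\Sigma_{0,4}$.

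The first step is to choose a 1-form $\beta$ on $\Sigma_{0,4}$ such that: $d\beta$ is a positive area form on $\Sigma_{0,4}$; $\beta$ has the usual Thurston--Winkelnkemper normal form on a collar of each boundary component; and $\beta|_A = s\,d\theta$. To construct such a $\beta$, one first picks an area form $\omega$ on $\Sigma_{0,4}$ whose local expressions near $\partial\Sigma_{0,4}$ and on $A$ agree with what is needed. Since $\Sigma_{0,4}$ has non-empty boundary, $\omega$ is exact, so $\omega = d\beta_0$ for some 1-form $\beta_0$. One then adjusts $\beta_0$ by a suitable closed (in fact, exact) 1-form via a partition of unity to make it match the prescribed local forms on $A$ and on the boundary collars.

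Next, since $f|_A = \mathrm{id}_A$, the pullback $f^*\beta$ agrees with $\beta$ on $A$. Pick a smooth cutoff $h\colon [0,2\pi] \to [0,1]$ with $h \equiv 0$ near $0$ and $h \equiv 1$ near $2\pi$, and set $\beta_\varphi = (1-h(\varphi))\beta + h(\varphi) f^*\beta$ on $\Sigma_{0,4} \times [0,2\pi]$. Then $d_\Sigma\beta_\varphi$ is a positive area form on each page, the family $\beta_\varphi$ descends to the mapping torus $\Sigma_f = (\Sigma_{0,4} \times [0,2\pi])/(x,2\pi)\sim(f(x),0)$, and $\beta_\varphi|_A = s\,d\theta$ independently of $\varphi$. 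For $K > 0$ sufficiently large, $\alpha = \beta_\varphi + K\,d\varphi$ is a contact form on $\Sigma_f$ and extends in the standard Thurston--Winkelnkemper way over neighborhoods of the binding to a compatible contact form on $Y(f)$.

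Finally, on the thickened-torus neighborhood $A \times S^1_\varphi$ of $T := \sigma \times S^1_\varphi$ inside $Y(f)$, the contact form is exactly $\alpha = s\,d\theta + K\,d\varphi$. Its restriction to $T = \{s=0\}$ is $K\,d\varphi$, a non-vanishing closed 1-form on $T$, so the characteristic foliation of $T$ is the non-singular linear foliation by the Legendrian circles $\sigma \times \{\varphi_0\}$. Hence $T$ is pre-Lagrangian, as required. The main technical point of the argument is the first step---producing a global $\beta$ on $\Sigma_{0,4}$ with the prescribed local normal form on $A$ while keeping $d\beta$ a positive area form---but this is a routine de Rham-theoretic construction on a surface with boundary.
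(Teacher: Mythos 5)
Your overall strategy is the same as the paper's: modify the Thurston--Winkelnkemper construction by prescribing a normal form $s\,d\theta$ for the page 1-form $\beta$ on an annular collar of $\sigma$ fixed pointwise by $f$, then interpolate, add $K\,d\varphi$, extend over the binding, and observe that on $\sigma\times S^1$ the contact form restricts to $K\,d\varphi$, giving a foliation by Legendrian circles. The final step and the interpolation are fine. However, the step you dismiss as ``a routine de Rham-theoretic construction'' is precisely where the proposal has a gap.

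You claim one can first pick an arbitrary area form $\omega$ with the right local germ near $\partial\Sigma$ and on $A$, write $\omega = d\beta_0$, and then correct $\beta_0$ by a closed (even exact) 1-form $\gamma$, chosen ``via a partition of unity,'' so that $\beta_0+\gamma$ has the prescribed form on $A$ and on the boundary collars. This does not work for a general $\omega$. If $\beta := \beta_0+\gamma$ satisfies $\beta|_{\partial_i\Sigma}=d\theta_i$ and $\beta|_\sigma = 0$ (the $s=0$ slice of $s\,d\theta$), then Stokes on the pair-of-pants $\Sigma_1$ (bounded by $\sigma$, $\partial_1\Sigma$, $\partial_2\Sigma$) forces
\[
\int_{\Sigma_1}\omega \;=\; \int_{\partial\Sigma_1}\beta \;=\; 2\pi + 2\pi + 0 \;=\; 4\pi,
\]
and likewise $\int_{\Sigma_2}\omega = 4\pi$. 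So the existence of the desired $\beta$ already constrains the total area of $\omega$ on each half; an arbitrary $\omega$ with the right germs will in general violate this, and no choice of closed correction $\gamma$ (let alone an exact one, which cannot change $\int_\sigma\beta_0$ at all) can repair it. A partition-of-unity argument cannot see this global obstruction. The paper handles exactly this point: it first fixes $\beta_0$ with the prescribed local form, then \emph{chooses} $\omega$ to agree with $d\beta_0$ near $A\cup B$ and to have $\int_{\Sigma_1}\omega = \int_{\Sigma_2}\omega = 4\pi$, and only then shows $[\omega - d\beta_0] = 0$ in $H^2_c(\Sigma')\cong\mathbb{R}^2$ (by the Stokes computation above), which is what produces the compactly supported correction $\beta_1$. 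You need to add this area normalization and the resulting cohomological vanishing; without it the construction of $\beta$ is not justified.
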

\begin{proof}
This is a slight modification of the Thurston--Winkelnkemper construction \cite{ThurstonWinkelnkemper}. Let \(\Sigma\) denote the page. Note \(\sigma\) separates \(\Sigma\) into two compact submanifolds \(\Sigma_1,\Sigma_2\) with \(\Sigma_1\cap \Sigma_2=\sigma\), as shown in Figure \ref{fig: cutting}. We identify collar neighborhoods \[ A\cong [-2,0]_t\times \partial \Sigma \subset \Sigma,\qquad  B\cong [-2,2]_t\times \sigma \subset \Sigma \]
where \(\partial\Sigma\) is identified with \(\{0\}\times \partial \Sigma\), and similarly \(\sigma\) is identified with \(\{0\}\times \sigma\). Additionally, we orient the \([-2,2]_t\) factor of \(B\) so that \(\Sigma_1\cap B= [-2,0]_t\times \sigma\). We choose \(A\) and \(B\) to be small enough so that they are disjoint and \(f\) is the identity on \(A\cup B\). 

Let \(\partial_i\Sigma\) be the component of \(\partial\Sigma\) parallel to \(a_i\). Choose an angle coordinate \(\theta_i\) for \(\partial_i \Sigma\) valued in \(\mathbb{R}/2\pi\mathbb{Z}\cong S^1\) which is consistent with the boundary orientation on \(\partial \Sigma\). Additionally let \(\theta\) be an angle coordinate on \(\sigma\). Orient \(\sigma\) so that the product orientation on \(B\) coincides with the orientation of \(\Sigma\). Let \(S\subset \Omega^1(\Sigma)\) denote the collection of 1-forms \(\beta\) for which
\begin{enumerate}[label=(\roman*)]
\item the 2-form \(d\beta\) is a positive area form on \(\Sigma\),
\item on the collar \([-\frac{3}{2},0]_t\times \partial_i \Sigma\) one has \(\beta = e^t\,d\theta_i\),
\vspace{1pt}
\item on the collar \([-\frac{3}{2},\frac{3}{2}]_t\times \sigma \) one has \(\beta = \frac{1}{10}t\, d\theta\).
\end{enumerate}

It is easy to see that \(S\) is convex. To show \(S\) is non-empty, start out with any 1-form \(\beta_0\in \Omega^1(\Sigma)\) which satisfies 
\begin{enumerate}[label=(\roman*)]
\item \(\beta_0 = e^t d\theta_i\) on \([-2,0]_t\times \partial_i\Sigma\), 
\item  \(\beta_0 = \frac{1}{10}t\,d\theta\) on \([-2,2]_t\times \sigma\). 
\end{enumerate}
Let \(\omega\in \Omega^2(\Sigma)\) be a positive area form which agrees with \(d\beta_0\) on \(A\cup B\), and which also satisfies 
\[\int_{\Sigma_1}\omega = \int_{\Sigma_2}\omega = 4\pi.\]
Note such a 2-form exists, since \(d\beta_0\) is a positive area form on \(A\cup B\) with
\[\int_{(A\,\cup\, B)\,\cap\, \Sigma_i}d\beta_0 <4\pi \qquad\text{for}\qquad i=1,2.\]
Now \(\omega - d\beta_0\) has compact support in the open surface 
\[\Sigma' = \Sigma \setminus \Big(\big([-{\textstyle\frac{3}{2}},0]_t\times \partial \Sigma \big)\cup \big([-{\textstyle\frac{3}{2}},{\textstyle\frac{3}{2}}]_t\times \sigma \big)\Big).\]
Moreover, by Stoke's theorem, for \(i=1,2\)
\[\int_{\Sigma_i} (\omega - d\beta_0) = \int_{\Sigma_i}\omega - \int_{\partial \Sigma_i}\beta_0 =0.\]
Therefore the class \([\omega -d\beta_0]\) is trivial in \(H_c^2(\Sigma';\mathbb{R})\cong \mathbb{R}^2\). Hence there is a compactly supported 1-form \(\beta_1 \in \Omega^1_{c}(\Sigma')\) so that \(\omega - d\beta_0 = d\beta_1\). Then 
\[\beta:=\beta_0+\beta_1\in S.\]\par
As \(f\) fixes \(A\cup B\) pointwise, we see \(f^*\beta\) also lies in \(S\). Let \(\mu \colon [0,2\pi]\to [0,1]\) be a smooth function which is identically \(0\) near \(0\), and identically \(1\) near \(2\pi\). The form
\[ \mu(\varphi)\beta  + \big(1-\mu(\varphi)\big )f^*\beta\qquad\text{on}\qquad  \Sigma\times [0,2\pi]_\varphi  \]
descends to a 1-form on the mapping torus \[\Sigma(f):= \Sigma \times [0,2\pi]_{\varphi}/(x,2\pi)\sim (f(x),0).\] By abuse of notation, we also call this 1-form \(\beta\). Note the restriction of \(\beta\) to each fiber of \(\Sigma(f)\) lies in \(S\). In particular \(d\varphi\wedge d\beta\) is a positive volume form on \(\Sigma(f)\), so for \(C>0\) sufficiently large the 1-form \(\alpha = \beta + Cd\varphi\) will be a contact form on \(\Sigma(f)\). Note \(d\alpha\) restricts to a positive area form on each fiber of \(\Sigma(f)\). Moreover, with respect to this contact form, the torus traced out by \(\sigma\) is pre-Lagrangian. 

It remains to extend \(\alpha\) from \(\Sigma(f)\) to \(Y(f)\). We think of \(Y(f)\) as a quotient of 
\[ \Sigma(f) \sqcup \coprod_{i=1}^4 (\partial_i \Sigma \times D_2^2).\]
Above, \(D_2^2\) denotes the closed \(2\)-disk of radius \(2\). If \((r,\varphi)\) are polar coordinates on \(D_2^2\), then we identify \[\partial_i \Sigma \times (D_2^2 - \Int D^2)\qquad\text{and}\qquad [-1,0]_t\times \partial_i \Sigma \times S^1_\varphi \subset \Sigma(f)\] 
by setting the point \((\theta_i,r,\varphi)\) in the LHS equal to the point \((1-r,\theta_i,\varphi)\) in the RHS. To extend the contact form \(\alpha\) over the solid tori, we make the ansatz 
\[\alpha = h_1(r)\,d\theta_i + h_2(r)\,d\varphi\qquad\text{on}\qquad  \partial_i\Sigma \times D_2^2.\]
In order for \(\alpha\) to satisfy the properties we would like, we insist the following:
\begin{enumerate}[label = (\roman*)]
\item \(h_1(r) = e^{1-r}\) and \(h_2(r) = C\) for \(r\in [1,2]\),
\item \(h_1(r)=2-r^2\) and \(h_2(r) =r^2\) near \(r=0\) *so that \(\alpha\) is non-singular on \(\partial_i\Sigma \times \{0\}\subset \partial_i\Sigma\times D_2^2\) and that \(\partial_i\Sigma\times \{0\}\) is a positively transverse knot),
\item the determinant
\[\det \begin{bmatrix}h_1(r) &  h_1'(r)\\ h_2(r) & h_2'(r)\end{bmatrix}\]
never vanishes for \(r>0\) (so that \(\alpha\) is a contact form),
\item \(h_1'(r)<0\) for \(r>0\) (so that \(d\alpha\) restricts to a positive area form on the interior of each page).
\end{enumerate}
\begin{figure}[ht]
\centering
    \includegraphics[scale =0.2]{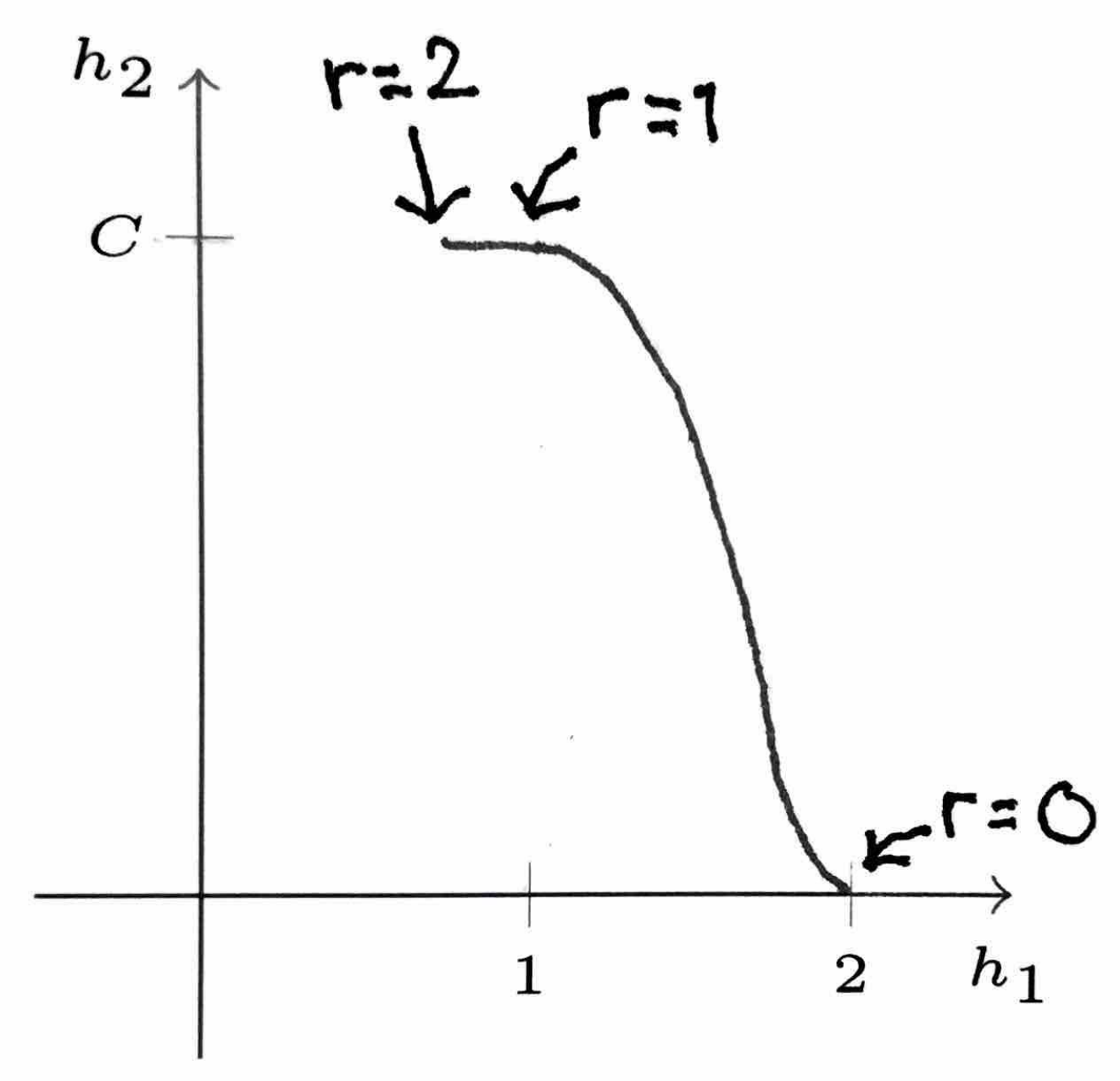}
    \caption{}
    \label{fig: binding extension}
\end{figure}
These conditions ensure \(\alpha\) descends to a smooth contact form on \(Y(f)\) which is supported by its open book structure. We draw the parametric plot \(r\mapsto \big(h_1(r),h_2(r)\big)\) in Figure \ref{fig: binding extension}
\end{proof}

Perturb the pre-Lagrangian torus above to be convex with two dividing curves and then cut to produce contact manifolds with boundary, \(Y_1,Y_2\), so that \(Y_1\) contains the binding components parallel to \(a_1\) and \(a_2\). We now glue a solid torus on \(Y_i\) in such a way that the resulting closed manifold has a natural open book decomposition. For each \(n,m\in \mathbb{Z}\), consider the monodromy
\[g_{n,m}=\tau_\gamma^n \tau_\delta^m\]
on a pair-of-pants, where \(\gamma\) and \(\delta\) are the curves shown in Figure \ref{fig: pair-of-pants}. 
\begin{figure}[ht]
\centering
    \includegraphics[scale =0.12]{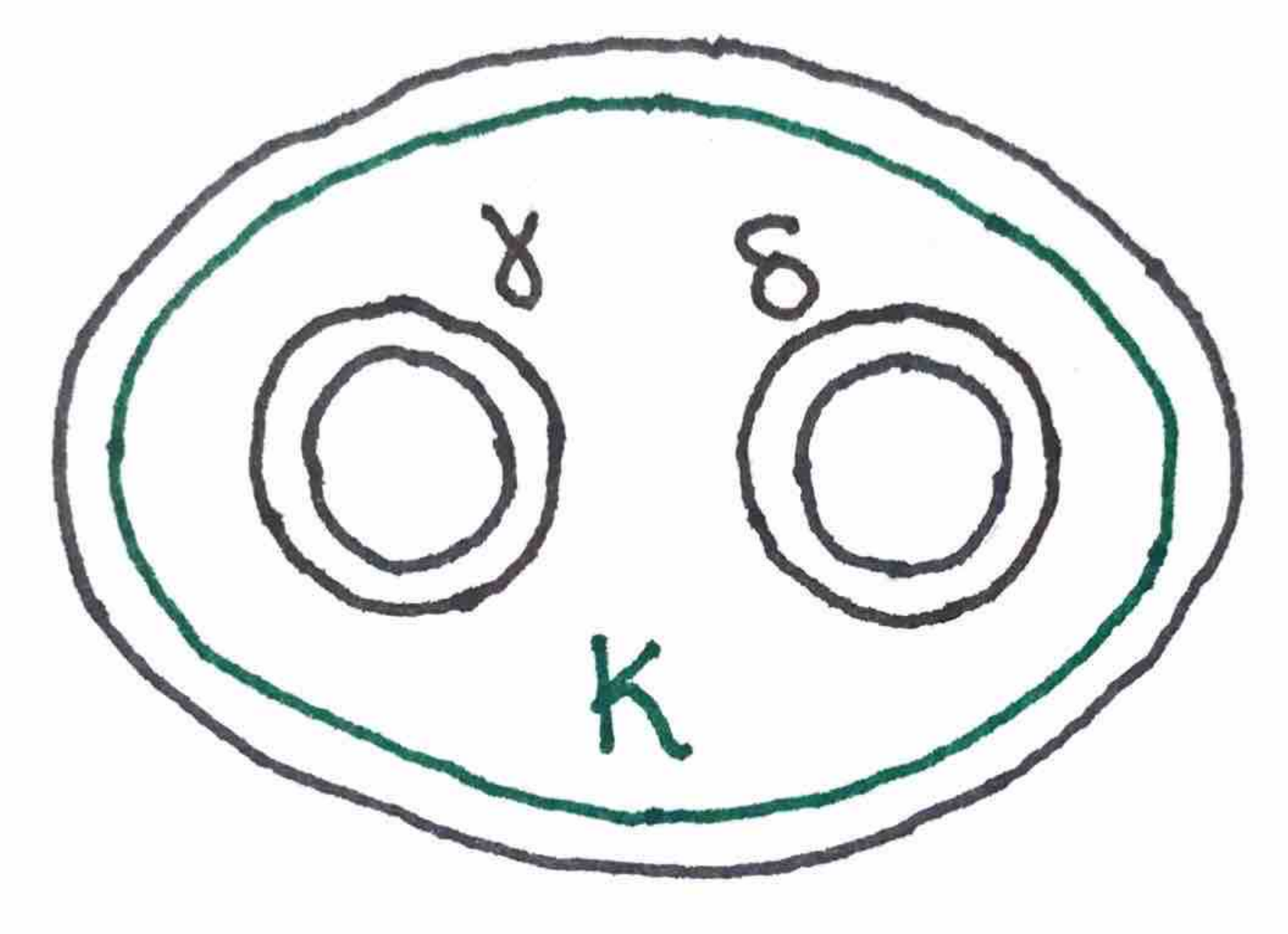}
    \caption{}
    \label{fig: pair-of-pants}
\end{figure}
Topologically, 
\[Y(g_{n,m})\cong L(n,-1)\# L(m,-1).\]\par
In \(Y(g_{n,m})\), Legendrian realize the curve \(K\) shown above on a page. (This is possible as the page has binding components other than the one parallel to \(K\).) Take the complement of a standard tubular neighborhood of \(K\) in \(Y(g_{n,m})\), and call the resulting contact manifold-with-boundary \(Y(n,m)\). 
\begin{proposition}
\label{prop: half}
As contact manifolds with convex boundary, \(Y_1\cong Y(n_1,n_2)\) and \(Y_2\cong Y(n_3,n_4)\). 
\end{proposition}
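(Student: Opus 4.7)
The plan is to exhibit both \(Y_1\) and \(Y(n_1,n_2)\) as the same partial open book. The common model has page \(\Sigma_1\), a pair-of-pants, with three boundary components: two playing the role of bindings (corresponding to \(a_1,a_2\)) and one serving as a free boundary that sweeps out the convex boundary torus. The monodromy is taken in the mapping class group that fixes only the two binding components pointwise.

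First, I would verify the topological identification. The open book on \(Y(f)\), cut along the torus traced by \(\sigma\), induces a partial open book on \(Y_1\) with page \(\Sigma_1\), binding components \(a_1, a_2\), and free boundary \(\sigma\). The restriction of \(f\) to \(\Sigma_1\) lies in \(\mathrm{Mod}(\Sigma_1, a_1\cup a_2)\), which only fixes the two binding components pointwise. In this group a Dehn twist about any curve parallel to \(\sigma\) is trivial, and since \(b\) is parallel to \(\sigma\), the factor \(\tau_b^{n_b}\) is absorbed irrespective of whether \(b\) lies on the \(\Sigma_1\) side or the \(\Sigma_2\) side of the cut. Thus the effective monodromy on \(\Sigma_1\) reduces to \(\tau_{a_1}^{n_1}\tau_{a_2}^{n_2}\). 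On the other side, \(Y(n_1,n_2) = Y(g_{n_1,n_2}) \setminus N(K)\) carries an analogous partial open book with pair-of-pants pages: the two binding-like components correspond to the boundary components about which one performs \(\tau_\gamma^{n_1}\) and \(\tau_\delta^{n_2}\), and the \(K\)-parallel component becomes the free boundary. Since the two partial open books have matching page, binding, and monodromy data, this produces a homeomorphism \(Y_1 \cong Y(n_1,n_2)\).

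Next, I would match the contact structures. From the construction in Proposition \ref{prop: full}, the contact form near \(T\) takes the form \(\alpha = \frac{1}{10}t\,d\theta + C\,d\varphi\), so the Reeb foliation on \(T = \{t = 0\}\) consists of \(\varphi\)-circles, and a standard perturbation of \(T\) to a convex torus yields two dividing curves parallel to \(\sigma\) (the \(\theta\)-direction). On the \(Y(n_1,n_2)\) side, the boundary of a standard Legendrian neighborhood of \(K\) is a convex torus whose two dividing curves run parallel to \(K\). Under the identification of \(\sigma\) with \(K\) via the matched pair-of-pants pages, the dividing sets coincide. Finally, by the uniqueness of the contact structure compatible with a partial open book in the Honda--Kazez--Matić sense, the two contact manifolds-with-boundary are contactomorphic. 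The argument for \(Y_2 \cong Y(n_3,n_4)\) is identical, simply interchanging the roles of the two sides of \(\sigma\).

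The main difficulty will be pinning down the slopes of the dividing curves precisely. This reduces to a local computation comparing the Thurston--Winkelnkemper contact form near \(T\) with the standard neighborhood model of the Legendrian \(K\); in both cases, the dividing curves end up parallel to the free boundary component of the page, so the identification is unambiguous once the local models are lined up.
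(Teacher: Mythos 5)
Your partial-open-book approach is a genuinely different route from the paper, but it leaves a gap at the step where you invoke Honda--Kazez--Mati\'{c} uniqueness. That result lets you conclude $Y_1 \cong Y(n_1,n_2)$ \emph{provided both} contact manifolds are supported by the same partial open book, but establishing either compatibility statement is where the actual work lies: cutting an open book along an annulus neighborhood of a fixed page curve does not automatically produce a compatible partial open book on each half, and verifying this is essentially the content that the paper supplies, just by a more explicit route. The paper keeps the cutting torus pre-Lagrangian (not yet convex), extends the contact form from $Y_1'$ to the closed manifold $Y(g_{n_1,n_2})$ via an explicit ansatz $\alpha = h_1(r)\,d\theta + h_2(r)\,d\varphi$ on a glued-in solid torus, and only then perturbs the torus to be convex inside this larger contact manifold, at which point $Y_1$ is visibly the complement of a standard contact neighborhood of the Legendrian $K$. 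The modified boundary condition (i'), with $h_2(r) = C + \frac{1}{10}n_b(1-r)$, also makes concrete how the $\tau_b^{n_b}$ twist is absorbed on the $Y_2$ side; your observation that $\tau_b$ dies in $\mathrm{Mod}(\Sigma_2, a_3\cup a_4)$ is the right intuition, but the paper proves the corresponding contact-geometric statement directly rather than passing through a partial-open-book correspondence.

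One further small point: your dividing-slope computation reaches the correct answer via the wrong foliation. For a pre-Lagrangian torus $T$, the dividing curves produced by a $C^\infty$-small convex perturbation are parallel to the \emph{characteristic} foliation $\xi\cap TT$, which here runs in the $\theta$-direction (parallel to $\sigma$); the Reeb foliation on $T$ runs in the transverse $\varphi$-direction and is not what governs the dividing slope.
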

\begin{proof} We give \(Y(f)\) the same contact from \(\alpha\) constructed in Proposition \ref{prop: full}. Cut \(Y(f)\) along the pre-Lagrangian torus traced out by \(\sigma\) without perturbing it to be convex. Call the resulting halves \(Y_1',Y_2'\). 
Let \((r,\varphi)\) denote polar coordinates on the disk \(D_2^2\). Consider the quotient of \(Y_1'\sqcup (\sigma\times D_2^2),\) where we identify 
\[\sigma\times (D_2^2- \Int D^2)\qquad \text{and}\qquad [-1,0]_t\times \sigma \times S^1_{\varphi}\subset Y_1'\]
by setting the point \((\theta,r,\varphi)\) in the LHS equal to the point \((1-r,\theta,\varphi)\) in the RHS. 
The resulting manifold has an open book decomposition making it diffeomorphic to \(Y(n_1,n_2)\). We wish to extend the contact form on \(Y_1'\) to \(Y(g_{n_1,n_2})\) in a way which is compatible with the open book structure. \par 
We again make the ansatz 
\[\alpha = h_1(r)\,d\theta + h_2(r)\,d\varphi \qquad \text{on}\qquad \sigma \times D_2^2\]
This time, we require 
\begin{enumerate}[label=(\roman*)]
    \item \(h_1(r) = \frac{1}{10}(1-r)\) and \(h_2(r)=C\) for \(r\in [1,2]\),
    \item \(h_1(r) = 1-r^2\) and \(h_2(r) = r^2\) near \(r=0\),
    \item the determinant 
    \[\det \begin{bmatrix}h_1(r) &  h_1'(r)\\ h_2(r) & h_2'(r)\end{bmatrix}\]
    never vanishes for \(r>0\),
    \item \(h_1'(r)<0\) for \(r>0\). 
\end{enumerate}
See Figure \ref{fig: binding extension one}
\begin{figure}[ht]
\centering
    \includegraphics[scale =0.17]{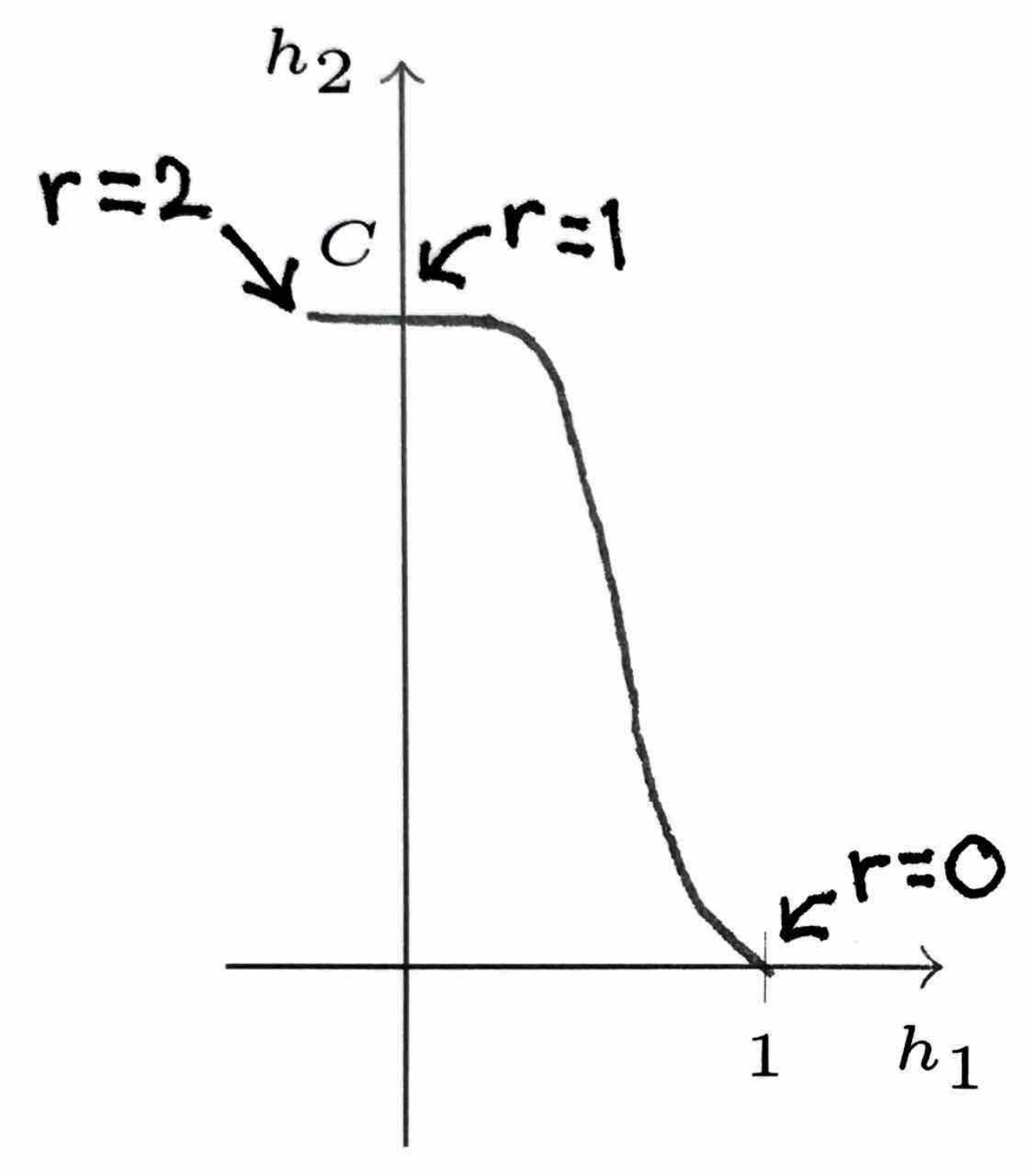}
    \caption{}
    \label{fig: binding extension one}
\end{figure}

In this way, we think of \(Y_1'\) as a contact submanifold of \(Y(g_{n_1,n_2})\). Within the contact manifold \(Y(g_{n_1,n_2})\) perturb \(\partial Y_1'\) to make it convex with two dividing curves. This convex torus bounds \(Y_1\) on one side and a solid torus on the other. Moreover, the dividing curves on the torus are parallel to the boundary of a page. Hence the solid torus is contact isotopic to a standard neighborhood of the Legendrian \(K\) above. This proves the first part of the proposition.

Let \(-\sigma\) denote \(\sigma\) with its reverse orientation. We give \(\sigma\) the angle coordinate \(\overline{\theta}=-\theta\). Consider the quotient of \(Y_2'\sqcup \big((-\sigma)\times D_2^2\big)\) where we identify 
\[(-\sigma)\times (D_2^2 -\Int D^2)\qquad\text{and}\qquad [0,1]_t\times \sigma\times S^1_{\varphi}\subset Y_2'\]
by setting the point \((\overline{\theta},r,\varphi)\) in the LHS equal to the point \((r-1, -n_b\varphi-\overline{\theta},\varphi)\) in the RHS. (We flip the orientation of \(\sigma\) as it is oriented oppositely from \(\partial Y_2'\).) The resulting manifold has an open book decomposition making it diffeomorphic to \(Y(n_3,n_4)\). We wish to extend the contact form on \(Y_2'\)  to \(Y(n_3,n_4)\) in a way which is compatible with the open book structure. \par We make the ansatz 
\[\alpha = h_1(r)\,d\overline{\theta} + h_2(r)\,d\varphi \qquad \text{on}\qquad (-\sigma) \times D_2^2\]
and require (ii) through (iv) as above, but change (i) to 
\begin{enumerate}
\item[(i')] \(h_1(r) = \frac{1}{10}(1-r)\) and \(h_2(r) = C+\frac{1}{10}n_b(1-r)\) for \(r\in [1,2]\).
\end{enumerate}
See Figure \ref{fig: binding extension two}.
\begin{figure}[ht]
\centering
    \includegraphics[scale =0.17]{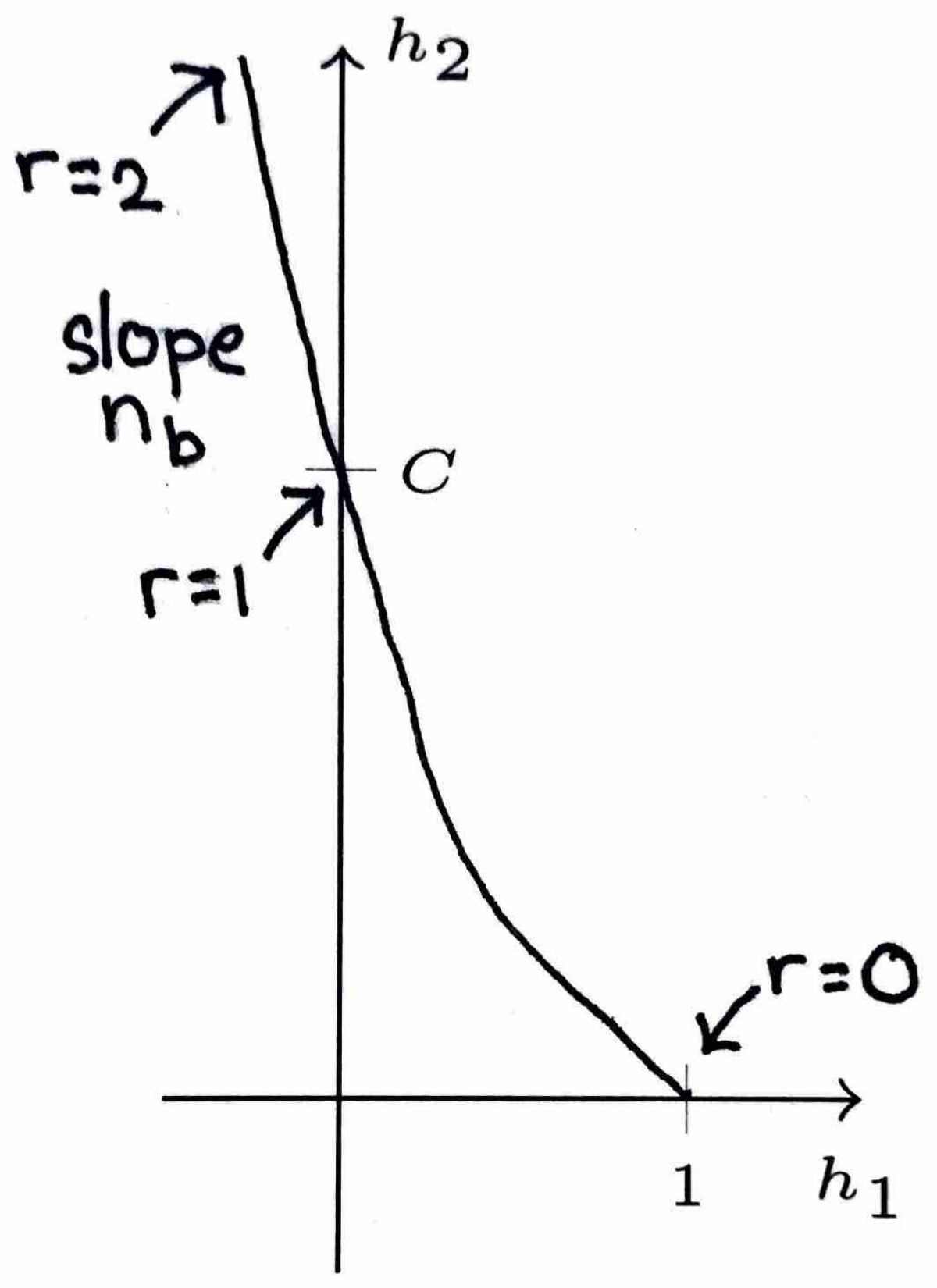}
    \caption{The case when \(n_b<0\).}
    \label{fig: binding extension two}
\end{figure}
We now conclude in the same way as above.
\end{proof}

\begin{remark}
In the case \(n_3,n_4\geq 0\) and \(n_b>0\), the above construction together with the fact that the contact structure on \(Y(n_3,n_4)\) is tight implies that in the proof of Theorem \ref{prop: full} one must take \(C>\frac{1}{10}n_b\) in order for \(\alpha\) to be a contact form. 
\end{remark}

In Section \ref{sec: identifying} we identify the bordered contact invariant for \(Y(n,m)\). In Section \ref{sec: gluing} we then calculate the contact invariant of \(Y(f)\) by pairing the bordered contact invariants.

\section{Identifying the bordered contact invariant}\label{sec: identifying}
In the previous section, we have been vague as to how we orient the page. A pair-of-pants has an orientation-reversing diffeomorphism which restricts to a self-diffeomorphism on each boundary component. We can use such a diffeomorphism to construct an \textit{orientation-preserving} diffeomorphism of \(Y(g_{n,m})\) which maps each page to a page and flips the direction of the Reeb vector field. The induced effect on the contact structure (up to isotopy) is to flip its co-orientation.

We would like to similarly flip the co-orientation of the contact structure on \(Y(n,m)\). To do this in a way so that the two resulting contact structures induce the same (oriented) dividing set on \(\partial Y(n,m)\), we think of \(Y(n,m)\) as being obtained by perturbing the boundary of a contact manifold \(Y_{n,m}'\) with pre-Lagrangian boundary. (This is possible by the proof of Proposition \ref{prop: half}.)

Temporarily, let \(\xi_1\) denote the contact structure on \(Y_{n,m}'\), and let \(\xi_2\) denote the same contact structure with the opposite co-orientation. We now perturb \(Y_{n,m}'\) to have convex boundary with two dividing curves to obtain \(Y(n,m)\). However, we do this in complementary ways for \(\xi_1\) and \(\xi_2\), i.e., where we push the boundary ``outward'' for \(\xi_1\), we push ``inward'' for \(\xi_2\), and vice-versa. The resulting manifolds with boundary can be identified in a natural way. After this identification, \(\xi_1\) and \(\xi_2\) induce the same dividing set on \(\partial Y(n,m)\).

As with \(Y(g_{n,m})\), we can define an orientation-preserving diffeomorphism
\[\Phi\colon Y(n,m)\to Y(n,m)\]
which preserves the sutured structure on \(\partial Y(n,m)\) and so \(\Phi_*(\xi_1)\) is isotopic to \(\xi_2\) through contact structures inducing the same dividing set on \(\partial Y(n,m)\). When restricted to the torus boundary, \(\Phi\) is isotopic to a hyperelliptic involution. Let \(\mathcal{F}\) be a pointed matched circle on \(\partial Y(n,m)\) with one parameterizing arc which agrees with the sutures induced by \(\xi_1,\xi_2\), and let \(\mathcal{F}'\) be the same pointed matched circle
but with the basepoint moved to the diametrically opposite position. The diffeomorphism \(\Phi\) induces an equivalence 
\[\Phi_*\colon {\widehat{\mathit{CFA}}}\big({-Y(n,m),\mathcal{F}}\big)\to {\widehat{\mathit{CFA}}}\big({-Y(n,m),\mathcal{F}'}\big)\]
sending the Type A contact invariant \(c_A(\xi_1, \mathcal{F})\) to \(c_A(\xi_2,\mathcal{F}')\), and \(c_A(\xi_2, \mathcal{F})\) to \(c_A(\xi_1,\mathcal{F}')\). It will to be beneficial to keep track of both \(\xi_1\) and \(\xi_2\). From here on out we will be slightly sloppy; when we refer to \(Y(n,m)\) as a contact manifold, we implicitly mean with respect to one of the contact structures \(\xi_1\) or \(\xi_2\). 

In \(Y(n,m)\), orient the Lagrangian \(K\) in the same way as the binding component parallel to it. (The binding is a positively transverse knot, so the orientation of \(K\) depends on the co-orientation of the contact structure.) We now attach a basic slice to the boundary of \(Y(n,m)\) so that the resulting dividing set consists of meridians of \(K\). If one chooses the sign of the bypass attachment correctly, then by work of Stipsicz--V\'{e}rtesi \cite{StipsiczVertesi} the induced map on sutured Floer homology 
\begin{equation} \mathit{SFH}\big({-Y(n,m)}, -\Gamma_\lambda\big)\to \mathit{SFH}\big({-Y(n,m),-\Gamma_\mu}\big)\cong \widehat{\mathit{HFK}}\big({-Y(g_{n,m}),K}\big)\label{eq: SV}\end{equation}
sends the sutured contact invariant \(\mathit{EH}\big(Y(n,m)\big)\) \cite{HKMsuture} to the LOSS invariant \({\widehat{\mathcal{L}}(K)}\) \cite{LOSS}. Above \(\Gamma_\lambda\) and \(\Gamma_\mu\) are the longitudinal and meridional sutures on \(\partial Y(n,m)\) determined by the Legendrian \(K\). (Note \(K\) is rationally null-homologous, so the LOSS invariant of \(K\) is well-defined.)
\begin{lemma} 
\label{lemma: LOSS}
Depending on the co-orientation for the contact structure on \(Y(g_{n,m})\), the LOSS invariant \(\mathcal{L}(K)\in \mathit{HFK}^-\big({-Y(g_{n,m}),K}\big)\) is represented by the pictured generator in one of the two doubly-pointed Heegaard diagrams in Figure {\normalfont\ref{fig: LOSS}}.
\begin{figure}[ht]
\centering
    \includegraphics[scale =0.2]{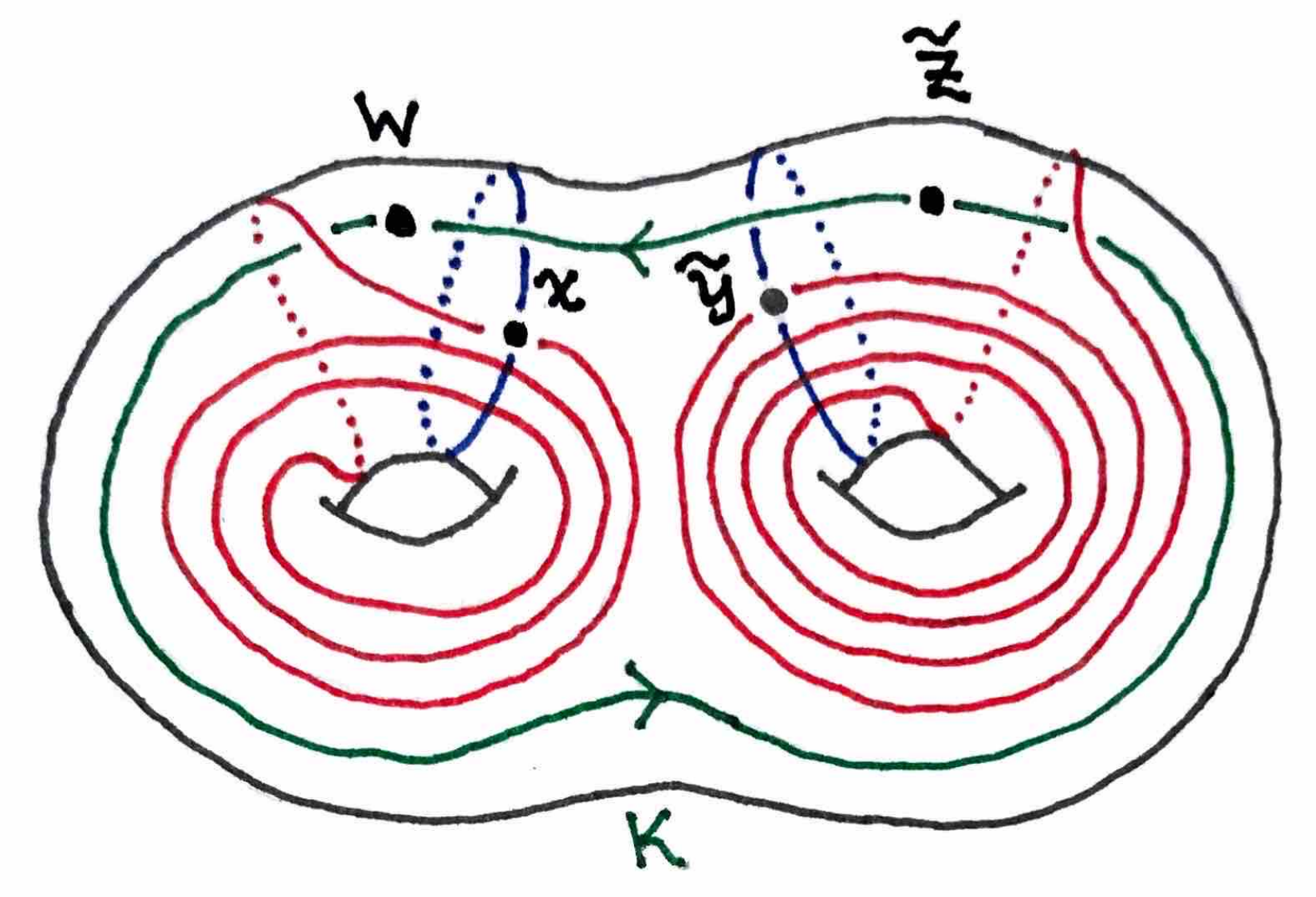}\qquad\qquad\includegraphics[scale =0.18]{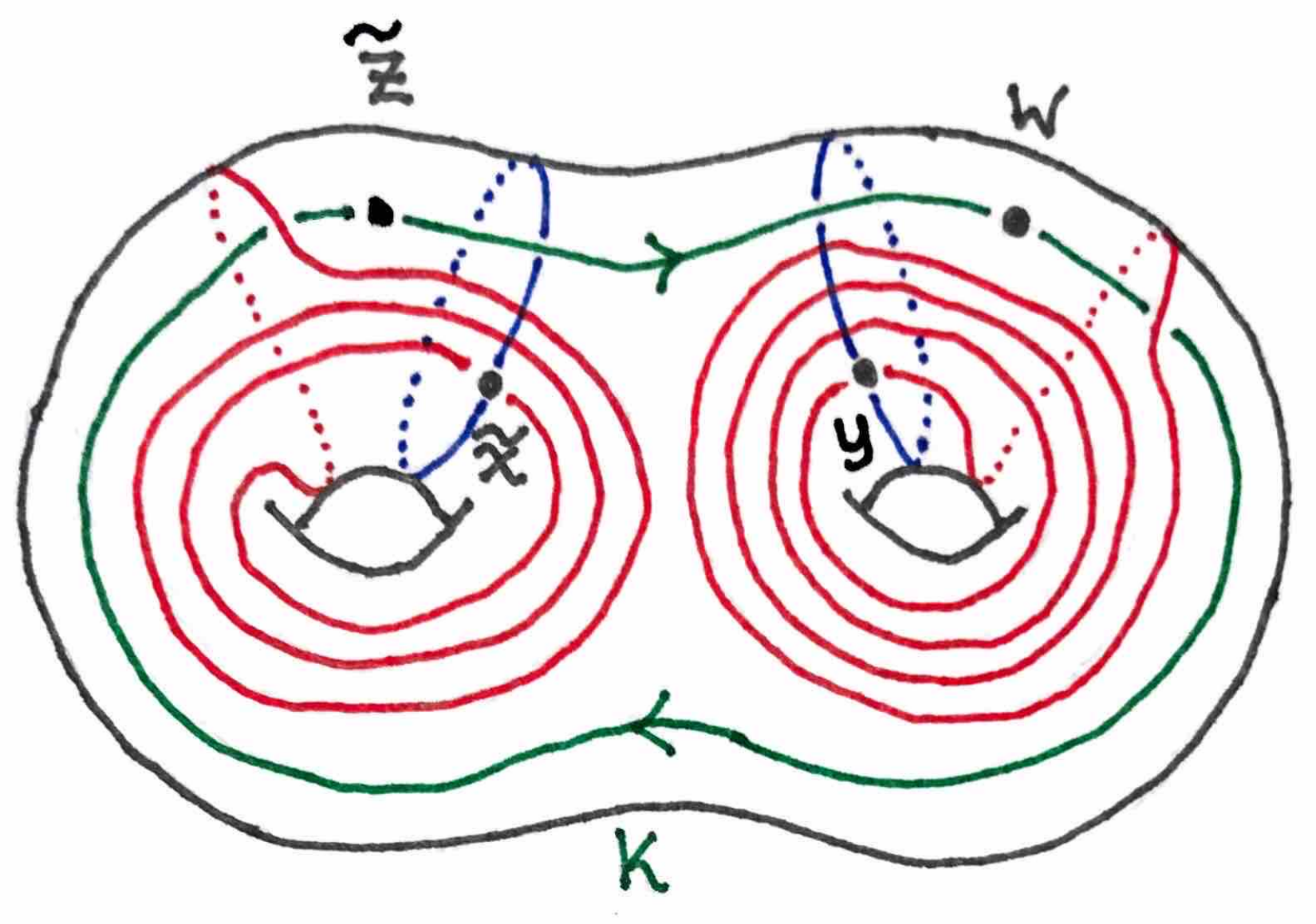}
    \caption{The case \(n=3\), \(m=4\)}
    \label{fig: LOSS}
\end{figure}
We color the curves in these Heegaard diagrams according to the usual convention for \(-Y(g_{n,m})\), instead of \(Y(g_{n,m})\). 
\end{lemma}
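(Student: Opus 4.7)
The plan is to unpack the Lisca--Ozsv\'{a}th--Stipsicz--Szab\'{o} construction of the LOSS invariant directly, applied to the open book $(P,\tau_\gamma^n \tau_\delta^m)$ for $Y(g_{n,m})$ with $K$ realized as a Legendrian on a page. Recall that given a Legendrian $L$ sitting on a page of an open book $(S,\phi)$ for a contact $3$-manifold $(Y,\xi)$, one forms a doubly-pointed Heegaard diagram for $-Y$ by doubling the page, $\Sigma=S\cup_{\partial S}(-S)$, choosing a cut system $\{a_1,\dots,a_n\}$ of properly embedded arcs on $S$ so that $L$ meets exactly one arc (say $a_1$) transversely in a single point, letting $\alpha_i = a_i\cup\overline{a_i}$ and $\beta_i = \phi(a_i)\cup \overline{a_i}'$ (with $\overline{a_i}'$ a small Hamiltonian perturbation of $\overline{a_i}$ into $-S$), and placing basepoints $w,z$ on either side of $a_1\cap L$ on $S$. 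The LOSS invariant is then the intersection point whose components all lie just inside the handlebody $S$, with the location with respect to $L$ (and hence with respect to $w$ vs.\ $z$) dictated by the co-orientation of $\xi$.

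First I would pick the cut system on the pair-of-pants $P$: take $a_1$ to be a properly embedded arc from the boundary component parallel to $K$ to one of the other boundary components, crossing $K$ once; take $a_2$ to be a properly embedded arc connecting the remaining pair of boundary components, chosen to be disjoint from $K$ and, up to isotopy, fixed by both $\tau_\gamma$ and $\tau_\delta$. With this choice, $\beta_2$ is merely a small perturbation of $\alpha_2$ and contributes a single intersection point on the $S$-side of $\Sigma$. All of the interesting structure therefore lives in the $\alpha_1$-$\beta_1$ pair: here $\phi(a_1)$ is obtained by applying $\tau_\gamma^n \tau_\delta^m$ to $a_1$, producing precisely the twisting around the two boundary components visible in Figure \ref{fig: LOSS}.

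Next I would read off the doubly-pointed diagram and locate the generator. By the LOSS recipe, the distinguished generator at $\alpha_1\cap\beta_1$ is the intersection point adjacent to $\partial S$ on a specified side of $K$, and which side is determined by the co-orientation of $\xi$. Reversing the co-orientation of $\xi$ swaps $w$ and $z$, which is reflected in Figure \ref{fig: LOSS} as the two different placements of the generator; we use the convention that the Heegaard diagram is drawn for $-Y(g_{n,m})$, i.e., the $\alpha$- and $\beta$-coloring of curves in Figure \ref{fig: LOSS} is the opposite of what one would draw for $Y(g_{n,m})$. One then verifies that the two possible LOSS generators match the two pictured generators in Figure \ref{fig: LOSS}.

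The main obstacle is the bookkeeping in two places: correctly describing the positive Dehn twists $\tau_\gamma^n$ and $\tau_\delta^m$ as windings of $\phi(a_1)$ around the two punctures of $P$, so that the doubled curve $\beta_1$ really is the one drawn in Figure \ref{fig: LOSS}; and keeping straight the orientation conventions, since we work in $-Y(g_{n,m})$ and must also track how the co-orientation flip exchanges the two co-orientations of $\xi$ on $Y(g_{n,m})$ with the two basepoint placements. Once these bookkeeping issues are settled the identification of $\mathcal{L}(K)$ with the pictured generator is immediate from the definition.
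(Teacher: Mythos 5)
Your plan has a genuine gap in the proposed cut system. On the pair-of-pants page $P$, the Legendrian $K$ is parallel to one boundary component, while $\gamma$ and $\delta$ are parallel to the other two. For the LOSS recipe only $a_1$ may meet $K$, which forces $a_2$ to run between the $\gamma$-boundary and the $\delta$-boundary. But in the relative mapping class group a boundary-parallel Dehn twist acts nontrivially on every arc ending on that boundary component, so $a_2$ is \emph{not} fixed (even up to isotopy rel endpoints) by $\tau_\gamma$ or $\tau_\delta$. Hence $\beta_2$ is not merely a Hamiltonian perturbation of $\alpha_2$, and the diagram does not collapse to a single interesting $\alpha_1$--$\beta_1$ pair. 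One can also see this by counting: a single twisted pair would produce on the order of $n$ generators, whereas the diagram in Figure \ref{fig: LOSS} has $nm$ generators, so both arcs must wind under the monodromy. Without this simplification, your concluding claim that the identification of $\mathcal{L}(K)$ ``is immediate from the definition'' is unsupported; the bookkeeping you flag as the main obstacle is in fact the bulk of the argument, and it remains to be done.

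Your route is also genuinely different from the paper's, which deliberately avoids the direct LOSS computation. The paper instead uses that the forgetful map $\mathit{HFK}^-\big({-Y(g_{n,m}),K}\big)\to\widehat{\mathit{HF}}\big({-Y(g_{n,m})}\big)$ sends $\mathcal{L}(K)$ to the Honda--Kazez--Mati\'c contact class $c(g_{n,m})$, computes $c(g_{n,m})$ from the open book via the HKM procedure, checks that each of the $nm$ generators in the resulting Heegaard diagram lies in a distinct $\mathrm{spin}^c$ class, and then reads off the generator representing $\mathcal{L}(K)$ after an appropriate basepoint move. The $\mathrm{spin}^c$-separation step is what makes the identification rigorous without ever tracking the windup of the cut arcs. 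A repaired version of your approach, with a correct cut system (both arcs twisted) and an explicit isotopy to the diagram in Figure \ref{fig: LOSS}, could in principle work, but it would essentially amount to reproducing the diagram and then still needing some uniqueness argument, of the kind the paper gets from $\mathrm{spin}^c$ structures, to pin down the distinguished generator.
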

\begin{proof}
Instead of directly identifying \(\mathcal{L}(K)\) via the procedure in \cite{LOSS}, we use the fact that the natural map
\begin{equation}\mathit{HFK}^{-}\big({-Y(g_{n,m})},K\big) \to \widehat{\mathit{HF}}\big({-Y(g_{n,m})}\big) \label{eq: forget}\end{equation}
takes the class \(\mathcal{L}(K)\) to \(c(g_{n,m})\). To find \(c(g_{n,m})\), we follow the procedure in \cite{HKM}. This is illustrated for each choice of co-orientation of the contact structure in Figure \ref{fig: singly pointed}. 
\begin{figure}[ht]
\centering
    \includegraphics[scale=1]{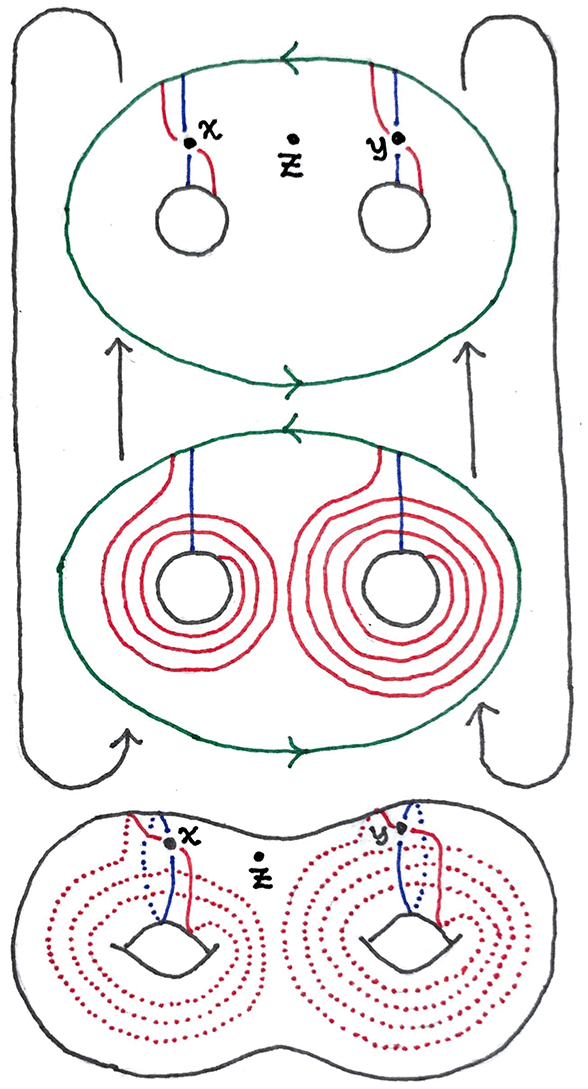}\qquad\qquad\includegraphics[scale=1]{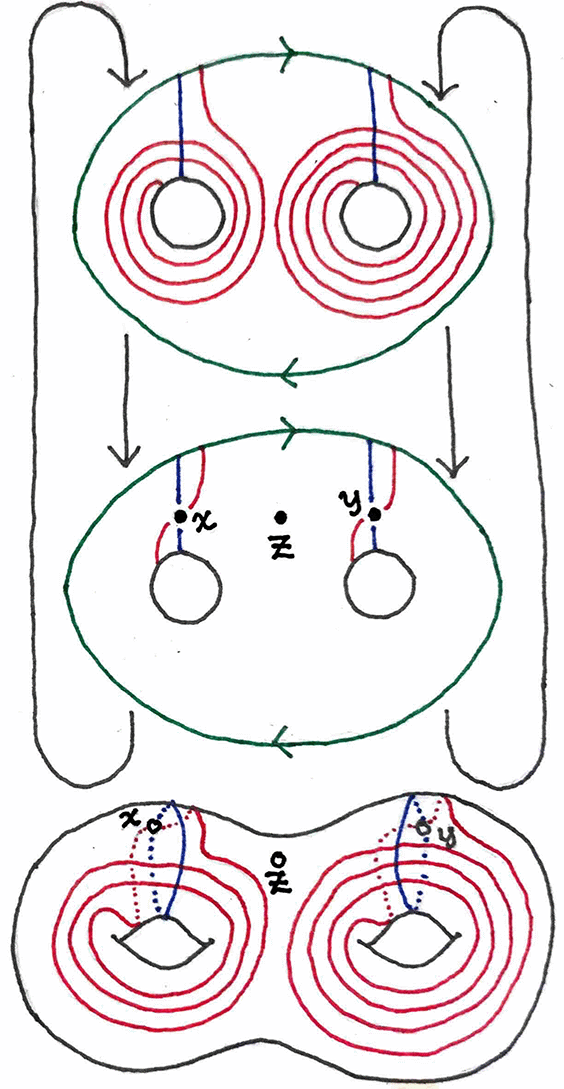}
    \caption{}
    \label{fig: singly pointed}
\end{figure}

In both cases, we can isotope the red \(\alpha\) curves to obtain Figure \ref{fig: isotopy}. To get from the left-hand side of Figure \ref{fig: singly pointed} to \ref{fig: isotopy} we do not ``flip the Heegaard surface upside down''. 
\begin{figure}[ht]
\centering
    \includegraphics[scale=0.2]{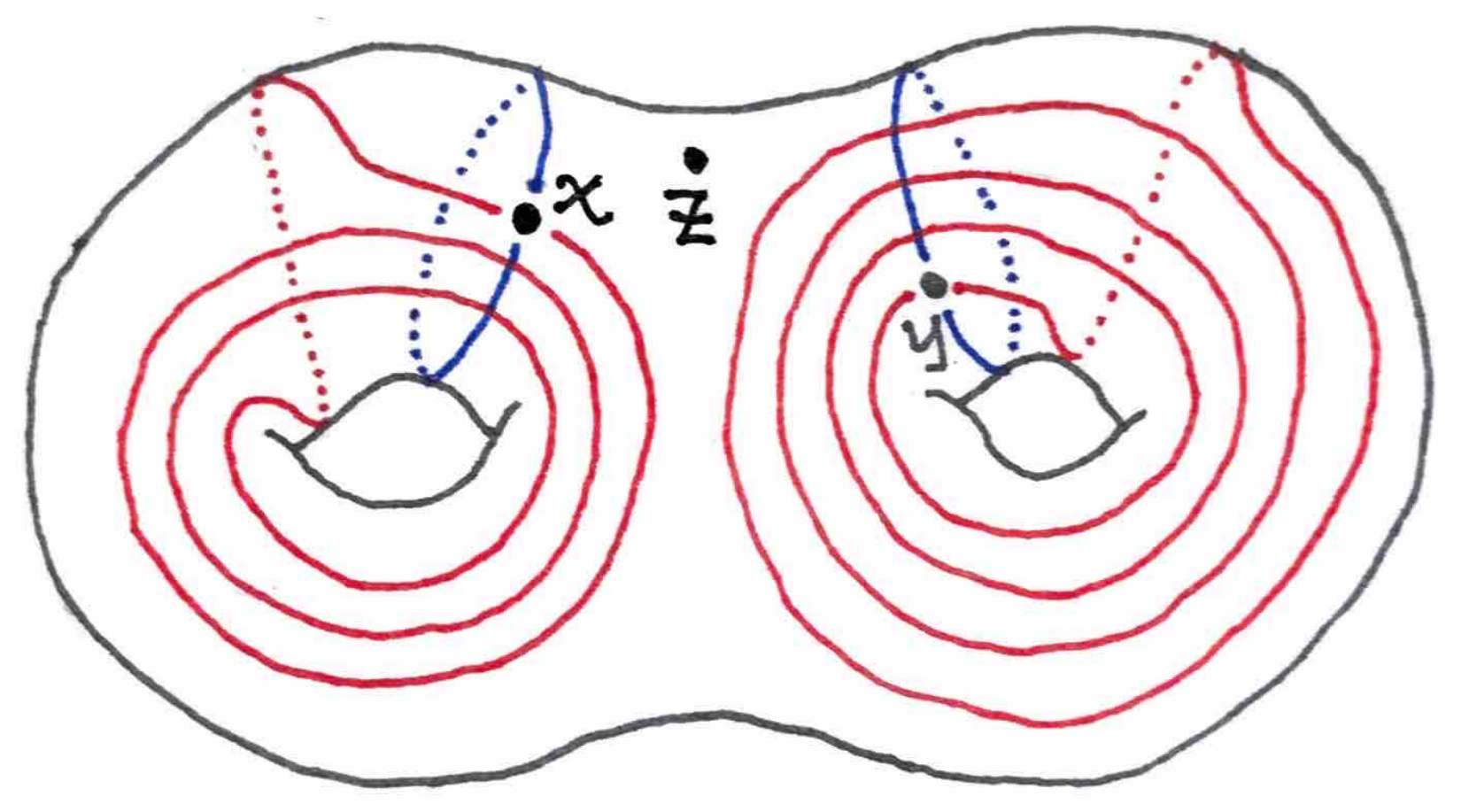}
    \caption{}
    \label{fig: isotopy}
\end{figure}

Each generator in Figure \ref{fig: isotopy} belongs to a different \(\mathrm{spin}^c\) structure. Indeed, \({\mathrm{Spin}^c\big(Y(g_{n,m})\big)}\) is a torsor over the abelian group 
\[H_1\big({Y(g_{n,m})}\big)\cong \langle \mu_1,\mu_2\,|\,n\mu_1 = 0,m\mu_2 = 0\rangle\cong \mathbb{Z}/n\oplus \mathbb{Z}/m.\]
In Figure \ref{fig: isotopy}, \(\mu_1\) and \(\mu_2\) are represented by dual curves to the blue \(\beta\) curves. Using this, one can see each of the \(nm\) generators lives in a distinct \(\mathrm{spin}^c\) class.

The map (\ref{eq: forget}) forgets the basepoint \(w\). Hence in Figure \ref{fig: isotopy} we should move the basepoint \(z\) to \({\widetilde{z}}\) as in Figure \ref{fig: LOSS}. Note this shifts the assignment of generators to \(\mathrm{spin}^c\) structures by \(\mathrm{PD}([\gamma])\), where \(\gamma\) is the curve in Figure \ref{fig: shift}. To compensate for this, we either shift the point \(x\) to \({\widetilde{x}}\) or the point \(y\) to \({\widetilde{y}}\). 
\begin{figure}[ht]
\centering
    \includegraphics[scale=0.18]{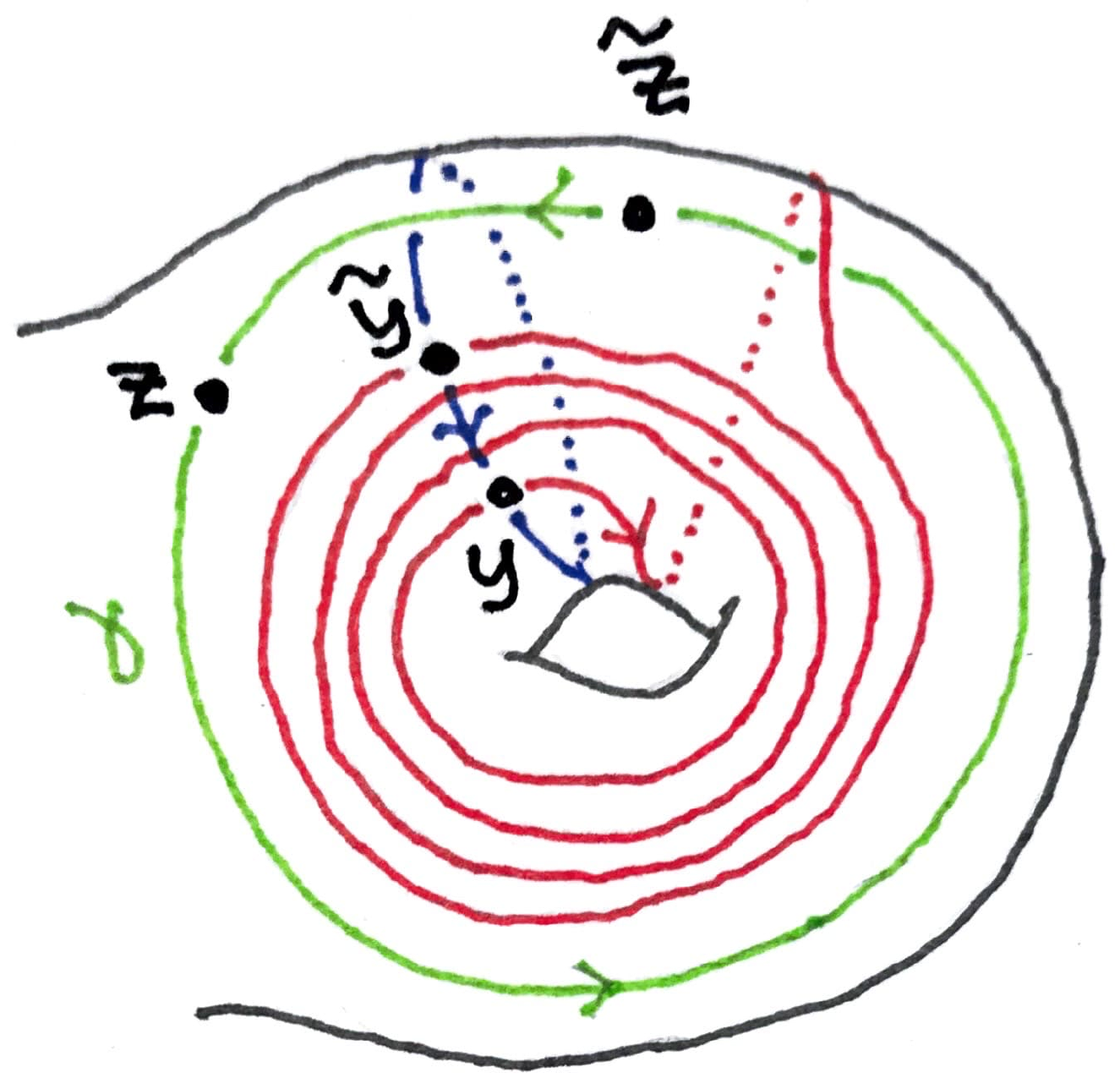}\qquad\qquad\includegraphics[scale=0.18]{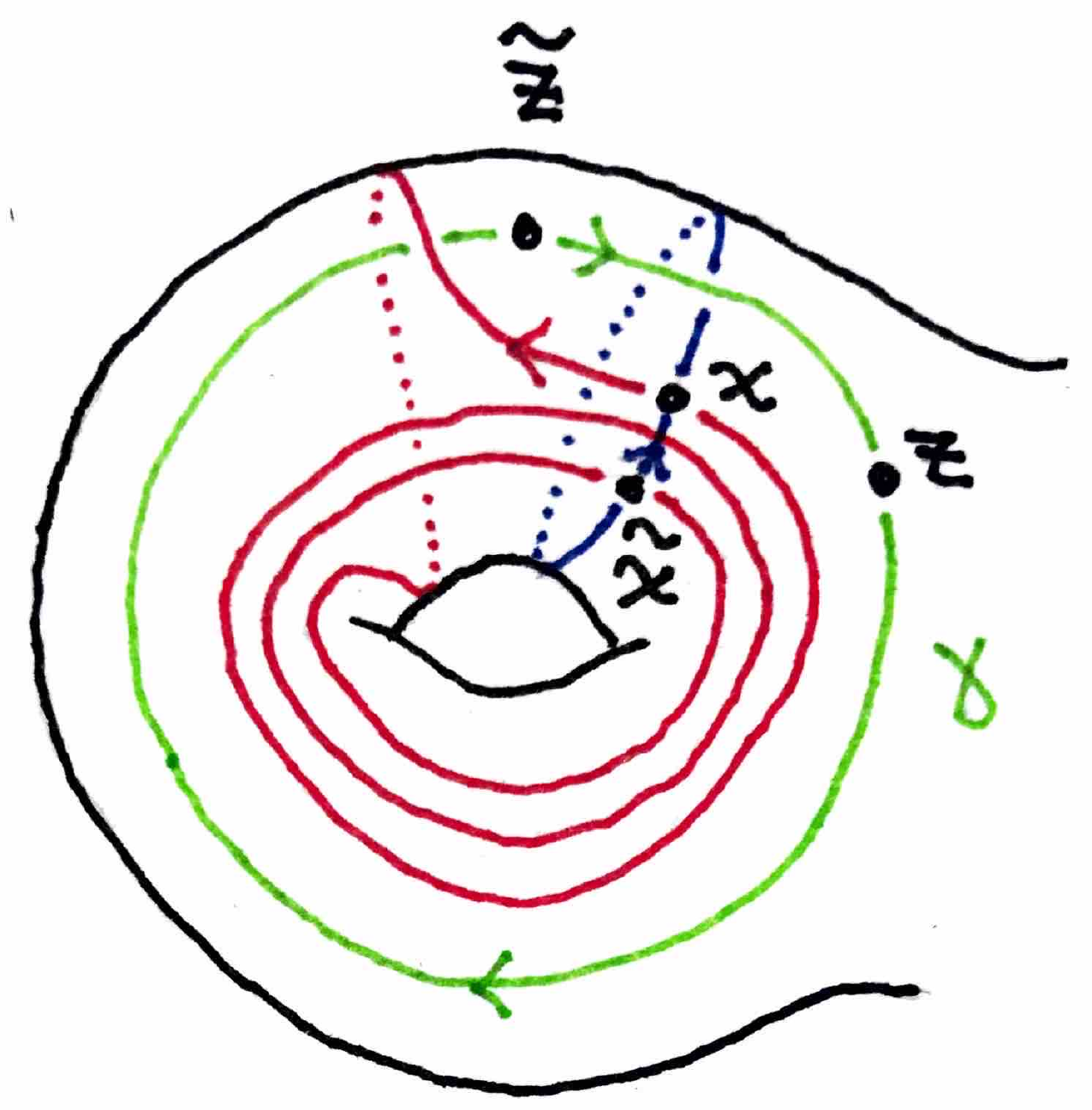}
    \caption{}
    \label{fig: shift}
\end{figure}
\end{proof}
Since the map 
\[\mathit{HFK}^-\big({-Y(g_{n,m}),K}\big)\to \widehat{\mathit{HFK}}\big({-Y(g_{n,m}),K}\big),\]
obtained by setting \(U=0\), sends \(\mathcal{L}(K)\) to \({\widehat{L}(K)}\), we can think of Figure \ref{fig: LOSS} as specifying \[{\widehat{L}(K)}\in \widehat{\mathit{HFK}}\big({-Y(g_{n,m}),K}\big)\cong \mathit{SFH}\big({-Y(n,m),-\Gamma_{\mu}}\big).\]

Before moving to the bordered setting, we refer the reader to Appendix \ref{sec: algebra} for a review of Type A and D modules over the torus algebra (in part to set up notation). In this paper, all Type A modules will be right modules. Except when tensoring a Type A module with a Type DD bimodule, all Type D modules will be left modules.  

\begin{lemma} 
Let \(\mathcal{F}\) be a framing of \(\partial\big({-Y(n,m)}\big)\) with one arc parallel to the boundary of a page, and the other arc parallel to a meridian of \(K\). There are 8 such framings depending on the choice of basepoint and choice of \(\alpha\) or \(\beta\) type. Consider the isomorphism 
\[\widehat{\mathit{HFA}}\big({-Y(n,m),\mathcal{F}}\big)\cdot \iota \to \mathit{SFH}\big({-Y(n,m),-\Gamma_{\mu}}\big)\]
obtained by attaching the appropriate sutured cap {\normalfont\cite{Zarev}}, where \(\iota\) is the corresponding idempotent in the torus algebra {\normalfont(}depending on the basepoint{\normalfont)}. Under this isomorphism, \({\widehat{\mathcal{L}}(K)}\) is represented by either \(\{p,x_1,y_1\}\) or \(\{p,x_n,y_m\}\) in one of the bordered diagrams in Figure {\normalfont\ref{fig: bordered}}.
\begin{figure}
\centering
    \includegraphics[scale=0.2]{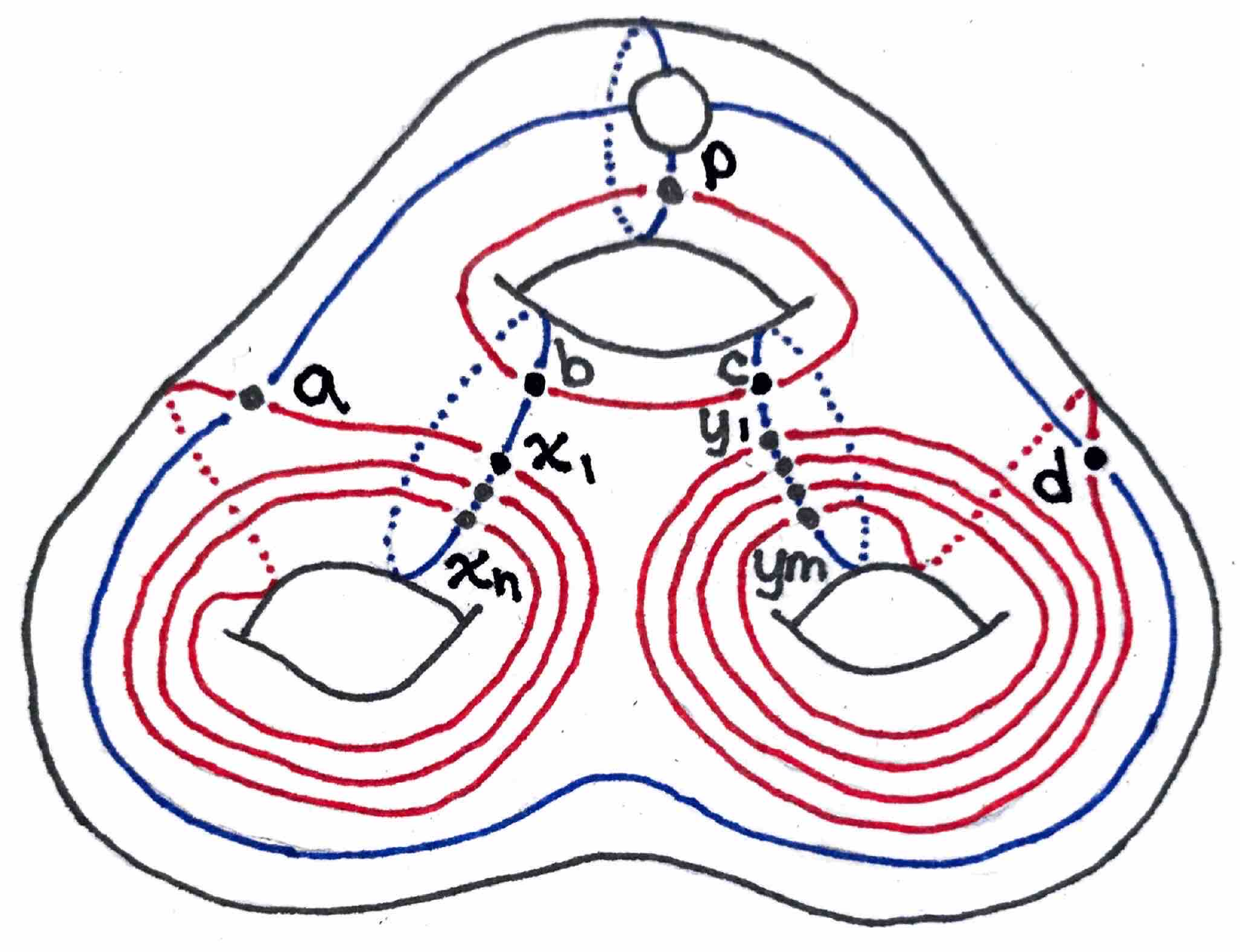}\qquad \includegraphics[scale=0.2]{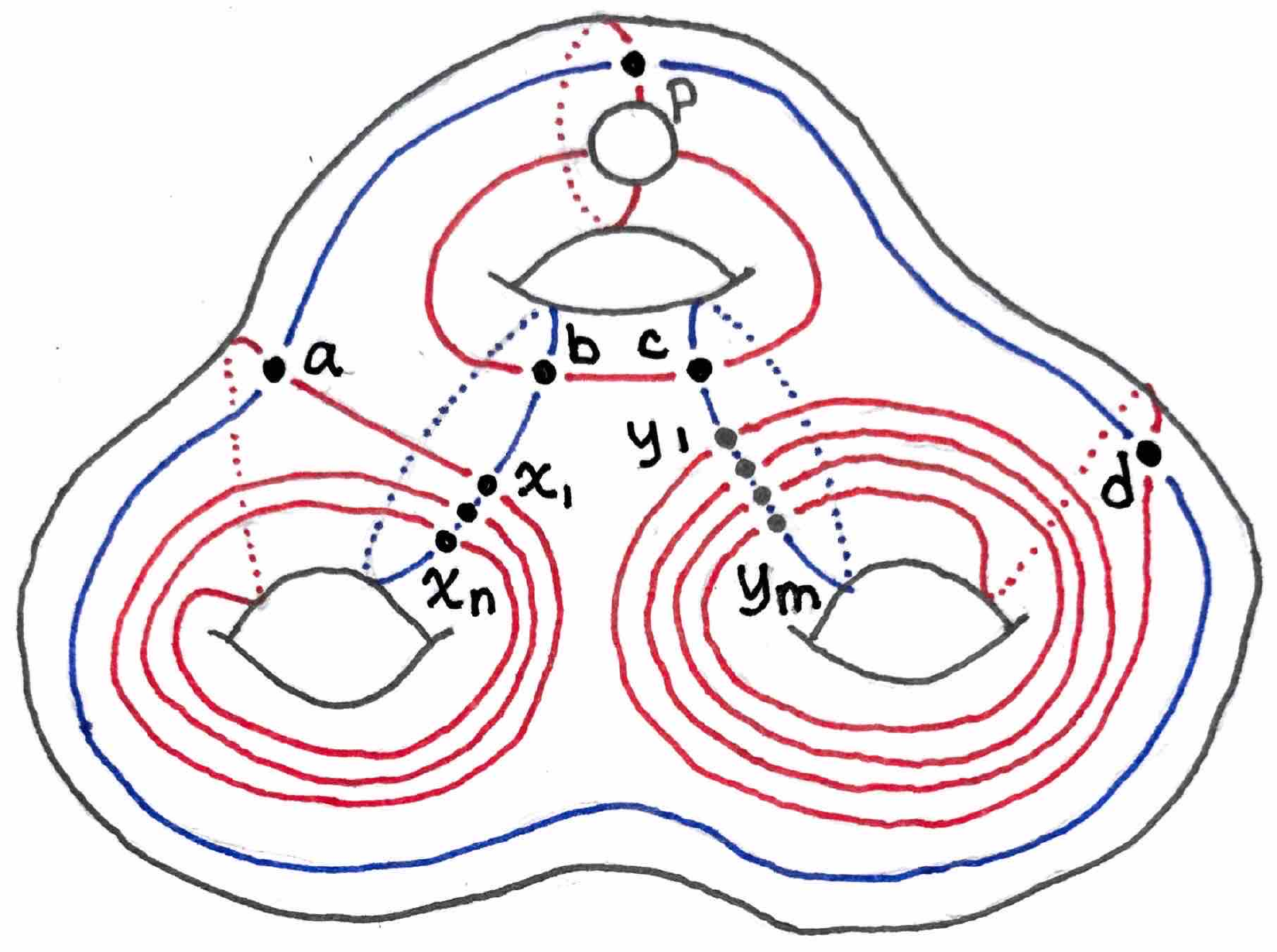}
    \caption{}
    \label{fig: bordered}
\end{figure}
\end{lemma}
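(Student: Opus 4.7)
The plan is to prove this by directly comparing each capped bordered diagram with the doubly-pointed Heegaard diagram of the previous lemma (Figure \ref{fig: LOSS}). By Zarev's sutured cap gluing theorem, attaching the appropriate cap to a bordered diagram $(\mathcal{H}, \mathcal{F})$ for $-Y(n,m)$ yields a sutured Heegaard diagram for $\bigl(-Y(n,m), -\Gamma_\mu\bigr)$, and the idempotent $\iota$ specifies which of the two $\alpha$-arcs must be occupied by any surviving generator. The cap for the framing with meridional $\alpha$-arc contributes a $\beta$-completion (and vice versa), and in each of the eight cases it introduces exactly one new transverse intersection point; this point is the $p$ appearing in the lemma, and the idempotent constraint forces $p$ to lie in every generator of $\widehat{\mathit{HFA}}(-Y(n,m),\mathcal{F})\cdot\iota$.

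Next I would verify that the capped diagrams in Figure \ref{fig: bordered} are Heegaard-move equivalent to the doubly-pointed diagrams of Figure \ref{fig: LOSS}, using only isotopies and handleslides that avoid the basepoints. This is a topological bookkeeping step: one checks that the underlying $3$-manifold, the page structure, the meridional sutures on $\partial Y(n,m)$, and the location of $K$ all match. Because $Y(n,m)$ is obtained by removing a standard Legendrian neighborhood of $K$ from $Y(g_{n,m})$, the parameterizing arcs of $\mathcal{F}$ (one on $\partial$ of a page, one on a meridian of $K$) correspond exactly to the data in the doubly-pointed diagram after moving $z$ to $\widetilde{z}$, so the equivalence is visible after an obvious sequence of finger moves along the blue $\beta$-curves winding around $K$.

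Under this equivalence, the two distinguished tuples $\{p,x_1,y_1\}$ and $\{p,x_n,y_m\}$ in the bordered picture correspond to the two generators highlighted in Figure \ref{fig: LOSS}; these are precisely the generators representing $\widehat{L}(K)$ for the two possible co-orientations $\xi_1, \xi_2$. The identification is dictated by the $\mathrm{spin}^c$ computation at the end of the previous lemma: the $nm$ intersection points split among the $nm$ distinct $\mathrm{spin}^c$ classes of $Y(g_{n,m})$, so the LOSS generator is pinned down uniquely in each bordered diagram. The involution $\Phi$ introduced in Section \ref{sec: identifying} intertwines the two choices by moving the basepoint diametrically, which is exactly the passage between $\mathcal{F}$ and $\mathcal{F}'$ and between the two distinguished generators.

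The main obstacle will be the bookkeeping of basepoints and idempotents: each of the four positions of the basepoint together with the $\alpha/\beta$ choice must be matched with a definite $\iota \in \{\iota_0,\iota_1\}$ and with the correct generator $\{p,x_1,y_1\}$ or $\{p,x_n,y_m\}$, and one must be careful that the shift of basepoint $z \mapsto \widetilde{z}$ in Figure \ref{fig: LOSS} (which shifts the $\mathrm{spin}^c$ labeling by $\mathrm{PD}([\gamma])$) is compensated for in precisely the same way when passing from the sutured cap to the doubly-pointed diagram. Once this dictionary is set up, the lemma follows from the Stipsicz--V\'ertesi formula \eqref{eq: SV} together with the identification of $\widehat{L}(K)$ in Figure \ref{fig: LOSS}.
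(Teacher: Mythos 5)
Your overall strategy matches the paper's: attach a sutured cap to the bordered diagram, show the result is Heegaard-move equivalent to the doubly-pointed diagram of Figure \ref{fig: LOSS}, and then use the fact that the generators lie in distinct $\mathrm{spin}^c$ classes to pin down the LOSS invariant. However, there are two issues worth flagging.

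First, the claim that the sutured cap ``introduces exactly one new transverse intersection point; this point is the $p$ appearing in the lemma'' is backwards. The point $p$ is already present in the bordered diagram of Figure \ref{fig: bordered}; it is one of the intersections of an $\alpha$-arc with a $\beta$-circle, which is precisely why the generators $\{p,x_i,y_j\}$ make sense as generators of $\widehat{\mathit{CFA}}(-Y(n,m),\mathcal{F})$ \emph{before} any cap is attached. What the cap introduces are additional $\alpha$- and $\beta$-circles (raising the genus), together with their own new intersection points; these are exactly what gets removed by the paper's two de-stabilizations after two $\alpha$- and two $\beta$-handleslides. Describing the cap as contributing a single new point $p$ conflates the boundary-algebra data with the surface topology and would lead to an incorrect account of the destabilization.

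Second, the crux of the paper's argument is left implicit in your write-up. After establishing Heegaard-move equivalence, one still needs a reason that the natural bijection between generators on the two sides respects $\mathrm{spin}^c$ structures; only then does the spin-$c$ partition of the $nm$ generators pin down the LOSS generator. The paper's justification is that the handleslides and de-stabilizations alter the pseudo-gradient vector field on $Y(n,m)$ only inside two $3$-balls disjoint from $\partial Y(n,m)$, which is a stronger statement than ``the moves avoid the basepoints.'' You allude to the $\mathrm{spin}^c$ computation as ``dictating the identification,'' but without the locality statement the identification is not forced; you should make this step explicit rather than treating the Heegaard-move equivalence as self-evidently $\mathrm{spin}^c$-preserving. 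The invocation of the involution $\Phi$ to explain the two possible answers is a reasonable intuition, but in the paper the dichotomy comes directly from the two choices of sutured cap (depending on basepoint position) together with the two co-orientations of the contact structure, so $\Phi$ is not actually needed here.
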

For more on the distinction between \(\alpha\)-type and \(\beta\)-type diagrams, see Remark \ref{re:alpha_beta}. 
\begin{proof}
We focus on \(\beta\)-type framing. (The argument for \(\alpha\)-type framing is similar.) To attached a sutured cap to a bordered diagram, one first converts the bordered diagram to a bordered-sutured diagram by replacing the basepoint with a small green arc, as show in Figure \ref{fig: bordered sutured}. (We represent sutures in green.)
\begin{figure}[ht]
\centering
    \includegraphics[scale=0.6]{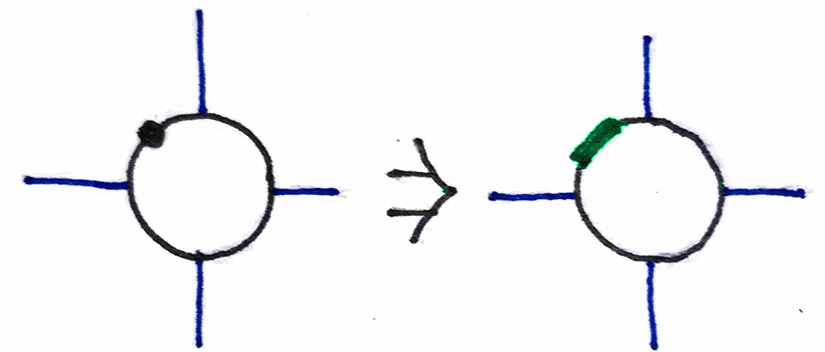}
    \caption{}
    \label{fig: bordered sutured}
\end{figure}

To realize the suture \(\Gamma_{\lambda}\) we now attach one of the two  sutured caps shown in Figure \ref{fig: caps}, matching the black parts of the boundaries. The cap we choose depends on the choice of basepoint. The result in all cases is the left-hand side of Figure \ref{fig: attached cap}.
\begin{figure}[ht]
\centering
    \includegraphics[scale=0.6]{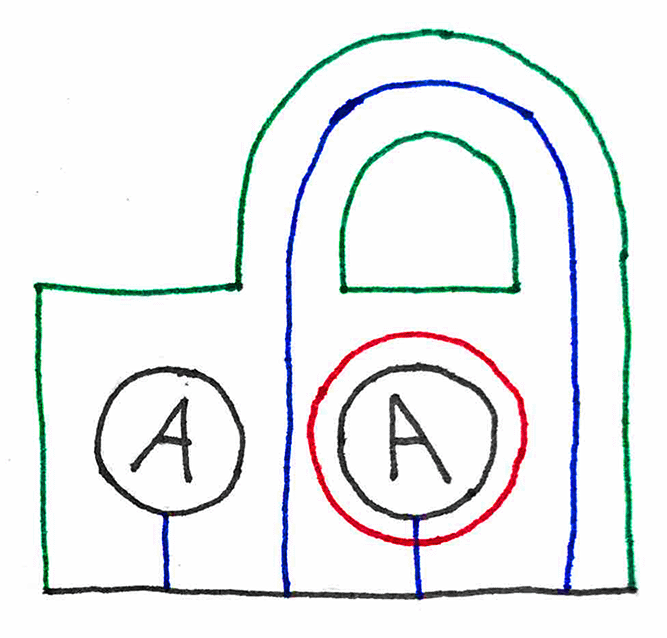}\qquad \includegraphics[scale=0.6]{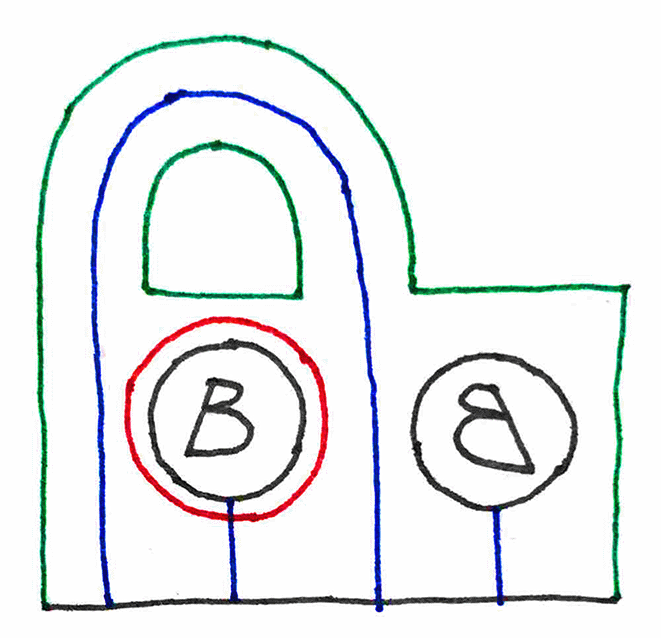}
    \caption{}
    \label{fig: caps}
\end{figure}
\begin{figure}[ht]
\centering
    \includegraphics[scale=0.8]{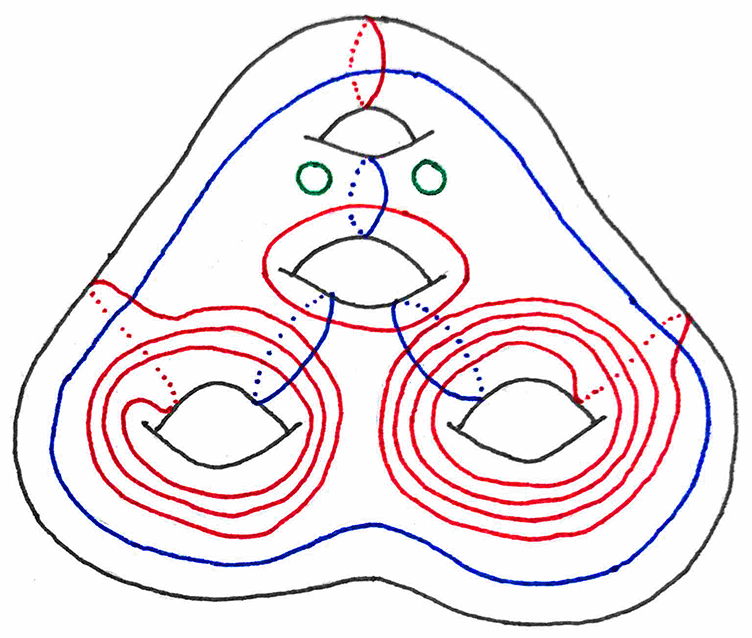}\qquad \qquad \includegraphics[scale=0.8]{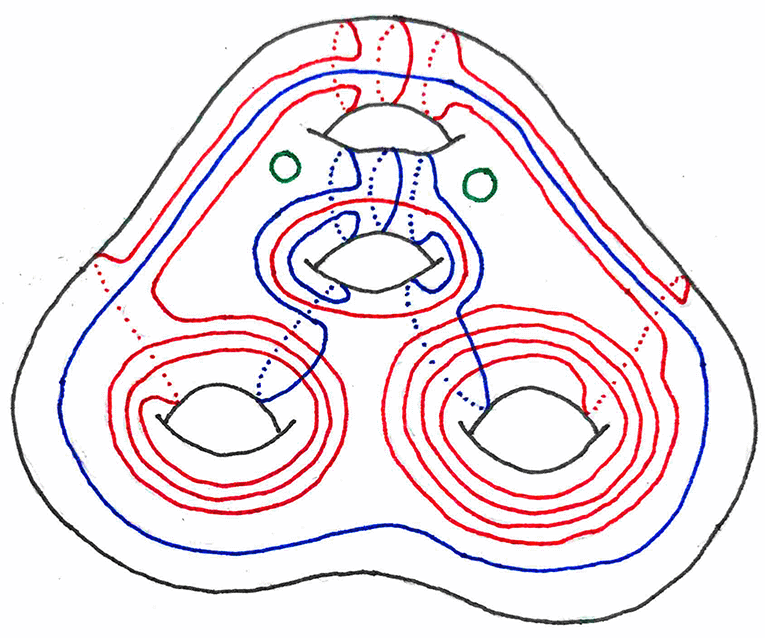}
    \caption{}
    \label{fig: attached cap}
\end{figure}

We would like to de-stabilize this diagram twice to get back to Figure \ref{fig: LOSS}. To do this, we first perform two \(\alpha\)-handleslides and two \(\beta\)-handleslides to obtain the right-hand side of Figure \ref{fig: attached cap}. We can then de-stabilize twice to obtain Figure \ref{fig: LOSS}. (Recall that a doubly-pointed Heegaard diagram is made into a sutured diagram by puncturing the basepoints \(z\) and \(w\).)

The changes in the pseudo-gradient vector field on \(Y(n,m)\) resulting from these handleslides and de-stabilizations occur within two 3-balls away from \(\partial Y(n,m)\). Hence the natural bijection between generators in the left-hand side of Figure \ref{fig: attached cap} and generators in Figure \ref{fig: LOSS}  preserves \(\mathrm{spin}^c\) classes. Moreover, the generators in both of these diagrams live in distinct \(\mathrm{spin}^c\) classes. Indeed, \({\mathrm{Spin}^c\big({-Y(n,m),-\Gamma_\mu}\big)}\) is a torsor over the abelian group 
\[H_1\big({Y(n,m)}\big)\cong \langle \mu_1,\mu_2\,|\, n\mu_1 = m\mu_2\rangle\cong \mathbb{Z}/(n,m)\oplus \mathbb{Z}.\]
In Figure \ref{fig: LOSS}, \(\mu_1\) and \(\mu_2\) are represented by dual curves to the blue \(\beta\) curves. Using this, one can see each of the \(nm\) generators lives in a distinct \(\mathrm{spin}^c\) class. From this, the corollary follows. 
\end{proof}
\begin{remark} 
It is more natural to think of the \(\alpha\)-type diagram in Figure \ref{fig: bordered} as shown in Figure \ref{fig: moustache}.
\begin{figure}[ht]
\centering
    \includegraphics[scale=0.15]{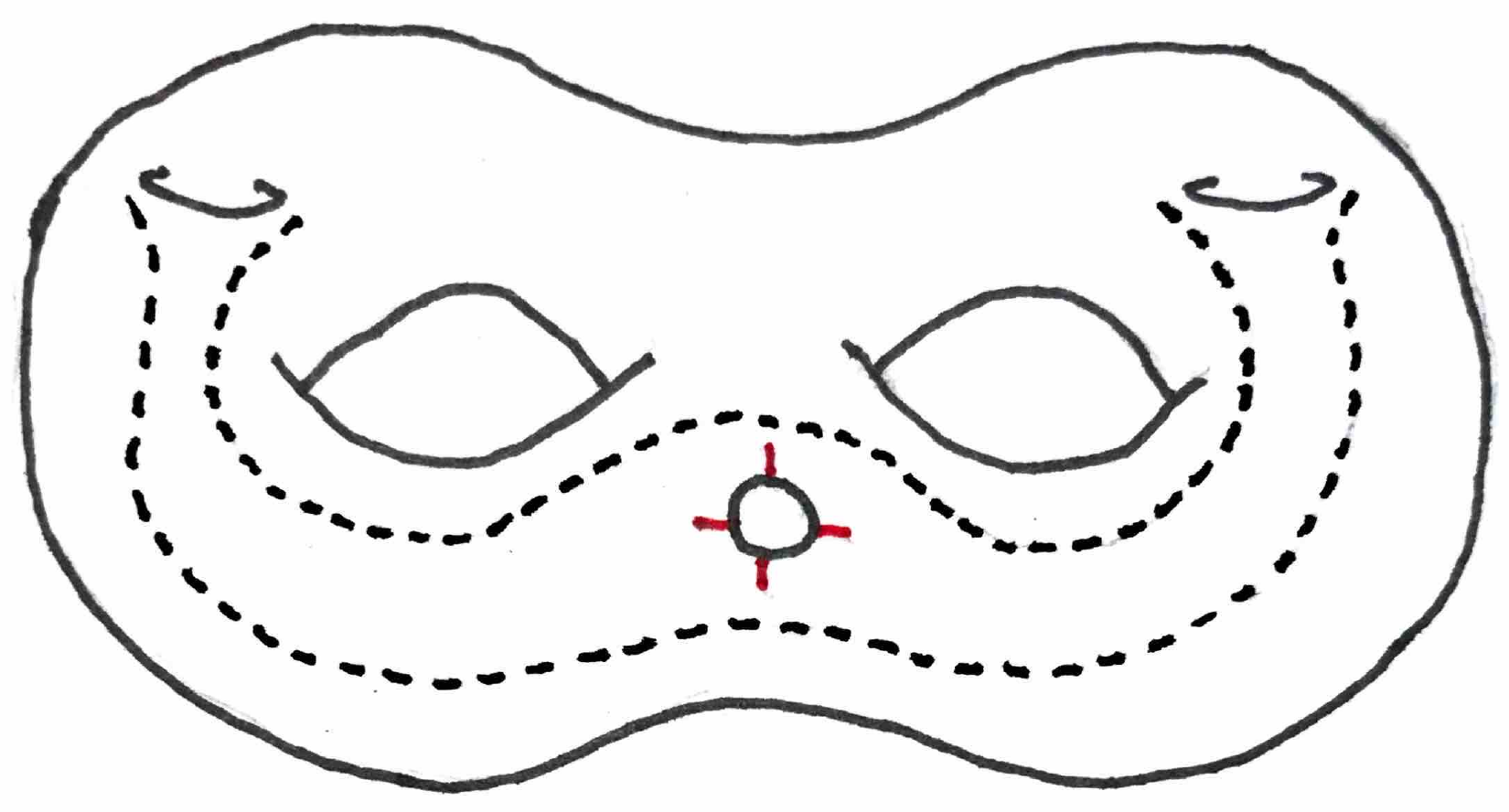}
    \caption{}
    \label{fig: moustache}
\end{figure}
As with the \(\beta\)-type diagram in Figure \ref{fig: bordered}, in this diagram the \(\beta\) handlebody is ``visible'' in \(\mathbb{R}^3\). Both diagrams arise from the link surgery diagram in \(S^3\) for \(\big(Y(g_{n,m}),K\big)\) shown in Figure \ref{fig: surgery one}.
\begin{figure}[ht]
\centering
    \includegraphics[scale=0.14]{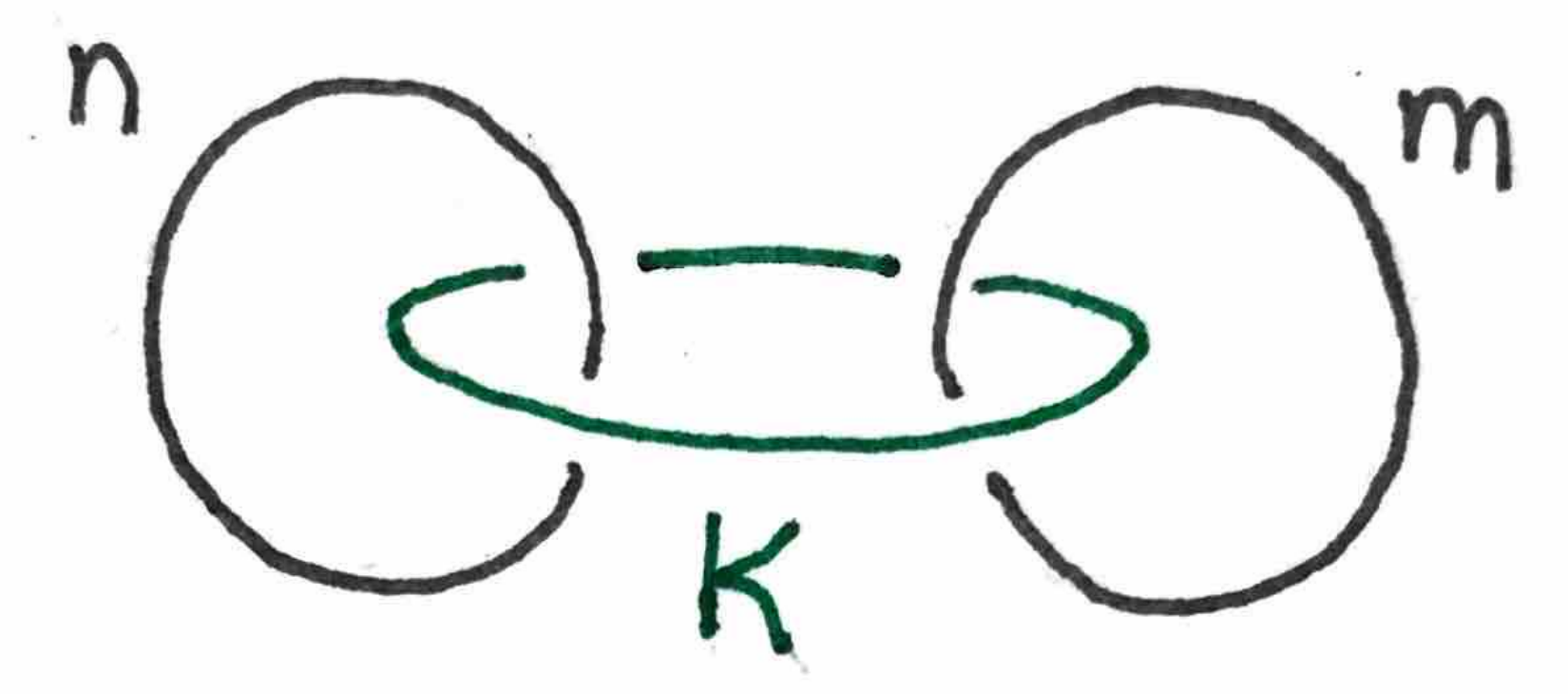}
    \caption{}
    \label{fig: surgery one}
\end{figure}
\end{remark}
Label the four possible basepoint choices for both the \(\beta\)-type and \(\alpha\)-type diagrams in Figure \ref{fig: bordered} as shown in Figure \ref{fig: beta basepoint}. We will label the resulting framings \(\mathcal{F}_{\mathrm{I},\beta}\), etc. The labels are chosen so that e.g., \(\mathcal{F}_{\mathrm{I},\beta}\) and \(\mathcal{F}_{\mathrm{I},\alpha}\) induce the same pointed matched circle on \(\partial Y(n,m)\).
\begin{figure}[ht]
    \centering
    \includegraphics[scale=0.08]{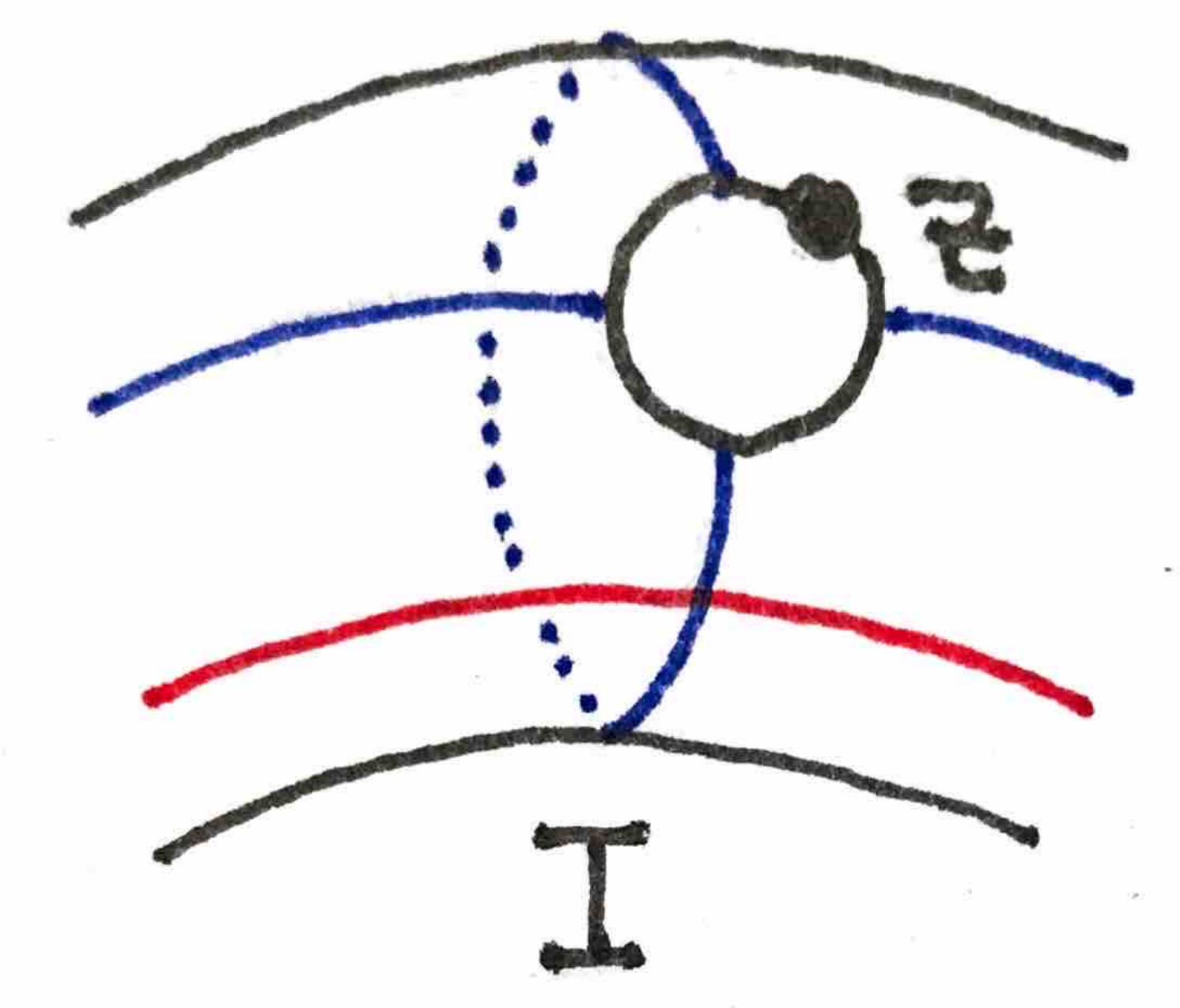}\quad\includegraphics[scale=0.08]{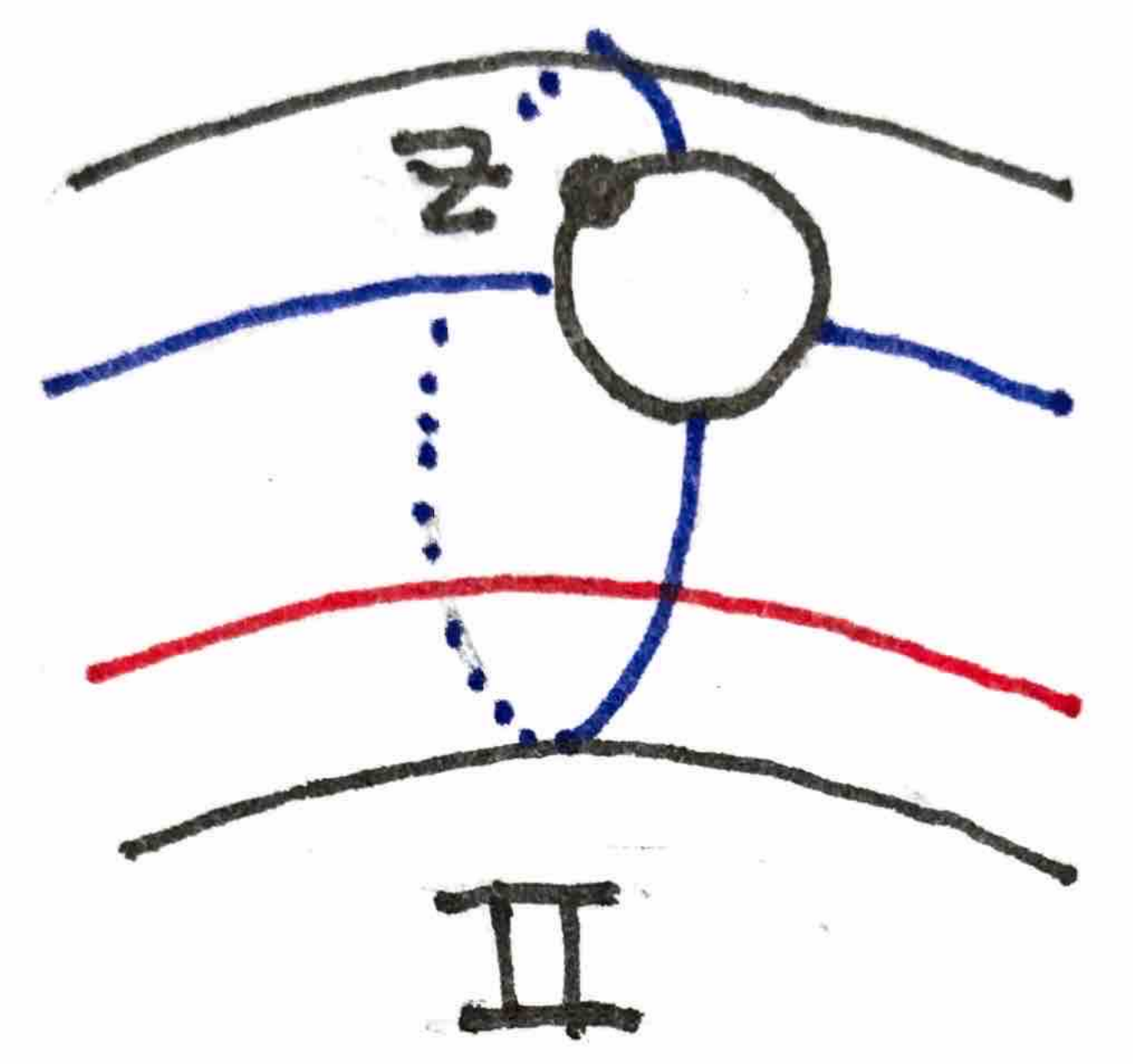}\quad\includegraphics[scale=0.08]{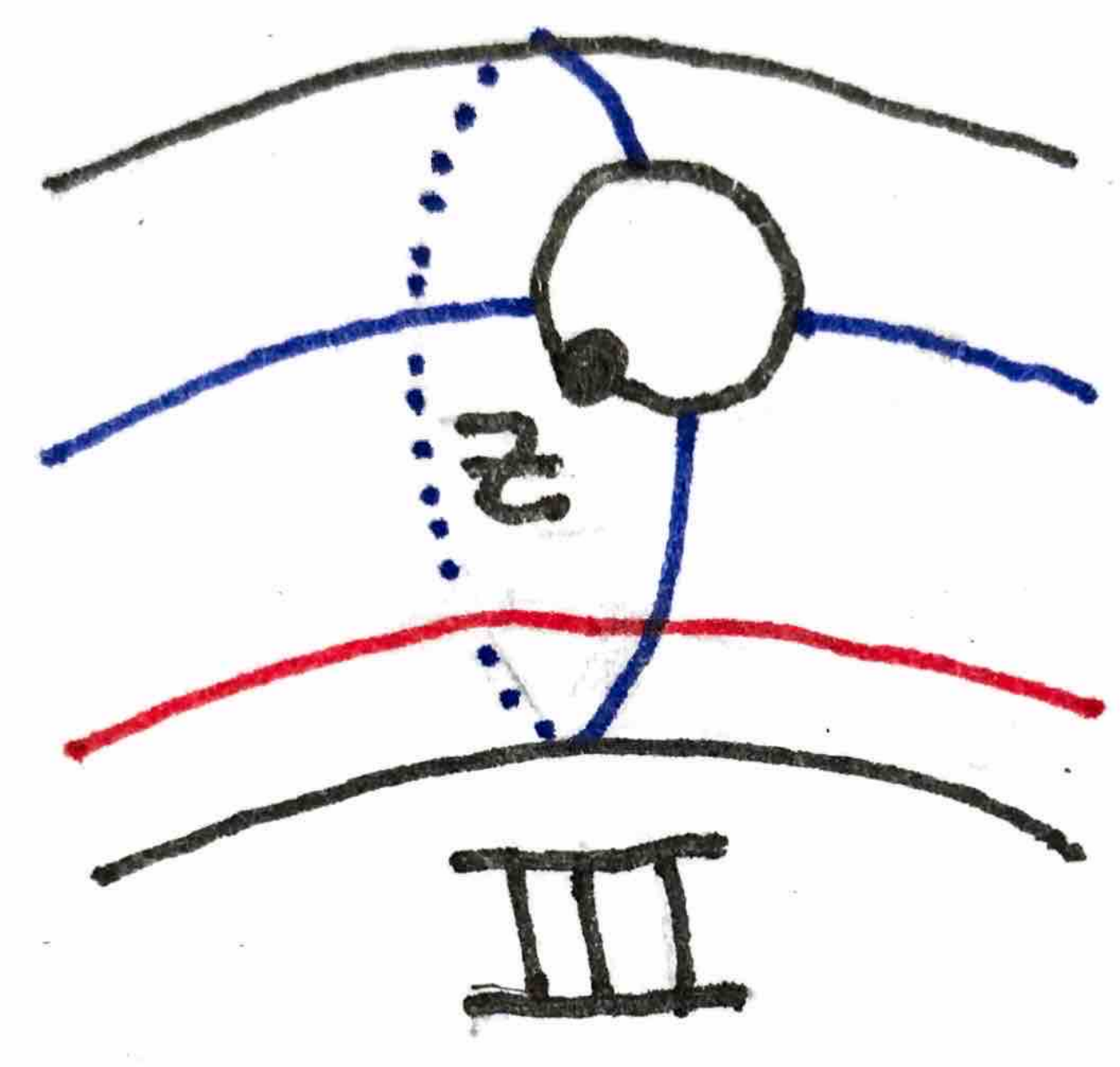}\quad\includegraphics[scale=0.08]{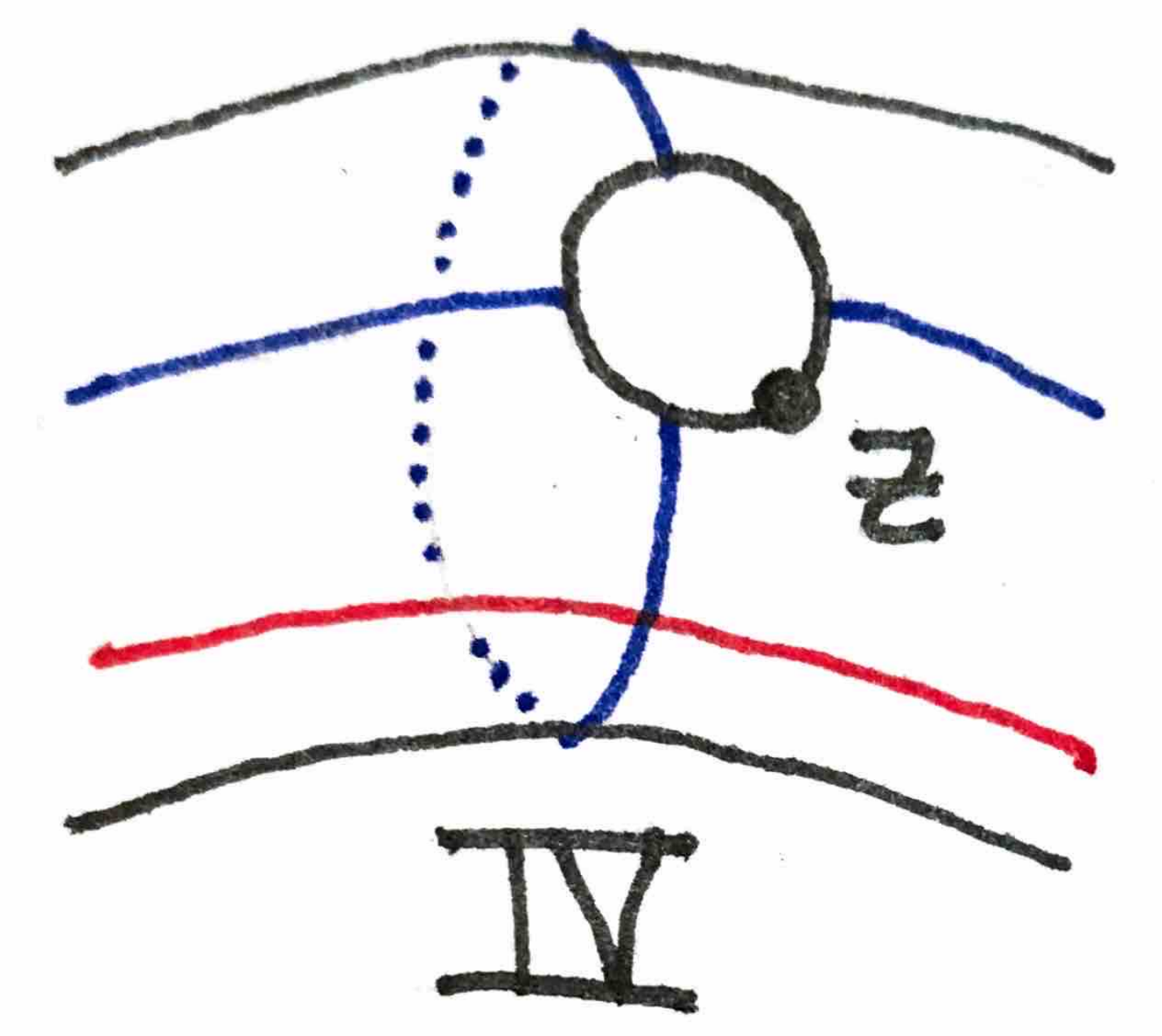}\\
    \includegraphics[scale=0.08]{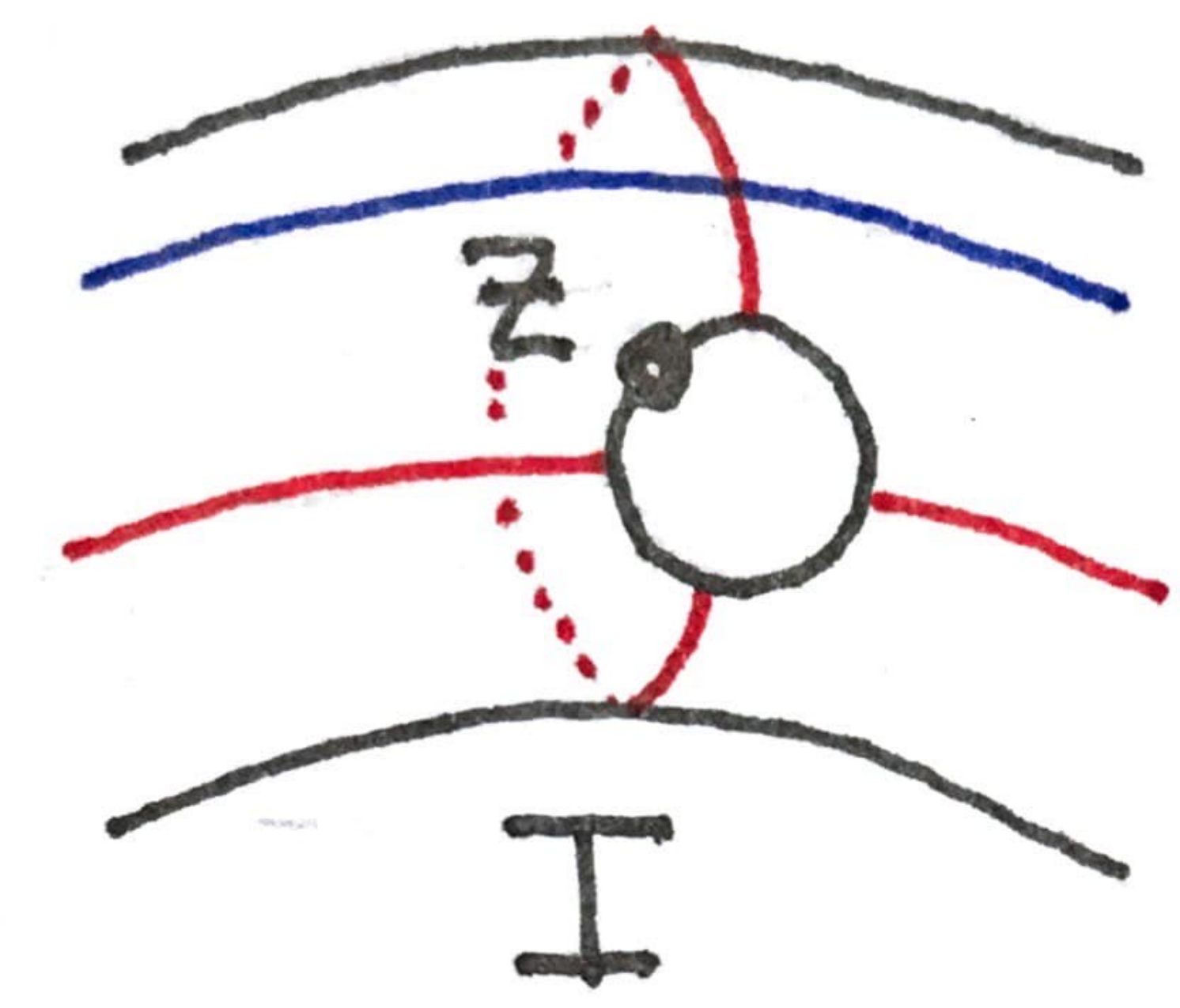}\quad\includegraphics[scale=0.08]{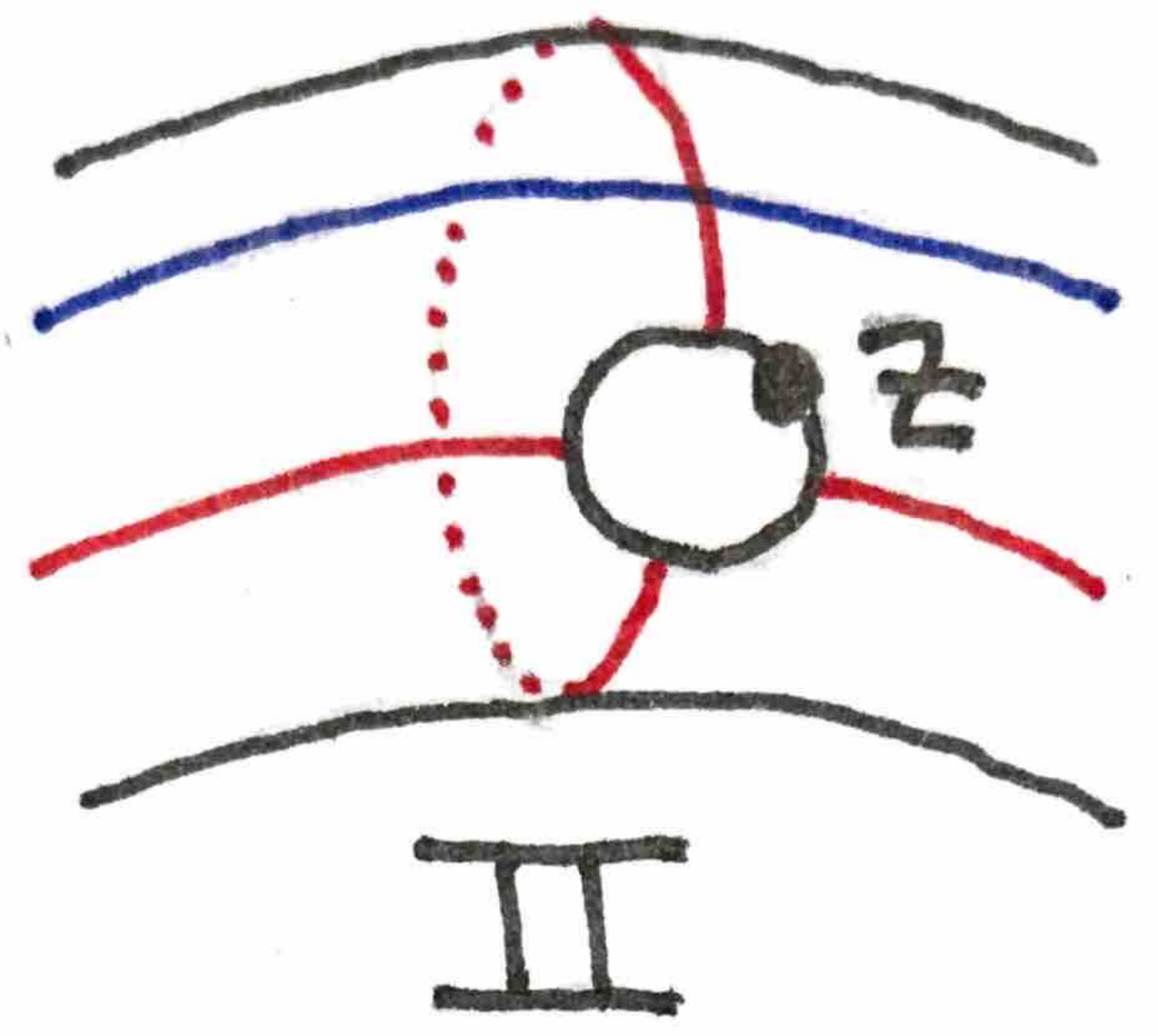}\quad\includegraphics[scale=0.08]{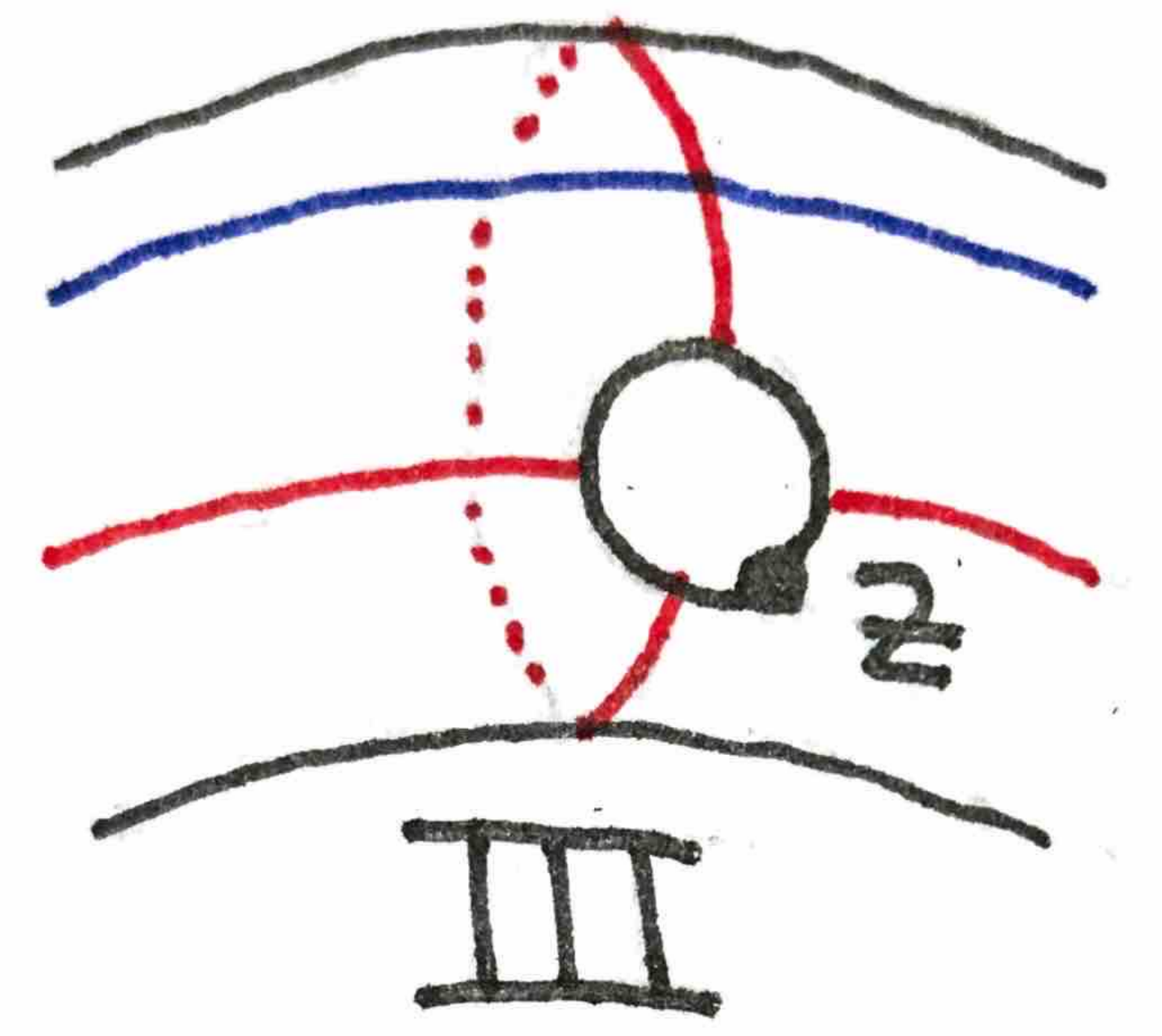}\quad\includegraphics[scale=0.08]{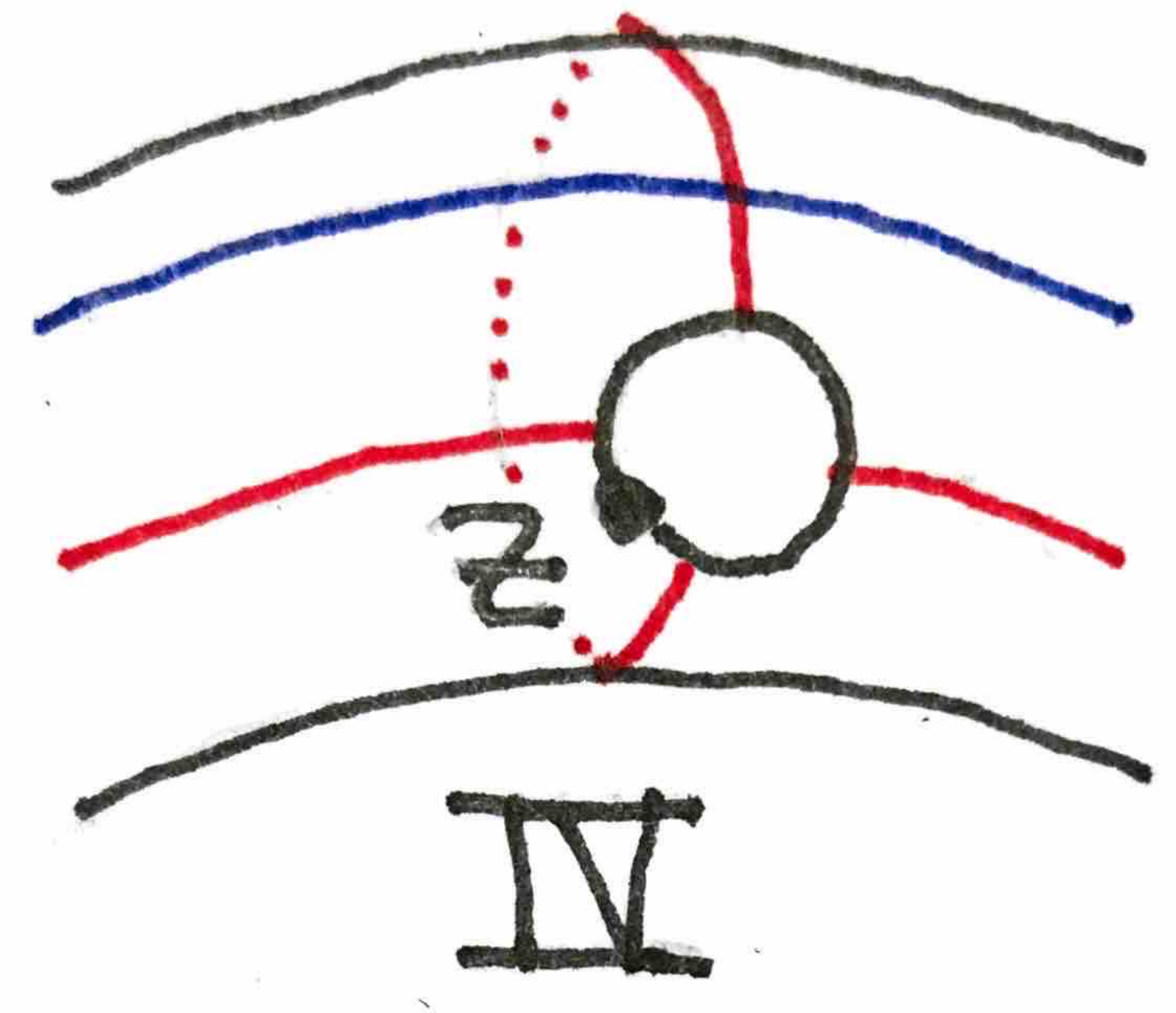}
    \caption{}
    \label{fig: beta basepoint}
\end{figure}
In Appendix \ref{sec: calculating}, we calculate that if \(\mathcal{F}\) is any one of these framings, then the Type A module \({{\widehat{\mathit{CFA}}\big({-Y(n,m),\mathcal{F}}\big)}}\) has a model given by one of the decorated graphs in Figure \ref{fig: initial type A}.
\begin{figure}[ht]
\centering
\begin{tikzpicture}
\def\n{3}
\def\m{4}
\def\gap{1}
\def\biggap{2}
\def\border{0.35}
\def\epsilon{0.15}
\draw[thick] (-\border,\border) rectangle ({(\n+2)*\gap}, {-(\m+2)*\gap});
\node at ({(\n+2)*\gap-1.3},0) {Type A, I/III};
\draw[thick] ({\biggap+(\n+2)*\gap-\border},\border) rectangle ({\biggap+(2*\n+4)*\gap}, {-(\m+2)*\gap});
\node at ({\biggap+(2*\n+4)*\gap-1.3},0) {Type A, II/IV};
\draw[thick, ->] ({-\border},{\border}) -- ({-\epsilon/sqrt(2)},{\epsilon/sqrt(2)});
\draw[thick] ({\biggap+(\n+2)*\gap-\border},{\border}) -- ({\biggap+(\n+2)*\gap-\epsilon/sqrt(2)},{\epsilon/sqrt(2)});
\filldraw[thick, fill=white] (0,0) circle (2pt);
\filldraw[black] ({\biggap+(\n+2)*\gap},0) circle (2pt);
\draw[thick, ->] ({\epsilon/sqrt(2)},{-\epsilon/sqrt(2)}) -- ({\gap-\epsilon/sqrt(2)}, {\epsilon/sqrt(2)-\gap})node[fill=white,inner sep=2pt,midway] {\scriptsize \(2\)};
\draw[thick, ->] ({\biggap+(\n+2)*\gap+\epsilon/sqrt(2)},{-\epsilon/sqrt(2)}) -- ({\biggap+(\n+3)*\gap-\epsilon/sqrt(2)}, {\epsilon/sqrt(2)-\gap})node[fill=white,inner sep=2pt,midway] {\scriptsize \(1\)};
\foreach\x in {2,...,\n}
    {
    \filldraw[thick, fill=white] ({(\x-1)*\gap},0) circle (2pt);
    \filldraw[black] ({\biggap+(\n+\x+1)*\gap},0) circle (2pt);
    \draw[thick, ->] ({(\x-1)*\gap +\epsilon/sqrt(2)},{-\epsilon/sqrt(2)}) -- ({\x*\gap-\epsilon/sqrt(2)}, {\epsilon/sqrt(2)-\gap})node[fill=white,inner sep=2pt,midway] {\scriptsize \(2\)};
    \draw[thick, ->] ({\biggap+(\n+\x+1)*\gap +\epsilon/sqrt(2)},{-\epsilon/sqrt(2)}) -- ({\biggap+(\n+2+\x)*\gap-\epsilon/sqrt(2)}, {\epsilon/sqrt(2)-\gap})node[fill=white,inner sep=2pt,midway] {\scriptsize \(1\)};
    \draw[thick] ({(\x-1)*\gap}, {-\m*\gap-\epsilon}) -- ({(\x-1)*\gap},{-(\m+2)*\gap})node[fill=white,inner sep=2pt,midway] {\scriptsize \(321\)};
    \draw[thick, ->] ({\biggap+(\n+\x+1)*\gap},{-(\m+2)*\gap}) -- ({\biggap +(\n+\x+1)*\gap}, {-\m*\gap-\epsilon}) node[fill=white,inner sep=2pt,midway] {\scriptsize \(3\)};
    \draw[thick, ->] ({(\x-1)*\gap}, {\border})--({(\x-1)*\gap}, {\epsilon});
    \draw[thick, -] ({\biggap+(\n+\x+1)*\gap}, {\border})--({\biggap+(\n+\x+1)*\gap}, {\epsilon});
    }
\foreach\y in {2,...,\m} 
    {
    \filldraw[thick, fill=white] (0,{(1-\y)*\gap}) circle (2pt);
    \filldraw[black] ({\biggap+(\n+2)*\gap},{(1-\y)*\gap}) circle (2pt);
    \draw[thick, ->] ({\epsilon/sqrt(2)},{(1-\y)*\gap -\epsilon/sqrt(2)}) -- ({\gap-\epsilon/sqrt(2)}, {\epsilon/sqrt(2)-\y*\gap})node[fill=white,inner sep=2pt,midway] {\scriptsize \(2\)};
    \draw[thick, ->] ({\biggap + (\n+2)*\gap + \epsilon/sqrt(2)},{(1-\y)*\gap -\epsilon/sqrt(2)}) -- ({\biggap + (\n+3)*\gap-\epsilon/sqrt(2)}, {\epsilon/sqrt(2)-\y*\gap})node[fill=white,inner sep=2pt,midway] {\scriptsize \(1\)};
    \draw[thick] ({\n*\gap+\epsilon},{(1-\y)*\gap})--({(\n+2)*\gap},{(1-\y)*\gap}) node[fill=white,inner sep=2pt,midway] {\scriptsize \(321\)};
    \draw[thick, ->] ({\biggap+(2*\n+4)*\gap},{(1-\y)*\gap})--({\biggap+(2*\n+2)*\gap+\epsilon},{(1-\y)*\gap}) node[fill=white,inner sep=2pt,midway] {\scriptsize \(3\)};
    \draw[thick, ->]({-\border},{(1-\y)*\gap})--({-\epsilon},{(1-\y)*\gap});
    \draw[thick]({\biggap+(\n+2)*\gap-\border},{(1-\y)*\gap})--({\biggap+(\n+2)*\gap-\epsilon},{(1-\y)*\gap});
    }
\foreach \x in {1,...,\n}
    \foreach \y in {1,...,\m}{
        \filldraw[black] ({\x*\gap},{-\y*\gap}) circle (2pt);
        \filldraw[thick, fill = white] ({\biggap+(\n+2+\x)*\gap},{-\y*\gap}) circle (2pt);
        }
\foreach \x in {2,...,\n}
    \foreach \y in {2,...,\m}{
        \draw [thick, ->] ({(\x-1)*\gap+\epsilon/sqrt(2)},{(1-\y)*\gap-\epsilon/sqrt(2)}) --({\x*\gap-\epsilon/sqrt(2)},{-\y*\gap+\epsilon/sqrt(2)}) node[fill=white,inner sep=2pt,midway] {\scriptsize \(32\)};
        \draw [thick, ->] ({\biggap+(\n+\x+1)*\gap+\epsilon/sqrt(2)},{(1-\y)*\gap-\epsilon/sqrt(2)}) --({\biggap+(\n+2+\x)*\gap-\epsilon/sqrt(2)},{-\y*\gap+\epsilon/sqrt(2)}) node[fill=white,inner sep=2pt,midway] {\scriptsize \(21\)};
        }
\filldraw[thick, fill=white] ({(\n+1)*\gap},{-(\m+1)*\gap}) circle (2pt);
\filldraw[black] ({\biggap+(2*\n+3)*\gap},{-(\m+1)*\gap}) circle (2pt);
\draw[thick, ->] ({\n*\gap+\epsilon/sqrt(2)},{-\m*\gap-\epsilon/sqrt(2)}) -- ({(\n+1)*\gap-\epsilon/sqrt(2)},{\epsilon/sqrt(2)-(\m+1)*\gap}) node[fill=white,inner sep=2pt,midway] {\scriptsize \(321\)};
\draw[thick, ->] ({\biggap+(2*\n+3)*\gap-\epsilon/sqrt(2)},{\epsilon/sqrt(2)-(\m+1)*\gap}) -- ({\biggap + (2*\n+2)*\gap+\epsilon/sqrt(2)},{-\m*\gap-\epsilon/sqrt(2)}) node[fill=white,inner sep=2pt,midway] {\scriptsize \(3\)};
\draw[thick] ({(\n+1)*\gap+\epsilon/sqrt(2)},{-(\m+1)*\gap-\epsilon/sqrt(2)}) -- ({(\n+2)*\gap},{-(\m+2)*\gap}) node[fill=white,inner sep=2pt,midway] {\scriptsize \(21\)};
\draw[thick, ->] ({\biggap+(2*\n+4)*\gap},{-(\m+2)*\gap}) -- ({\biggap+(2*\n+3)*\gap+\epsilon/sqrt(2)},{-(\m+1)*\gap-\epsilon/sqrt(2)})node[fill=white,inner sep=2pt,midway] {\scriptsize \(32\)};
\end{tikzpicture}
\caption{The case \(n=3\), \(m=4\). Both decorated graphs should be interpreted as lying on a torus. (We only draw the graphs on a torus to avoid self-intersections.)}
\label{fig: initial type A}
\end{figure}
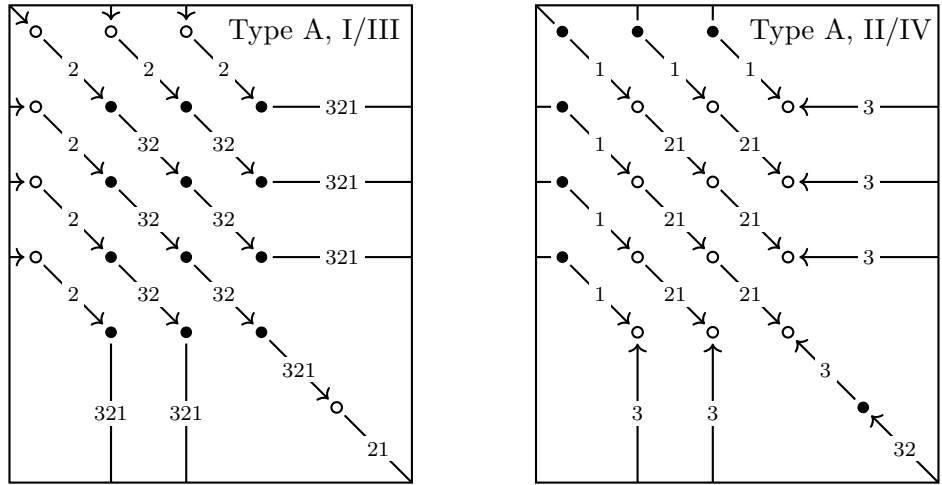

Here we are representing \({\widehat{\mathit{CFA}}}\) by a decorated graph using the conventions from \cite{calculus}. Namely, vertices in the graph correspond to generators. A vertex labeled \(\bullet\) admits a non-trivial action of the idempotent \(\iota_0\), and a vertex labeled \(\circ\) is admits on non-trivial action of \(\iota_1\). There is no \(m_1\) action. To read off \(m_k\) for \(k\geq 2\), one looks at directed paths. Suppose there is a directed path from a generator \(\mathbf{x}\) to a generator \(\mathbf{y}\). Starting at \(\mathbf{x}\), concatenate all the labels in the path into a string \(I\). Next, separate \(I\) into substrings \(I=I_1\cdots I_k\) so that each \(I_1,\ldots,I_k\) lies in \(\{1,2,3,12,23,123\}\). Do this in such a way to minimize \(k\). From this data, one reads off 
\[m_{k+1}(\mathbf{x},\rho_{I_1},\ldots,\rho_{I_k}) = \mathbf{y}.\]
For instance, a path \({\bullet \xrightarrow{\,32\,}\bullet \xrightarrow{\,321\,}\bullet}\) corresponds the \(m_5\) action \[m_5(\mathbf{x},\rho_3,\rho_{23},\rho_2,\rho_1)=\mathbf{y}.\]
We remark that since the decorated graph on the right has no directed cycles, the corresponding model for \({\widehat{\mathit{CFA}}}\big({-Y(n,m),\mathcal{F}_{\mathrm{II/IV}}}\big)\) is bounded as defined in Appendix \ref{sec: algebra}.

To identify the LOSS invariant \({\widehat{\mathcal{L}}(K)}\) in the above models, we will use a grading argument. We make a few preliminary observations. First, the group of periodic domains for a bordered Heegaard diagram representing \(-Y(n,m)\) is isomorphic to 
\[H_2\big({Y(n,m), \partial Y(n,m)}\big)\cong H^1\big(Y(n,m)\big) \cong \mathbb{Z}.\]
We identify a generating periodic domain for Figure \ref{fig: bordered} with the values \(n=3\), \(m=4\) and framing \(\mathcal{F}_{\mathrm{I},\beta}\) in Figure \ref{fig: periodic}.

\begin{figure}[ht]
\centering
    \includegraphics[scale=0.75]{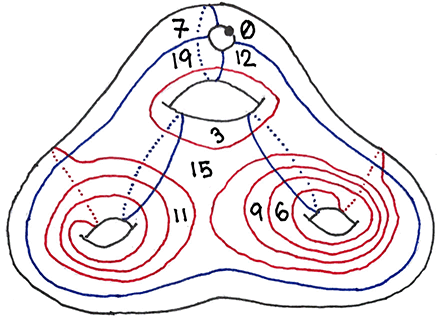}
    \caption{Despite appearances, all regions above have been labeled!}
    \label{fig: periodic}
\end{figure}

From here on out, in order to draw domains simultaneously for both the \(\alpha\) and \(\beta\)-type diagrams in Figure \ref{fig: bordered} we draw domains in the ``nodal diagram'' shown in Figure \ref{fig: nodal}. 
\begin{figure}[ht]
\centering
    \includegraphics[scale=0.13]{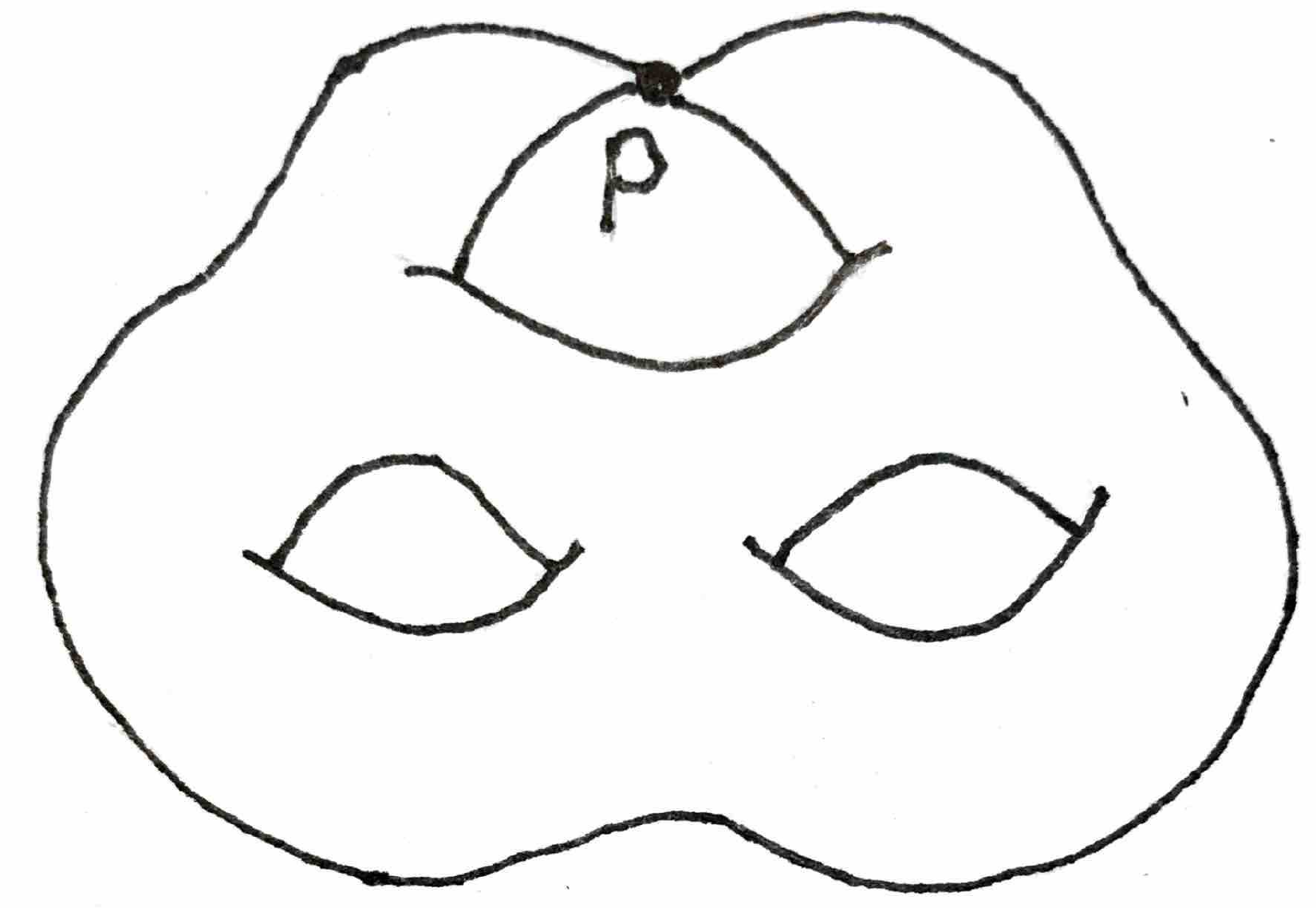}
    \caption{}
    \label{fig: nodal}
\end{figure}

In general, a generating periodic domain for the diagram in Figure \ref{fig: bordered} has multiplicites in regions adjacent to the pointed matched circle as shown in Figure \ref{fig: weights}.
\begin{figure}[ht]
\centering
    \includegraphics[scale=0.15]{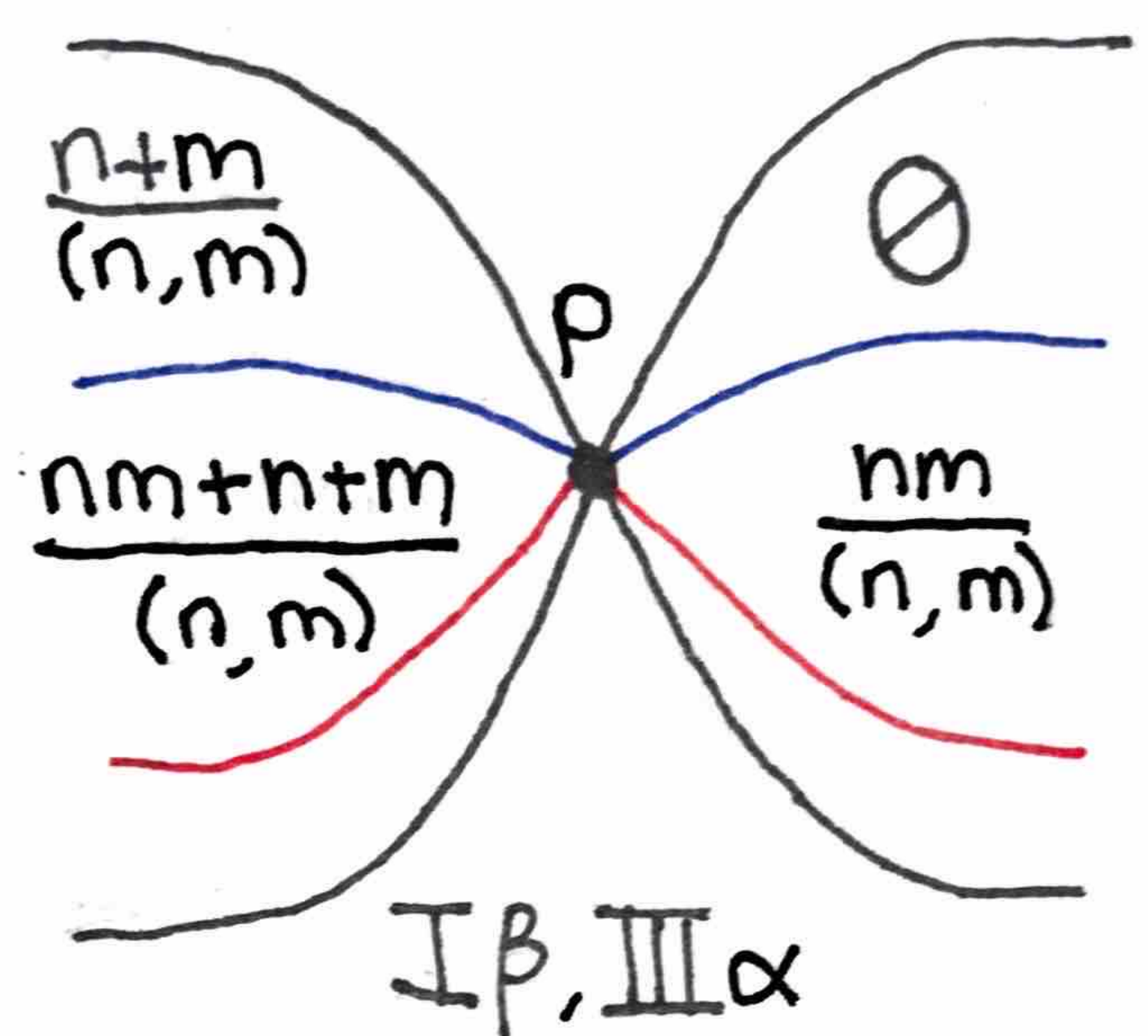}\quad\includegraphics[scale=0.15]{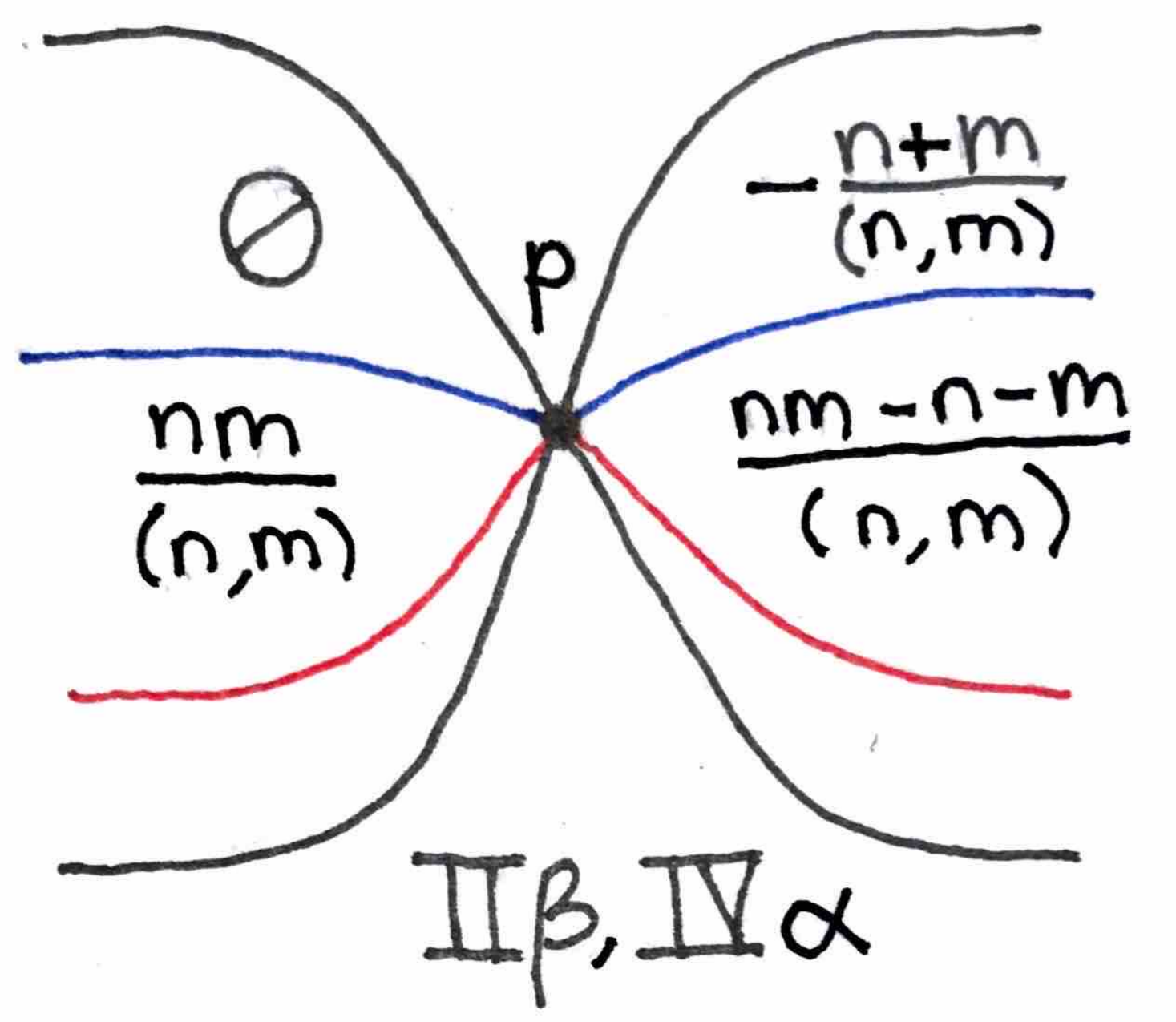}\quad\includegraphics[scale=0.15]{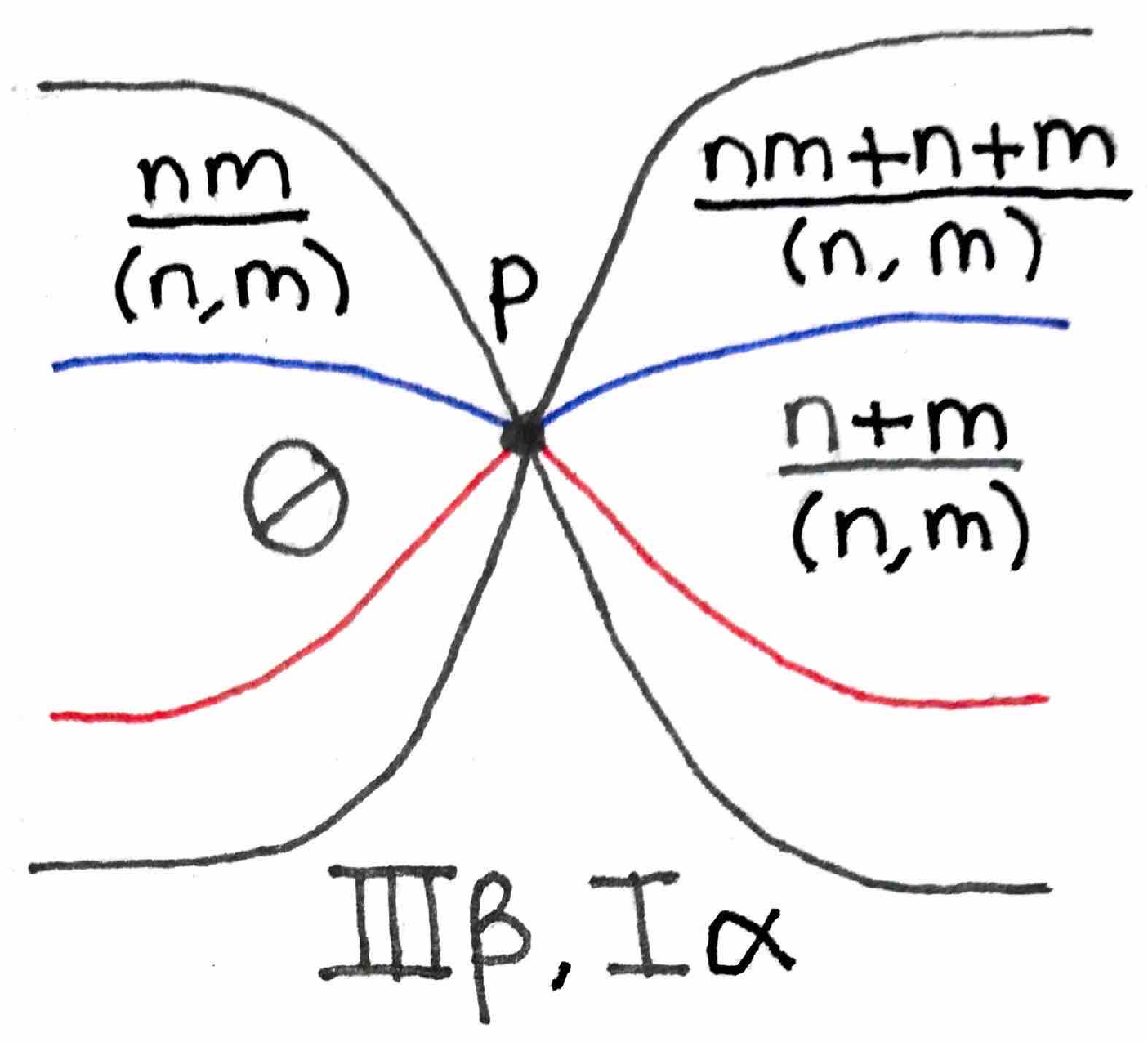}\quad\includegraphics[scale=0.15]{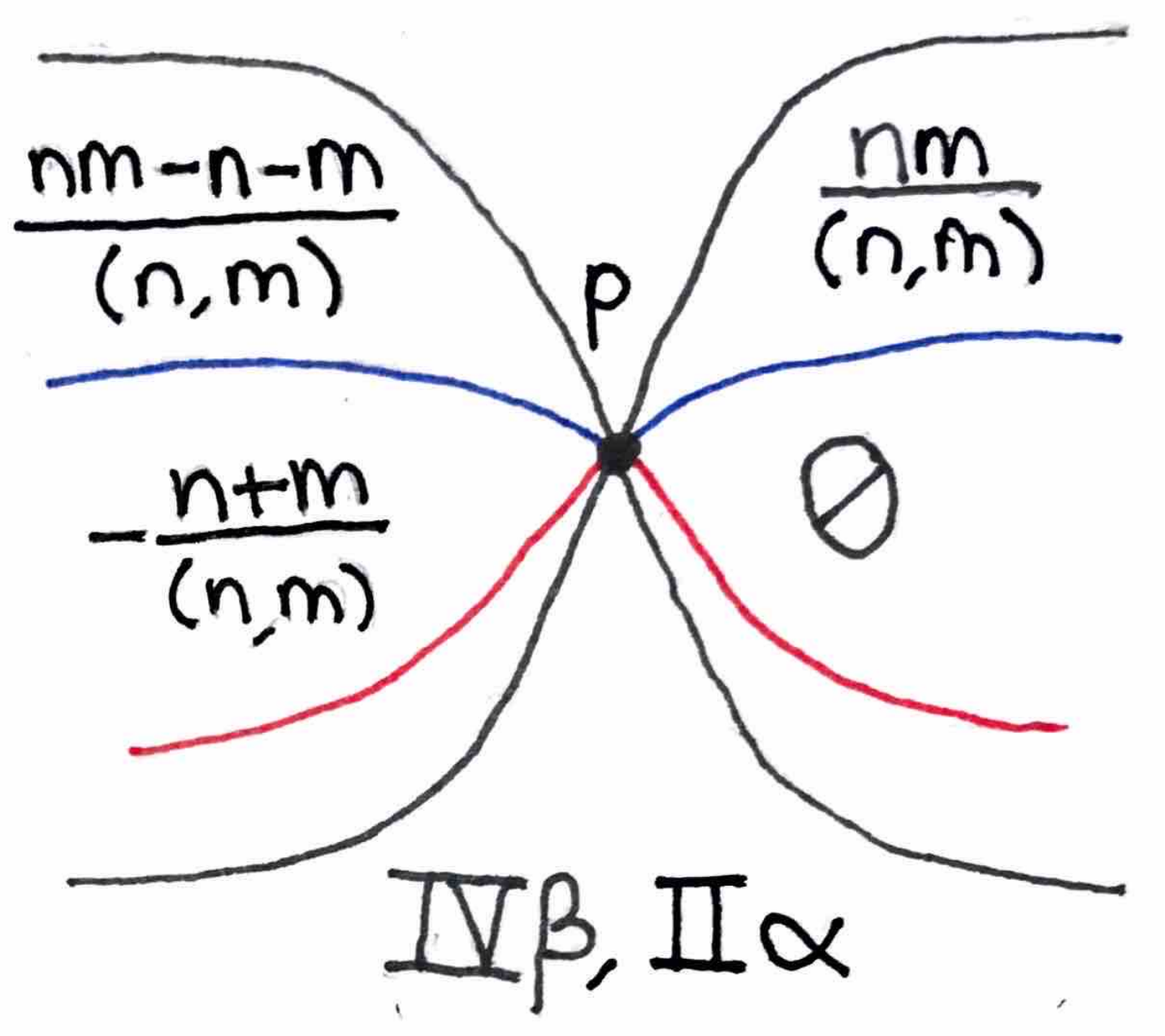}
    \caption{}
    \label{fig: weights}
\end{figure}

Recall a periodic domain is called \textit{provincial} if its multiplicities at regions adjacent to the pointed matched circle are \(0\). From Figure \ref{fig: weights} we see there are no provincial periodic domains, hence both diagrams in \ref{fig: bordered} are \textit{provincially admissible.} (This also follows from the fact that \(H_2\big(Y(n,m)\big) =0\).) 

Next, both diagrams in Figure \ref{fig: bordered} contain no \(m_1\) action. To see this, one can either argue using \(\mathrm{spin}^c\) structures or observe that the number of generators in Figure \ref{fig: bordered} is the same as in Figure \ref{fig: initial type A}. We now identify some index 1 simple domains which possibly contribute to higher differentials. In Figure \ref{fig: domains one} we show two such domains. 
\begin{figure}[ht]
\centering
    \includegraphics[scale=0.17]{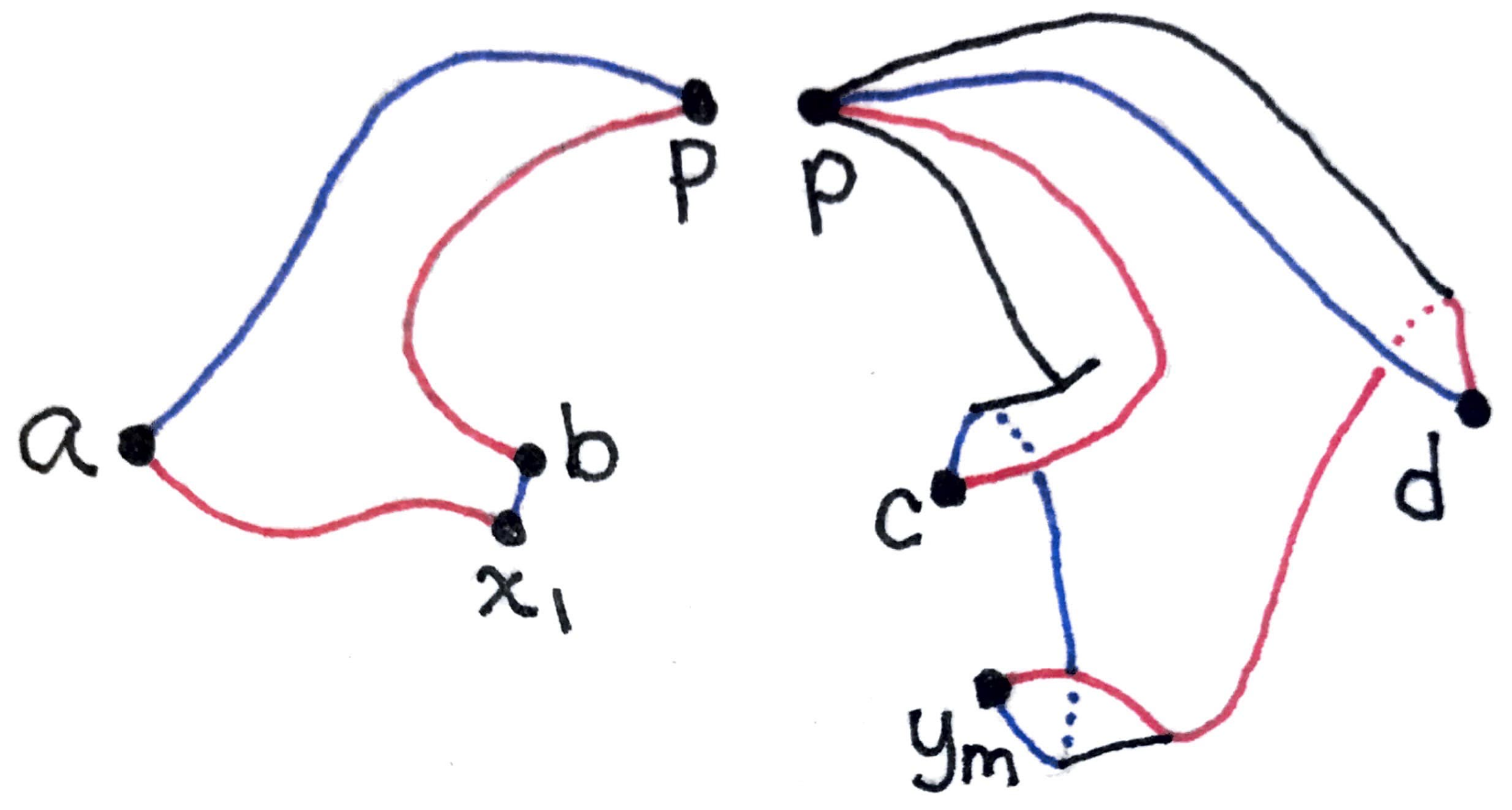}
    \caption{}
    \label{fig: domains one}
\end{figure}

The domain on the left of Figure \ref{fig: domains one} contributes to the following counts: 
\begin{alignat*}{2}
&\text{For }\mathcal{F}_{\mathrm{II},\beta}\text{ and }\mathcal{F}_{\mathrm{IV},\alpha},&&\quad \big[m_2(\{a,b,y_j\}, \rho_1); \{p,x_1,y_j\}\big],\\
&\text{For }\mathcal{F}_{\mathrm{I},\beta}\text{ and }\mathcal{F}_{\mathrm{III},\alpha},&&\quad \big[m_2(\{a,b,y_j\}, \rho_2); \{p,x_1,y_j\}\big],\qquad\qquad  1\leq j\leq m.\\
&\text{For }\mathcal{F}_{\mathrm{IV},\beta}\text{ and }\mathcal{F}_{\mathrm{II},\alpha},&&\quad \big[m_2(\{a,b,y_j\}, \rho_3); \{p,x_1,y_j\}\big],
\end{alignat*}
On the other hand, for \(\mathcal{F}_{\mathrm{III},\beta}\) and \(\mathcal{F}_{\mathrm{I},\alpha}\) the \textit{complement} of the same domain has index \(1\) if considered as contributing (potentially zero) to the count 
\[\big[ m_4(\{p,x_1,y_j\}, \rho_3,\rho_2,\rho_1\big);\{a,b,y_j\}\big] ,\qquad 1\leq j\leq m.\]\par
Similarly, the domain on the right of Figure \ref{fig: domains one} contributes to the following counts: 
\begin{alignat*}{2}
&\text{For }\mathcal{F}_{\mathrm{IV},\beta}\text{ and }\mathcal{F}_{\mathrm{II},\alpha},&&\quad \big[m_2(\{c,d,x_i\}, \rho_1); \{p,x_i,y_m\}\big],\\
&\text{For }\mathcal{F}_{\mathrm{III},\beta}\text{ and }\mathcal{F}_{\mathrm{I},\alpha},&&\quad \big[m_2(\{c,d,x_i\}, \rho_2); \{p,x_i,y_m\}\big],\qquad\qquad  1\leq i\leq n.\\
&\text{For }\mathcal{F}_{\mathrm{II},\beta}\text{ and }\mathcal{F}_{\mathrm{IV},\alpha},&&\quad \big[m_2(\{c,d,x_i\}, \rho_3); \{p,x_i,y_m\}\big],
\end{alignat*}
For \(\mathcal{F}_{\mathrm{I},\beta}\) and \(\mathcal{F}_{\mathrm{III},\alpha}\) the complement of the same domain has index \(1\) if considered as contributing to the count 
\[\big[m_4(\{p,x_i,y_m\}, \rho_3,\rho_2,\rho_1); \{c,d,x_i\}\big],\qquad 1\leq i\leq n.\]\par
Next, consider the complementary simple domains in Figure \ref{fig: domains two}
\begin{figure}[ht]
\centering
\includegraphics[scale=0.17]{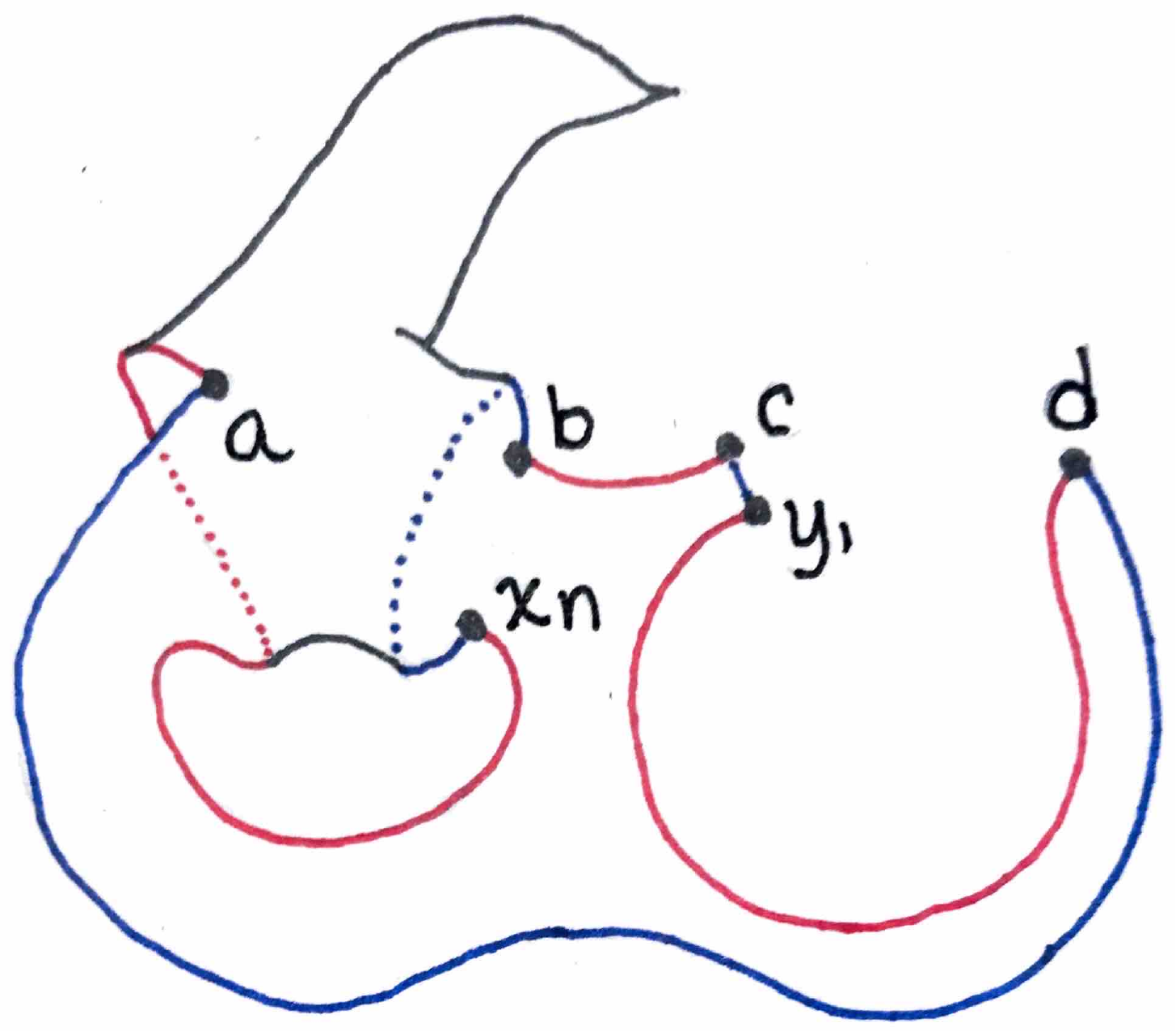}\qquad\includegraphics[scale=0.17]{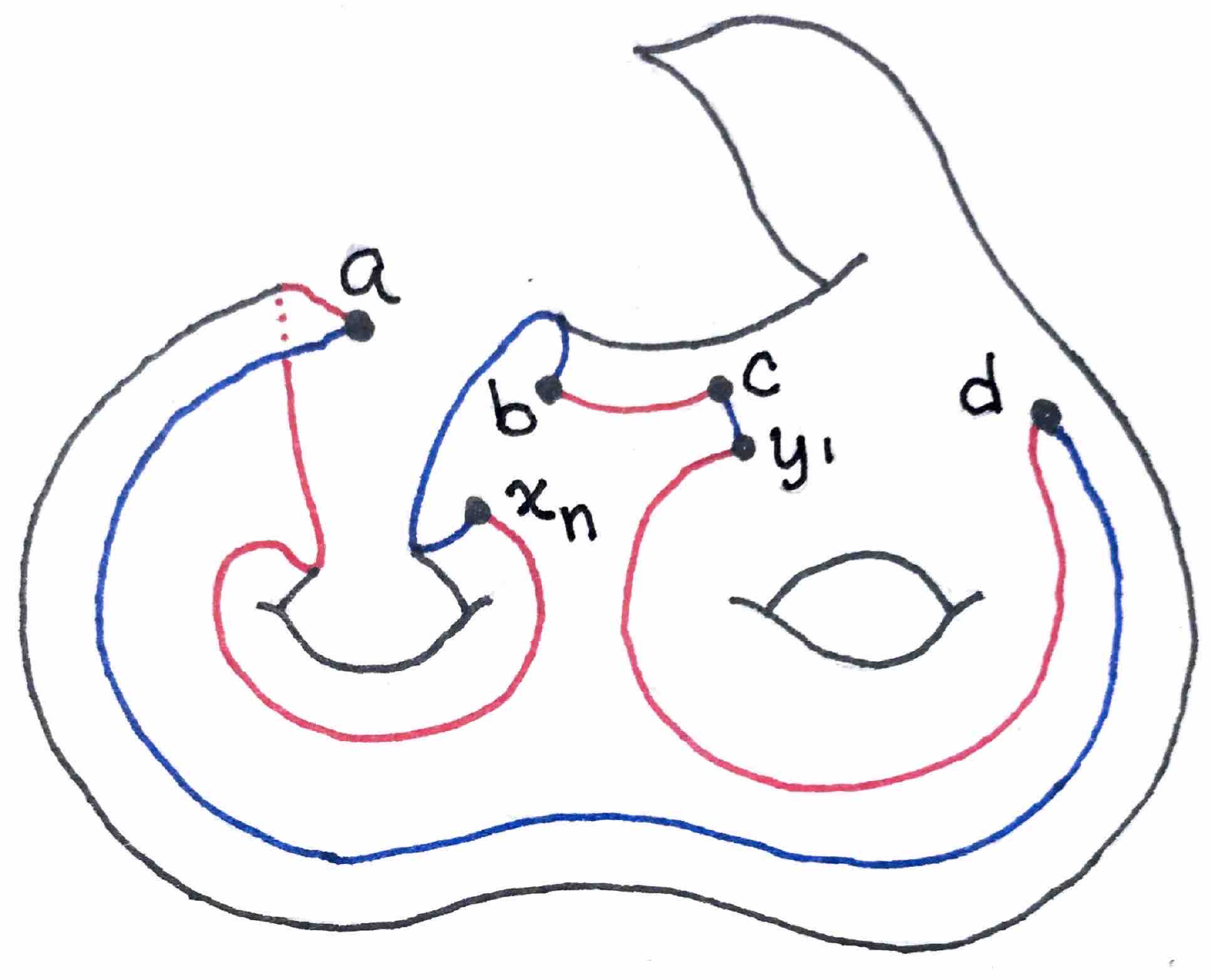}
\caption{}
\label{fig: domains two}
\end{figure}

The domain on the left of Figure \ref{fig: domains two} has index \(1\) if considered as contributing to the following counts:
\begin{alignat*}{2}
&\text{For }\mathcal{F}_{\mathrm{I},\beta}\text{ and }\mathcal{F}_{\mathrm{III},\alpha},&&\quad \big[m_3(\{c,d,x_n\},\rho_2,\rho_1); \{a,b,y_1\}\big],\\
&\text{For }\mathcal{F}_{\mathrm{IV},\beta}\text{ and }\mathcal{F}_{\mathrm{II},\alpha},&&\quad \big[m_3(\{c,d,x_n\},\rho_3,\rho_2); \{a,b,y_1\}\big].
\end{alignat*}
The complement of the same domain, i.e., the domain on the right, has index \(1\) if considered as contributing to the following counts:
\begin{alignat*}{2}
&\text{For }\mathcal{F}_{\mathrm{III},\beta}\text{ and }\mathcal{F}_{\mathrm{I},\alpha},&&\quad \big[m_3(\{a,b,y_1\},\rho_2,\rho_1); \{c,d,x_n\}\big],\\
&\text{For }\mathcal{F}_{\mathrm{II},\beta}\text{ and }\mathcal{F}_{\mathrm{IV},\alpha},&&\quad \big[m_3(\{a,b,y_1\},\rho_3,\rho_2); \{c,d,x_n\}\big].
\end{alignat*}\par
Before identifying the LOSS invariant, we introduce a quick definition.
\begin{definition} Let \(\mathcal{F}\) be any framing of \(\partial \big({-Y(n,m)}\big)\) with one of the arcs forming the boundary of a page. Call the framing \(\mathcal{F}\) \textbf{consistent} with the co-orientation of the contact structure if the basepoint lies on the side of the page determined by moving slightly in the positive Reeb direction. 
\end{definition}
\begin{proposition}
\label{prop: LOSS in graph}
The LOSS invariant \({\widehat{\mathcal{L}}(K)}\) is given by one of the cyan or pink generators in Figure {\normalfont\ref{fig: LOSS in graph}}, depending on the choice of co-orientation for the contact structure on \(Y(n,m)\) and the choice of framing. 
\begin{figure}[ht]
\centering
\begin{tikzpicture}
\def\n{3}
\def\m{4}
\def\gap{1}
\def\biggap{2}
\def\border{0.35}
\def\epsilon{0.15}
\draw[thick] (-\border,\border) rectangle ({(\n+2)*\gap}, {-(\m+2)*\gap});
\node at ({(\n+2)*\gap-1.3},0) {\normalfont Type A, I/III};
\draw[thick] ({\biggap+(\n+2)*\gap-\border},\border) rectangle ({\biggap+(2*\n+4)*\gap}, {-(\m+2)*\gap});
\node at ({\biggap+(2*\n+4)*\gap-1.3},0) {\normalfont Type A, II/IV};
\draw[thick, ->] ({-\border},{\border}) -- ({-\epsilon/sqrt(2)},{\epsilon/sqrt(2)});
\draw[thick] ({\biggap+(\n+2)*\gap-\border},{\border}) -- ({\biggap+(\n+2)*\gap-\epsilon/sqrt(2)},{\epsilon/sqrt(2)});
\filldraw[thick, fill=white] (0,0) circle (2pt);
\filldraw[black] ({\biggap+(\n+2)*\gap},0) circle (2pt);
\draw[thick, ->] ({\epsilon/sqrt(2)},{-\epsilon/sqrt(2)}) -- ({\gap-\epsilon/sqrt(2)}, {\epsilon/sqrt(2)-\gap})node[fill=white,inner sep=2pt,midway] {\scriptsize \(2\)};
\draw[thick, ->] ({\biggap+(\n+2)*\gap+\epsilon/sqrt(2)},{-\epsilon/sqrt(2)}) -- ({\biggap+(\n+3)*\gap-\epsilon/sqrt(2)}, {\epsilon/sqrt(2)-\gap})node[fill=white,inner sep=2pt,midway] {\scriptsize \(1\)};
\foreach\x in {2,...,\n}
    {
    \filldraw[thick, fill=white] ({(\x-1)*\gap},0) circle (2pt);
    \filldraw[black] ({\biggap+(\n+\x+1)*\gap},0) circle (2pt);
    \draw[thick, ->] ({(\x-1)*\gap +\epsilon/sqrt(2)},{-\epsilon/sqrt(2)}) -- ({\x*\gap-\epsilon/sqrt(2)}, {\epsilon/sqrt(2)-\gap})node[fill=white,inner sep=2pt,midway] {\scriptsize \(2\)};
    \draw[thick, ->] ({\biggap+(\n+\x+1)*\gap +\epsilon/sqrt(2)},{-\epsilon/sqrt(2)}) -- ({\biggap+(\n+2+\x)*\gap-\epsilon/sqrt(2)}, {\epsilon/sqrt(2)-\gap})node[fill=white,inner sep=2pt,midway] {\scriptsize \(1\)};
    \draw[thick] ({(\x-1)*\gap}, {-\m*\gap-\epsilon}) -- ({(\x-1)*\gap},{-(\m+2)*\gap})node[fill=white,inner sep=2pt,midway] {\scriptsize \(321\)};
    \draw[thick, ->] ({\biggap+(\n+\x+1)*\gap},{-(\m+2)*\gap}) -- ({\biggap +(\n+\x+1)*\gap}, {-\m*\gap-\epsilon}) node[fill=white,inner sep=2pt,midway] {\scriptsize \(3\)};
    \draw[thick, ->] ({(\x-1)*\gap}, {\border})--({(\x-1)*\gap}, {\epsilon});
    \draw[thick, -] ({\biggap+(\n+\x+1)*\gap}, {\border})--({\biggap+(\n+\x+1)*\gap}, {\epsilon});
    }
\foreach\y in {2,...,\m} 
    {
    \filldraw[thick, fill=white] (0,{(1-\y)*\gap}) circle (2pt);
    \filldraw[black] ({\biggap+(\n+2)*\gap},{(1-\y)*\gap}) circle (2pt);
    \draw[thick, ->] ({\epsilon/sqrt(2)},{(1-\y)*\gap -\epsilon/sqrt(2)}) -- ({\gap-\epsilon/sqrt(2)}, {\epsilon/sqrt(2)-\y*\gap})node[fill=white,inner sep=2pt,midway] {\scriptsize \(2\)};
    \draw[thick, ->] ({\biggap + (\n+2)*\gap + \epsilon/sqrt(2)},{(1-\y)*\gap -\epsilon/sqrt(2)}) -- ({\biggap + (\n+3)*\gap-\epsilon/sqrt(2)}, {\epsilon/sqrt(2)-\y*\gap})node[fill=white,inner sep=2pt,midway] {\scriptsize \(1\)};
    \draw[thick] ({\n*\gap+\epsilon},{(1-\y)*\gap})--({(\n+2)*\gap},{(1-\y)*\gap}) node[fill=white,inner sep=2pt,midway] {\scriptsize \(321\)};
    \draw[thick, ->] ({\biggap+(2*\n+4)*\gap},{(1-\y)*\gap})--({\biggap+(2*\n+2)*\gap+\epsilon},{(1-\y)*\gap}) node[fill=white,inner sep=2pt,midway] {\scriptsize \(3\)};
    \draw[thick, ->]({-\border},{(1-\y)*\gap})--({-\epsilon},{(1-\y)*\gap});
    \draw[thick]({\biggap+(\n+2)*\gap-\border},{(1-\y)*\gap})--({\biggap+(\n+2)*\gap-\epsilon},{(1-\y)*\gap});
    }
\foreach \x in {1,...,\n}
    \foreach \y in {1,...,\m}{
        \filldraw[black] ({\x*\gap},{-\y*\gap}) circle (2pt);
        \filldraw[thick, fill = white] ({\biggap+(\n+2+\x)*\gap},{-\y*\gap}) circle (2pt);
        }
\foreach \x in {2,...,\n}
    \foreach \y in {2,...,\m}{
        \draw [thick, ->] ({(\x-1)*\gap+\epsilon/sqrt(2)},{(1-\y)*\gap-\epsilon/sqrt(2)}) --({\x*\gap-\epsilon/sqrt(2)},{-\y*\gap+\epsilon/sqrt(2)}) node[fill=white,inner sep=2pt,midway] {\scriptsize \(32\)};
        \draw [thick, ->] ({\biggap+(\n+\x+1)*\gap+\epsilon/sqrt(2)},{(1-\y)*\gap-\epsilon/sqrt(2)}) --({\biggap+(\n+2+\x)*\gap-\epsilon/sqrt(2)},{-\y*\gap+\epsilon/sqrt(2)}) node[fill=white,inner sep=2pt,midway] {\scriptsize \(21\)};
        }
\filldraw[thick, fill=white] ({(\n+1)*\gap},{-(\m+1)*\gap}) circle (2pt);
\filldraw[black] ({\biggap+(2*\n+3)*\gap},{-(\m+1)*\gap}) circle (2pt);
\draw[thick, ->] ({\n*\gap+\epsilon/sqrt(2)},{-\m*\gap-\epsilon/sqrt(2)}) -- ({(\n+1)*\gap-\epsilon/sqrt(2)},{\epsilon/sqrt(2)-(\m+1)*\gap}) node[fill=white,inner sep=2pt,midway] {\scriptsize \(321\)};
\draw[thick, ->] ({\biggap+(2*\n+3)*\gap-\epsilon/sqrt(2)},{\epsilon/sqrt(2)-(\m+1)*\gap}) -- ({\biggap + (2*\n+2)*\gap+\epsilon/sqrt(2)},{-\m*\gap-\epsilon/sqrt(2)}) node[fill=white,inner sep=2pt,midway] {\scriptsize \(3\)};
\draw[thick] ({(\n+1)*\gap+\epsilon/sqrt(2)},{-(\m+1)*\gap-\epsilon/sqrt(2)}) -- ({(\n+2)*\gap},{-(\m+2)*\gap}) node[fill=white,inner sep=2pt,midway] {\scriptsize \(21\)};
\draw[thick, ->] ({\biggap+(2*\n+4)*\gap},{-(\m+2)*\gap}) -- ({\biggap+(2*\n+3)*\gap+\epsilon/sqrt(2)},{-(\m+1)*\gap-\epsilon/sqrt(2)})node[fill=white,inner sep=2pt,midway] {\scriptsize \(32\)};
\filldraw[cyan] ({\gap},{-\gap}) circle (2pt);
\filldraw[magenta] ({\n*\gap},{-\m*\gap}) circle (2pt);
\filldraw[ultra thick, color = cyan, fill=white] ({\biggap+(\n+3)*\gap},{-\gap}) circle (2pt);
\filldraw[ultra thick, color = magenta, fill=white] ({\biggap+(2*\n+2)*\gap},{-\m*\gap}) circle (2pt);
\end{tikzpicture}
\caption{}
\label{fig: LOSS in graph}
\end{figure}

Specifically, the cyan generators correspond to consistent framing choices for \(\beta\)-type diagrams and inconsistent choices for \(\alpha\)-type diagrams {\normalfont(}and vice-versa for the pink generators{\normalfont)}.
\begin{proof}
We argue using the refined grading on \({\widehat{\mathit{CFA}}}\). We recall the definition here, but for a more detailed discussion see \cite[Chapter 10, 11]{bordered} and \cite{properties}. The refined grading is defined only up to an overall shift on each \(\mathrm{spin}^c\) component. While the full refined grading takes values in a non-abelian group, for our purposes it will suffice to consider the ``\(\mathrm{spin}^c\) component'' of the refined grading. From here on out, we simply refer to this component as ``the grading''. 

In the case of torus boundary, the grading takes values in a quotient of the group \(\frac{1}{2}\mathbb{Z}^2\). The quotient is determined by the group of periodic domains in a diagram. For I/III basepoints, the observations above imply this quotient is 
\[\frac{1}{2}\mathbb{Z}^2\Big /\Big\langle\Big(\frac{n+m}{(n,m)},\frac{nm}{(n,m)}\Big)\Big\rangle.\]
For II/IV basepoints, the quotient is instead
\[\frac{1}{2}\mathbb{Z}^2\Big /\Big\langle\Big(\frac{nm}{(n,m)},-\frac{n+m}{(n,m)}\Big)\Big\rangle.\]
The Reeb elements of the torus algebra are graded by \(\frac{1}{2}\mathbb{Z}^2\) by setting
\[\mathrm{gr}(\rho_1) = ({\textstyle\frac{1}{2}},-{\textstyle\frac{1}{2}}),\qquad \mathrm{gr}(\rho_2) = ({\textstyle\frac{1}{2}},{\textstyle\frac{1}{2}}),\qquad \mathrm{gr}(\rho_3) = (-{\textstyle\frac{1}{2}},{\textstyle\frac{1}{2}})\]
and imposing \(\mathrm{gr}(\rho_I\rho_J)=\mathrm{gr}(\rho_I)\mathrm{gr}(\rho_J)\). The grading on \({\widehat{\mathit{CFA}}}\) then satisfies
\[\mathrm{gr}\big(m_{k+1}(\mathbf{x},\rho_{I_1},\ldots,\rho_{I_k})\big)=\mathrm{gr}(\mathbf{x}) + \mathrm{gr}(\rho_{I_1})+\cdots + \mathrm{gr}(\rho_{I_k}).\]\par
In Table \ref{fig: table}, we show how to calculate grading differences from a Type A decorated graph.
\begin{table}[ht]
\centering
\begin{tblr}{c|l||c|l}
Labeled edge & Grading change & Labeled edge & Grading change \\ 
\hline 
\(\mathbf{x}\xrightarrow{\,1\,}\mathbf{y}\) & \(\mathrm{gr}(\mathbf{y})=\mathrm{gr}(\mathbf{x}) + (\frac{1}{2},-\frac{1}{2})\) & \(\mathbf{x}\xrightarrow{\,21\,}\mathbf{y}\) &  \(\mathrm{gr}(\mathbf{y})=\mathrm{gr}(\mathbf{x}) + (1,0)\)\\ 
\(\mathbf{x}\xrightarrow{\,2\,}\mathbf{y} \) & \(\mathrm{gr}(\mathbf{y})=\mathrm{gr}(\mathbf{x}) + (\frac{1}{2},\frac{1}{2})\) & \(\mathbf{x}\xrightarrow{\,32\,}\mathbf{y} \) & \(\mathrm{gr}(\mathbf{y})=\mathrm{gr}(\mathbf{x}) + (0,1)\)\\
\(\mathbf{x}\xrightarrow{\,3\,}\mathbf{y} \) & \(\mathrm{gr}(\mathbf{y})=\mathrm{gr}(\mathbf{x}) + (-\frac{1}{2},\frac{1}{2})\) & \(\mathbf{x}\xrightarrow{\,321\,}\mathbf{y} \) & \(\mathrm{gr}(\mathbf{y})=\mathrm{gr}(\mathbf{x}) + (\frac{1}{2},\frac{1}{2})\)
\end{tblr}
\caption{Grading differences in a Type A decorated graph}
\label{fig: table}
\end{table}\par
For any choice of framing \(\mathcal{F}\), there are \((n,m)\) many \(\mathrm{spin}^c\) summands in \({{\widehat{\mathit{CFA}}\big({-Y(n,m),\mathcal{F}}\big)}}\). To see this, one can count the number of connected components in Figure \ref{fig: LOSS in graph} or observe \[H_1\big(Y(n,m),\partial Y(n,m)\big)\cong \mathbb{Z}/(n,m).\]\par 
Let us first focus on the I/III case. We start with the \(\mathrm{spin}^c\) summand which contains the top-leftmost generator in the I/III graph of Figure \ref{fig: LOSS in graph}. As we go around the directed cycle starting from this generator, we pass by
\[\frac{n+m}{(n,m)}-1,\frac{n+m}{(n,m)}-1,\frac{nm-n-m}{(n,m)}+1, 1\]
arrow(s) labeled \(2\), \(321\), \(32\), \(21\), respectively. Traversing this cycle once, we ``path lift'' the gradings to \(\frac{1}{2}\mathbb{Z}^2\) starting from \((0,0)\). Both components of the grading are non-decreasing as the grading changes from \((0,0)\) to 
\[\Big(\frac{n+m}{(n,m)},\frac{nm}{(n,m)}\Big). \label{eq: fraction} \]
The second component is only stationary as we pass the unique arrow labeled \(21\). \par 
Suppose we play the same game starting from a generator in any other \(\mathrm{spin}^c\) class. We pass by 
\[\frac{n+m}{(n,m)},\frac{n+m}{(n,m)},\frac{nm-n-m}{(n,m)}\]
arrows labeled \(2\), \(321\), \(32\) respectively. When we path lift, both components are non-decreasing as the grading changes from \((0,0)\) to the same value as before. This time, the second coordinate strictly increases. It follows that the top left and bottom right generators in the I/III graph are distinguished as the unique pair belonging to the same \(\mathrm{spin}^c\) class whose gradings differ by 
\[ (1,0) +\mathbb{Z}\Big(\frac{n+m}{(n,m)},\frac{nm}{(n,m)}\Big).\]
A similar argument shows that the top left and bottom right generators in the II/IV graph are distinguished as the unique pair belonging to the same \(\mathrm{spin}^c\) class whose gradings differ by 
\[ (0,1) +\mathbb{Z}\Big(\frac{nm}{(n,m)},-\frac{n+m}{(n,m)}\Big).\]\par
Now consider the bordered diagram in Figure \ref{fig: bordered} with framing \(\mathcal{F}_{\mathrm{I},\beta}\). (The case of other framings is similar). The existence of the index 1 domain on the left in Figure \ref{fig: domains two} (considered as contributing to an \(m_3(-,\rho_2,\rho_1)\) action) implies 
\[\mathrm{gr}\{a,b,y_1\} = \mathrm{gr}\{c,d,x_n\}+(1,0).\]
We conclude any graded equivalence of Type A modules must send \(\{a,b,y_1\}\) and \(\{c,d,x_n\}\) to the top left and bottom right generators in Figure \ref{fig: LOSS in graph}, respectively. Using the domain on the left in Figure \ref{fig: domains one} and the complement of the domain on the right in the same figure, we can conclude again by gradings that \(\{p,x_1,y_1\}\) and \(\{p,x_n,y_n\}\) must be sent to the cyan and pink generators in Figure \ref{fig: LOSS in graph}, respectively.
\end{proof}
\end{proposition}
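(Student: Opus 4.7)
My plan is to identify $\widehat{\mathcal{L}}(K)$ by a grading argument. Since the previous lemma has pinned down the LOSS invariant as either $\{p,x_1,y_1\}$ or $\{p,x_n,y_m\}$ in the bordered Heegaard diagram of Figure \ref{fig: bordered}, and the decorated graph models of $\widehat{\mathit{CFA}}\big({-Y(n,m),\mathcal{F}}\big)$ are already given in Figure \ref{fig: initial type A}, it suffices to match the generators across a graded equivalence of Type A modules. The spin$^c$ component of the refined grading takes values in $\frac{1}{2}\mathbb{Z}^2$ modulo the subgroup spanned by the gradings of periodic domains, and any graded equivalence must send generators to generators of the same grading (up to an overall shift on each spin$^c$ summand).

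First I would compute the quotient group in which the grading lives by reading off a generating periodic domain from Figures \ref{fig: periodic} and \ref{fig: weights}. For I/III-type framings this should yield $\frac{1}{2}\mathbb{Z}^2 / \big\langle \big(\tfrac{n+m}{(n,m)},\tfrac{nm}{(n,m)}\big)\big\rangle$, while for II/IV-type framings it should give the analogous quotient with the second generator of the periodic domain subgroup being $\big(\tfrac{nm}{(n,m)},-\tfrac{n+m}{(n,m)}\big)$. Then I would tabulate, for each edge label $1,2,3,12,21,23,32,123$ appearing in Figure \ref{fig: initial type A}, the change in grading across a directed arrow, using the standard gradings $\mathrm{gr}(\rho_1)=(\tfrac{1}{2},-\tfrac{1}{2})$, $\mathrm{gr}(\rho_2)=(\tfrac{1}{2},\tfrac{1}{2})$, $\mathrm{gr}(\rho_3)=(-\tfrac{1}{2},\tfrac{1}{2})$ together with the $A_\infty$ relation $\mathrm{gr}\big(m_{k+1}(\mathbf{x},\rho_{I_1},\ldots,\rho_{I_k})\big)=\mathrm{gr}(\mathbf{x})+\sum_j \mathrm{gr}(\rho_{I_j})$.

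The key observation is that each decorated graph in Figure \ref{fig: initial type A} contains directed cycles; by path-lifting the grading around the cycle containing a given generator, I can read off the grading of every other generator in its spin$^c$ summand up to the overall ambiguity. I expect that the distinguished top-left and bottom-right corners of each graph are characterized as the unique pair lying in a common spin$^c$ class whose graded difference is $(1,0)$ in the I/III case (respectively $(0,1)$ in the II/IV case)---the remaining spin$^c$ summands will close up with grading change exactly equal to the chosen generator of the periodic-domain lattice, while the corner summand picks up an additional $(1,0)$ (resp.\ $(0,1)$) because it contains the extra edge labeled $21$ (resp.\ $32$). On the bordered side, the index $1$ domains of Figure \ref{fig: domains two}, interpreted as contributing to $m_3$-actions, yield the grading relation $\mathrm{gr}\{a,b,y_1\}=\mathrm{gr}\{c,d,x_n\}+(1,0)$ (and similarly $(0,1)$ for II/IV framings), forcing $\{a,b,y_1\}$ and $\{c,d,x_n\}$ to be sent to precisely those two corners. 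Finally, the index $1$ domains from Figure \ref{fig: domains one}, read as $m_2$- or $m_4$-actions depending on the framing, give grading relations locating $\{p,x_1,y_1\}$ adjacent to $\{a,b,y_1\}$ and $\{p,x_n,y_m\}$ adjacent to $\{c,d,x_n\}$; these neighbors are the cyan and pink generators of Figure \ref{fig: LOSS in graph}.

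The main obstacle I anticipate is careful bookkeeping: there are eight framings, two co-orientations, two diagram types ($\alpha$ and $\beta$), and $(n,m)$ distinct spin$^c$ summands, and the ``consistency'' convention tying the co-orientation to the basepoint placement must be threaded through each case to determine which pair (cyan vs.\ pink) is hit. One also has to verify that the path-lifting argument is sharp, i.e.\ that no other pair of generators in a common spin$^c$ class shares the same grading difference as the corner pair; this should follow from a direct count of arrow labels $(2,321,32,21)$ along one traversal of the relevant cycle, yielding total grading change equal to the periodic-domain generator plus the single-edge contribution. Once these computations are nailed down for one representative framing (say $\mathcal{F}_{\mathrm{I},\beta}$), the remaining seven cases should reduce to symmetry considerations using $\Phi_*$ and the basepoint-moving equivalences.
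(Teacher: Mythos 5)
Your proposal is correct and follows essentially the same route as the paper: a $\mathrm{spin}^c$-component grading argument, computing the quotient of $\tfrac{1}{2}\mathbb{Z}^2$ by the periodic-domain contribution, path-lifting around the directed cycles of the decorated graph to single out the distinguished corner pair by its grading difference of $(1,0)$ (resp.\ $(0,1)$), and then using the index-$1$ domains of Figures \ref{fig: domains two} and \ref{fig: domains one} to match $\{a,b,y_1\}$, $\{c,d,x_n\}$, and thence $\{p,x_1,y_1\}$, $\{p,x_n,y_m\}$ to the corner and cyan/pink generators. The only difference is cosmetic: you flag the bookkeeping across the eight framings more explicitly, whereas the paper handles one representative case and asserts the rest are similar.
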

\begin{corollary}
The Type A contact invariant for \(Y(n,m)\) is given by one of the generators shown in \ref{fig: type A invariant}, depending on the choice of co-orientation for the contact structure on \(Y(n,m)\) and the choice of framing. 
\begin{figure}[ht]
\centering
\begin{tikzpicture}
\def\n{3}
\def\m{4}
\def\gap{1}
\def\biggap{2}
\def\border{0.35}
\def\epsilon{0.15}
\draw[thick] (-\border,\border) rectangle ({(\n+2)*\gap}, {-(\m+2)*\gap});
\node at ({(\n+2)*\gap-1.3},0) {\normalfont Type A, I/III};
\draw[thick] ({\biggap+(\n+2)*\gap-\border},\border) rectangle ({\biggap+(2*\n+4)*\gap}, {-(\m+2)*\gap});
\node at ({\biggap+(2*\n+4)*\gap-1.3},0) {\normalfont Type A, II/IV};
\draw[thick, ->] ({-\border},{\border}) -- ({-\epsilon/sqrt(2)},{\epsilon/sqrt(2)});
\draw[thick] ({\biggap+(\n+2)*\gap-\border},{\border}) -- ({\biggap+(\n+2)*\gap-\epsilon/sqrt(2)},{\epsilon/sqrt(2)});
\filldraw[thick, fill=white] (0,0) circle (2pt);
\filldraw[black] ({\biggap+(\n+2)*\gap},0) circle (2pt);
\draw[thick, ->] ({\epsilon/sqrt(2)},{-\epsilon/sqrt(2)}) -- ({\gap-\epsilon/sqrt(2)}, {\epsilon/sqrt(2)-\gap})node[fill=white,inner sep=2pt,midway] {\scriptsize \(2\)};
\draw[thick, ->] ({\biggap+(\n+2)*\gap+\epsilon/sqrt(2)},{-\epsilon/sqrt(2)}) -- ({\biggap+(\n+3)*\gap-\epsilon/sqrt(2)}, {\epsilon/sqrt(2)-\gap})node[fill=white,inner sep=2pt,midway] {\scriptsize \(1\)};
\foreach\x in {2,...,\n}
    {
    \filldraw[thick, fill=white] ({(\x-1)*\gap},0) circle (2pt);
    \filldraw[black] ({\biggap+(\n+\x+1)*\gap},0) circle (2pt);
    \draw[thick, ->] ({(\x-1)*\gap +\epsilon/sqrt(2)},{-\epsilon/sqrt(2)}) -- ({\x*\gap-\epsilon/sqrt(2)}, {\epsilon/sqrt(2)-\gap})node[fill=white,inner sep=2pt,midway] {\scriptsize \(2\)};
    \draw[thick, ->] ({\biggap+(\n+\x+1)*\gap +\epsilon/sqrt(2)},{-\epsilon/sqrt(2)}) -- ({\biggap+(\n+2+\x)*\gap-\epsilon/sqrt(2)}, {\epsilon/sqrt(2)-\gap})node[fill=white,inner sep=2pt,midway] {\scriptsize \(1\)};
    \draw[thick] ({(\x-1)*\gap}, {-\m*\gap-\epsilon}) -- ({(\x-1)*\gap},{-(\m+2)*\gap})node[fill=white,inner sep=2pt,midway] {\scriptsize \(321\)};
    \draw[thick, ->] ({\biggap+(\n+\x+1)*\gap},{-(\m+2)*\gap}) -- ({\biggap +(\n+\x+1)*\gap}, {-\m*\gap-\epsilon}) node[fill=white,inner sep=2pt,midway] {\scriptsize \(3\)};
    \draw[thick, ->] ({(\x-1)*\gap}, {\border})--({(\x-1)*\gap}, {\epsilon});
    \draw[thick, -] ({\biggap+(\n+\x+1)*\gap}, {\border})--({\biggap+(\n+\x+1)*\gap}, {\epsilon});
    }
\foreach\y in {2,...,\m} 
    {
    \filldraw[thick, fill=white] (0,{(1-\y)*\gap}) circle (2pt);
    \filldraw[black] ({\biggap+(\n+2)*\gap},{(1-\y)*\gap}) circle (2pt);
    \draw[thick, ->] ({\epsilon/sqrt(2)},{(1-\y)*\gap -\epsilon/sqrt(2)}) -- ({\gap-\epsilon/sqrt(2)}, {\epsilon/sqrt(2)-\y*\gap})node[fill=white,inner sep=2pt,midway] {\scriptsize \(2\)};
    \draw[thick, ->] ({\biggap + (\n+2)*\gap + \epsilon/sqrt(2)},{(1-\y)*\gap -\epsilon/sqrt(2)}) -- ({\biggap + (\n+3)*\gap-\epsilon/sqrt(2)}, {\epsilon/sqrt(2)-\y*\gap})node[fill=white,inner sep=2pt,midway] {\scriptsize \(1\)};
    \draw[thick] ({\n*\gap+\epsilon},{(1-\y)*\gap})--({(\n+2)*\gap},{(1-\y)*\gap}) node[fill=white,inner sep=2pt,midway] {\scriptsize \(321\)};
    \draw[thick, ->] ({\biggap+(2*\n+4)*\gap},{(1-\y)*\gap})--({\biggap+(2*\n+2)*\gap+\epsilon},{(1-\y)*\gap}) node[fill=white,inner sep=2pt,midway] {\scriptsize \(3\)};
    \draw[thick, ->]({-\border},{(1-\y)*\gap})--({-\epsilon},{(1-\y)*\gap});
    \draw[thick]({\biggap+(\n+2)*\gap-\border},{(1-\y)*\gap})--({\biggap+(\n+2)*\gap-\epsilon},{(1-\y)*\gap});
    }
\foreach \x in {1,...,\n}
    \foreach \y in {1,...,\m}{
        \filldraw[black] ({\x*\gap},{-\y*\gap}) circle (2pt);
        \filldraw[thick, fill = white] ({\biggap+(\n+2+\x)*\gap},{-\y*\gap}) circle (2pt);
        }
\foreach \x in {2,...,\n}
    \foreach \y in {2,...,\m}{
        \draw [thick, ->] ({(\x-1)*\gap+\epsilon/sqrt(2)},{(1-\y)*\gap-\epsilon/sqrt(2)}) --({\x*\gap-\epsilon/sqrt(2)},{-\y*\gap+\epsilon/sqrt(2)}) node[fill=white,inner sep=2pt,midway] {\scriptsize \(32\)};
        \draw [thick, ->] ({\biggap+(\n+\x+1)*\gap+\epsilon/sqrt(2)},{(1-\y)*\gap-\epsilon/sqrt(2)}) --({\biggap+(\n+2+\x)*\gap-\epsilon/sqrt(2)},{-\y*\gap+\epsilon/sqrt(2)}) node[fill=white,inner sep=2pt,midway] {\scriptsize \(21\)};
        }
\filldraw[thick, fill=white] ({(\n+1)*\gap},{-(\m+1)*\gap}) circle (2pt);
\filldraw[black] ({\biggap+(2*\n+3)*\gap},{-(\m+1)*\gap}) circle (2pt);
\draw[thick, ->] ({\n*\gap+\epsilon/sqrt(2)},{-\m*\gap-\epsilon/sqrt(2)}) -- ({(\n+1)*\gap-\epsilon/sqrt(2)},{\epsilon/sqrt(2)-(\m+1)*\gap}) node[fill=white,inner sep=2pt,midway] {\scriptsize \(321\)};
\draw[thick, ->] ({\biggap+(2*\n+3)*\gap-\epsilon/sqrt(2)},{\epsilon/sqrt(2)-(\m+1)*\gap}) -- ({\biggap + (2*\n+2)*\gap+\epsilon/sqrt(2)},{-\m*\gap-\epsilon/sqrt(2)}) node[fill=white,inner sep=2pt,midway] {\scriptsize \(3\)};
\draw[thick] ({(\n+1)*\gap+\epsilon/sqrt(2)},{-(\m+1)*\gap-\epsilon/sqrt(2)}) -- ({(\n+2)*\gap},{-(\m+2)*\gap}) node[fill=white,inner sep=2pt,midway] {\scriptsize \(21\)};
\draw[thick, ->] ({\biggap+(2*\n+4)*\gap},{-(\m+2)*\gap}) -- ({\biggap+(2*\n+3)*\gap+\epsilon/sqrt(2)},{-(\m+1)*\gap-\epsilon/sqrt(2)})node[fill=white,inner sep=2pt,midway] {\scriptsize \(32\)};
\filldraw[ultra thick, color = blue, fill=white] (0,0) circle (2pt);
\filldraw[ultra thick, color = red, fill=white] ({(\n+1)*\gap},{-(\m+1)*\gap}) circle (2pt);
\filldraw[blue] ({\biggap+(\n+2)*\gap},{0}) circle (2pt);
\filldraw[red] ({\biggap+(2*\n+3)*\gap},{-(\m+1)*\gap}) circle (2pt);
\end{tikzpicture}
\caption{}
\label{fig: type A invariant}
\end{figure}

The blue generators correspond to consistent framing choices for \(\beta\)-type diagrams and inconsistent choices for \(\alpha\)-type diagrams {\normalfont(}and vice-versa for the red generators{\normalfont)}.
\end{corollary}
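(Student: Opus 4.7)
The plan is to deduce the position of the Type A contact invariant from the position of the LOSS invariant already identified in Proposition \ref{prop: LOSS in graph}. By the Stipsicz--Vértesi theorem, the positive bypass attachment map \eqref{eq: SV} sends the sutured contact invariant $\mathit{EH}\big(Y(n,m),\Gamma_\lambda\big)$ to $\widehat{\mathcal{L}}(K)$. On the other hand, the defining property of the bordered contact invariant of Min--Varvarezos says that $c_A(\xi, \mathcal{F})$ maps to $\mathit{EH}\big(Y(n,m),\Gamma_\lambda\big)$ under the sutured cap that fills in the meridional arc of $\mathcal{F}$. Combined with the cap isomorphism of the preceding lemma (which fills in the longitudinal arc of $\mathcal{F}$ to yield $\Gamma_\mu$), this gives an explicit element of the Type A module that the contact invariant must reproduce after a specific bordered operation.

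The key step is to identify this composite operation at the bordered level. For each of the eight framings in Figure \ref{fig: beta basepoint}, the change-of-cap/bypass composition above is realized on $\widehat{\mathit{CFA}}(-Y(n,m),\mathcal{F})$ by the $\mathcal{A}_\infty$-action of an explicit Reeb element $\rho_J$ of the torus algebra; the relevant $J$ is $2$ for consistent $\beta$-framings of I/III type, $1$ for consistent $\beta$-framings of II/IV type, and the analogous Reeb elements for the remaining cases (with the direction of the action reversed when the framing is inconsistent, corresponding to the opposite sign of bypass). Given this, $c_A(\xi,\mathcal{F})$ is pinned down as the unique generator $\mathbf{x}$ in the correct idempotent summand of the decorated graph model of Figure \ref{fig: initial type A} for which $m_2(\mathbf{x},\rho_J)$ equals the cyan/pink LOSS generator of Proposition \ref{prop: LOSS in graph}. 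Inspecting the decorated graphs, this uniquely singles out the corner vertex at $(0,0)$ in the consistent cases and the corner vertex at $(n+1,-(m+1))$ in the inconsistent cases, which matches Figure \ref{fig: type A invariant}. A grading argument analogous to the one in the proof of Proposition \ref{prop: LOSS in graph} gives an independent check: the refined grading distinguishes the pair of corner generators among all generators in the appropriate $\mathrm{spin}^c$ summand.

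The main obstacle in this plan is giving a rigorous identification of the Stipsicz--Vértesi bypass map with multiplication by the Reeb element $\rho_J$ at the bordered level, in particular keeping track of signs and orientation conventions uniformly across the eight framings and both co-orientations. A cleaner alternative, which avoids this book-keeping altogether, is to compute $c_A(\xi,\mathcal{F})$ directly from a partial open book of $(Y(n,m),\xi)$. Such a partial open book is inherited from the honest open book of $Y(g_{n,m})$ with page $\Sigma_{0,3}$ and monodromy $g_{n,m}$ by deleting a standard contact neighborhood of the Legendrian $K$. The HKM algorithm \cite{HKM,HKMsuture}, extended to bordered-sutured Floer homology by Min--Varvarezos \cite{MinVarvarezos}, identifies $c_A$ as an explicit contact generator in a bordered-sutured Heegaard diagram adapted to this partial open book. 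A sequence of handleslides and destabilizations converts this diagram into the bordered diagrams of Figure \ref{fig: bordered}, carrying the contact generator to the corner generator of Figure \ref{fig: type A invariant}.
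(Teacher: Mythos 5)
Your first plan follows the paper's overall strategy (relate \(c_A\) to \(\widehat{\mathcal{L}}(K)\) via Stipsicz--V\'ertesi and the Min--Varvarezos bypass-to-Reeb-element dictionary), but it glosses over the obstruction the paper must work around. For I/III basepoints only one of the two basic slice attachments to \(\partial Y(n,m)\) is realized by the action of a Reeb element; the other has \emph{no} representative in the torus algebra, precisely because torus algebra elements correspond to contact structures on a thickened \emph{punctured} torus. Concretely, in the I/III decorated graph the only arrow joining the pink LOSS generator and the red contact-invariant candidate points from pink to red (it is the \(321\)-arrow, i.e.\ \(m_2(\text{pink},\rho_{123}) = \text{red}\)), and there is no \(\rho_J\) with \(m_2(\text{red},\rho_J) = \text{pink}\). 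So your claim that ``the direction of the action is reversed'' for inconsistent framings is not an accurate description of the wrong-sign bypass, and the uniqueness criterion you rely on is unavailable in this case.

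The paper's fix is a basepoint-move argument that your plan is missing: it first identifies \(c_A(\xi_1,\mathcal{F}_{\mathrm{I},\beta})\) as the generator \(\{a,b,y_1\}\) in the bordered diagram (using the \(\rho_2\)-action trick, since only the cyan LOSS generator lies in the image of a \(\rho_2\)-action); it then observes that in the \emph{same} Heegaard diagram with the diametrically opposite basepoint \(\mathcal{F}_{\mathrm{III},\beta}\), the same geometric generator \(\{a,b,y_1\}\) is placed at the red vertex of the model by the refined grading argument of Proposition \ref{prop: LOSS in graph}. Since attaching a sutured cap forgets the basepoint, this pins down the second contact invariant. Your alternative route via a partial open book of \((Y(n,m),\xi)\) is a genuinely different strategy that could in principle avoid the sign bookkeeping, but as written it is only a sketch: you would still need to produce an explicit adapted bordered-sutured diagram, locate the HKM/Min--Varvarezos contact generator in it, and carry it through a concrete sequence of handleslides and destabilizations to the diagrams of Figure \ref{fig: bordered}, which is substantially more work than the summary you give.
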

\begin{proof}
There are two basic slices we can attach to the boundary of \(Y(n,m)\) to obtain sutures which are meridians of \(K\). One of these basic slices will be the correct sign for the Stipsicz--V\'{e}rtesi map (\ref{eq: SV}) for one choice of co-orientation of contact structure on \(Y(n,m)\), and the incorrect sign for the other (and vice-versa for the other basic slice). According to \cite{MinVarvarezos},
\begin{itemize}
    \item For a I/III basepoint, one of these basic slices corresponds to an \(m_2(-,\rho_2)\) action, while the other has no representative in the torus algebra. (This has to do with the fact that elements of the torus algebra correspond to contact structures on a thickened \textit{punctured} torus.)
    \item For a II/IV basepoint, one of these basic slices corresponds to an \(m_2(-,\rho_1)\) action, while the other corresponds to an \(m_2(-,\rho_3)\) action. 
\end{itemize}
Let us focus on the case of a I/III basepoint. (The case for a II/IV basepoint is similar and easier.) Since the cyan LOSS invariants is hit by a single \(\rho_2\) action and the pink LOSS invariant is not hit by any \(\rho_2\) action, this allows us to identify the blue generator in the I/III graph of Figure \ref{fig: type A invariant} as a contact invariant. (This sort of trickery would not be necessary if the author had more carefully read about signs of basic slices.) 

To identify the other contact invariant, recall in the bordered diagram of Figure \ref{fig: bordered} with \(\mathcal{F}_{\mathrm{I},\beta}\) framing the blue contact invariant corresponds to \(\{a,b,y_1\}\). The grading argument in Proposition \ref{prop: LOSS in graph} implies that in the same bordered Heegaard diagram with \(\mathcal{F}_{\mathrm{III},\beta}\) framing, \(\{a,b,y_1\}\) corresponds to the red generator in the I/III graph of Figure \ref{fig: type A invariant}. Note this is the contact invariant for the same contact structure with \(\mathcal{F}_{\mathrm{III},\beta}\) framing, since attaching a sutured cap to a bordered Heegaard diagram forgets any information about the basepoint. 
\end{proof}
\begin{remark}\label{re:alpha_beta}
It is tempting to assume in that the Type A contact invariant does not change when switching from an \(\alpha\)-type to a \(\beta\)-type framing, since the corresponding \(\mathit{\widehat{CFA} }\) modules do not change. However, switching from an \(\alpha\)-type bordered diagram to a \(\beta\)-type bordered diagram exchanges the roles of \(R_+\) and \(R_{-}\) when a sutured cap is attached. In the case of a higher genus boundary, this makes it clear that when switching from an \(\alpha\)-type to \(\beta\)-type diagram, the idempotent in the surface algebra corresponding to the Type A contact invariant cannot remain the same.
\end{remark}
\section{Reparameterizing and gluing} \label{sec: gluing}
In this section, we finish the reproof of Theorem \ref{thm: Lekili} by pairing bordered contact invariants. We refer the reader to Appendix \ref{sec: algebra} for a review of Type DD and DA bimodules, as well as various types of box tensor products. We make the precursory comment that all bimodules in this section are left and right bounded in the sense of \cite{bimodules}, as they are computed from left and right provincially admissible Heegaard diagrams. Indeed, these diagrams have no left or right provincial periodic domains.  

In order to carry out the pairing we first identify the Type D invariant for \(Y(n,m)\) with the 8 framings from the previous section. If \(\mathcal{F}\) is such a framing, then we calculate in Appendix \ref{sec: calculating} that the module \(\widehat{\mathit{CFD}}\big({-Y(n,m),\mathcal{F}}\big)\) has a model given by one of the decorated graphs in Figure \ref{fig: initial type D}.
\begin{figure}[ht]
\centering
\begin{tikzpicture}
\def\n{3}
\def\m{4}
\def\gap{1}
\def\biggap{2}
\def\border{0.35}
\def\epsilon{0.15}
\draw[thick] (-\border,\border) rectangle ({(\n+2)*\gap}, {-(\m+2)*\gap});
\node at ({(\n+2)*\gap-1.3},0) {Type D, I/III};
\draw[thick] ({\biggap+(\n+2)*\gap-\border},\border) rectangle ({\biggap+(2*\n+4)*\gap}, {-(\m+2)*\gap});
\node at ({\biggap+(2*\n+4)*\gap-1.3},0) {Type D, II/IV};
\draw[thick, ->] ({-\border},{\border}) -- ({-\epsilon/sqrt(2)},{\epsilon/sqrt(2)});
\draw[thick] ({\biggap+(\n+2)*\gap-\border},{\border}) -- ({\biggap+(\n+2)*\gap-\epsilon/sqrt(2)},{\epsilon/sqrt(2)});
\filldraw[thick, fill=white] (0,0) circle (2pt);
\filldraw[black] ({\biggap+(\n+2)*\gap},0) circle (2pt);
\draw[thick, ->] ({\epsilon/sqrt(2)},{-\epsilon/sqrt(2)}) -- ({\gap-\epsilon/sqrt(2)}, {\epsilon/sqrt(2)-\gap})node[fill=white,inner sep=2pt,midway] {\scriptsize \(2\)};
\draw[thick, ->] ({\biggap+(\n+2)*\gap+\epsilon/sqrt(2)},{-\epsilon/sqrt(2)}) -- ({\biggap+(\n+3)*\gap-\epsilon/sqrt(2)}, {\epsilon/sqrt(2)-\gap})node[fill=white,inner sep=2pt,midway] {\scriptsize \(3\)};
\foreach\x in {2,...,\n}
    {
    \filldraw[thick, fill=white] ({(\x-1)*\gap},0) circle (2pt);
    \filldraw[black] ({\biggap+(\n+\x+1)*\gap},0) circle (2pt);
    \draw[thick, ->] ({(\x-1)*\gap +\epsilon/sqrt(2)},{-\epsilon/sqrt(2)}) -- ({\x*\gap-\epsilon/sqrt(2)}, {\epsilon/sqrt(2)-\gap})node[fill=white,inner sep=2pt,midway] {\scriptsize \(2\)};
    \draw[thick, ->] ({\biggap+(\n+\x+1)*\gap +\epsilon/sqrt(2)},{-\epsilon/sqrt(2)}) -- ({\biggap+(\n+2+\x)*\gap-\epsilon/sqrt(2)}, {\epsilon/sqrt(2)-\gap})node[fill=white,inner sep=2pt,midway] {\scriptsize \(3\)};
    \draw[thick] ({(\x-1)*\gap}, {-\m*\gap-\epsilon}) -- ({(\x-1)*\gap},{-(\m+2)*\gap})node[fill=white,inner sep=2pt,midway] {\scriptsize \(123\)};
    \draw[thick, ->] ({\biggap+(\n+\x+1)*\gap},{-(\m+2)*\gap}) -- ({\biggap +(\n+\x+1)*\gap}, {-\m*\gap-\epsilon}) node[fill=white,inner sep=2pt,midway] {\scriptsize \(1\)};
    \draw[thick, ->] ({(\x-1)*\gap}, {\border})--({(\x-1)*\gap}, {\epsilon});
    \draw[thick, -] ({\biggap+(\n+\x+1)*\gap}, {\border})--({\biggap+(\n+\x+1)*\gap}, {\epsilon});
    }
\foreach\y in {2,...,\m} 
    {
    \filldraw[thick, fill=white] (0,{(1-\y)*\gap}) circle (2pt);
    \filldraw[black] ({\biggap+(\n+2)*\gap},{(1-\y)*\gap}) circle (2pt);
    \draw[thick, ->] ({\epsilon/sqrt(2)},{(1-\y)*\gap -\epsilon/sqrt(2)}) -- ({\gap-\epsilon/sqrt(2)}, {\epsilon/sqrt(2)-\y*\gap})node[fill=white,inner sep=2pt,midway] {\scriptsize \(2\)};
    \draw[thick, ->] ({\biggap + (\n+2)*\gap + \epsilon/sqrt(2)},{(1-\y)*\gap -\epsilon/sqrt(2)}) -- ({\biggap + (\n+3)*\gap-\epsilon/sqrt(2)}, {\epsilon/sqrt(2)-\y*\gap})node[fill=white,inner sep=2pt,midway] {\scriptsize \(3\)};
    \draw[thick] ({\n*\gap+\epsilon},{(1-\y)*\gap})--({(\n+2)*\gap},{(1-\y)*\gap}) node[fill=white,inner sep=2pt,midway] {\scriptsize \(123\)};
    \draw[thick, ->] ({\biggap+(2*\n+4)*\gap},{(1-\y)*\gap})--({\biggap+(2*\n+2)*\gap+\epsilon},{(1-\y)*\gap}) node[fill=white,inner sep=2pt,midway] {\scriptsize \(1\)};
    \draw[thick, ->]({-\border},{(1-\y)*\gap})--({-\epsilon},{(1-\y)*\gap});
    \draw[thick]({\biggap+(\n+2)*\gap-\border},{(1-\y)*\gap})--({\biggap+(\n+2)*\gap-\epsilon},{(1-\y)*\gap});
    }
\foreach \x in {1,...,\n}
    \foreach \y in {1,...,\m}{
        \filldraw[black] ({\x*\gap},{-\y*\gap}) circle (2pt);
        \filldraw[thick, fill = white] ({\biggap+(\n+2+\x)*\gap},{-\y*\gap}) circle (2pt);
        }
\foreach \x in {2,...,\n}
    \foreach \y in {2,...,\m}{
        \draw [thick, ->] ({(\x-1)*\gap+\epsilon/sqrt(2)},{(1-\y)*\gap-\epsilon/sqrt(2)}) --({\x*\gap-\epsilon/sqrt(2)},{-\y*\gap+\epsilon/sqrt(2)}) node[fill=white,inner sep=2pt,midway] {\scriptsize \(12\)};
        \draw [thick, ->] ({\biggap+(\n+\x+1)*\gap+\epsilon/sqrt(2)},{(1-\y)*\gap-\epsilon/sqrt(2)}) --({\biggap+(\n+2+\x)*\gap-\epsilon/sqrt(2)},{-\y*\gap+\epsilon/sqrt(2)}) node[fill=white,inner sep=2pt,midway] {\scriptsize \(23\)};
        }
\filldraw[thick, fill=white] ({(\n+1)*\gap},{-(\m+1)*\gap}) circle (2pt);
\filldraw[black] ({\biggap+(2*\n+3)*\gap},{-(\m+1)*\gap}) circle (2pt);
\draw[thick, ->] ({\n*\gap+\epsilon/sqrt(2)},{-\m*\gap-\epsilon/sqrt(2)}) -- ({(\n+1)*\gap-\epsilon/sqrt(2)},{\epsilon/sqrt(2)-(\m+1)*\gap}) node[fill=white,inner sep=2pt,midway] {\scriptsize \(123\)};
\draw[thick, ->] ({\biggap+(2*\n+3)*\gap-\epsilon/sqrt(2)},{\epsilon/sqrt(2)-(\m+1)*\gap}) -- ({\biggap + (2*\n+2)*\gap+\epsilon/sqrt(2)},{-\m*\gap-\epsilon/sqrt(2)}) node[fill=white,inner sep=2pt,midway] {\scriptsize \(1\)};
\draw[thick] ({(\n+1)*\gap+\epsilon/sqrt(2)},{-(\m+1)*\gap-\epsilon/sqrt(2)}) -- ({(\n+2)*\gap},{-(\m+2)*\gap}) node[fill=white,inner sep=2pt,midway] {\scriptsize \(23\)};
\draw[thick, ->] ({\biggap+(2*\n+4)*\gap},{-(\m+2)*\gap}) -- ({\biggap+(2*\n+3)*\gap+\epsilon/sqrt(2)},{-(\m+1)*\gap-\epsilon/sqrt(2)})node[fill=white,inner sep=2pt,midway] {\scriptsize \(12\)};
\end{tikzpicture}
\caption{The case \(n=3\), \(m=4\). Both decorated graphs should be interpreted as lying on a torus.}
\label{fig: initial type D}
\end{figure}

The interpretation of a decorated graph is different for a left Type D module. Vertices have the same interpretation as before. (Recall these are labeled according to non-trivial idempotent actions, not which arcs the generators lie on in a diagram). A directed path of length \(k\) from a generator \(\mathbf{x}\) to a generator \(\mathbf{y}\) with arrows labeled \(I_1,\ldots,I_k\) corresponds to a differential 
\[\delta^k (\mathbf{x}) = \rho_{I_k}\otimes \cdots\otimes \rho_{I_k}\otimes \mathbf{y}.\]
An unlabeled arrow corresponds to a pure differential, i.e., \(\delta^1(x) = \mathbf{1}\otimes \mathbf{y}\). For example, a path \({\bullet \xrightarrow{\,12\,}\bullet \xrightarrow{\,123\,}\bullet}\) corresponds to the \(\delta^2\) differential
\[\delta^2(\mathbf{x})=\rho_{12}\otimes \rho_{123}\otimes \mathbf{y}.\]\par
To identify the Type D invariant, we follow the procedure in \cite{MinVarvarezos} and pair with \({\widehat{\mathit{BSDD}}(\mathcal{TW}_{\mathcal{F}}^+)}\), where \(\mathcal{TW}^+_{\mathcal{F}}\) is  the \textit{twisting slice}. In the case of torus boundary, \(\mathcal{TW}^+_{\mathcal{F}}\) has a bordered-sutured Heegaard diagram shown in Figure \ref{fig: twisting slice}.
\begin{figure}[ht]
\centering
\includegraphics[scale=0.8]{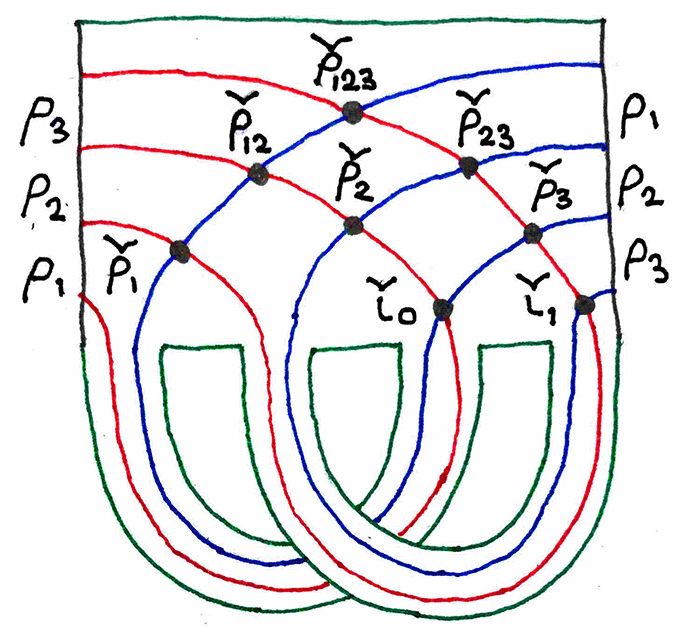}
\caption{}
\label{fig: twisting slice}
\end{figure}

From the above diagram one can calculate the following model for \(\widehat{\mathit{BSDD}}(\mathcal{TW}_{\mathcal{F}}^+)\). The non-trivial idempotent actions are given by
\begin{alignat*}{4}
\iota_0\cdot \widecheck{\rho}_{123}\cdot \iota_1 &= \widecheck{\rho}_{123},&\qquad \iota_0\cdot \widecheck{\rho}_{23}\cdot \iota_0 &= \widecheck{\rho}_{23},&\qquad  \iota_1\cdot \widecheck{\rho}_{12}\cdot \iota_1 &= \widecheck{\rho}_{12},&\qquad \iota_0 \cdot \widecheck{\rho}_3\cdot \iota_1 &= \widecheck{\rho}_3,\\ 
\iota_0\cdot \widecheck{\rho}_1\cdot \iota_1 &=\widecheck{\rho}_1,&\quad \iota_1\cdot \widecheck{\rho}_2\cdot \iota_0 &=\widecheck{\rho}_2,&\quad \iota_0\cdot \widecheck{\iota}_1\cdot \iota_0&= \widecheck{\iota}_1,&\quad \iota_1\cdot \widecheck{\iota}_0\cdot \iota_1&=\widecheck{\iota}_0.
\end{alignat*}
Next, the non-zero differentials are as follows.
\begin{alignat*}{3}\delta^1 (\widecheck{\rho}_{123})& = \rho_3\otimes \widecheck{\rho}_{12} + \widecheck{\rho}_{23}\otimes \rho_1,&\qquad  \delta^1(\widecheck{\rho}_{23})&=\rho_3\otimes \widecheck{\rho}_2+\widecheck{\rho}_3\otimes \rho_2,&\qquad
\delta^1(\widecheck{\rho}_{12})&=\rho_2\otimes \widecheck{\rho}_1+\widecheck{\rho}_2\otimes \rho_1,\\ \delta^1(\widecheck{\rho}_3)&=\rho_3\otimes \widecheck{\iota}_0+\widecheck{\iota}_1\otimes \rho_3,&\qquad \delta^1(\widecheck{
\rho}_1) &= \rho_1\otimes \widecheck{\iota}_0+\widecheck{\iota}_1\otimes \rho_1,&\qquad \delta^1(\widecheck{\rho}_2)&=\rho_2\otimes \widecheck{\iota}_1+\widecheck{\iota}_0\otimes \rho_2.\end{alignat*}
In general, one has
\[\widehat{\mathit{CFD}}(-Y,\mathcal{F})\cong \widehat{\mathit{CFA}}(-Y,\mathcal{F})\boxtimes \widehat{\mathit{BSDD}}(\mathcal{TW}_{\mathcal{F}}^+).\]\par
To determine the Type D contact invariant, one tensors the Type A invariant \(c_A(\xi,\mathcal{F})\) with \(\widecheck{\iota}_1\) or \(\widecheck{\iota}_0\), depending on whether \(c_A(\xi,\mathcal{F})\) is acted on by the idempotent \(\iota_0\) or \(\iota_1\), respectively [sic]. Note attaching a twisting slice changes an \(\alpha\)-type diagram to \(\beta\)-type diagram, and vice-versa. Performing this construction in our case and simplifying the resulting Type D modules by canceling pure differentials gives the following proposition.

\begin{proposition}
The Type D contact invariant for \(Y(n,m)\) is given by one of the generators highlighted in Figure {\normalfont\ref{fig: type D invariant}}, depending on the choice of co-orientation for the contact structure on \(Y(n,m)\) and the choice of framing. 
\begin{figure}[ht]
\centering
\begin{tikzpicture}
\def\n{3}
\def\m{4}
\def\gap{0.8}
\def\biggap{1}
\def\border{0.35}
\def\epsilon{0.15}
\draw[thick] (-\border,\border) rectangle ({(\n+6)*\gap }, {-(\m+6)*\gap});
\node at ({(\n+6)*\gap-1.3},0) {\normalfont{Type D, I/III}};
\draw[thick] ({\biggap+(\n+6)*\gap-\border},\border) rectangle ({\biggap+(2*\n+10)*\gap }, { -(\m+4)*\gap});
\node at ({\biggap+(2*\n+10)*\gap-1.3},0) {\normalfont{Type D, II/IV}};
\draw[thick, ->] ({-\border},{\border}) -- ({-\epsilon/sqrt(2)},{\epsilon/sqrt(2)});
\draw[thick] ({\biggap+(\n+6)*\gap-\border},{\border}) -- ({\biggap+(\n+6)*\gap-\epsilon/sqrt(2)},{\epsilon/sqrt(2)});
\filldraw[thick, fill = white] (0,0) circle (2pt);
\filldraw[black] ({\biggap+(\n+6)*\gap},0) circle (2pt);
\draw[thick, ->] ({\epsilon/sqrt(2)},{-\epsilon/sqrt(2)}) -- ({\gap-\epsilon/sqrt(2)}, {\epsilon/sqrt(2)-\gap})node[fill=white,inner sep=2pt,midway] {\scriptsize \(2\)};
\draw[thick, ->] ({\biggap+(\n+6)*\gap+\epsilon/sqrt(2)},{-\epsilon/sqrt(2)}) -- ({\biggap+(\n+7)*\gap-\epsilon/sqrt(2)}, {\epsilon/sqrt(2)-\gap})node[fill=white,inner sep=2pt,midway] {\scriptsize \(3\)};
\foreach\x in {2,...,\n}
    {
    \filldraw[thick, fill=white] ({(\x-1)*\gap},0) circle (2pt);
    \filldraw[black] ({\biggap+(\n+\x+5)*\gap},0) circle (2pt);
    \draw[thick, ->] ({(\x-1)*\gap +\epsilon/sqrt(2)},{-\epsilon/sqrt(2)}) -- ({\x*\gap-\epsilon/sqrt(2)}, {\epsilon/sqrt(2)-\gap})node[fill=white,inner sep=2pt,midway] {\scriptsize \(2\)};
    \draw[thick, ->] ({\biggap+(\n+\x+5)*\gap +\epsilon/sqrt(2)},{-\epsilon/sqrt(2)}) -- ({\biggap+(\n+6+\x)*\gap-\epsilon/sqrt(2)}, {\epsilon/sqrt(2)-\gap})node[fill=white,inner sep=2pt,midway] {\scriptsize \(3\)};
    \draw[thick] ({(\x-1)*\gap}, {-\m*\gap-\epsilon}) -- ({(\x-1)*\gap},{-(\m+6)*\gap})node[fill=white,inner sep=2pt,midway] {\scriptsize \(123\)};
    \draw[thick, ->] ({\biggap+(\n+\x+5)*\gap},{-(\m+4)*\gap}) -- ({\biggap +(\n+\x+5)*\gap}, {-\m*\gap-\epsilon}) node[fill=white,inner sep=2pt,midway] {\scriptsize \(1\)};
    \draw[thick, ->] ({(\x-1)*\gap}, {\border})--({(\x-1)*\gap}, {\epsilon});
    \draw[thick, -] ({\biggap+(\n+\x+5)*\gap}, {\border})--({\biggap+(\n+\x+5)*\gap}, {\epsilon});
    }
\foreach\y in {2,...,\m} 
    {
    \filldraw[thick, fill=white] (0,{(1-\y)*\gap}) circle (2pt);
    \filldraw[black] ({\biggap+(\n+6)*\gap},{(1-\y)*\gap}) circle (2pt);
    \draw[thick, ->] ({\epsilon/sqrt(2)},{(1-\y)*\gap -\epsilon/sqrt(2)}) -- ({\gap-\epsilon/sqrt(2)}, {\epsilon/sqrt(2)-\y*\gap})node[fill=white,inner sep=2pt,midway] {\scriptsize \(2\)};
    \draw[thick, ->] ({\biggap + (\n+6)*\gap + \epsilon/sqrt(2)},{(1-\y)*\gap -\epsilon/sqrt(2)}) -- ({\biggap + (\n+7)*\gap-\epsilon/sqrt(2)}, {\epsilon/sqrt(2)-\y*\gap})node[fill=white,inner sep=2pt,midway] {\scriptsize \(3\)};
    \draw[thick] ({\n*\gap+\epsilon},{(1-\y)*\gap})--({(\n+6)*\gap},{(1-\y)*\gap}) node[fill=white,inner sep=2pt,midway] {\scriptsize \(123\)};
    \draw[thick, ->] ({\biggap+(2*\n+10)*\gap },{(1-\y)*\gap})--({\biggap+(2*\n+6)*\gap+\epsilon},{(1-\y)*\gap}) node[fill=white,inner sep=2pt,midway] {\scriptsize \(1\)};
    \draw[thick, ->]({-\border},{(1-\y)*\gap})--({-\epsilon},{(1-\y)*\gap});
    \draw[thick]({\biggap+(\n+6)*\gap-\border},{(1-\y)*\gap})--({\biggap+(\n+6)*\gap-\epsilon},{(1-\y)*\gap});
    }
\filldraw[thick, fill = white] (0, {(\n-\m)*\gap}) circle (2pt);
\filldraw[black] ({\biggap+(\n+6)*\gap}, {(\n-\m)*\gap}) circle (2pt);
\foreach \x in {1,...,\n}
    \foreach \y in {1,...,\m}{
        \filldraw[black] ({\x*\gap},{-\y*\gap}) circle (2pt);
        \filldraw[thick, fill = white] ({\biggap+(\n+6+\x)*\gap},{-\y*\gap}) circle (2pt);
        }
\foreach \x in {2,...,\n}
    \foreach \y in {2,...,\m}{
        \draw [thick, ->] ({(\x-1)*\gap+\epsilon/sqrt(2)},{(1-\y)*\gap-\epsilon/sqrt(2)}) --({\x*\gap-\epsilon/sqrt(2)},{-\y*\gap+\epsilon/sqrt(2)}) node[fill=white,inner sep=2pt,midway] {\scriptsize \(12\)};
        \draw [thick, ->] ({\biggap+(\n+\x+5)*\gap+\epsilon/sqrt(2)},{(1-\y)*\gap-\epsilon/sqrt(2)}) --({\biggap+(\n+6+\x)*\gap-\epsilon/sqrt(2)},{-\y*\gap+\epsilon/sqrt(2)}) node[fill=white,inner sep=2pt,midway] {\scriptsize \(23\)};
        }
\filldraw[ultra thick, color = red, fill=white] ({\biggap+(2*\n+6)*\gap},{-\m*\gap}) circle (2pt);
\filldraw[color = red] ({(\n+1)*\gap},{-(\m+1)*\gap}) circle (2pt);
\filldraw[black] ({\biggap+(2*\n+7)*\gap},{-(\m+1)*\gap}) circle (2pt);
\draw[thick, ->] ({\n*\gap+\epsilon/sqrt(2)},{-\m*\gap-\epsilon/sqrt(2)}) -- ({(\n+1)*\gap-\epsilon/sqrt(2)},{\epsilon/sqrt(2)-(\m+1)*\gap}) node[fill=white,inner sep=2pt,midway] {\scriptsize \(12\)};
\draw[thick, ->] ({\biggap+(2*\n+7)*\gap-\epsilon/sqrt(2)},{\epsilon/sqrt(2)-(\m+1)*\gap}) -- ({\biggap + (2*\n+6)*\gap+\epsilon/sqrt(2)},{-\m*\gap-\epsilon/sqrt(2)}) node[fill=white,inner sep=2pt,midway] {\scriptsize \(1\)};
\filldraw[black] ({(\n+2)*\gap},{-(\m+2)*\gap}) circle (2pt);
\filldraw[thick, fill=white] ({\biggap+(2*\n+8)*\gap},{-(\m+2)*\gap}) circle (2pt);
\draw[thick, ->] ({(\n+2)*\gap-\epsilon/sqrt(2)},{\epsilon/sqrt(2)-(\m+2)*\gap})--({(\n+1)*\gap+\epsilon/sqrt(2)},{-(\m+1)*\gap-\epsilon/sqrt(2)});
\draw[thick, ->] ({\biggap+(2*\n+8)*\gap-\epsilon/sqrt(2)},{\epsilon/sqrt(2)-(\m+2)*\gap}) -- ({\biggap + (2*\n+7)*\gap+\epsilon/sqrt(2)},{-(\m+1)*\gap-\epsilon/sqrt(2)}) node[fill=white,inner sep=2pt,midway] {\scriptsize \(2\)};
\filldraw[thick, fill = white] ({(\n+3)*\gap},{-(\m+3)*\gap}) circle (2pt);
\filldraw[ultra thick, color = blue, fill=white] ({\biggap+(2*\n+9)*\gap},{-(\m+3)*\gap}) circle (2pt);
\draw[thick, ->] ({(\n+2)*\gap+\epsilon/sqrt(2)},{-(\m+2)*\gap-\epsilon/sqrt(2)})--({(\n+3)*\gap-\epsilon/sqrt(2)},{\epsilon/sqrt(2)-(\m+3)*\gap}) node[fill=white,inner sep=2pt,midway] {\scriptsize \(3\)};
\draw[thick, ->] ({\biggap + (2*\n+8)*\gap+\epsilon/sqrt(2)},{-(\m+2)*\gap-\epsilon/sqrt(2)}) -- ({\biggap+(2*\n+9)*\gap-\epsilon/sqrt(2)},{\epsilon/sqrt(2)-(\m+3)*\gap});
\filldraw[color = blue] ({(\n+4)*\gap},{-(\m+4)*\gap}) circle (2pt);
\draw[thick, ->] ({(\n+3)*\gap+\epsilon/sqrt(2)},{-(\m+3)*\gap-\epsilon/sqrt(2)})--({(\n+4)*\gap-\epsilon/sqrt(2)},{\epsilon/sqrt(2)-(\m+4)*\gap}) node[fill=white,inner sep=2pt,midway] {\scriptsize \(2\)};
\draw[thick, ->] ({\biggap+(2*\n+10)*\gap},{-(\m+4)*\gap})--({\biggap + (2*\n+9)*\gap+\epsilon/sqrt(2)},{-(\m+3)*\gap-\epsilon/sqrt(2)}) node[fill=white,inner sep=2pt,midway] {\scriptsize \(1\)};
\filldraw[black] ({(\n+5)*\gap},{-(\m+5)*\gap}) circle (2pt);
\draw[thick, ->] ({(\n+5)*\gap-\epsilon/sqrt(2)},{\epsilon/sqrt(2)-(\m+5)*\gap})--({(\n+4)*\gap+\epsilon/sqrt(2)},{-(\m+4)*\gap-\epsilon/sqrt(2)});
\draw[thick] ({(\n+5)*\gap+\epsilon/sqrt(2)},{-(\m+5)*\gap-\epsilon/sqrt(2)})--({(\n+6)*\gap},{-(\m+6)*\gap}) node[fill=white,inner sep=2pt,midway] {\scriptsize \(3\)};
\end{tikzpicture}
\caption{}
\label{fig: type D invariant}
\end{figure}
Blue generators correspond to consistent framing choices for Type D diagrams which are \(\beta\)-framed. {\normalfont(}By ``Type D diagram'' we mean the diagram after attaching the basic slice.{\normalfont)}
\end{proposition}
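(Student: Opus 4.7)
The plan is to directly apply the pairing theorem of Min--Varvarezos \cite{MinVarvarezos}, which expresses the contact invariant in the Type D module as the box tensor of the Type A invariant with the twisting slice bimodule. Specifically, starting from the identification
\[\widehat{\mathit{CFD}}(-Y(n,m),\mathcal{F}) \cong \widehat{\mathit{CFA}}(-Y(n,m),\mathcal{F}) \boxtimes \widehat{\mathit{BSDD}}(\mathcal{TW}_{\mathcal{F}}^+),\]
together with the description of $c_A(\xi,\mathcal{F})$ from Figure \ref{fig: type A invariant}, I would compute $c_D(\xi,\mathcal{F})$ by tensoring with the generator $\widecheck{\iota}_0$ or $\widecheck{\iota}_1$ of the twisting slice bimodule whose left idempotent matches that of $c_A(\xi,\mathcal{F})$, and then simplify the resulting Type D module.

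Concretely, the first step is to form the box tensor product. Its generators are pairs $(\mathbf{x}, \widecheck{y})$ with compatible idempotent actions, and its $\delta^k$ operations come from combining the $m_{k+1}$ operations recorded in Figure \ref{fig: initial type A} with the explicit $\delta^1$ differentials on $\widehat{\mathit{BSDD}}(\mathcal{TW}_{\mathcal{F}}^+)$ listed just above the proposition. Because the twisting slice bimodule contains many pure differentials of the form $\widecheck{\rho}_I \mapsto \widecheck{\iota}_j$ (possibly tensored with an algebra element on the left), the tensor product produces a large model with many pairs of generators joined by pure differentials.

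The second step is to cancel these pure differentials one at a time via the standard edge-reduction lemma, all the while tracking the marked element $c_A(\xi,\mathcal{F}) \otimes \widecheck{\iota}$. Each cancellation either leaves the marked element unchanged or replaces it by another generator according to the standard formula; after all cancellations, the remaining graph should be isomorphic to the model in Figure \ref{fig: initial type D} (with the roles of $\alpha$- and $\beta$-type framings swapped, reflecting the fact that attaching a twisting slice reverses the parameterization type), and the marked generator will land on the vertex highlighted in Figure \ref{fig: type D invariant}. Carrying this out for one case suffices, with the remaining seven framing/co-orientation combinations following by the same procedure together with the $\Phi_*$-symmetry between the two co-orientations discussed at the start of Section \ref{sec: identifying}.

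The main obstacle is bookkeeping: carefully tracking generators, idempotents, and the marked element through the (rather lengthy) cancellation process. However, once the box tensor product is set up, the simplification is essentially mechanical; the symmetry between the four framing types I, II, III, IV and between the two co-orientation choices means that only a small amount of genuinely new computation is required once the first case is complete. The overall structure of the answer---a single non-trivial surviving generator in each $\mathrm{spin}^c$ summand, plus the ``tail'' visible in the II/IV graph of Figure \ref{fig: type D invariant}---is forced by the known rank of $\widehat{\mathit{CFD}}$ and by the grading/idempotent constraints on the contact class, so the computation cannot produce anything more complicated than what is pictured.
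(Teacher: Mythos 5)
Your proposal follows exactly the route the paper takes: pair $\widehat{\mathit{CFA}}(-Y(n,m),\mathcal{F})$ with $\widehat{\mathit{BSDD}}(\mathcal{TW}_{\mathcal{F}}^+)$, identify the Type~D contact invariant as $c_A(\xi,\mathcal{F})\otimes\widecheck{\iota}_1$ or $c_A(\xi,\mathcal{F})\otimes\widecheck{\iota}_0$ according to the idempotent, and simplify by canceling pure differentials while tracking the marked generator and the $\alpha/\beta$-type swap. One small imprecision: you say the fully reduced result is isomorphic to the model of Figure~\ref{fig: initial type D}, but the paper deliberately \emph{stops short} of full cancellation in Figure~\ref{fig: type D invariant}, leaving some pure differentials unreduced precisely so that the Type~D contact invariant remains a single generator rather than a sum of surviving generators after a complete reduction.
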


The Type D modules in Figure \ref{fig: type D invariant} look slightly different than in Figure \ref{fig: initial type D}. This is because some pure differentials have been left unreduced, so as to preserve the Type D contact invariants. 

Before gluing, we must reparameterize the boundary of one of the bordered 3-manifolds to realize negative Dehn twists about the curve \(b\) in Figure \(\ref{fig: open book}\). Henceforth, we make the arbitrary choice to assign \(-Y(n_1,n_2)\) a Type A diagram with II/IV boundary. This forces us to assign \(-Y(n_3,n_4)\) a Type D diagram with I/III boundary, of the same \(\beta\) or \(\alpha\) type as the diagram for \(-Y(n_1,n_2)\). To realize a single negative Dehn twist about \(b\), we pair with one of the bimodule diagrams in Figure \ref{fig: dehn twists} (depending on whether we choose \(\beta\)-type or \(\alpha\)-Type diagrams).

\begin{figure}[ht]
\centering
\includegraphics[scale = 0.7]{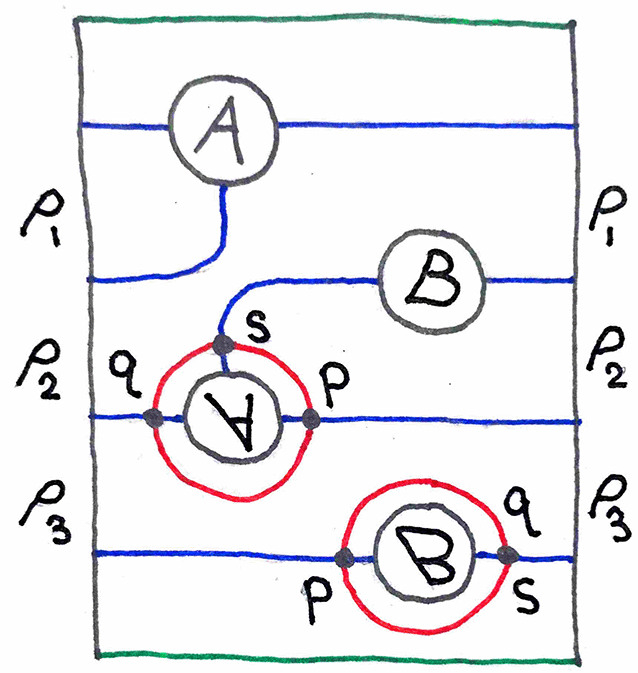}\qquad\qquad\includegraphics[scale = 0.7]{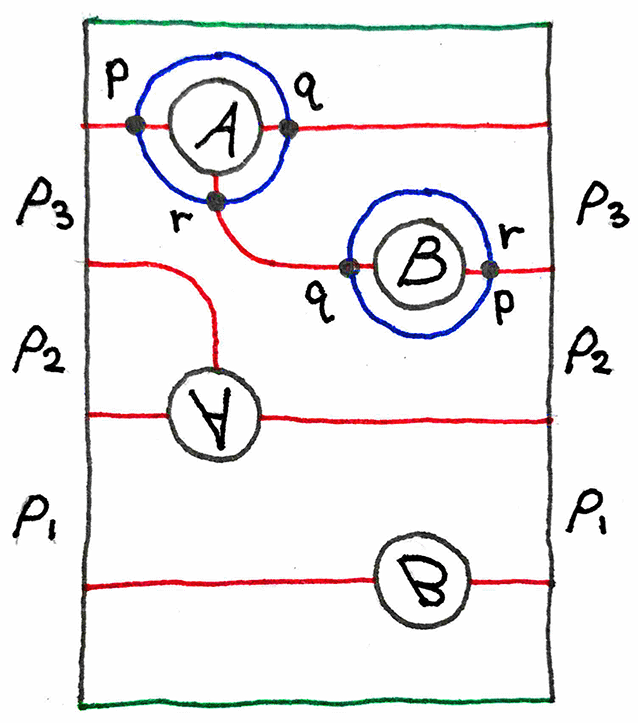}
\caption{}
\label{fig: dehn twists}
\end{figure}

(Unfortunately, it is a headache here to keep track of orientations. Recall that all our Heegaard diagrams involve an orientation-reversal. It is maybe prudent to revert back to the original orientations before figuring out which bimodule diagram to use. This is done by recoloring \(\alpha\) arcs to \(\beta\) arcs, and vice-versa.  It is also important to keep in mind that for \(\alpha\)-type diagrams, the boundary of the 3-manifold is being viewed from the inside. For \(\beta\)-type diagrams, the boundary is viewed from the outside.)

In both diagrams we interpret the left-hand boundary as acting in a Type D fashion, and the right-hand boundary as acting in a Type A fashion. In this way, the above diagrams are assigned isomorphic modules, which we will call \(\widehat{\mathit{CFDA}}(\tau)\). We have 
\[\widehat{\mathit {CF}}\big({-Y(f)}\big)\cong \widehat{\mathit{CFA}}\big({-Y(n_1,n_2),\mathcal{F}_{\mathrm{II/IV}}}\big)\boxtimes {\underbrace{\widehat{\mathit{CFDA}}(\tau)\boxtimes \cdots \boxtimes \widehat{\mathit{CFDA}}(\tau)}_{\lvert n_b\rvert \text{ times}}}\boxtimes{ \widehat{\mathit{CFD}}\big({-Y(n_3,n_4),\mathcal{F}_{\mathrm{I/III}}}\big)}. \]
The bimodule \(\widehat{\mathit{CFDA}}(\tau)\) has three generators, \(\mathbf{p},\mathbf{q},\mathbf{s}\), with idempotent actions
\[\iota_0 \cdot \mathbf{p}\cdot \iota_0=\mathbf{p},\qquad \iota_1\cdot \mathbf{q}\cdot \iota_1 = \mathbf{q},\qquad \iota_0\cdot \mathbf{s}\cdot \iota_1 = \mathbf{s}. \]
In \cite[Section 10.2]{bimodules}, the non-zero differentials are computed to be 
\begin{alignat*}{2}
m_{0,1,2}(\mathbf{q},\rho_2,\rho_1)&=\rho_2\otimes \mathbf{s},&\qquad m_{0,1,2}(\mathbf{q},\rho_2,\rho_{12})&=\rho_{2}\otimes\mathbf{p},\\  m_{0,1,2}(\mathbf{q},\rho_2,\rho_{123})&=\rho_{23}\otimes \mathbf{q},&\qquad
m_{0,1,1}(\mathbf{p},\rho_1) &= \rho_{12}\otimes \mathbf{s},\\ 
m_{0,1,1}(\mathbf{p},\rho_{12})&=\rho_{12}\otimes \mathbf{p},&\qquad m_{0,1,1}(\mathbf{p},\rho_{123})&=\rho_{123}\otimes \mathbf{p},\\ 
m_{0,1,1}(\mathbf{p},\rho_{3})&=\rho_{3}\otimes\mathbf{q},&\qquad m_{0,1,0}(\mathbf{s}) &= \rho_1\otimes \mathbf{q},
\\ m_{0,1,1}(\mathbf{s},\rho_2) &= \mathbf{p},&\qquad m_{0,1,1}(\mathbf{s},\rho_{23})&=\rho_3\otimes \mathbf{q}. 
\end{alignat*}

\begin{proposition}\label{prop:reparameterize}
Let \(Y\) be a compact 3-manifold with torus boundary and framing \(\mathcal{F}\). Let \(\Gamma\) be the suture on \(\partial Y\) corresponding to the idempotent \(\iota_0\). Consider the isomorphisms 
\[\widehat{\mathit{HFA}}(Y,\mathcal{F})\cdot \iota_0 \cong H_*\big(\widehat{\mathit{CFA}}(Y,\mathcal{F})\boxtimes \widehat{\mathit{CFDA}}(\tau)\big)\cdot \iota_0\cong \mathit{SFH}(Y,\Gamma)\]
induced by attaching the sutured cap corresponding to \(\iota_0\). Define the isomorphism 
\[\Phi\colon \mathit{SFH}(Y,\Gamma)\to \mathit{SFH}(Y,\Gamma)\]
by 
\[\Phi([\mathbf{x}]\cdot \iota_0) = [\mathbf{x}\otimes \mathbf{p}]\cdot \iota_0,\qquad \mathbf{x}\in \widehat{\mathit{CFA}}(Y,\mathcal{F}),\, m_1(\mathbf{x}) =0.\]
Then \(\Phi\) preserves \(\mathrm{spin}^c\) structures. 
\end{proposition}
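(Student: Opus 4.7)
The plan is to interpret this isomorphism geometrically and argue that, after capping off the \(\iota_0\)-arc, the reparameterization by \(\tau\) extends to a diffeomorphism isotopic to the identity; preservation of the spin-c decomposition then follows from naturality of \(\mathit{SFH}\) under ambient isotopy.

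Box-tensoring \(\widehat{\mathit{CFA}}(Y,\mathcal{F})\) with \(\widehat{\mathit{CFDA}}(\tau)\) is, up to canonical equivalence, the Type A module for \(Y\) with its boundary reparameterized by the negative Dehn twist \(\tau\), and attaching the sutured cap for \(\iota_0\) then glues a 2-disk into \(Y\) along the \(\iota_0\)-arc of the reparameterized pointed matched circle. The key geometric observation is that, by the construction of the framings in Section \ref{sec: splitting}, the curve \(b\) about which \(\tau\) twists is parallel to the \(\iota_0\)-arc on \(\partial Y\): it is the distinguished curve on the torus traced out by \(b\), which is precisely the curve used to set up the \(\iota_0\) side of the pointed matched circle. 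Consequently \(b\) bounds a properly embedded disk in \(\mathrm{cap}_{\iota_0}\), and \(\tau\) extends over \(Y\cup \mathrm{cap}_{\iota_0}\) to a Dehn twist along a nullhomotopic curve. Any such Dehn twist is isotopic, relative to the remaining sutures on \(\partial(Y\cup \mathrm{cap}_{\iota_0})\), to the identity diffeomorphism. Therefore the induced map on \(\mathit{SFH}(Y,\Gamma)\) is (chain homotopic to) the identity, and in particular preserves \(\mathrm{spin}^c\) structures.

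It remains to verify that the formula \([\mathbf{x}]\cdot \iota_0 \mapsto [\mathbf{x}\otimes \mathbf{p}]\cdot \iota_0\) actually models this geometric chain map. Of the three generators \(\mathbf{p}, \mathbf{q}, \mathbf{s}\) of \(\widehat{\mathit{CFDA}}(\tau)\), only \(\mathbf{p}\) carries the \(\iota_0\otimes \iota_0\) idempotent, so \(\mathbf{p}\) is the only partner for a generator \(\mathbf{x}\cdot \iota_0\) that can contribute to an \(\iota_0\)-output after capping. Using the explicit differentials for \(\widehat{\mathit{CFDA}}(\tau)\) recorded above (from \cite[Section 10.2]{bimodules}), together with the fact that the Type D outputs labeled by a nontrivial \(\rho_I\) are killed when one pairs with the \(\iota_0\)-cap, one checks that \(\mathbf{x}\otimes \mathbf{p}\) is a cycle whenever \(\mathbf{x}\) is, so the formula defines a well-defined map on homology.

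The main obstacle will be a bookkeeping check: one must verify that the specific \(\iota_0\) vs.\ \(\iota_1\) correspondence coming from the pointed matched circles of Section \ref{sec: identifying} matches the \(b\)-vs.-meridian distinction used by \(\tau\). Once this is confirmed, the two box-tensor descriptions of \(\mathit{SFH}(Y,\Gamma)\) agree as spin-c-graded vector spaces, and \(\Phi\) is the induced change-of-basis, which by the geometric argument above is the identity on spin-c summands. This suffices for the proposition.
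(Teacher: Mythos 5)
The central geometric claim of your argument is incorrect. You assert that "attaching the sutured cap for $\iota_0$ then glues a 2-disk into $Y$ along the $\iota_0$-arc," so that the Dehn twist curve becomes nullhomotopic and $\tau$ extends to a diffeomorphism isotopic to the identity. But the sutured cap is not a Dehn filling: it is a bordered-sutured product region over $\partial Y$, and attaching it does not change the underlying $3$-manifold. This is exactly why the isomorphism in the statement lands in $\mathit{SFH}(Y,\Gamma)$ for the \emph{same} $Y$, and why the paper can attach the cap at the level of Heegaard diagrams and then destabilize back down to a sutured diagram for $(Y,\Gamma)$. After capping, the Dehn twist curve is still an essential curve on $\partial Y$, so your conclusion that $\tau$ becomes a Dehn twist along a nullhomotopic curve does not hold, and the argument that $\Phi$ is induced by an isotopy collapses. (Your idempotent bookkeeping in the third paragraph is fine -- $\mathbf{p}$ is the only $\iota_0$--$\iota_0$ generator, and $m_{0,1,0}(\mathbf{p})=0$ makes $\mathbf{x}\otimes\mathbf{p}$ a cycle when $m_1(\mathbf{x})=0$ -- but this is a side point.)

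The paper's proof is a direct Heegaard-diagram manipulation: attaching the cap to the $(Y,\mathcal{F})$ diagram and destabilizing once, versus attaching $\widehat{\mathit{CFDA}}(\tau)$ and then the cap and destabilizing three times, both produce the same sutured Heegaard diagram for $(Y,\Gamma)$, and all the handleslides and destabilizations alter the pseudo-gradient vector field only in $3$-balls disjoint from $\partial Y$. Hence the spin-$c$ assignment of generators is preserved under both identifications, giving the claimed compatibility. If you want a more geometric route along the lines you propose, the salvageable version is that a Dehn twist supported in a collar along a curve parallel to $\Gamma$ is isotopic to the identity as a sutured self-diffeomorphism of $(Y,\Gamma)$ -- but even then, you would still need to match that geometric map with the combinatorial formula $\mathbf{x}\mapsto\mathbf{x}\otimes\mathbf{p}$, which is essentially what the Heegaard-diagram comparison accomplishes.
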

\begin{proof}
We show this by manipulating Heegaard diagrams. We depict the case when the diagrams for \((Y,\mathcal{F})\) and \(\tau\) are of \(\beta\)-type. When attaching the sutured cap to a diagram of \((Y,\mathcal{F})\), the resulting diagram can be destabilized once after possibly handlesliding as shown in Figure \ref{fig:cap_simplify}. If we attach both \(\tau\) and the sutured cap, then the resulting diagram can be destabilized three times after appropriate handleslides. This is shown in Figure \ref{fig:twist_cap}.
\begin{figure}[ht]
\centering
\includegraphics[scale = 1]{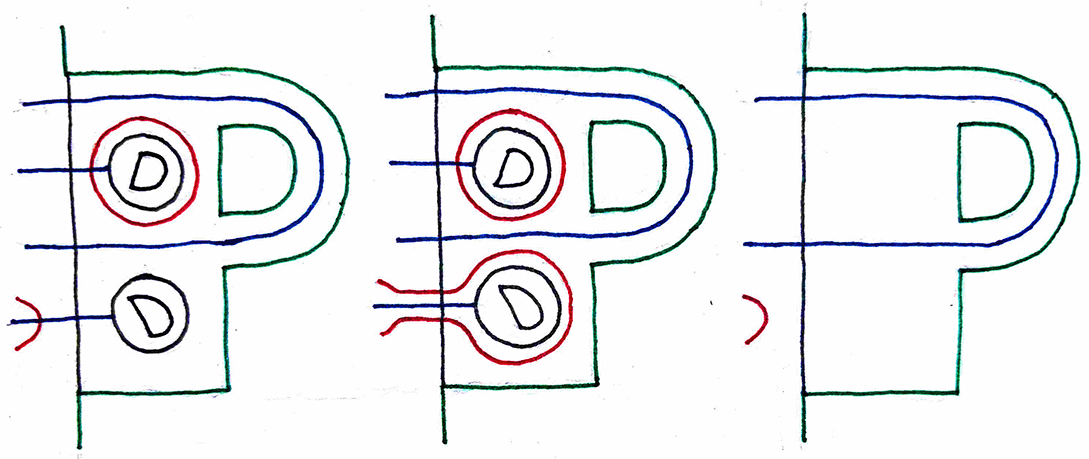}
\caption{}
\label{fig:cap_simplify}
\end{figure}

\begin{figure}[ht]
\centering
\includegraphics[scale = 0.8]{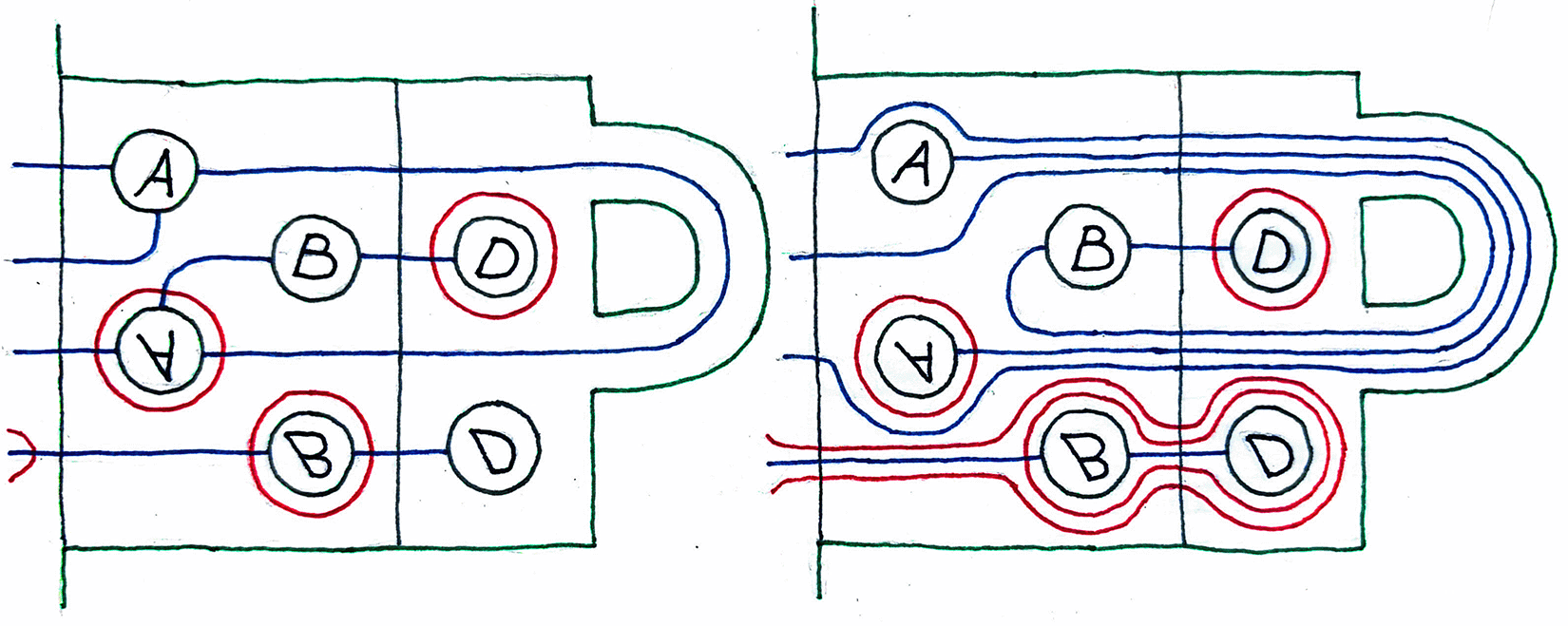}
\caption{The diagram on the right can be destabilized three times to get to the right-most diagram in Figure \ref{fig:cap_simplify}}.
\label{fig:twist_cap}
\end{figure}

In both cases, one obtains the same Heegaard diagram for \((Y,\Gamma)\). Each of the handleslides and de-stabilizations changes the pseudo-gradient vector field on \(Y\) only in 3-balls away from \(\partial Y\). It follows that the isomorphism \(\Phi\) defined above preserves \(\mathrm{spin}^c\) structures.
\end{proof}

Recall \(\Gamma_{\lambda}\) denotes a pair of sutures on \(\partial Y(n_1,n_2)\) which are parallel to the page. The \(n_1+n_2\) generators of
\(\mathit{SFH}\big({-Y(n_1,n_2)},-\Gamma_{\lambda}\big)\) all have distinct \(\mathrm{spin}^c\) structures. This can be seen from Figure \ref{fig: bordered}, noting that \(\mathrm{Spin}^c\big({-Y(n_1,n_2),-\Gamma_{\lambda}}\big)\) is a torsor for the group
\[H_1\big(Y(n_1,n_2)\big)\cong \langle \mu_{1},\mu_2\,|\, n_1\mu_1=n_2\mu_2\rangle\cong \mathbb{Z}/(n_1,n_2)\oplus \mathbb{Z}.\]
In the \(\beta\)-type diagram of Figure \ref{fig: bordered}, \(\mu_1,\mu_2\) are represented by curves intersecting one of the \(\beta\)-circles in once and not intersecting the other \(\beta\)-circle or \(\beta\)-arcs.  For \(i\geq 0\), let \(\mathcal{F}_{1,i}\) denote the framing on \(\partial \big({-Y(n_1,n_2)}\big)\) obtained by attaching \(\tau\) to \(\big({-Y(n_1,n_2),\mathcal{F}_{\mathrm{II/IV}}}\big)\) a total of \(i\) times. By repeatedly applying Proposition \ref{prop:reparameterize}, we have
\[c_A(\xi,\mathcal{F}_{1,i})=c_A(\xi,\mathcal{F}_{\mathrm{II/IV}})\otimes \underbrace{\mathbf{p}\otimes \cdots \otimes \mathbf{p}}_{i\text{ times}} .\]\par
 We are now finally ready to prove the desired vanishing result. 
\begin{proof}[Proof of Theorem \ref{thm: vanishing}]
Let \(\mathbf{x}\in\widehat{\mathit{CFA}}\big({-Y(n_1,n_2}),\mathcal{F}_{ \mathrm{II/IV}}\big)\) and \(\mathbf{y}\in \widehat{\mathit{CFD}}({-Y(n_1,n_2}),\mathcal{F}_{ \mathrm{I/III}}\big)\) denote the blue contact invariants in Figures \ref{fig: type A invariant} and \ref{fig: type D invariant}. From the preceding discussion, we have 
\[ c(f) = \mathbf{x}\otimes {\underbrace{\mathbf{p}\otimes \cdots \otimes \mathbf{p}}_{\lvert n_b\rvert\text{ times}}} \otimes \mathbf{y}.\]
For \(i\geq 0\) let \(\mathcal{F}_{2,i}\) be the framing on \(\partial \big({-Y(n_3,n_4)}\big)\) obtained by attaching \(\tau\) to \(\big({-Y(n_1,n_2)},\mathcal{F}_{\mathrm{I/III}}\big)\) a total of \(i\) times, and set 
\[\mathbf{y}_i ={\underbrace{\mathbf{p}\otimes \cdots \otimes \mathbf{p}}_{\lvert i\rvert\text{ times}} }\otimes \mathbf{y}\in \widehat{\mathit{CFD}}\big({-Y(n_3,n_4),\mathcal{F}_{2,i}}\big). \]
In Figure \ref{fig: changes}, we depict \(\mathbf{y}_i\) in blue for increasing values of \(i\). We only show a portion of the type \(D\) diagram for \(\widehat{\mathit{CFD}}\big({-Y(n_3,n_4),\mathcal{F}_{2,i}}\big)\). Except for the generators at the ends, we depict all incoming and outgoing arrows. 
\begin{figure}[ht]
\centering
\begin{tikzpicture}
\def\gap{1.2}
\def\vertgap{0.75}
\def\epsilon{0.15}
\def\n{3}
\filldraw[black] (0,0) circle (2pt);
\draw[thick, ->] ({\epsilon},0) -- ({\gap-\epsilon},0) node[fill=white,inner sep=2pt,midway] {\scriptsize \(3\)};
\filldraw[thick, fill = white] (\gap,0) circle (2pt);
\draw[thick, ->] ({\gap+\epsilon},0) -- ({2*\gap-\epsilon},0) node[fill=white,inner sep=2pt,midway] {\scriptsize \(2\)};
\filldraw[blue] ({2*\gap},0) circle (2pt) node[anchor=south, black]{\(\mathbf{y}_0\)};
\draw[thick, <-] ({2*\gap+\epsilon},0) -- ({3*\gap-\epsilon},0);
\filldraw[Purple] ({3*\gap},0) circle (2pt)node[anchor=south, black]{\(\mathbf{w}_0\)};
\filldraw[black] (0,{-\vertgap}) circle (2pt);
\draw[thick, ->] ({\epsilon},{-\vertgap}) -- ({\gap-\epsilon},-\vertgap) node[fill=white,inner sep=2pt,midway] {\scriptsize \(3\)};
\filldraw[thick, fill = white] (\gap,{-\vertgap}) circle (2pt);
\draw[thick, <-] ({\gap+\epsilon},{-\vertgap}) -- ({2*\gap-\epsilon},-\vertgap) node[fill=white,inner sep=2pt,midway] {\scriptsize \(1\)};
\filldraw[Green] ({2*\gap},{-\vertgap}) circle (2pt) node[anchor=south, black]{\(\mathbf{z}_1\)};
\draw[thick, ->] ({2*\gap+\epsilon},{-\vertgap}) -- ({3*\gap-\epsilon},-\vertgap);
\filldraw[blue] ({3*\gap},{-\vertgap}) circle (2pt) node[anchor=south, black]{\(\mathbf{y}_1\)};
\draw[thick, <-] ({3*\gap+\epsilon},{-\vertgap}) -- ({4*\gap-\epsilon},-\vertgap);
\filldraw[Purple] ({4*\gap},{-\vertgap}) circle (2pt) node[anchor=south, black]{\(\mathbf{w}_1\)};
\foreach \x in {1,...,\n}
{
\filldraw[black] (0,{-(\x+1)*\vertgap}) circle (2pt);
\draw[thick, ->] ({\epsilon},{-(\x+1)*\vertgap}) -- ({\gap-\epsilon},{-(\x+1)*\vertgap}) node[fill=white,inner sep=2pt,midway] {\scriptsize \(3\)};
\filldraw[thick, fill = white] (\gap,{-(\x+1)*\vertgap}) circle (2pt);
\draw[thick, <-] ({\gap+\epsilon},{-(\x+1)*\vertgap}) -- ({2*\gap-\epsilon},{-(\x+1)*\vertgap}) node[fill=white,inner sep=2pt,midway] {\scriptsize \(1\)};
\foreach \y in {1,...,\x}{
\filldraw[black] ({(\y+1)*\gap},{-(\x+1)*\vertgap}) circle (2pt);
\draw[thick, <-] ({(\y+1)*\gap+\epsilon},{-(\x+1)*\vertgap}) -- ({(\y+2)*\gap-\epsilon},{-(\x+1)*\vertgap}) node[fill=white,inner sep=2pt,midway] {\scriptsize \(12\)};
}
\pgfmathsetmacro\z{\x+1}
\filldraw[Green] ({(\x+2)*\gap},{-(\x+1)*\vertgap}) circle (2pt) node[anchor=south, black]{\(\mathbf{z}_{\pgfmathprintnumber{\z}}\)};
\draw[thick, ->] ({(\x+2)*\gap+\epsilon},{-(\x+1)*\vertgap}) -- ({(\x+3)*\gap-\epsilon},{-(\x+1)*\vertgap});
\filldraw[blue] ({(\x+3)*\gap},{-(\x+1)*\vertgap}) circle (2pt) node[anchor=south, black]{\(\mathbf{y}_{\pgfmathprintnumber{\z}}\)};
\draw[thick, <-] ({(\x+3)*\gap+\epsilon},{-(\x+1)*\vertgap}) -- ({(\x+4)*\gap-\epsilon},{-(\x+1)*\vertgap});
\filldraw[Purple] ({(\x+4)*\gap},{-(\x+1)*\vertgap}) circle (2pt) node[anchor=south, black]{\(\mathbf{w}_{\pgfmathprintnumber{\z}}\)};
}
\end{tikzpicture}
\caption{}
\label{fig: changes}
\end{figure}

For \(i\geq 1\) let \(\mathbf{z}_i\in \widehat{\mathit{CFD}}\big({-Y(n_3,n_4}),\mathcal{F}_{2,i}\big)\) be the green generator which is pictured in Figure \ref{fig: changes}. We claim \(\mathbf{x}\otimes \mathbf{y}_{i}\) is the boundary of \(\mathbf{x}\otimes \mathbf{z}_{i}\) whenever \(i>n_1\), which gives the desired vanishing result. For each \(i\geq 1\), the boundary of \(\mathbf{x}\otimes \mathbf{z}_{i}\) contains \(\mathbf{x}\otimes \mathbf{y}_{i}\) as a term. However if \(i\leq n_1\), then
\[m_{i+1}(\mathbf{x},\underbrace{\rho_{12},\ldots\rho_{12}}_{i-1\text{ times}},\rho_1)=\mathbf{x}_i'\]
for some generator \(\mathbf{x}_i'\), and 
\[\delta^i(\mathbf{z}_i)= {\underbrace{\rho_{21}\otimes \cdots \otimes \rho_{21}}_{i-1\text{ times}}} \otimes \rho_1\otimes  \mathbf{z}_i'\]
for some generator \(\mathbf{z}_i'\). In these cases, it is not too hard to see that
\[\partial^{\boxtimes} (\mathbf{x}\otimes \mathbf{z}_i) = \mathbf{x}\otimes \mathbf{y}_i + \mathbf{x}_i'\otimes \mathbf{z}_i',\qquad 0\leq i \leq n_1.\]
When \(i>n_1\), this additional term in \(\partial^\boxtimes (\mathbf{x}\otimes \mathbf{z}_i)\) vanishes.
\end{proof}
\begin{remark}
At this point, the reader may question why we did not make use of the purple generator in Figure \ref{fig: changes} to kill the contact invariant. Let \(\mathbf{w}_i\in \widehat{\mathit{CFD}}\big({-Y(n_3,n_4}),\mathcal{F}_{2,i}\big)\) denote this generator, and let us assume \(n_3\leq n_4\). It turns out for \(0\leq i<n_3\), one has 
\[\delta^2(\mathbf{w}_i) = \rho_3 \otimes\rho_2\otimes  \mathbf{w}_i'  \]
for some generator \(\mathbf{w}_i'\). In addition
\[m_3(\mathbf{x},\rho_3,\rho_2) = \mathbf{x}''\]
for some generator \(\mathbf{x}''\). When \(i=n_3\), one instead has 
\[\delta^2(\mathbf{w}_{n_3}) = \rho_3\otimes \rho_{23}\otimes \mathbf{w}_{n_3}'\]
for some generator \(\mathbf{w}_{n_3}'\). On the other hand
\[m_3(\mathbf{x},\rho_3,\rho_{23}) = \mathbf{x}'''\]
for some generator \(\mathbf{x}'''\). One can indeed show 
\[\partial^\boxtimes(\mathbf{x}\otimes \mathbf{w}_i)=\mathbf{x}\otimes \mathbf{y}_i+\mathbf{x}''\otimes \mathbf{w}_i'\quad \text{for }i<n_3,\qquad \partial(\mathbf{x}\otimes \mathbf{w}_{n_3}) =\mathbf{x}\otimes \mathbf{y}_i+\mathbf{x}'''\otimes \mathbf{w}_{n_3}'.\]
However, when \(i>n_3\), this additional term in \(\partial^\boxtimes (\mathbf{x}\otimes \mathbf{w}_i)\) vanishes. Thus, this gives another proof of the above proposition by assuming \(n_3 = \min\{n_1,n_2,n_3,n_4\}\). One can make several arguments along these lines. 
\end{remark}
\appendix 
\section{Modules over the torus algebra}\label{sec: algebra}
In this appendix we give a brief review of various modules over the torus algebra. For a more detailed account, see \cite[Chapter 2]{bordered}, \cite{bimodules}, and \cite[Appendix B]{Zarev}. We first define the torus algebra \(\mathcal{A}\). This is an 8-dimensional (unital, associative, non-commutative) algebra over the field of two elements \(\mathbb{F}_2\), spanned as a vector space by the generators
\[\iota_0,\iota_1,\rho_1,\rho_2,\rho_3,\rho_{12},\rho_{23},\rho_{123}.\]\par
The generators \(\iota_0,\iota_1\) are orthogonal, minimal idempotents. The action of the idempotents on the other generators is as follows:
\begin{alignat*}{3}
\rho_1 &= \iota_0\rho_1\iota_1,&\qquad \rho_2 &= \iota_1\rho_2\iota_0,&\qquad \rho_3&=\iota_0\rho_3\iota_1,\\ 
\rho_{12}&= \iota_0\rho_{12}\iota_0, &\qquad \rho_{23}&= \iota_1\rho_{23}\iota_1,&\qquad \rho_{123}&=\iota_0\rho_{123} \iota_1.
\end{alignat*}
Note \(\mathbf{1}=\iota_0+\iota_1\) is an identity. The non-zero products between non-idempotent generators are 
\[\rho_1\rho_2=\rho_{12},\qquad \rho_2\rho_3=\rho_{23},\qquad \rho_1\rho_{23}=\rho_{12}\rho_3 = \rho_{123}.\]\par
Let \(\mathcal{I}\leq \mathcal{A}\) denote the subalgebra spanned by \(\iota_0\) and \(\iota_1\). We note \(\mathcal{I}\) is a commutative, unital algebra; all \(\mathcal{I}\)-modules will be unital and we do not distinguish between left and right \(\mathcal{I}\)-modules. 
\subsection*{Type A modules}
A right Type A module over \(\mathcal{A}\) is an \(\mathcal{I}\)-module \(M\) equipped with ``multiplications''
\[m_{k}\colon M\otimes_\mathcal{I} {\underbrace{\mathcal{A}\otimes_{\mathcal{I}}\cdots \otimes_{\mathcal{I}}\mathcal{A}}_{k-1\text{ times}}}\to M,\qquad k\geq 1\]
which satisfy the \({A}_\infty\) relations (which we do not spell out here). In addition \(M\) is said to be \textit{strictly unital} if 
\[m_2(\mathbf{x},\mathbf{1})=\mathbf{x},\qquad m_k(\mathbf{x},\ldots,\mathbf{1},\ldots)=0\qquad\text{for all }\mathbf{x}\in M,\, k\geq 3.\]\par
One of the \({A}_\infty\) relations ensures that \(m_1\colon M\to M\) is a differential, i.e., \(m_1^2=0\). Indeed, if \(m_k=0\) for \(k\geq 3\) then one recovers the usual notion of a right differential module over \(\mathcal{A}\) with \(m_1\) the differential and \(m_2\colon M\otimes_{\mathcal{I}} \mathcal{A}\to M\) the right action of \(\mathcal{A}\). 

A Type A module is said to be \textit{bounded} if for all \(\mathbf{x}\in M\) there exists some \(n\) so that for all 
\[k_1+\cdots+k_i>n,\qquad a_{1,1},\ldots, a_{i,k_i}\in \mathcal{A}\]
one has 
\[m_{k_i+1}\circ (m_{k_{i-1}+1}\otimes \mathbb{I}_{\mathcal{A}^{\otimes k_i}})\circ \cdots \circ (m_{k_1+1}\otimes \mathbb{I}_{\mathcal{A}^{\otimes k_2+\cdots + k_i}})(\mathbf{x}\otimes a_{1,1}\otimes \cdots \otimes a_{i,k_i}) =0. \]
This boundedness condition will be needed when discussing box tensor products. 

\subsection*{Type D modules} A left Type D module over \(\mathcal{A}\) is an \(\mathcal{I}\)-module \(N\) equipped with a ``differential''
\[\delta^1\colon N\to \mathcal{A}\otimes_{\mathcal{I}} N.\]
In the case of the torus algebra, \(\delta^1\) should satisfy the condition that
\[(\mu_2\otimes \mathbb{I}_N)\circ (\mathbb{I}_{\mathcal{A}}\otimes \delta^1)\circ \delta^1\colon N\to \mathcal{A}\otimes_{\mathcal{I}} N\]
vanishes, where \(\mu_2\colon \mathcal{A}\otimes \mathcal{A}\to \mathcal{A}\) is the algebra multiplication. (In the general case one speaks of Type D modules over a differential algebra, in which case the above equation includes an additional term involving the algebra differential.)\par 
Given such a type D module \(N\), one can make \(\mathcal{A}\otimes_{\mathcal{I}} N\) into a left differential module over \(\mathcal{A}\) by setting the differential to be
\[(\mu_2\otimes \mathbb{I}_{N})\circ (\mathbb{I}_A\otimes \delta^1)\colon \mathcal{A}\otimes_{\mathcal{I}}N\to \mathcal{A}\otimes_{\mathcal{I}} N.\]
(In general there is an additional term involving the algebra differential.) We often confuse a Type D module with its associated differential module. Given a Type D module as above, we inductively define maps 
\[\delta^k\colon N\to {\underbrace{\mathcal{A}\otimes_{\mathcal{I}}\cdots \otimes_{\mathcal{I}} \mathcal{A}}_{k\text{ times}}}\otimes_{\mathcal{I}} N,\qquad k\geq 0\]
by setting \(\delta^0 = \mathbb{I}_N\) and 
\[\delta^k = (\mathbb{I}_{\mathcal{A}^{\otimes k-1}} \otimes \delta^1)\circ \delta^{k-1},\qquad k\geq 2.\]\par
We say that the Type D module is \textit{bounded} if for all \(\mathbf{x}\in N\) there exists some \(n\) so that \(\delta^k(\mathbf{x})=0\) for all \(k> n\). 
\medskip

Before moving on to bimodules, we remark that the notions of boundedness for bimodules are more subtle. In particular, bimodules can be bounded on the left, bounded on the right, or bounded (which is stronger than being both left and right bounded). We avoid this discussion and direct the reader toward \cite[Section 2.2.4]{bimodules}. 

\subsection*{Type DD bimodules}
A Type DD bimodule over \(\mathcal{A}\) is an \(\mathcal{I}\)-module \(N\) equipped with a structure map
\[\delta^1\colon N\to \mathcal{A}\otimes_{\mathcal{I}} N \otimes_{\mathcal{I}}\mathcal{A}.\]
In the case of the torus algebra, \(\delta^1\) should satisfy the condition that
\[(\mu_2\otimes \mathbb{I}_N\otimes \mu_2)\circ (\mathbb{I}_{\mathcal{A}}\otimes \delta^1\otimes \mathbb{I}_{\mathcal{A}})\circ \delta^1\colon N\to \mathcal{A}\otimes_{\mathcal{I}} N\otimes_{\mathcal{I}} \mathcal{A}\]
vanishes. (In general there is an additional term involving the algebra differential.) As above, this structure can be used to give \(\mathcal{A}\otimes_{\mathcal{I}} N\otimes_{\mathcal{I}} \mathcal{A}\) the structure of a differential bimodule. We again iterate to define maps 
\[\delta^k\colon N\to {\underbrace{\mathcal{A}\otimes_{\mathcal{I}}\cdots \otimes_{\mathcal{I}} \mathcal{A}}_{k\text{ times}}}\otimes_{\mathcal{I}} N\otimes_{\mathcal{I}}{\underbrace{\mathcal{A}\otimes_{\mathcal{I}}\cdots \otimes_{\mathcal{I}} \mathcal{A}}_{k\text{ times}}},\qquad k\geq 0\]
in a manner similar to above. 
\subsection*{Type DA bimodules} A Type DA bimodule over \(\mathcal{A}\) is an \(\mathcal{I}\)-module \(M\) together with structure maps
\[m_{0,1,k}\colon M\otimes_{\mathcal{I}}{\underbrace{\mathcal{A}\otimes_{\mathcal{I}}\cdots\otimes_{\mathcal{I}}\mathcal{A}}_{k-1\text{ times}}}\to \mathcal{A}\otimes_{\mathcal{I}}\otimes M,\qquad k\geq 1\]
which satisfy appropriate \(\mathcal{A}_\infty\) relations. (In some places \(m_{0,1,k}\) is instead denoted by \(\delta_k^1\).) We say \(M\) is \textit{strictly unital} if
\[m_{0,1,2}(\mathbf{x},\mathbf{1})=\mathbf{1}\otimes\mathbf{x},\qquad m_{0,1,k}(\mathbf{x},\ldots,\mathbf{1},\ldots)=0\qquad\text{for all }\mathbf{x}\in M,\,k\geq 3.\]
For more details, see \cite[Section 2.2.4]{bimodules}.
\subsection*{Pairing a Type A module with a Type D module}
Let \((M,\{m_k\}_{k\geq 1})\) be a strictly unital right Type A module and \((N,\delta^1)\) a left Type D module, with at least one of \(M\) and \(N\) bounded. We define a differential \(\mathbb{F}_2\)-vector space \((M\boxtimes N,\partial^{\boxtimes})\) whose underlying vector space is \(M\otimes_{\mathcal{I}}N\) and whose differential is given by
\[\partial^{\boxtimes}(\mathbf{x}\otimes \mathbf{y}) = \sum_{k\geq 0}(m_{k+1}\otimes \mathbb{I}_N)\circ (\mathbb{I}_M\otimes \delta^k)(\mathbf{x}\otimes \mathbf{y}),\qquad \mathbf{x}\in M,\,\mathbf{y}\in N.\]
The boundedness assumption guarantees that the above sum is finite.
\subsection*{Pairing a Type DA bimodule with a Type D module} Let \((M,\{m_{0,1,k}\}_{k\geq 1})\) be a strictly unital Type DA module and \((N,\delta^1)\) a Type D module, with either \(N\) bounded or \(M\) bounded on the right. We define a Type D module \((M\boxtimes N,\delta^1_{\boxtimes})\) whose underlying \(\mathcal{I}\)-module is \(M\otimes_{\mathcal{I}}N\) and whose stricture map is given by 
\[\delta^1_{\boxtimes}(\mathbf{x}\otimes \mathbf{y}) = \sum_{k\geq 0}(m_{0,1,k+1}\otimes \mathbb{I}_N)\circ (\mathbb{I}_M\otimes \delta^k)(\mathbf{x}\otimes \mathbf{y}),\qquad \mathbf{x}\in M,\,\mathbf{y}\in N.\]
\subsection*{Pairing a Type A module with a Type DD bimodule} For each \(k\geq 0\), define \[\pi_k\colon\underbrace{\mathcal{A}\otimes_{\mathcal{I}} \cdots \otimes_{\mathcal{I}} \mathcal{A}}_{k\text{ times}}\to \mathcal{A}\]
by setting 
\[\pi_k(a_1\otimes \cdots \otimes a_k) = a_1\cdot\ldots\cdot a_k,\qquad a_1,\ldots,a_k\in \mathcal{A}.\]
When \(k=0\), interpret \(\pi_0\colon \mathcal{I}\to\mathcal{A}\) to be inclusion.
Let \((M,\{m_k\}_{k\geq 1})\) be a strictly unital right Type A module and \(N\) a Type DD module, with \(M\) bounded or \(N\) bounded on the left. We define a \textit{right} Type D module \((M\boxtimes N,\delta^1_{\boxtimes})\) whose underlying \(\mathcal{I}\)-module is \(M\otimes_\mathcal{I} N\) and whose structure map is given by 
\[\delta^1_{\boxtimes}(\mathbf{x}\otimes \mathbf{y}) = \sum_{k\geq 0}(\mathbb{I}_M\otimes \mathbb{I}_N\otimes \pi_k)\circ(m_{k+1}\otimes \mathbb{I}_N\otimes \mathbb{I}_{\mathcal{A}^{\otimes k}})\circ (\mathbb{I}_M\otimes \delta^k)(\mathbf{x}\otimes \mathbf{y}),\qquad \mathbf{x}\in M,\,\mathbf{y}\in N.\]
In the case of the torus algebra, we can re-interpret \((M\boxtimes N,\delta^1_{\boxtimes})\) as a left Type D module over \(\mathcal{A}\) using the fact that \(\mathcal{A}^{\mathrm{op}}\cong \mathcal{A}\).

\section{Calculating bordered Floer homology} \label{sec: calculating}
In this appendix, we describe the calculation of the modules \[\widehat{\mathit{CFA}}\big({-Y(n,m),\mathcal{F}}\big)\qquad\text{and}\qquad \widehat{\mathit{CFD}}\big({-Y(n,m),\mathcal{F}}\big)\] where \(\mathcal{F}\) is one of the framings from Section \ref{sec: identifying}. We begin by identifying a lower genus bordered diagram for \(-Y(n,m)\). This diagram arises from the following link surgery diagram in \(S^3\) for \[\big(Y(g_{n,m})_0(K),K^*\big),\] where \(Y(g_{n,m})_0(K)\) denotes 0-surgery on \(K\) and \(K^*\) is the dual knot of the surgery, i.e., the core of the glued-in solid torus. This is shown in Figure \ref{fig: surgery two}. 
\begin{figure}[ht]
\centering
    \includegraphics[scale=0.15]{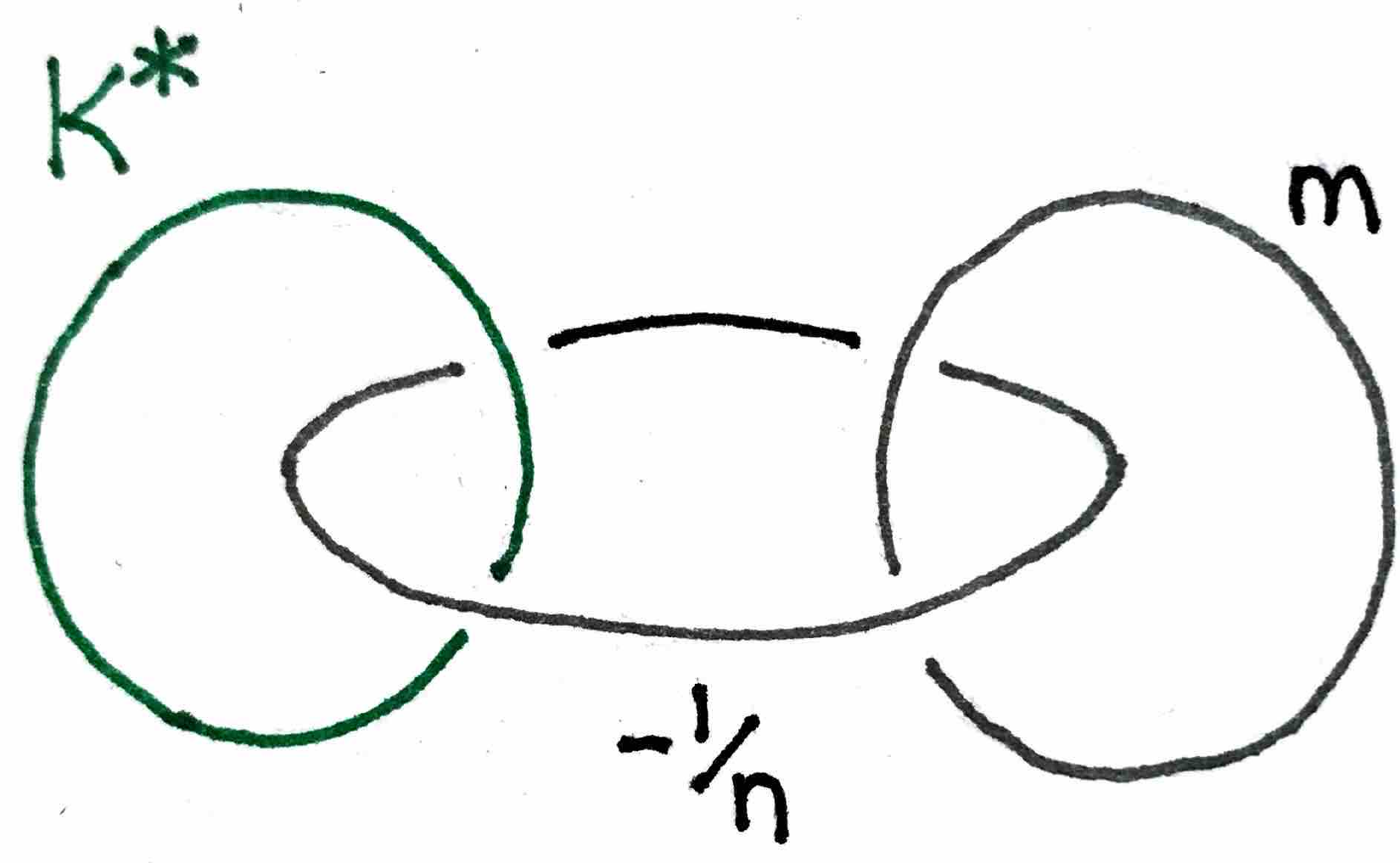}
    \captionof{figure}{}
    \label{fig: surgery two}
\end{figure}

(To get this link surgery diagram, one can perform \(0\)-surgery along \(K\) in Figure \ref{fig: surgery one} and then slam-dunk the \(n\)-framed unknot. Alternatively, one can think about capping off the binding component parallel to \(K\) in \(Y(g_{n.m})\).)

Removing a tubular neighborhood \(N(K^*)\) of \(K^*\) from this picture yields \(Y(n,m)\). The pages of \(Y(n,m)\) meet \(\partial N(K^*)\) in meridians for \(K^*\). This gives the following bordered Heegaard diagram for \(-Y(n,m)\). 
\begin{figure}[ht]
\centering
    \includegraphics[scale=0.9]{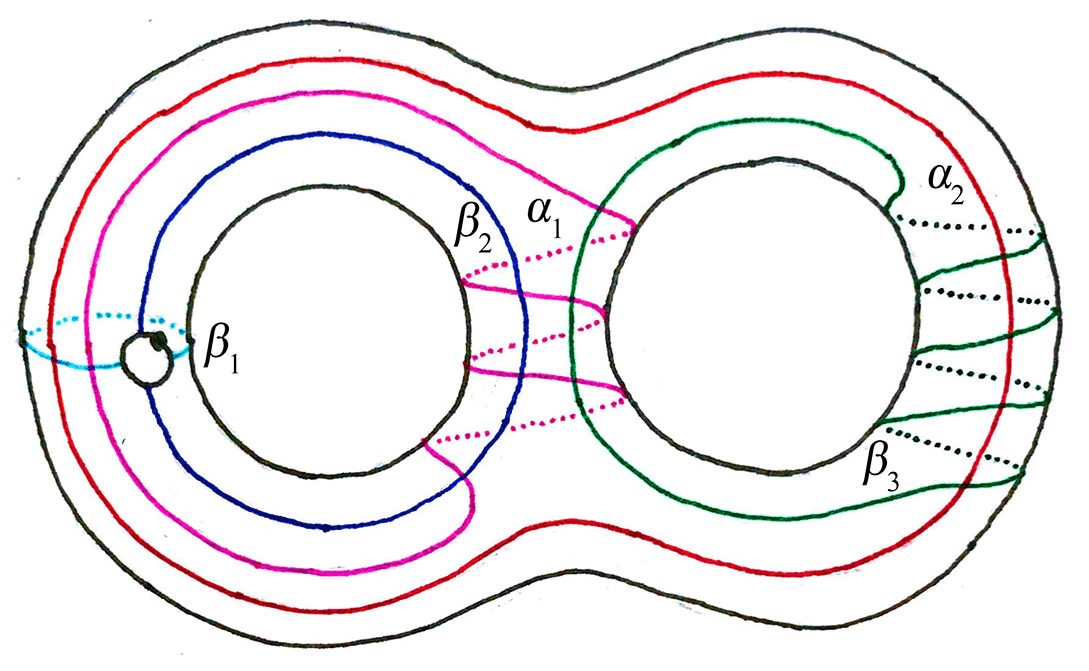}
    \caption{The case \(n=3\), \(m=4\).}
    \label{fig: simpler heegaard}
\end{figure}

In Figure \ref{fig: simpler heegaard}, there are two \(\alpha\) curves, \(\alpha_1,\alpha_2\) shown in pink and red, respectively. There are three \(\beta\) curves, \(\beta_1,\beta_2,\beta_3\) shown in cyan, blue, and green, respectively. One can see that the above basepoint choice corresponds to the I/III position from Section \ref{sec: identifying}. (From the discussion at the beginning of the same section, there is no way to resolve the I/III ambiguity, but the resulting modules are the same.) In Figure \ref{fig: plane heegaard} is the same diagram, but drawn in \(S^2=\mathbb{R}^2\cup \{\infty\}\) with surgery done twice to get a genus two surface. 
\begin{figure}[ht]
\centering
    \includegraphics[scale=0.7]{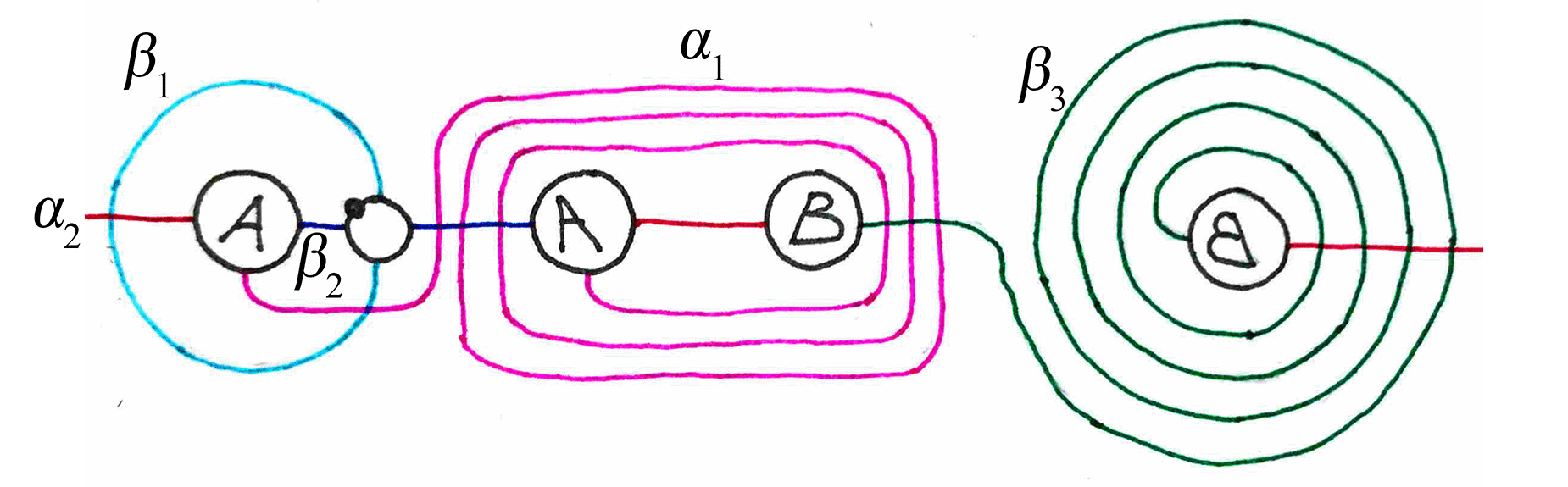}
    \caption{The red \(\alpha\) curve should be interpreted as passing through the point at infinity.}
    \label{fig: plane heegaard}
\end{figure}

We now perform the Sarkar--Wang algorithm \cite{SarkarWang} to obtain the nice Heegaard diagram in Figure \ref{fig: nice heegaard}. In other words, we apply isotopies (i.e., ``finger-moves'') the \(\beta\)-curves above to ensure that every region not adjacent to the basepoint is either a square or a bigon. The three non-basepoint regions adjacent to the pointed matched circle are ``squares'' with one of the sides lying on the pointed matched circle. Computing Heegaard Floer homology from a nice Heegaard diagram is purely combinatorial.

 \begin{figure}[ht]
 \centering
  \includegraphics[scale = 0.8]{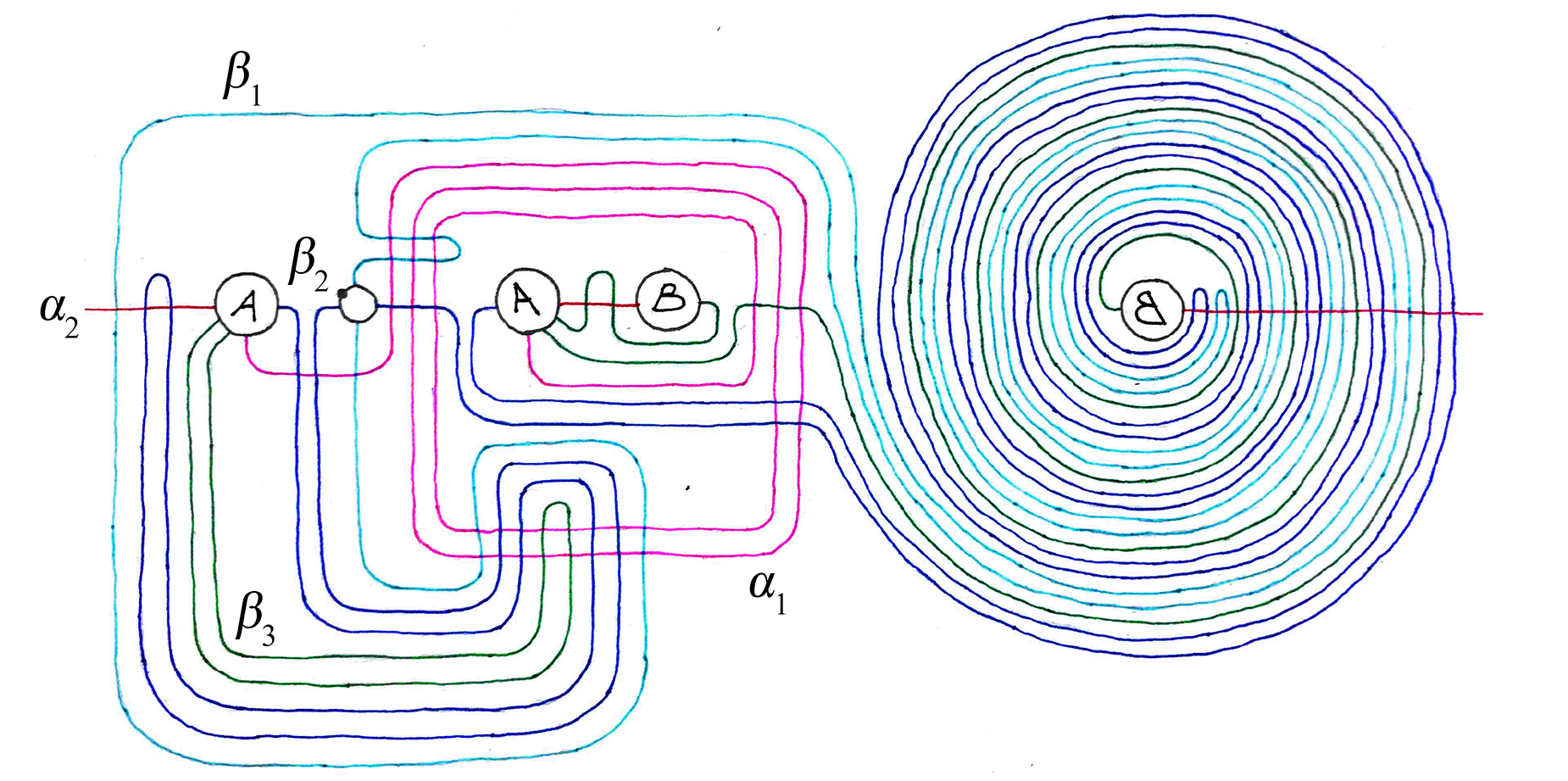}
  \caption{}
  \label{fig: nice heegaard}
 \end{figure}
 
In Figures \ref{fig: names_one}, \ref{fig: names_two}, and \ref{fig: names_three}, we zoom in to parts of the diagram to show how we label points. For clarity, we stick to the following convention for labeling points based on the curves they lie on. 
\[\begin{tikzpicture}
    \def\gap{2.5}
    \def\smallgap{1}
    \def\smallergap{0.5}
    \def\vertgap{0.5}
    \def\epsilon{0.25}
    \node[anchor = mid] at (-\smallgap,0) {\(a:{}\)};
    \node[anchor = mid] at (-\smallergap,0) {\(\alpha_1\)};
    \node[anchor = mid] at (0,\vertgap) {\(\beta_2\)};
    \draw[ultra thick, blue] (0,-\epsilon)--(0,\epsilon);
    \draw[ultra thick, Rhodamine] (-\epsilon,0)--(\epsilon,0);
    \filldraw[black] (0,0) circle (2pt);
    \node[anchor = mid] at ({\gap-\smallgap},0) {\(b:{}\)};
    \node[anchor = mid] at ({\gap-\smallergap},0) {\(\alpha_1\)};
    \node[anchor = mid] at (\gap,\vertgap) {\(\beta_1\)};
    \draw[ultra thick, cyan] (\gap,-\epsilon)--(\gap,\epsilon);
    \draw[ultra thick, Rhodamine] ({\gap-\epsilon},0)--({\gap+\epsilon},0);
    \filldraw[black] (\gap,0) circle (2pt);
    \node[anchor = mid] at ({2*\gap-\smallgap},0) {\(c:{}\)};
    \node[anchor = mid] at ({2*\gap-\smallergap},0) {\(\alpha_2\)};
    \node[anchor = mid] at ({2*\gap},\vertgap) {\(\beta_2\)};
    \draw[ultra thick, blue] ({2*\gap},-\epsilon)--({2*\gap},\epsilon);
    \draw[ultra thick, red] ({2*\gap-\epsilon},0)--({2*\gap+\epsilon},0);
    \filldraw[black] ({2*\gap},0) circle (2pt);
    \node[anchor = mid] at ({3*\gap-\smallgap},0) {\(d:{}\)};
    \node[anchor = mid] at ({3*\gap-\smallergap},0) {\(\alpha_2\)};
    \node[anchor = mid] at ({3*\gap},\vertgap) {\(\beta_1\)};
    \draw[ultra thick, cyan] ({3*\gap},-\epsilon)--({3*\gap},\epsilon);
    \draw[ultra thick, red] ({3*\gap-\epsilon},0)--({3*\gap+\epsilon},0);
    \filldraw[black] ({3*\gap},0) circle (2pt);
    \node[anchor = mid] at ({4*\gap-\smallgap},0) {\(x:{}\)};
    \node[anchor = mid] at ({4*\gap-\smallergap},0) {\(\alpha_2\)};
    \node[anchor = mid] at ({4*\gap},\vertgap) {\(\beta_3\)};
    \draw[ultra thick, Green] ({4*\gap},-\epsilon)--({4*\gap},\epsilon);
    \draw[ultra thick, red] ({4*\gap-\epsilon},0)--({4*\gap+\epsilon},0);
    \filldraw[black] ({4*\gap},0) circle (2pt);
    \node[anchor = mid] at ({5*\gap-\smallgap},0) {\(y:{}\)};
    \node[anchor = mid] at ({5*\gap-\smallergap},0) {\(\alpha_1\)};
    \node[anchor = mid] at ({5*\gap},\vertgap) {\(\beta_3\)};
    \draw[ultra thick, Green] ({5*\gap},-\epsilon)--({5*\gap},\epsilon);
    \draw[ultra thick, Rhodamine] ({5*\gap-\epsilon},0)--({5*\gap+\epsilon},0);
    \filldraw[black] ({5*\gap},0) circle (2pt);
\end{tikzpicture}\]
We depict the case \(n=3\) and \(m=4\), but try to indicate what the labels are in the general case.
\begin{figure}[ht]
  \centering
  \includegraphics[scale = 0.8]{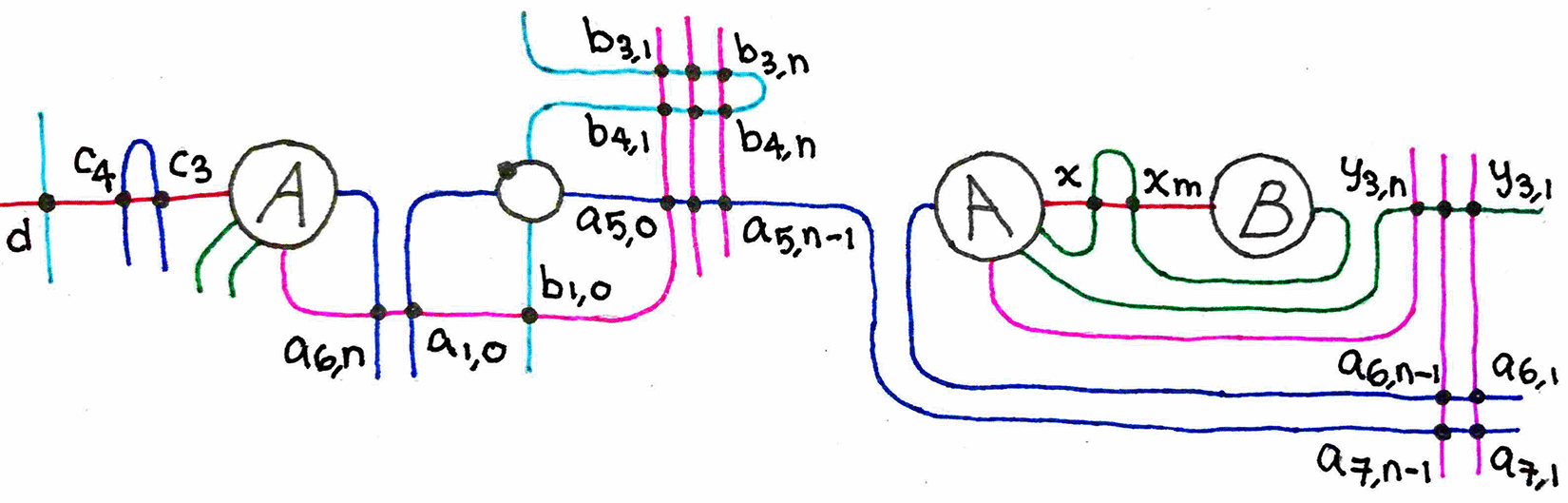}
  \caption{}
  \label{fig: names_one}
\end{figure}
\begin{figure}[ht]
\centering
  \includegraphics[scale = 0.9]{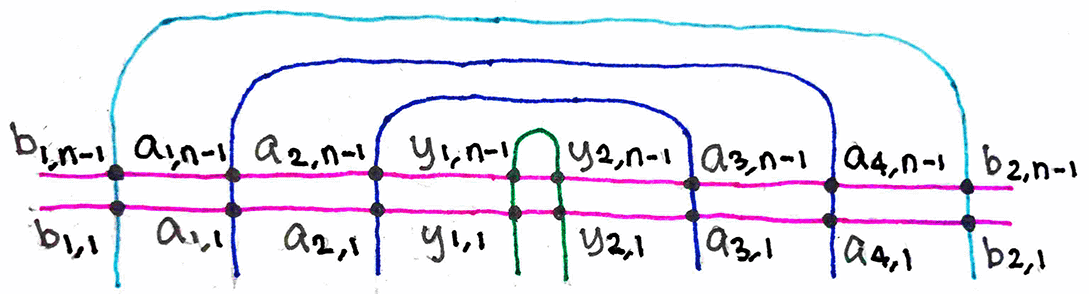}
  \caption{}
  \label{fig: names_two}
\end{figure}
\begin{figure}[ht]
\centering
  \includegraphics[scale = 0.8]{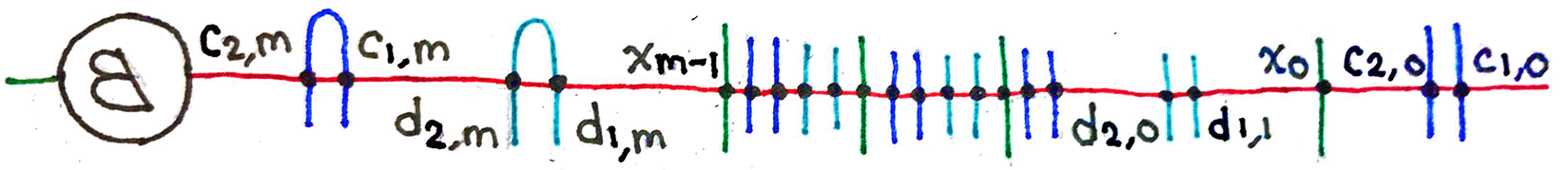}
  \caption{}
  \label{fig: names_three}
 \end{figure}
\begin{remark}
To safely apply the Sarkar-Wang algorithm to a \(\beta\)-type bordered Heegaard diagram one should apply finger moves to the \(\alpha\) curves, first applying a ``protective finger move'' near the pointed matched circle as explained in \cite[Chapter 8]{bordered}. Instead of doing this, we searched for a sequence of \(\beta\) finger moves which did not result in moves colliding with the pointed matched circle. We did this so that the number of finger moves would not depend on \(n\) and \(m\). 
\end{remark}

\begin{proposition}
\label{prop: type D I/III}
The module \(\widehat{\mathit{CFD}}\big({-Y(n,m),\mathcal{F}_{\mathrm{I/III}}}\big)\) has a model given by the I/III decorated graph in Figure {\normalfont \ref{fig: initial type D}}.
\end{proposition}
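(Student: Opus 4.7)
The strategy is to exploit the fact that the Heegaard diagram in Figure \ref{fig: nice heegaard} is nice, so that the computation of $\widehat{\mathit{CFD}}\bigl({-Y(n,m),\mathcal{F}_{\mathrm{I/III}}}\bigr)$ reduces to the combinatorics of counting embedded bigons and rectangles, together with their intersections with the $\alpha$-arcs on the pointed matched circle. First I would enumerate the generators: a generator is a tuple consisting of one intersection point on each $\beta$-curve, with exactly one of the two $\alpha$-arcs occupied. Using the labelling conventions set in Figures \ref{fig: names_one}, \ref{fig: names_two}, and \ref{fig: names_three}, each generator can be classified by which of the two $\alpha$-arcs is occupied (determining the idempotent $\iota_0$ or $\iota_1$) and by the choice of an $x$-type point on $\beta_3$, a $y$-type point on a secondary $\beta$-curve, and the points on the remaining $\beta$-circles. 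Counting yields exactly the number of vertices depicted in the I/III decorated graph in Figure \ref{fig: initial type D}, and each generator's idempotent label ($\bullet$ or $\circ$) matches the pictured coloring.

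Next I would identify the differentials. Because the diagram is nice, by Sarkar--Wang the only contributions to $\delta^1$ come from empty embedded rectangles and bigons whose interior avoids the basepoint. I would systematically list these domains: the ``local'' rectangles and bigons lying entirely in the interior of the diagram give the pure differentials (unlabeled arrows in Figure \ref{fig: initial type D}), while rectangles that abut the pointed matched circle and pick up one or more Reeb chords give the labeled arrows. The short rectangles produce the length-one labels $2$ and $3$ and the bigon-like square adjacent to the boundary produces the label $1$; iterated rectangles that sweep past consecutive $\alpha$-arcs yield the composite labels $12$, $23$, and $123$. Each contribution can be checked to have Maslov index one so that it genuinely counts for $\delta^1$.

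Verifying that the resulting differentials organize into a valid Type D structure will be mostly automatic once we check that the listed bigons and rectangles exhaust the embedded, empty, non-basepoint-crossing domains of index one; admissibility was already noted in Section \ref{sec: identifying} via the computation of periodic domains, so no bubbling of disks with boundary along the pointed matched circle contributes. I would also cross-check the count by verifying that the total number of generators agrees with $\mathrm{rk}\,\widehat{\mathit{HF}}\bigl({-Y(g_{n,m})}\bigr)\cdot 2 = 2nm$ plus the extra generators demanded by the corner of the pointed matched circle, matching the node counts in the decorated graph.

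The main obstacle is the bookkeeping: the diagram has genus two with several $\beta$-curves wound around each other by the Sarkar--Wang finger moves, and one must be careful to distinguish domains that look similar but differ in whether they cross one or both $\alpha$-arcs, since this determines the Reeb label. In particular, verifying the composite label $123$ appears exactly in the positions shown (on the ``edge'' of the torus-periodic graph) requires tracking a rectangle that wraps around the boundary region near the basepoint; since the author elected to place the basepoint in the I/III position there is no periodic domain obstructing such a rectangle, and its Maslov index can be computed from the Lipshitz formula. Once these boundary-adjacent domains are correctly identified, the entire module structure falls out by direct comparison with Figure \ref{fig: initial type D}.
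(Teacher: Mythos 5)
There is a genuine gap here: your proof treats the nice diagram as if it \emph{directly} produces the I/III decorated graph of Figure~\ref{fig: initial type D}, but this is not the case, and the omission is precisely the bulk of the paper's argument. In a nice bordered Heegaard diagram every non-basepoint region is an embedded bigon or rectangle, so each contribution to $\delta^1$ picks up at most a \emph{single} length-one Reeb chord $\rho_1$, $\rho_2$, or $\rho_3$ (or none at all). The composite labels $\rho_{12}$, $\rho_{23}$, $\rho_{123}$ visible in the I/III graph of Figure~\ref{fig: initial type D} cannot arise directly from a count of rectangles in a nice diagram; they only appear after cancelling pure differentials, where a zigzag $\mathbf{z}\xrightarrow{\rho_I}\mathbf{y}\leftarrow\mathbf{x}\xrightarrow{\rho_J}\mathbf{w}$ is replaced by $\mathbf{z}\xrightarrow{\rho_I\rho_J}\mathbf{w}$. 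Your claim that ``iterated rectangles that sweep past consecutive $\alpha$-arcs yield the composite labels'' has no analogue in the nice-diagram count. Relatedly, the I/III graph in Figure~\ref{fig: initial type D} has \emph{no} unlabeled edges and only the labels $2$, $12$, $23$, $123$; the labels $1$ and $3$ you mention belong to the II/IV graph, so your proposal would not even produce the correct output.

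Your generator count is also off. The Sarkar--Wang finger moves introduce a large number of extra intersection points, so the unreduced model has many more generators than the $(n+1)(m+1)+1$ vertices of the reduced graph; the paper tracks these through Figures~\ref{fig:start} onward and reduces to Figure~\ref{fig: large type D} before a final round of cancellation yields Figure~\ref{fig: initial type D}. Your sketch skips the entire reduction, which is where the real work lies. To repair the argument you would need to (a) enumerate the generators of the nice diagram, (b) record the unreduced $\delta^1$ (all with length-one or empty Reeb labels), and (c) carry out the pure-differential cancellation systematically, checking that it terminates in the pictured graph — essentially the content of the appendix's proof.
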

\begin{proof}
We count squares and bigons in Figure \ref{fig: nice heegaard} to identify a model of \({\widehat{\mathit{CFD}}\big({-Y(n,m),\mathcal{F}_{\mathrm{I/III}}}}\big)\). We then simplify the resulting Type D module by reducing pure differentials as explained in, e.g.\ \cite[Section 2.6]{Levine}.\par 

Recall that a \textit{pure differential} is one of the form \(\delta^1(\mathbf{x})=\mathbf{1}\otimes \mathbf{y}\). To remove a pure differential from \(\mathbf{x}\) to \(\mathbf{y}\) in a Type D decorated graph, one looks for all zigzags of the form 
\[\mathbf{z}\xrightarrow{\,\rho_I\,}\mathbf{y}\leftarrow \mathbf{x}\xrightarrow{\,\rho_J\,}\mathbf{w},\]
and replaces such a zigzag by 
\[\mathbf{z}\xrightarrow{\,\rho_I\rho_J\,}\mathbf{w}.\]
Here we allow one or both of the labels \(\rho_I,\rho_J\) to be empty. The new label \(\rho_I\rho_J\) denotes the product of \(\rho_I\) and \(\rho_J\) in the torus algebra \(\mathcal{A}\). The new decorated graph represents an equivalent Type D module. If multiple pure differentials are canceled, the result may depend on the specific order in which they are canceled. \par 
We now outline a systematic way to simplify the type D decorated graph resulting from the nice Heegaard diagram. Consider the subgraph of the Type D decorated graph consisting of all vertices (generators) with only the unlabeled arrows (pure differentials). Within each connected component of this subgraph, we try to simplify as much as possible without introducing any additional non-pure differentials. (We may however remove non-pure differentials or shift the source/target of existing non-pure differentials.) This allows us to perform these simplifications without specifying which component we simplify first. We illustrate this process in Figures \ref{fig:start} through \ref{fig:end}. 

Note there is not a unique way to carry out this simplification. Implicit in this figure is, for each component, a sequence of pure differentials to cancel in order. However, we omit the explicit sequence as to not take up too much space. (It seems to the author that many choices of such sequences yield the same final answer.) For each component, we show the following:
\begin{itemize}
\item We give the initial un-simplified component, together with all incoming/outgoing non-pure differentials from/to other components. 
\item If the component can be completely removed, we indicate so. If not, we give a capital letter to label the generators in the new simplified component. 
\item If the component cannot be completely removed, we show the simplified component. In this simplified component, some incoming/outgoing non-pure differentials may be removed due to simplification in the original component; however some removals may also occur due to simplification in other components. 
\end{itemize}
Figures start on the next page. 
\clearpage
\begin{figure}[ht]
\centering
\includegraphics[scale=0.38]{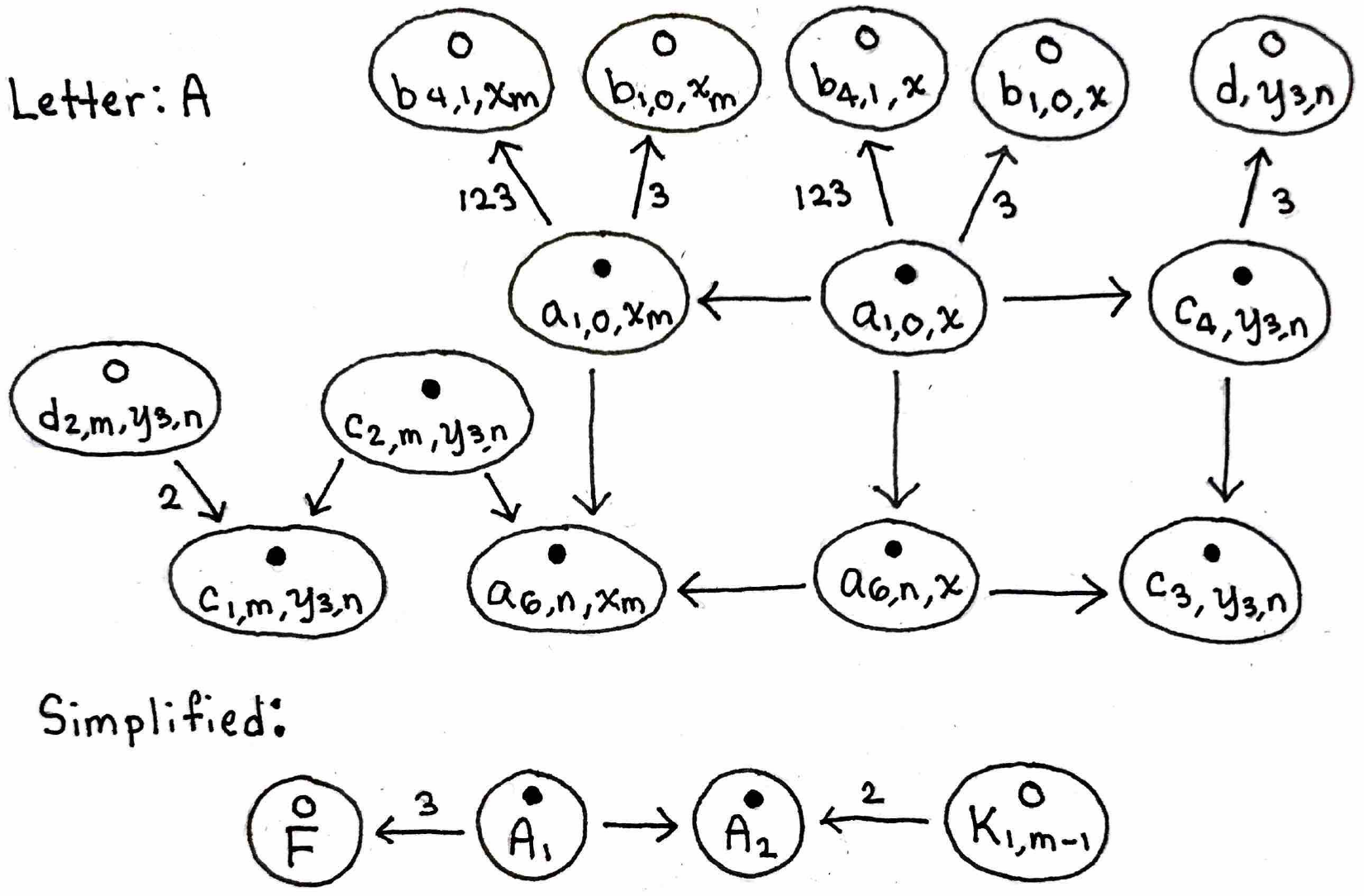}
\caption{}\label{fig:start}
\end{figure}
\begin{figure}[ht]
\centering
\includegraphics[scale=0.45]{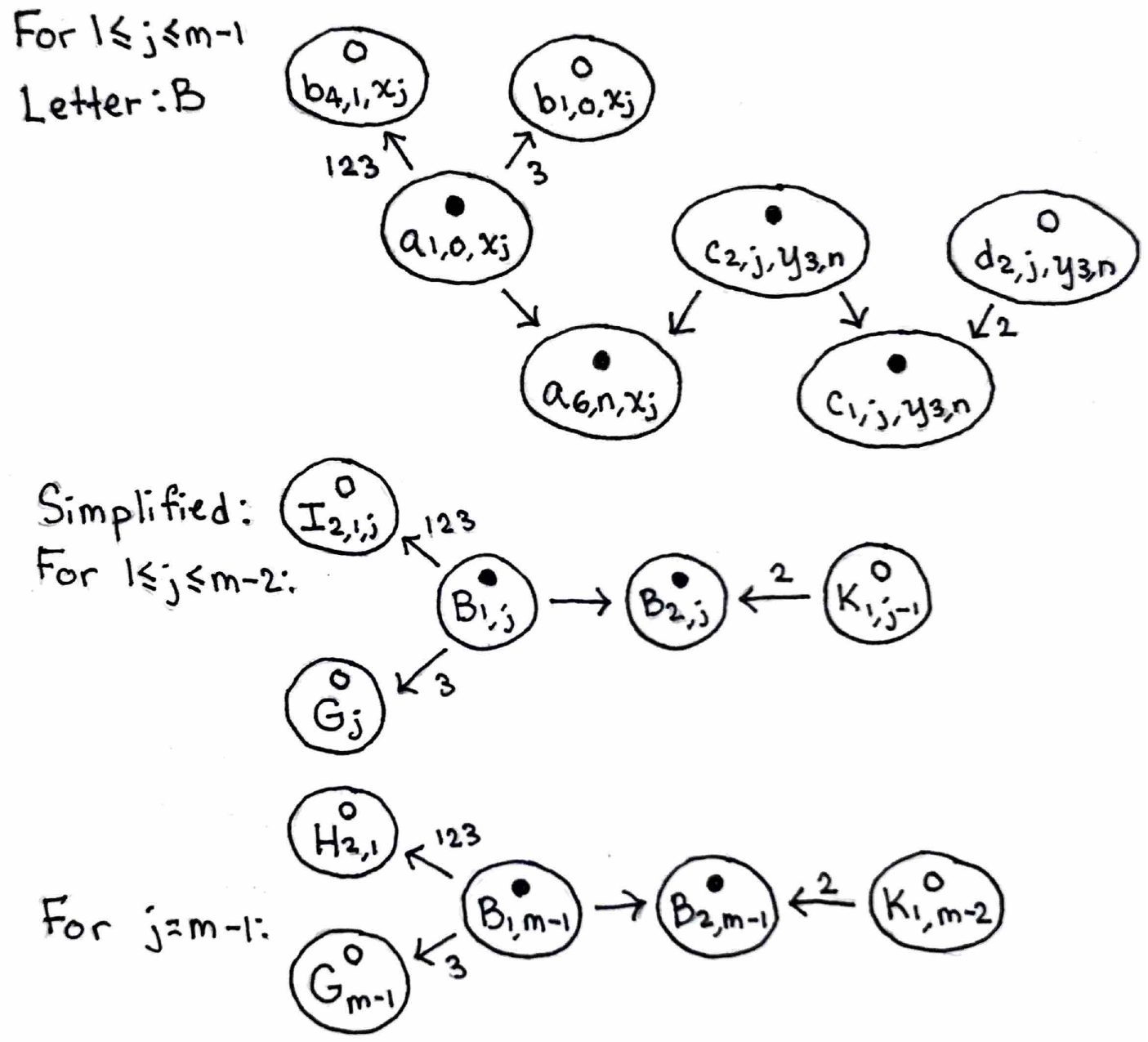}
\caption{}
\end{figure}
\begin{figure}[ht]
\centering
\includegraphics[scale=0.35]{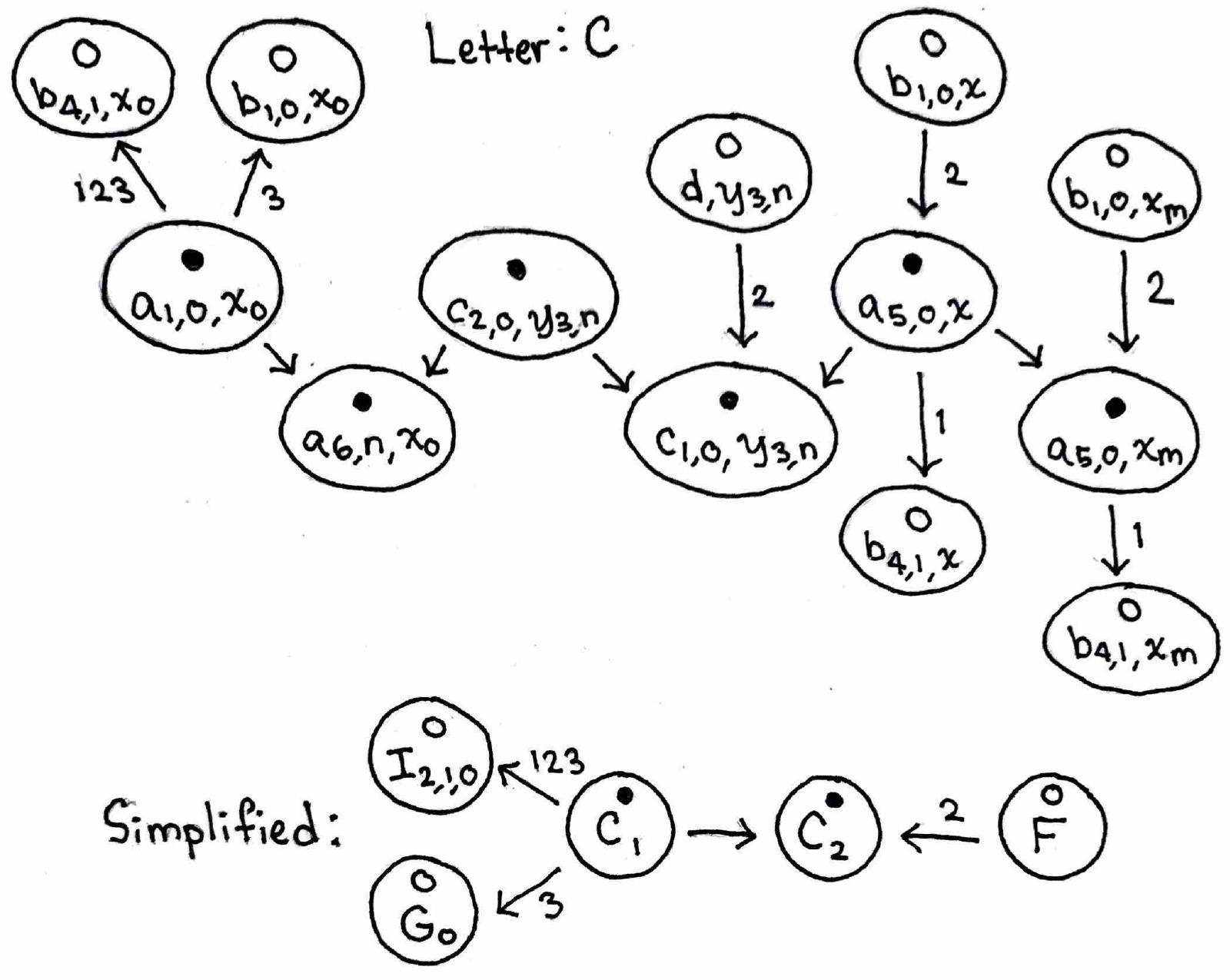}
\caption{}
\end{figure}
\begin{figure}[ht]
\centering
\includegraphics[width = 13cm, height = 2.6cm]{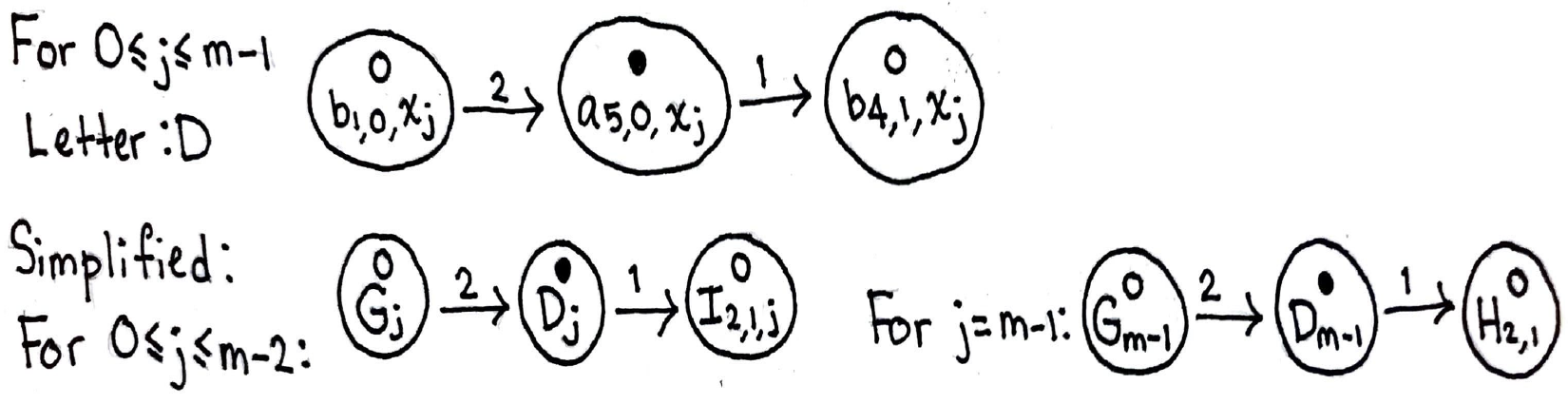}
\caption{}
\end{figure}
\begin{figure}[ht]
\centering
\includegraphics[scale=0.4]{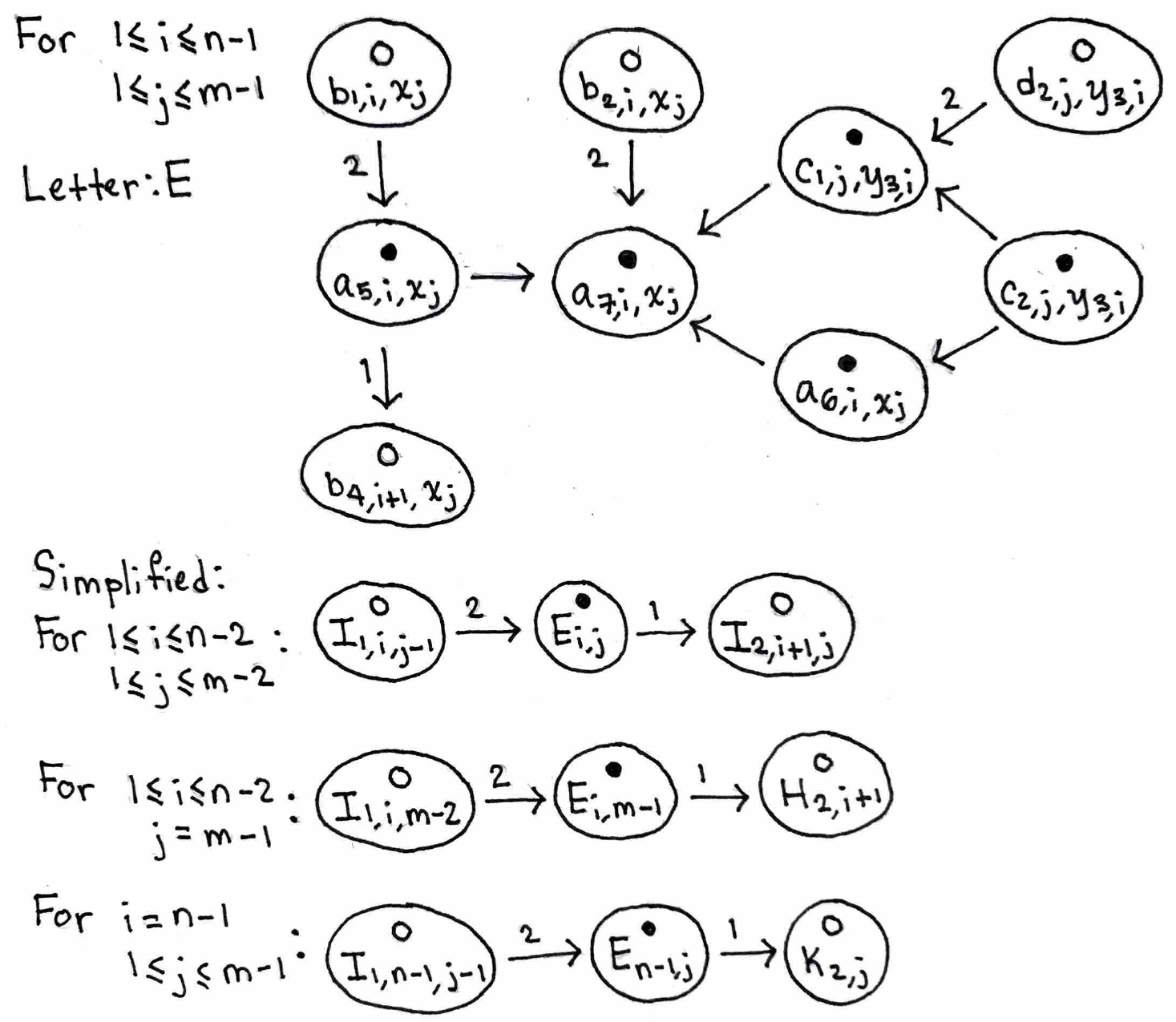}
\caption{}
\end{figure}
\begin{figure}
\centering
\includegraphics[scale=0.32]{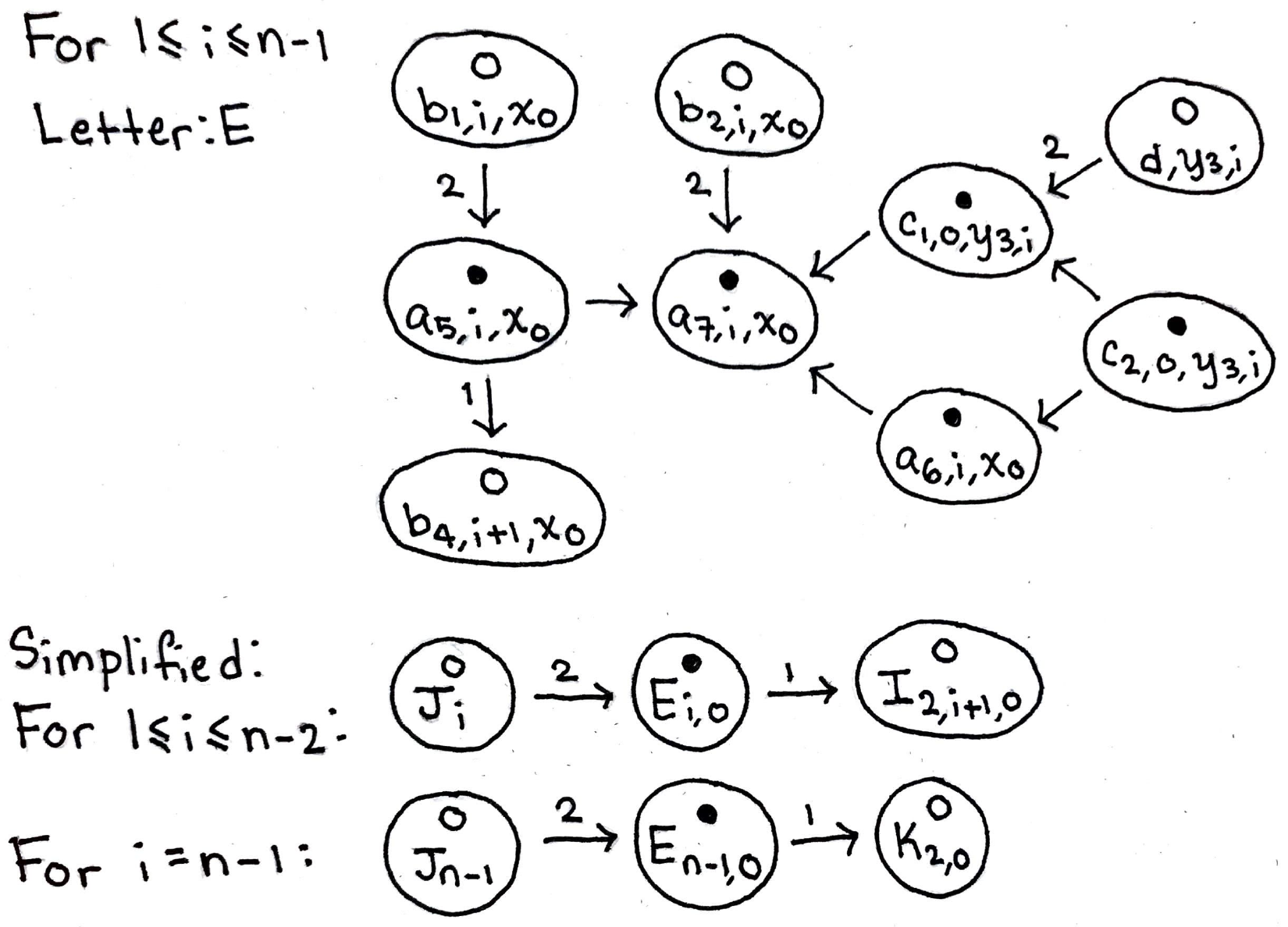}
\caption{}
\end{figure}
\begin{figure}[ht]
\centering
\includegraphics[scale=1]{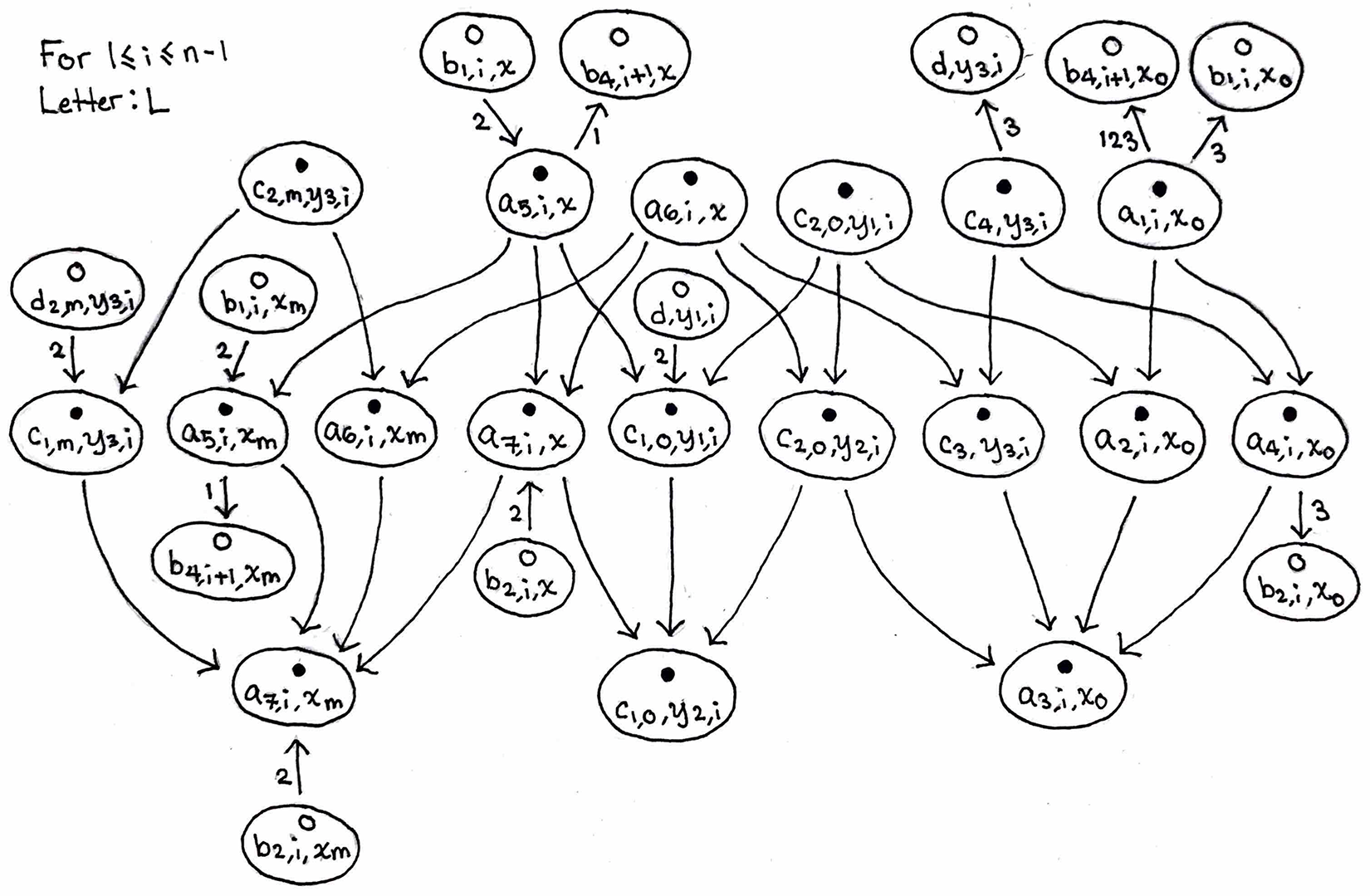}\\
\includegraphics[width = 9cm, height = 4.5cm]{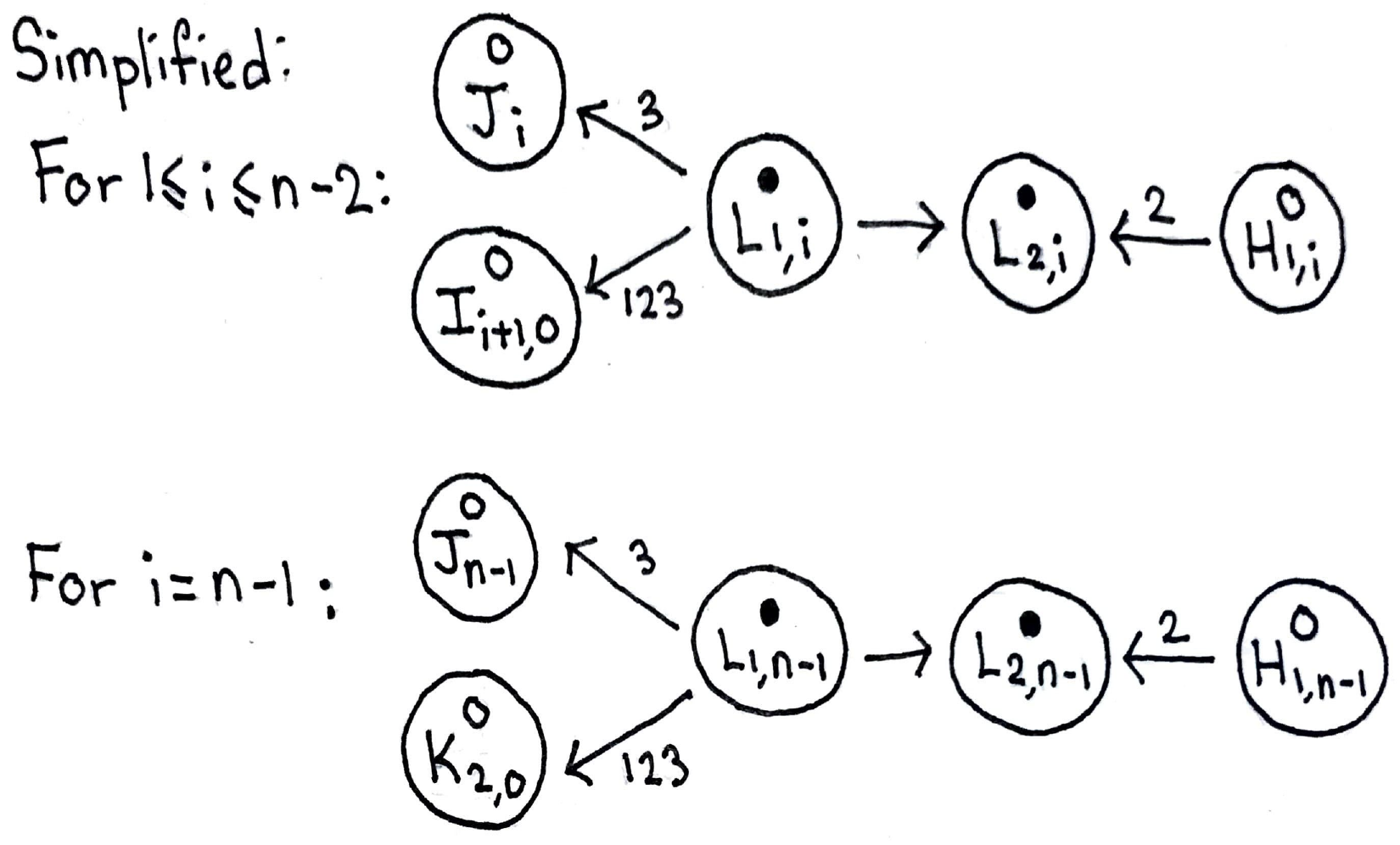}
\caption{}
\end{figure}
\begin{figure}[ht]
\centering
\includegraphics[scale=0.38]{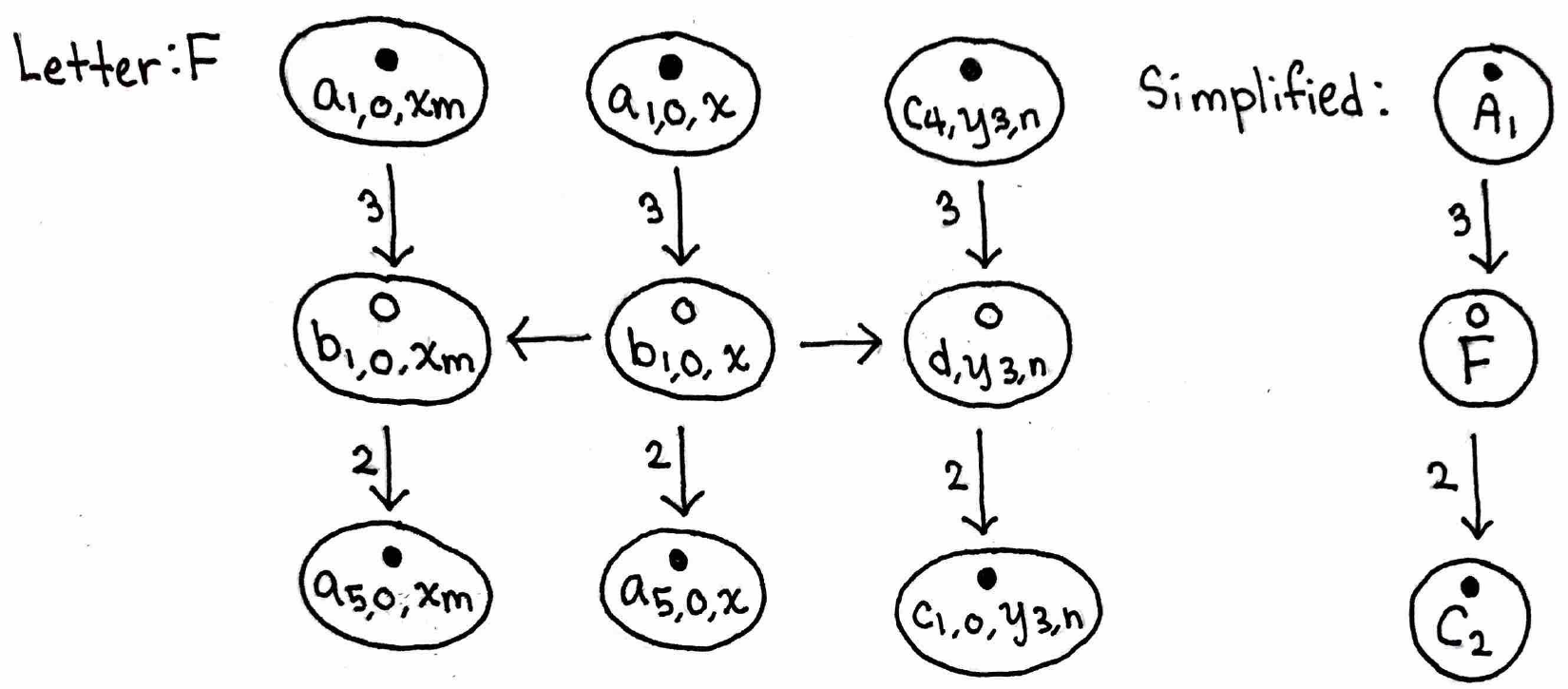}
\caption{}
\end{figure}
\begin{figure}[ht]
\centering
\includegraphics[width = 13cm, height = 2.6cm]{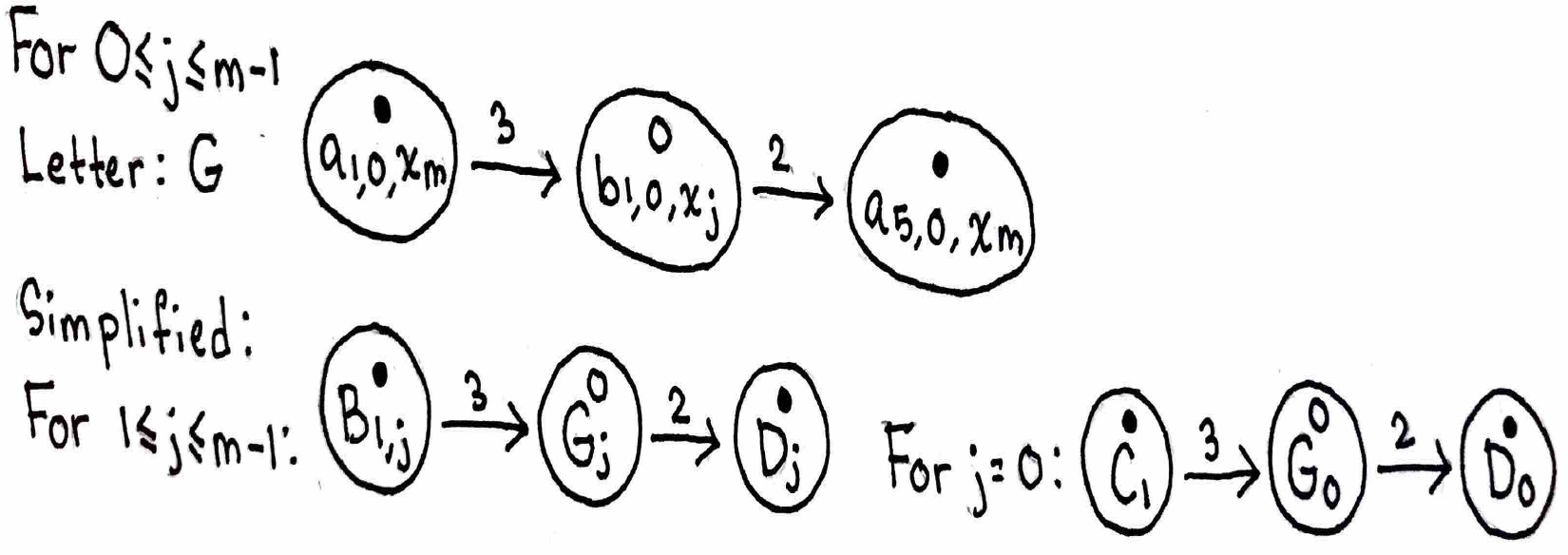}
\caption{}
\end{figure}
\begin{figure}[ht]
\centering
\includegraphics[scale = 1]{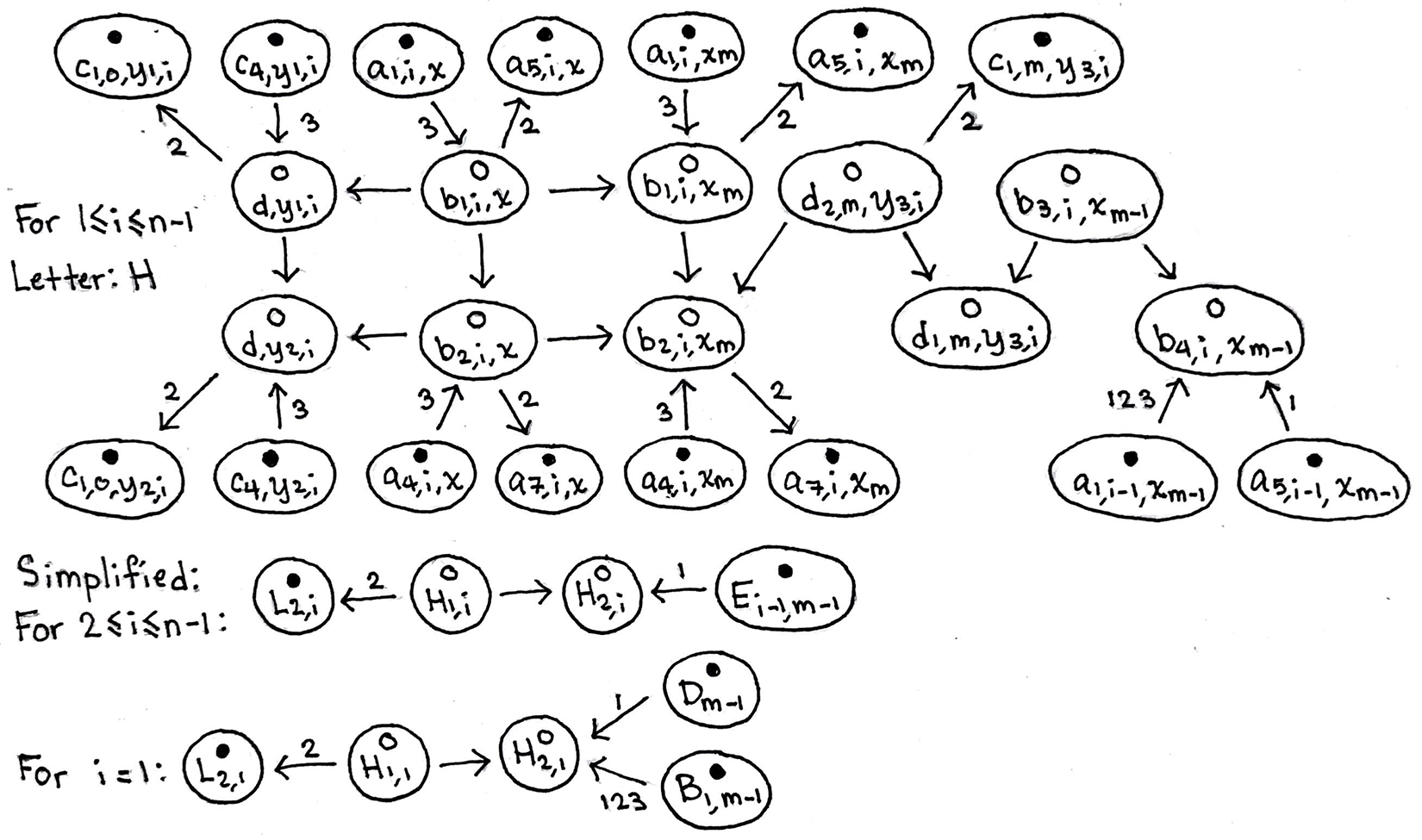}
\caption{}
\end{figure}
\begin{figure}[ht]
\centering
\includegraphics[scale = 0.32]{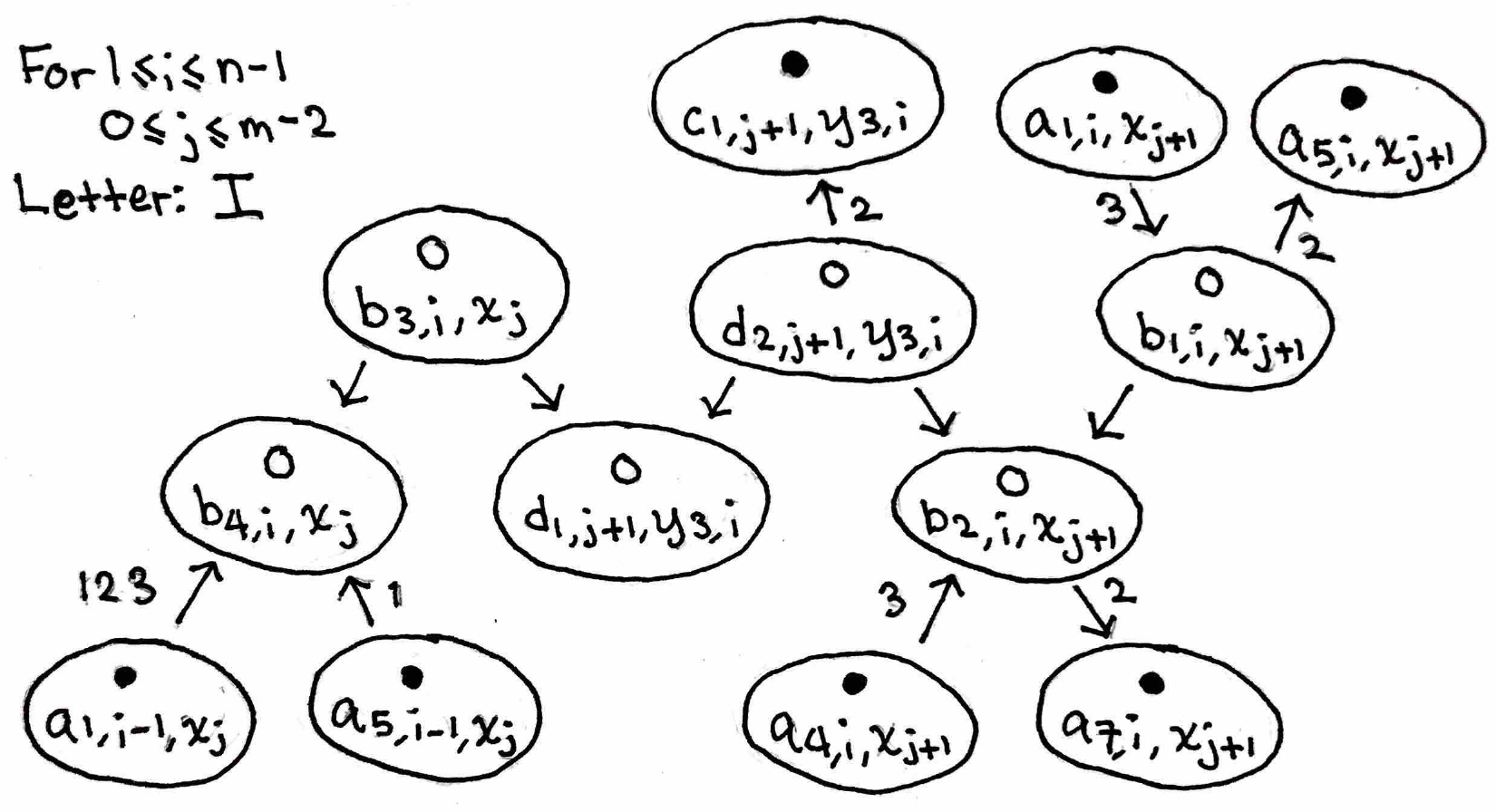}\\
\includegraphics[scale = 0.32]{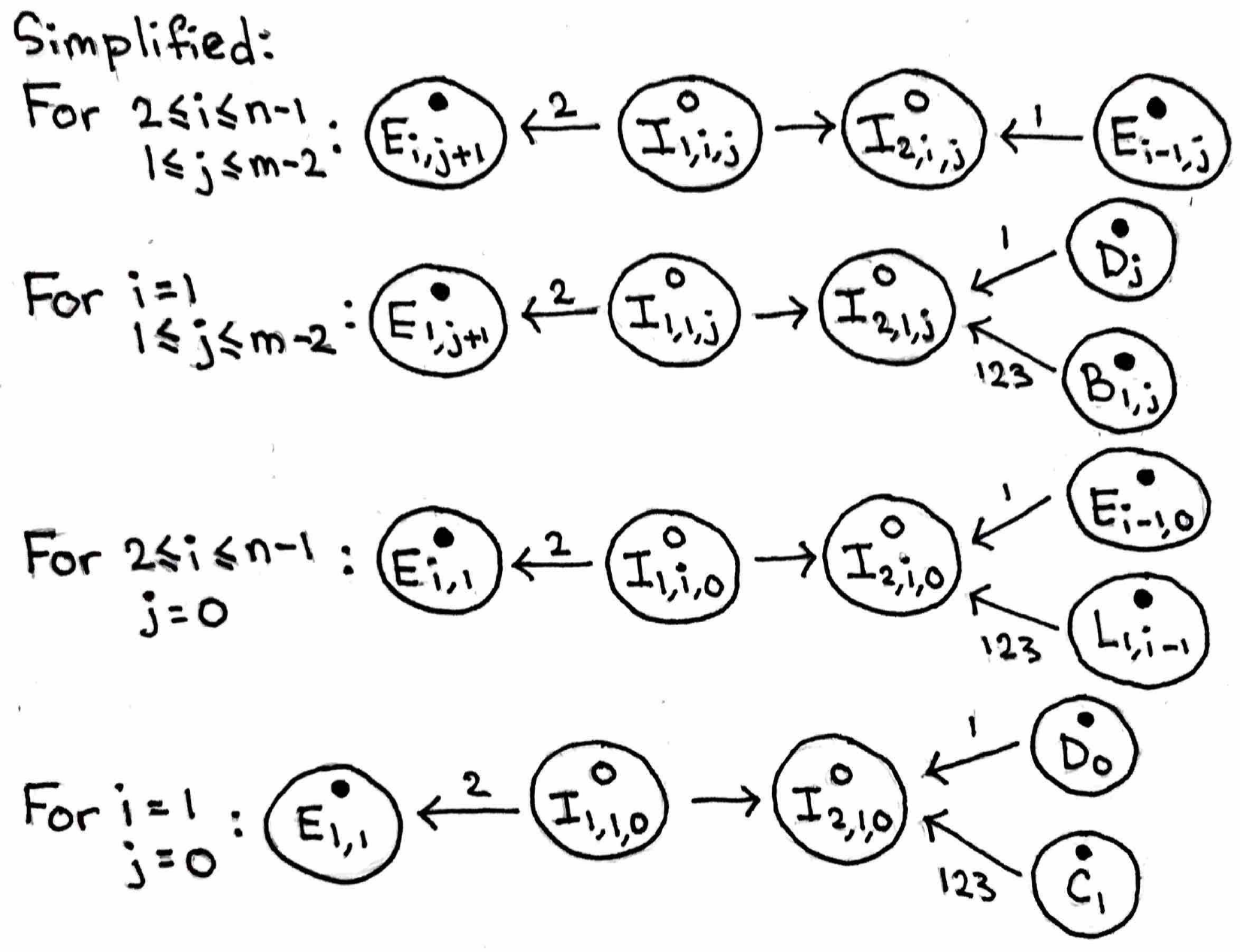}
\caption{}
\end{figure}
\begin{figure}[ht]
\centering
\includegraphics[width = 10cm, height = 4cm]{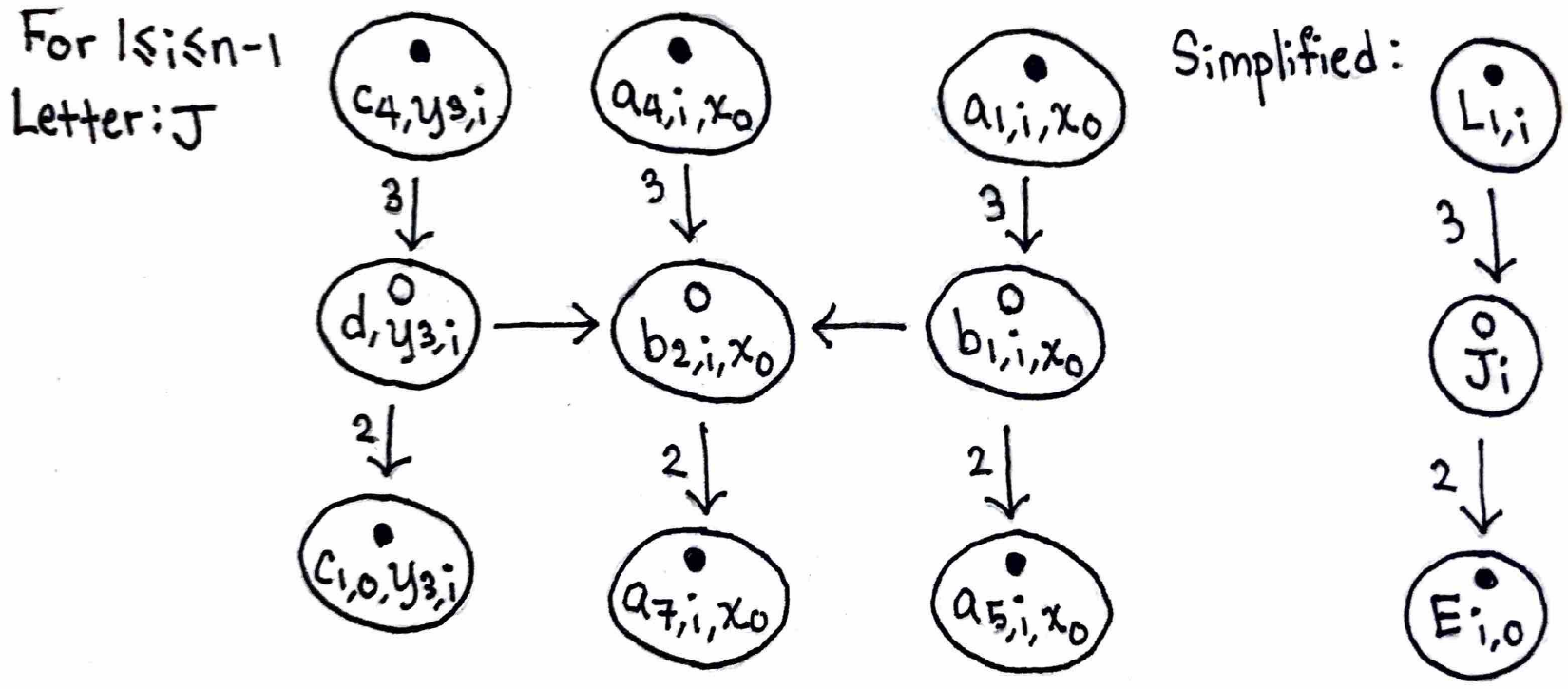}
\caption{}
\end{figure}
\begin{figure}[ht]
\centering
\includegraphics[scale = 0.35]{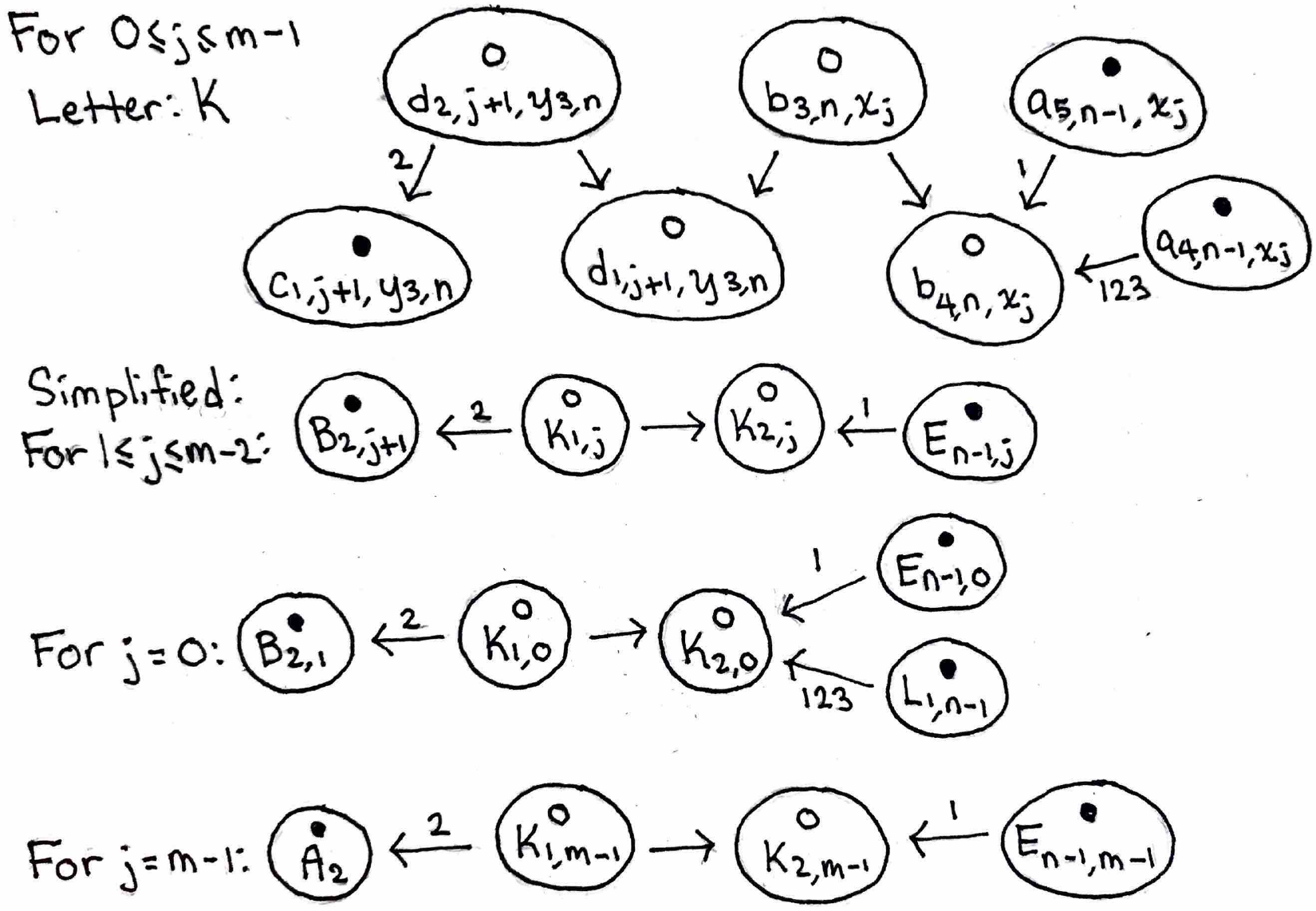}
\caption{}
\end{figure}
\begin{figure}[ht]
\centering
\includegraphics[scale = 1]{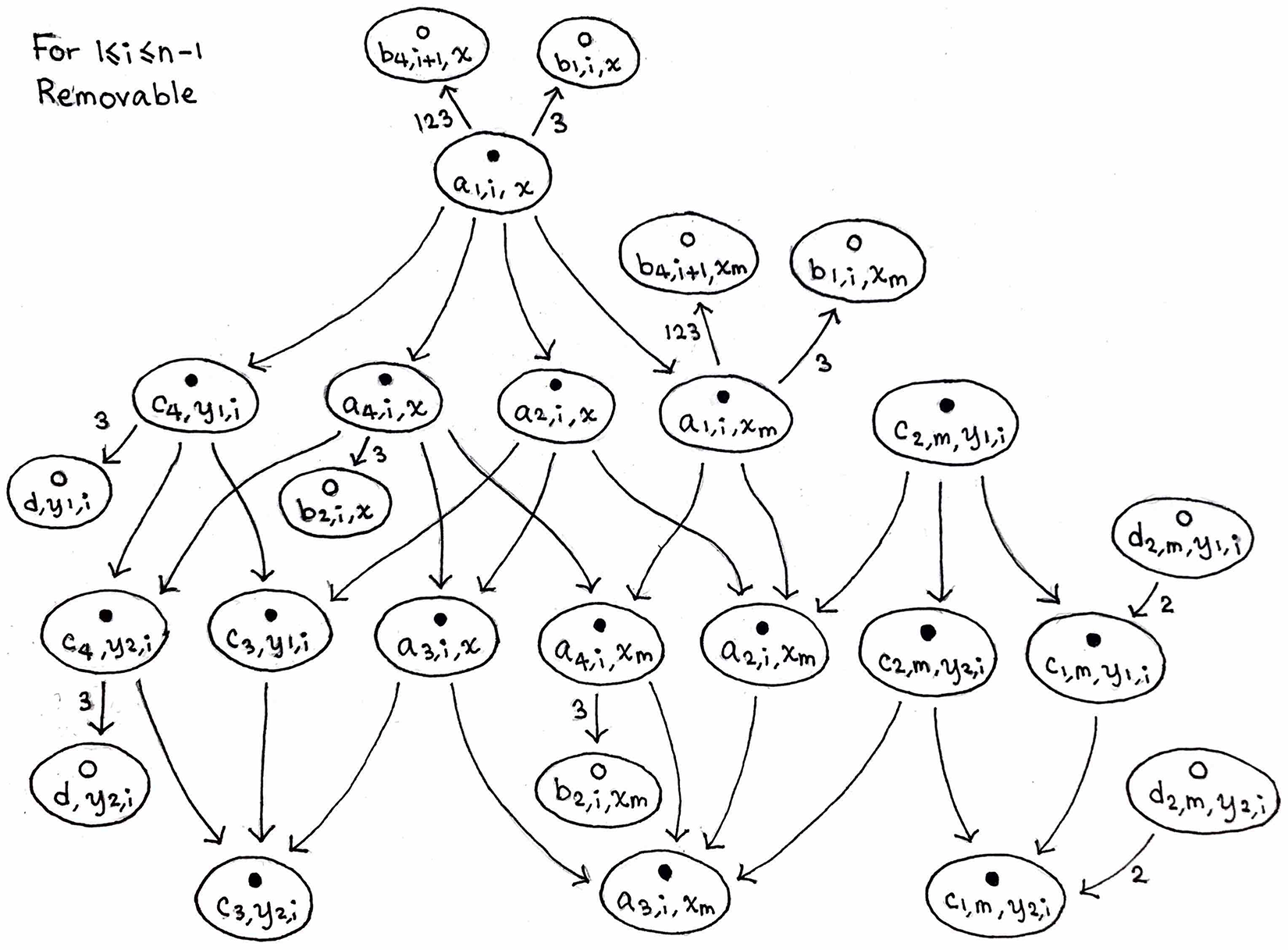}
\caption{}
\end{figure}
\begin{figure}[ht]
\centering
\includegraphics[scale = 0.5]{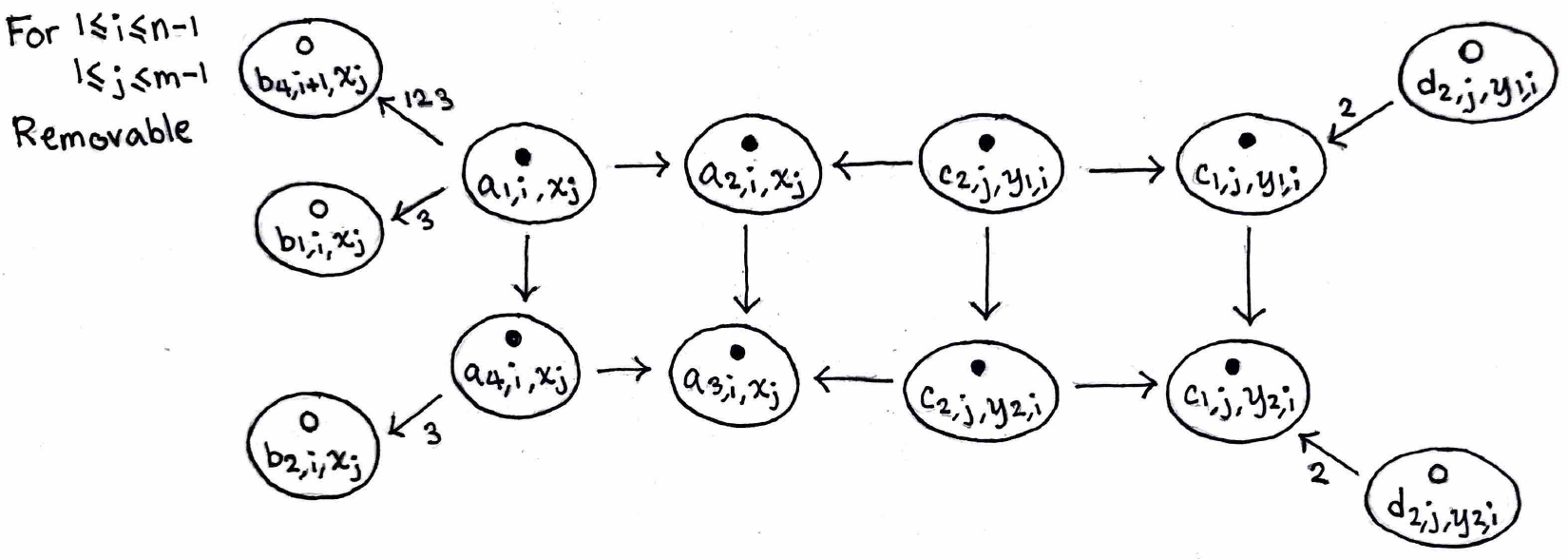}
\caption{}
\end{figure}
\begin{figure}[ht]
\centering
\includegraphics[scale = 1]{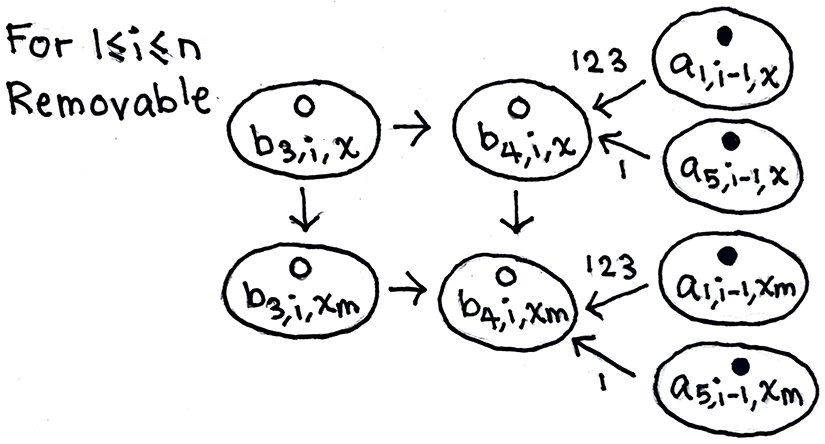}
\caption{}
\end{figure}
\begin{figure}[ht]
\centering
\includegraphics[scale = 0.25]{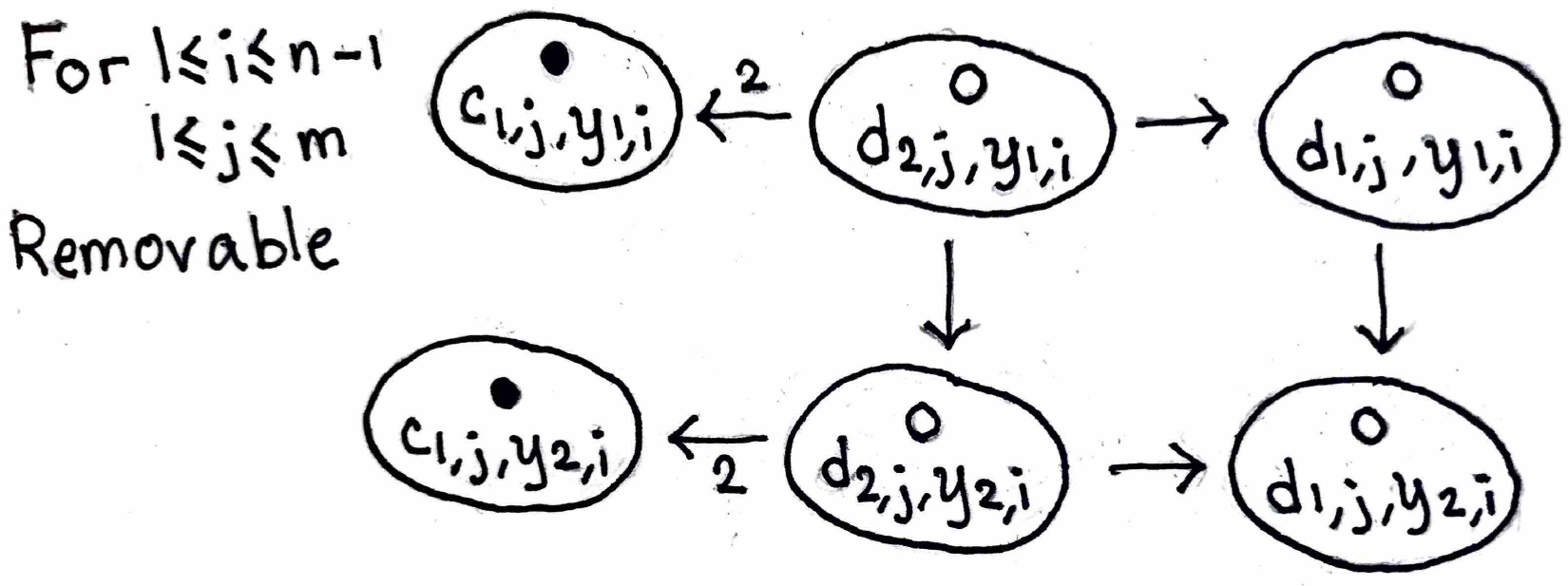}
\caption{}\label{fig:end}
\end{figure}
\clearpage 
Gathering all the simplified components yields the decorated graph in Figure \ref{fig: large type D}. 
\begin{figure}[ht]
\centering
\includegraphics[scale=1]{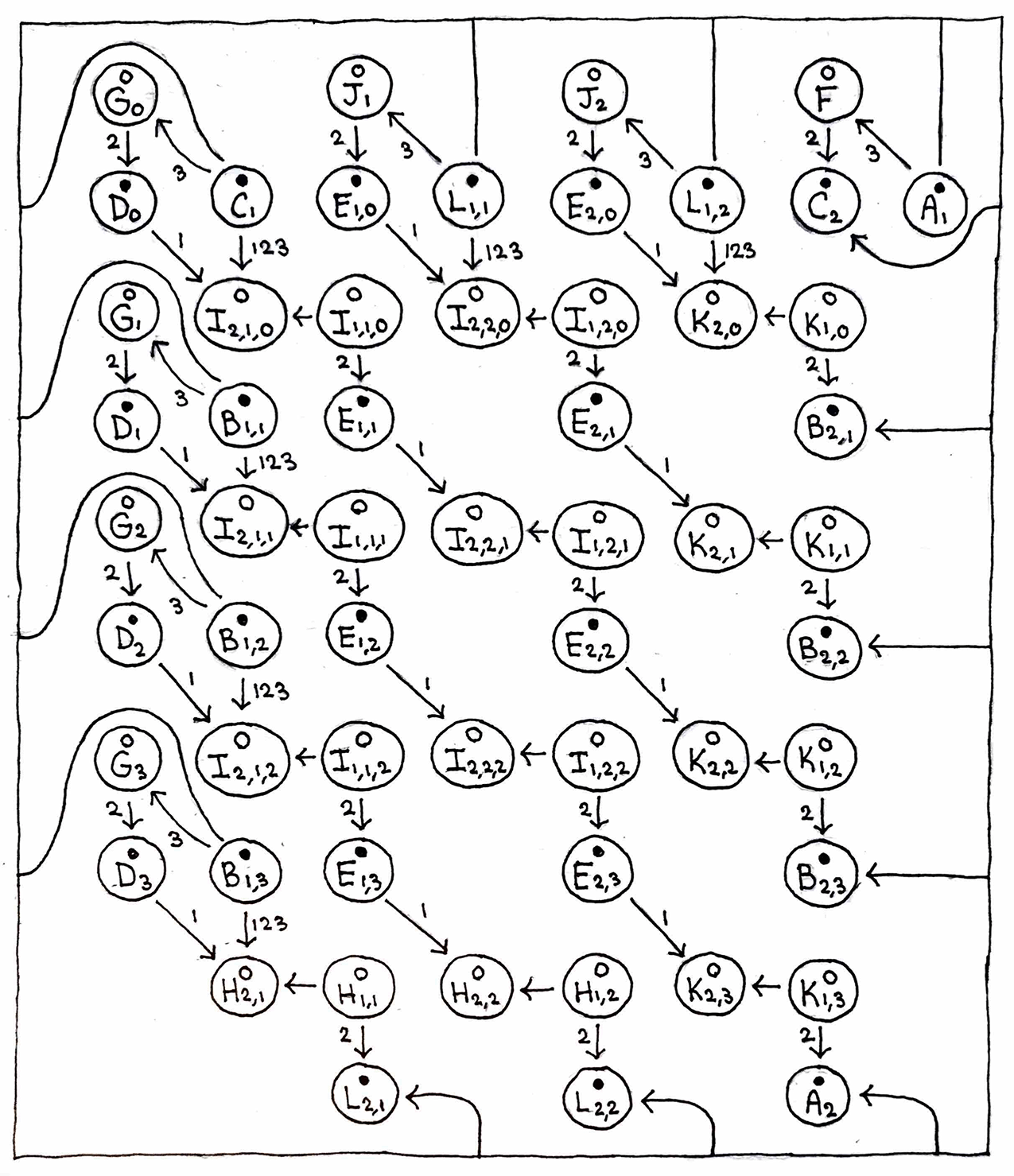}
\caption{The case \(n=3\), \(m=4\). The decorated graph should be interpreted as lying on a torus.}
\label{fig: large type D}
\end{figure}

Canceling the pure differentials in this decorated graph in any order then gives the I/III graph in Figure {\normalfont\ref{fig: initial type D}}. 
\end{proof}
In \cite{HeddenLevine} it is explained how to turn a Type D decorated graph for \({\widehat{\mathit{CFD}}}\) with no pure-differentials into a Type A decorated graph for \(\widehat{\mathit{CFA}}\) (with no \(m_1\) action). In brief, one makes the label changes \(1\to 3\) and \(3\to 1\). Performing this procedure gives the following corollary. 
\begin{corollary}
\label{corollary: type A I/III}
The module \(\widehat{\mathit{CFA}}\big({-Y(n,m),\mathcal{F}_{\mathrm{I/III}}}\big)\) has a model given by the I/III decorated graph in Figure {\normalfont\ref{fig: initial type A}}.
\end{corollary}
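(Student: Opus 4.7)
The plan is to deduce this corollary directly from Proposition \ref{prop: type D I/III} by appealing to the label-swapping procedure of Hedden--Levine \cite{HeddenLevine}. Recall that if a Type D module over the torus algebra admits a model whose decorated graph has no pure differentials (i.e., no unlabeled arrows) and no higher differentials $\delta^k$ with $k\geq 2$, then the corresponding Type A module admits a model on the same underlying $\mathcal{I}$-module whose decorated graph is obtained by swapping the labels $1 \leftrightarrow 3$ on every arrow (and correspondingly $12 \leftrightarrow 23$, $123 \leftrightarrow 321$), with no $m_1$ action and higher $m_k$ actions read off from directed paths as described after Figure \ref{fig: initial type A}.

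First, I would verify that the I/III decorated graph in Figure \ref{fig: initial type D} indeed satisfies the hypotheses of the label-swap. Every arrow in that graph carries a non-empty Reeb label, so there are no pure differentials. Moreover, every labeled arrow encodes only a $\delta^1$ action (no higher-order $\delta^k$ terms are present after the cancellation of pure differentials in the proof of Proposition \ref{prop: type D I/III}). Therefore the label-swap procedure applies.

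Next, I would carry out the swap on the I/III graph in Figure \ref{fig: initial type D}: arrows labeled $3$ become arrows labeled $1$, arrows labeled $1$ become arrows labeled $3$, and the mixed labels transform as $12\leftrightarrow 23$ and $123\leftrightarrow 321$; arrows labeled $2$ (and $\rho_2$-type arrows in general) are unchanged. The idempotent types of the vertices (that is, the labels $\bullet$ and $\circ$) are preserved, since the Type A and Type D modules share the same underlying $\mathcal{I}$-module. Comparing the resulting graph with the I/III graph in Figure \ref{fig: initial type A} shows they coincide, which gives the corollary.

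The only step requiring any care is checking that the resulting directed paths in the Type A graph really do assemble into the $m_{k+1}$ actions as claimed, rather than producing spurious compositions; but since the source graph had no higher $\delta^k$ actions and the label-swap is a purely formal operation on decorations, there is no obstacle here. Thus the corollary follows immediately from Proposition \ref{prop: type D I/III}.
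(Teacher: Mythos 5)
Your proposal matches the paper's own argument: the corollary is obtained from Proposition \ref{prop: type D I/III} by applying the Hedden--Levine label-swap $1\leftrightarrow 3$ (and hence $12\leftrightarrow 23$, $123\leftrightarrow 321$) to the reduced Type D decorated graph, which is legitimate precisely because that graph contains no unlabeled (pure) differentials. One small imprecision worth noting: for a Type D module the $\delta^k$ with $k\geq 2$ are by definition iterates of $\delta^1$ rather than independent structure maps, so ``no higher $\delta^k$'' is not an extra hypothesis to check — the only condition needed is the absence of pure differentials, which you do verify.
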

Tensoring the Type A module from the above corollary with the bimodules from \cite[Section 10.2]{bimodules} and \cite[Section 5]{factoring}, one can change the position of the basepoint. (Although the above Type A module is not bounded, the bimodules are bounded on the left and right for the same reason as explained in the beginning of Section \ref{sec: gluing}.) This gives the following.
\begin{corollary}
The module \(\widehat{\mathit{CFD}}\big({-Y(n,m),\mathcal{F}_{\mathrm{II/IV}}}\big)\) has a model given by the II/IV decorated graph in Figure {\normalfont\ref{fig: initial type D}}.
\label{corollary: type D II/IV}
\end{corollary}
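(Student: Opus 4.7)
The plan is to obtain the Type D II/IV model from what has already been computed, rather than re-running the Sarkar--Wang algorithm after moving the basepoint. Specifically, we start from Corollary \ref{corollary: type A I/III}, which gives an explicit model for $\widehat{\mathit{CFA}}\big({-Y(n,m),\mathcal{F}_{\mathrm{I/III}}}\big)$. The transition from the I/III to the II/IV boundary parametrization is realized by a mapping class $\phi$ of the pointed torus, and the action of $\phi$ on bordered invariants is given by box-tensoring with the DA bimodule $\widehat{\mathit{CFDA}}(\phi)$. An explicit model for $\widehat{\mathit{CFDA}}(\phi)$ is assembled in \cite[Section 10.2]{bimodules}, and alternatively factors into arcslide bimodules as in \cite[Section 5]{factoring}. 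These bimodules are both left and right bounded (having no provincial periodic domains), so the box tensor product with our unbounded Type A module is well defined.

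First I would compute the box tensor
\[\widehat{\mathit{CFA}}\big({-Y(n,m),\mathcal{F}_{\mathrm{I/III}}}\big)\boxtimes \widehat{\mathit{CFDA}}(\phi),\]
and simplify the result by canceling pure differentials in the fashion spelled out in the proof of Proposition \ref{prop: type D I/III}. By the pairing theorem this yields a model for $\widehat{\mathit{CFA}}\big({-Y(n,m),\mathcal{F}_{\mathrm{II/IV}}}\big)$. Since the simplified module has no $m_1$ action (because the spin$^c$ classes already separate the generators, exactly as for the I/III case), I can then pass to $\widehat{\mathit{CFD}}\big({-Y(n,m),\mathcal{F}_{\mathrm{II/IV}}}\big)$ either by applying the Hedden--Levine $1\leftrightarrow 3$ label-swap procedure used to deduce Corollary \ref{corollary: type A I/III}, or equivalently by tensoring with $\widehat{\mathit{BSDD}}(\mathcal{TW}^+_{\mathrm{II/IV}})$ and cancelling pure differentials.

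After cancellation I expect the resulting decorated graph to match the II/IV picture in Figure \ref{fig: initial type D}. The shape of the answer is in fact heavily constrained before any detailed computation: the number of generators in each idempotent class is forced by the count of intersection points (which can be read off from Figure \ref{fig: bordered}), the number of spin$^c$ components equals $(n,m)$, and the allowed edge labels on the boundary-adjacent vertices are determined by the multiplicities of the periodic domains displayed in Figure \ref{fig: weights}. The refined grading argument used in the proof of Proposition \ref{prop: LOSS in graph} likewise pins down the arrows into and out of the two ``corner'' generators.

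The main obstacle is purely combinatorial bookkeeping: the tensor product has $O(nm)$ generators, and the DA bimodule contributes both short and long $A_\infty$ operations that must be chased through before cancellation. I would organize the computation by tracking only one spin$^c$ component at a time, using the grading to predict which Reeb labels can appear on each arrow; once the five edge types $\rho_1,\rho_2,\rho_3,\rho_{12},\rho_{23},\rho_{123}$ and $\rho_{21}$ appearing in the II/IV graph are accounted for in a single row-and-column neighborhood, the rest of the graph is determined by the periodicity of the Heegaard picture in Figure \ref{fig: bordered}.
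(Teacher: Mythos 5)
Your proposal takes essentially the same approach as the paper: start from the computed Type A I/III module (Corollary \ref{corollary: type A I/III}), tensor with the reparametrization bimodules from \cite[Section 10.2]{bimodules} and \cite[Section 5]{factoring}, observe that these bimodules are left and right bounded so the box tensor is defined despite the Type A module being unbounded, and cancel pure differentials to reach the II/IV graph. The only difference is organizational: you pass through $\widehat{\mathit{CFA}}(-Y(n,m),\mathcal{F}_{\mathrm{II/IV}})$ as an intermediate step and then convert to Type D via the Hedden--Levine label swap (or the twisting slice), whereas the paper obtains $\widehat{\mathit{CFD}}(-Y(n,m),\mathcal{F}_{\mathrm{II/IV}})$ directly from the tensor product and only afterward applies Hedden--Levine to deduce the Type A II/IV model.
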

Finally, we once again apply the procedure in \cite{HeddenLevine} to obtain the following.
\begin{corollary}
The module \(\widehat{\mathit{CFA}}\big({-Y(n,m),\mathcal{F}_{\mathrm{II/IV}}}\big)\) has a model given by the I/III decorated graph in Figure {\normalfont\ref{fig: initial type A}}.
\label{corollary: type A II/IV}
\end{corollary}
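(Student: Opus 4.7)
The plan is to obtain this purely algebraically from Corollary \ref{corollary: type D II/IV} by invoking the label-swapping procedure of Hedden--Levine, in exactly the same fashion as Corollary \ref{corollary: type A I/III} was deduced from Proposition \ref{prop: type D I/III}. Concretely, the Hedden--Levine procedure says that if a Type D decorated graph (over the torus algebra) has no pure differentials, then the corresponding Type A module with no $m_1$ action can be read off from the \emph{same} graph by performing the label substitution $1 \leftrightarrow 3$ on each arrow (and leaving $2$, $12\leftrightarrow 23$, $123\leftrightarrow 123$ as dictated by that same involution of the algebra).

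First, I would take the II/IV decorated graph in Figure~\ref{fig: initial type D}, which by Corollary~\ref{corollary: type D II/IV} is a valid model for $\widehat{\mathit{CFD}}\big({-Y(n,m),\mathcal{F}_{\mathrm{II/IV}}}\big)$, and check that all pure differentials (unlabeled arrows) have already been canceled; this is visible from the figure, so the Hedden--Levine hypothesis is satisfied without any further reduction. Next, I would apply the label swap $1 \leftrightarrow 3$ to every edge of this graph; the labels that appear in the II/IV Type D graph are $1$, $3$, $23$, and $123$, so the swap produces a graph whose edges carry labels $3$, $1$, $21$, and $321$ respectively. These are precisely the labels that appear in the II/IV Type A graph of Figure~\ref{fig: initial type A}, and the combinatorial pattern of arrows (including the positions of $\bullet$ versus $\circ$ vertices, which are unchanged under the procedure since idempotents are not permuted) matches up exactly.

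The only substantive verification is a bookkeeping one: that the Hedden--Levine procedure indeed produces a strictly unital Type A module whose higher $m_k$ actions are read off from directed paths in the standard way described after Figure~\ref{fig: initial type A}, and that the resulting $A_\infty$ relations are automatic from the Type D relation $\delta^1 \circ \delta^1 = 0$. I would cite \cite{HeddenLevine} for this and not reprove it. The main obstacle, such as it is, will be confirming that the idempotent labels and the direction of each arrow transport correctly under the swap --- that is, that $\iota_0$-generators remain $\iota_0$-generators and sources/targets are not exchanged --- which is a direct consequence of the fact that the involution $1 \leftrightarrow 3$ of the torus algebra fixes both $\iota_0$ and $\iota_1$ and preserves the directionality of the Reeb generators. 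Once this is checked, comparison with Figure~\ref{fig: initial type A} completes the proof.
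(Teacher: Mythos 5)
Your approach is identical to the paper's: read off the II/IV Type D decorated graph from Corollary~\ref{corollary: type D II/IV} and apply the Hedden--Levine label swap $1 \leftrightarrow 3$ (preserving idempotents and arrow directions) to obtain the II/IV Type A graph, exactly as was done to pass from Proposition~\ref{prop: type D I/III} to Corollary~\ref{corollary: type A I/III}. One small bookkeeping slip: the labels on the II/IV Type D graph in Figure~\ref{fig: initial type D} are $1, 3, 23, 12$ (not $123$), so after the swap they read $3, 1, 21, 32$ (not $321$), which indeed are the labels on the II/IV Type A graph in Figure~\ref{fig: initial type A}.
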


\section{Data from \texttt{hf-hat-obd-nice}} \label{sec:data}
In this appendix, we give some data obtained from \texttt{hf-hat-obd-nice} \cite{computing}. We first give a Python program which outputs a region list for a Heegaard diagram corresponding to an open book with monodromy \(f\in \mathrm{Mod}(\Sigma_{0,4},\partial \Sigma_{0,4})\) whose FDTCs are all \(1\) with 
\begin{equation}\pi(f) = \pm\begin{bmatrix}r & s\\ p & q\end{bmatrix},\qquad r,s,p,q>0,\, p>q.\label{eq:matrix}\end{equation}
\begin{lstlisting}[language=Python, caption = Code for generating a region list]
def region_list(r, s, p, q): 
    a = 1 + q + ((r * q) // p) // 2 
    b = 1 + (p - q) + ((r * (p - q)) // p) // 2 
    n = 1 + p + r // 2 
    last = 2 * n + q + s//2 + 2
    rlist = [
        [3, 2, 2 * n + 1, n + a + 1, n + a + 2, 4],
        [n + 2, 2, last, a + 2, a + 3, n + 3],
        [2 * n + 1, 0, 3, 2 * n + 2],
        [last, 1, n + 2, last - 1],
        [b + n + 1, 0, n + 1, b + n + 2],
        [b + 2, 1, 2 * n, b + 3]]
    for i in range(a - 1):
        rlist.append([i + 4, 2 * n + 3 + i, 2 * n + 2 + i, i + 3])
        rlist.append([n + 3 + i, last - 2 - i, last - 1 - i, n + 2 + i])
    for i in range(b - 3):
        rlist.append([i + 4, n + a + 2 + i, n + a + 3 + i, i + 5])
        rlist.append([n + 3 + i, a + 3 + i, a + 4 + i, n + 4 + i])
    for i in range(a - 3):
        rlist.append([b + 3 + i, 2 * n - i, 2 * n - 1 - i, b + 4 + i])
    rlist.append([0, 2 * n + 1, 2, n + 2, 1, b + 2, b + 1, 2 * n, 1, last, 2, 3, 0, n + b + 1, n + b, n + 1])
    return rlist
\end{lstlisting}

More specifically, consider the page \(\Sigma_{0,4}\) as shown in Figure \ref{fig:square_holes}, with the binding component parallel to \(a_i\) labeled \(B_i\). We take a basis of arcs for the page consisting of the following: 
\begin{itemize}
\item an arc \(\beta_0\) of slope \(0\) from \(B_2\) to \(B_1\), 
\item an arc \(\beta_1\) of slope \(0\) from \(B_3\) to \(B_4\), 
\item an arc \(\beta_2\) of slope \(\infty\) from \(B_2\) to \(B_3\).
\end{itemize}
We then set \(\alpha_i=f^{-1}(\beta_i)\) for \(i=0,1,2\). The contact intersection between \(\alpha_i\) and \(\beta_i\) is given the label \(i\). Subsequently, the remaining intersection points are ordered as follows: 
\begin{enumerate}[label=(\roman*)]
\item Intersection points on \(\beta_0\) are given increasing labels (starting at 3), going from \(B_2\) to \(B_1\). 
\item Next, intersection points on \(\beta_1\) are given increasing labels going from \(B_3\) to \(B_4\).
\item Finally, intersection points on \(\beta_2\) are given increasing labels going from \(B_2\) to \(B_3\). 
\end{enumerate}
From these labels, a region list is generated according to the conventions in \cite{computing}. The resulting region list can be entered into \texttt{hf-hat-obd} and \texttt{hf-hat-obd-nice}, as explained in the same paper. In the case of \texttt{hf-hat-obd-nice}, the region list must first be run through \texttt{makenice}.
\begin{remark}
The Heegaard diagram described above is almost nice, except for 2 hexagonal regions. These can be dealt with using two short finger moves. It might be more efficient to directly enter in a nice diagram into \texttt{hf-hat-obd-nice}. Unfortunately, the author has not done the necessary work to generate a region list for such a nice diagram. 
\end{remark}
\par 

We give a table of results obtained from \texttt{hf-hat-obd-nice} on whether the Heegaard Floer contact invariant \(c(f)\) vanishes, where \(f\) is as above. We emphasize again that \(f\) has all FDTCs equal to 1. Among such monodromies, we only consider those not covered by Theorems \ref{thm:main_one}, \ref{thm:main_two} and Propositions \ref{prop:tight_all_twos}, \ref{prop:tight_two_threes}. In addition, if \(p,q,r,s\) are as in (\ref{eq:matrix}), then for a given \(p,q\) we only consider the smallest possible values for \(r\) and \(s\). (Recall \(\pi(f)\) is constrained to lie in the subgroup (\ref{eq:subgroup}) of \(\mathrm{PSL}(2,\mathbb{Z})\).) 
\begin{center}
\captionsetup{type = table}
\begin{longtblr}{Q[l]|Q[l]|Q[l]|}
\(\pi(f)\) &  Continued fraction for \(-p/q\) & \(c(f)=0\)? \\ 
\hline
\(\pm\begin{bmatrix}13 & 8 \\ 8 & 5\end{bmatrix}\) & \([-2,-3,-2]\) & No \\ \hline \(\pm\begin{bmatrix}23 & 14 \\ 18 & 11\end{bmatrix}\) & \([-2,-3,-4]\) & No\\ 
\hline 
\(\pm\begin{bmatrix}33 & 20\\ 28 & 17\end{bmatrix}\) & \([-2,-3,-6]\) & Yes \\ \hline
\(\pm\begin{bmatrix}7 & 4 \\ 12 & 7\end{bmatrix}\) & \([-2,-4,-2]\) & No\\
\hline
\(\pm\begin{bmatrix}7 & 4 \\ 26 & 15\end{bmatrix}\) & \([-2,-4,-4]\) & Yes \\ \hline 
\(\pm\begin{bmatrix}25 & 14 \\ 16 & 9\end{bmatrix}\) & \([-2,-5,-2]\) & No\\
\hline
\(\pm\begin{bmatrix}43 & 24 \\ 34 & 19\end{bmatrix}\) & \([-2,-5,-4]\) & No \\ \hline
\(\pm\begin{bmatrix}11 & 4 \\ 30 & 11\end{bmatrix}\) & \([-3,-4,-3]\) & Yes\\
\hline
\(\pm\begin{bmatrix}11 & 4 \\ 52 & 19\end{bmatrix}\) & \([-3,-4,-5]\) & Yes \\ \hline 
\(\pm\begin{bmatrix}29 & 8 \\ 18 & 5\end{bmatrix}\) & \([-4,-3,-2]\) & No\\
\hline
\(\pm\begin{bmatrix}51 & 14 \\ 40 & 11\end{bmatrix}\) & \([-4,-3,-4]\) & No \\ \hline
\(\pm\begin{bmatrix}15 & 4 \\ 26 & 7\end{bmatrix}\) & \([-4,-4,-2]\) & Yes\\
\hline
\(\pm\begin{bmatrix}15 & 4 \\ 56 & 15\end{bmatrix}\) & \([-4,-4,-4]\) & Yes \\ \hline
\(\pm\begin{bmatrix}53 & 14 \\ 34 & 9\end{bmatrix}\) & \([-4,-5,-2]\) & No\\
\hline
\(\pm\begin{bmatrix}25 & 18 \\ 18 & 13\end{bmatrix}\) & \([-2,-2,-3,-3]\) & No \\ \hline 
\(\pm\begin{bmatrix}39 & 28 \\ 32 & 23\end{bmatrix}\) & \([-2,-2,-3, -5]\) & No\\
\hline
\(\pm\begin{bmatrix}49 & 34 \\ 36 & 25\end{bmatrix}\) & \([-2,-2,-5,-3]\) & Yes \\ \hline
\(\pm\begin{bmatrix}13 & 8 \\ 34 & 21\end{bmatrix}\) & \([-2,-3,-3, -3]\) & No\\
\hline
\(\pm\begin{bmatrix}23 & 10 \\ 16 & 7\end{bmatrix}\) & \([-3,-2,-2,-3]\) & No \\ \hline
\(\pm\begin{bmatrix}37 & 16 \\ 30 & 13\end{bmatrix}\) & \([-3,-2,-2, -5]\) & Yes\\
\hline
\(\pm\begin{bmatrix}31 & 12 \\ 18 & 7\end{bmatrix}\) & \([-3,-3,-2,-2]\) & No \\ \hline
\(\pm\begin{bmatrix}21 & 8 \\ 34 & 13\end{bmatrix}\) & \([-3,-3,-3, -2]\) & No\\
\hline
\(\pm\begin{bmatrix}43 & 10 \\ 30 & 7\end{bmatrix}\) & \([-5,-2,-2,-3]\) & Yes \\ \hline 
\(\pm\begin{bmatrix}23 & 18 \\ 14 & 11\end{bmatrix}\) & \([-2,-2,-2, -3,-2]\) & No\\
\hline
\(\pm\begin{bmatrix}41 & 32 \\ 32 & 25\end{bmatrix}\) & \([-2,-2,-2,-3,-4]\) & No \\ \hline 
\(\pm\begin{bmatrix}13 & 10 \\ 22 & 17\end{bmatrix}\) & \([-2,-2,-2, -4,-2]\) & No\\
\hline
\(\pm\begin{bmatrix}13 & 10 \\ 48 & 37\end{bmatrix}\) & \([-2,-2,-2,-4,-4]\) & Yes \\ \hline
\(\pm\begin{bmatrix}47 & 36 \\ 30 & 23\end{bmatrix}\) & \([-2,-2,-2, -5,-2]\) & No\\
\hline
\(\pm\begin{bmatrix}25 & 16 \\ 14 & 9\end{bmatrix}\) & \([-2,-3,-2,-2,-2]\) & No \\ \hline
\(\pm\begin{bmatrix}47 & 30 \\ 36 & 23\end{bmatrix}\) & \([-2,-3,-2, -2,-4]\) & No\\
\hline
\(\pm\begin{bmatrix}19 & 12 \\ 30 & 19\end{bmatrix}\) & \([-2,-3,-2,-3,-2]\) & No \\ \hline
\(\pm\begin{bmatrix}17 & 10 \\ 22 & 13\end{bmatrix}\) & \([-2,-4,-2, -2,-2]\) & No\\
\hline
\(\pm\begin{bmatrix}37 & 10 \\ 48 & 13\end{bmatrix}\) & \([-4,-4,-2,-2,-2]\) & Yes \\ \hline
\(\pm\begin{bmatrix}39 & 32 \\ 28 & 23\end{bmatrix}\) & \([-2,-2,-2, -2,-3,-3]\) & No\\
\hline
\(\pm\begin{bmatrix}49 & 36\\ 34 & 25\end{bmatrix}\) & \([-2,-2,-3,-2,-2,-3]\) & No \\ \hline 
\(\pm\begin{bmatrix}35 & 16 \\ 24 & 11\end{bmatrix}\) & \([-3,-2,-2, -2,-2,-3]\) & No
\end{longtblr}
\captionof{table}{Data obtained from \texttt{hf-hat-obd-nice}. All of the FDTCs of \(f\) are equal to \(1\).}
\end{center}
\bigskip

\printbibliography

@article {Lekili,
    AUTHOR = {Lekili, Yankı},
     TITLE = {Planar open books with four binding components},
   JOURNAL = {Algebr. Geom. Topol.},
  FJOURNAL = {Algebraic \& Geometric Topology},
    VOLUME = {11},
      YEAR = {2011},
    NUMBER = {2},
     PAGES = {909--928},
      ISSN = {1472-2747,1472-2739},
   MRCLASS = {57R17},
  MRNUMBER = {2782547},
MRREVIEWER = {Joshua\ M.\ Sabloff},
       DOI = {10.2140/agt.2011.11.909},
       URL = {https://doi.org/10.2140/agt.2011.11.909},
}

@article {ItoKawamuroTight,
    AUTHOR = {Ito, Tetsuya and Kawamuro, Keiko},
     TITLE = {Overtwisted discs in planar open books},
   JOURNAL = {Internat. J. Math.},
  FJOURNAL = {International Journal of Mathematics},
    VOLUME = {26},
      YEAR = {2015},
    NUMBER = {3},
     PAGES = {1550027, 29},
      ISSN = {0129-167X,1793-6519},
   MRCLASS = {57R17 (57M27)},
  MRNUMBER = {3327490},
MRREVIEWER = {Burak\ Ozbagci},
       DOI = {10.1142/S0129167X15500275},
       URL = {https://doi.org/10.1142/S0129167X15500275},
}

@article {ItoKawamuroOvertwisted,
    AUTHOR = {Ito, Tetsuya and Kawamuro, Keiko},
     TITLE = {Visualizing overtwisted discs in open books},
   JOURNAL = {Publ. Res. Inst. Math. Sci.},
  FJOURNAL = {Publications of the Research Institute for Mathematical
              Sciences},
    VOLUME = {50},
      YEAR = {2014},
    NUMBER = {1},
     PAGES = {169--180},
      ISSN = {0034-5318,1663-4926},
   MRCLASS = {57M25},
  MRNUMBER = {3167583},
MRREVIEWER = {Mehmetcik\ Pamuk},
       DOI = {10.4171/PRIMS/128},
       URL = {https://doi.org/10.4171/PRIMS/128},
}

@article {ItoKawamuro,
    AUTHOR = {Ito, Tetsuya and Kawamuro, Keiko},
     TITLE = {Open book foliation},
   JOURNAL = {Geom. Topol.},
  FJOURNAL = {Geometry \& Topology},
    VOLUME = {18},
      YEAR = {2014},
    NUMBER = {3},
     PAGES = {1581--1634},
      ISSN = {1465-3060,1364-0380},
   MRCLASS = {57R17 (53D35 57M27 57M50)},
  MRNUMBER = {3228459},
MRREVIEWER = {Ferit\ \"Ozt\"urk},
       DOI = {10.2140/gt.2014.18.1581},
       URL = {https://doi.org/10.2140/gt.2014.18.1581},
}

@article {ItoKawamuroFDTC,
    AUTHOR = {Ito, Tetsuya and Kawamuro, Keiko},
     TITLE = {Essential open book foliations and fractional {D}ehn twist
              coefficient},
   JOURNAL = {Geom. Dedicata},
  FJOURNAL = {Geometriae Dedicata},
    VOLUME = {187},
      YEAR = {2017},
     PAGES = {17--67},
      ISSN = {0046-5755,1572-9168},
   MRCLASS = {57R17 (53D35 57M50 57R30)},
  MRNUMBER = {3622682},
MRREVIEWER = {Refik\ Inan\c c\ Baykur},
       DOI = {10.1007/s10711-016-0188-7},
       URL = {https://doi.org/10.1007/s10711-016-0188-7},
}

@misc{twistleft,
      title={{Twist left-veering open books, overtwistedness, looseness and virtual looseness}}, 
      author={Tetsuya Ito and Keiko Kawamuro},
      year={2020},
      eprint={arXiv:1909.05993},
}

@article {ItoKawamuroOperations,
    AUTHOR = {Ito, Tetsuya and Kawamuro, Keiko},
     TITLE = {Operations on open book foliations},
   JOURNAL = {Algebr. Geom. Topol.},
  FJOURNAL = {Algebraic \& Geometric Topology},
    VOLUME = {14},
      YEAR = {2014},
    NUMBER = {5},
     PAGES = {2983--3020},
      ISSN = {1472-2747,1472-2739},
   MRCLASS = {57M27},
  MRNUMBER = {3276852},
       DOI = {10.2140/agt.2014.14.2983},
       URL = {https://doi.org/10.2140/agt.2014.14.2983},
}

@incollection {ItoKawamuroSelfLinking,
    AUTHOR = {Ito, Tetsuya and Kawamuro, Keiko},
     TITLE = {On the self-linking number of transverse links},
 BOOKTITLE = {Interactions between low-dimensional topology and mapping
              class groups},
    SERIES = {Geom. Topol. Monogr.},
    VOLUME = {19},
     PAGES = {157--171},
 PUBLISHER = {Geom. Topol. Publ., Coventry},
      YEAR = {2015},
   MRCLASS = {57M25 (57M50)},
  MRNUMBER = {3609907},
MRREVIEWER = {Kenneth\ Lee\ Baker},
       DOI = {10.2140/gtm.2015.19.157},
       URL = {https://doi.org/10.2140/gtm.2015.19.157},
}

@incollection {ItoKawamuroCoverings,
    AUTHOR = {Ito, Tetsuya and Kawamuro, Keiko},
     TITLE = {Coverings of open books},
 BOOKTITLE = {Advances in the mathematical sciences},
    SERIES = {Assoc. Women Math. Ser.},
    VOLUME = {6},
     PAGES = {139--154},
 PUBLISHER = {Springer, [Cham]},
      YEAR = {2016},
      ISBN = {978-3-319-34139-2; 978-3-319-34137-8},
   MRCLASS = {57R17 (57M27)},
  MRNUMBER = {3654494},
MRREVIEWER = {Youlin\ Li},
       DOI = {10.1007/978-3-319-34139-2\_5},
       URL = {https://doi.org/10.1007/978-3-319-34139-2_5},
}

@article {ItoKawamuroQuestion,
    AUTHOR = {Ito, Tetsuya and Kawamuro, Keiko},
     TITLE = {On a question of {E}tnyre and {V}an {H}orn-{M}orris},
   JOURNAL = {Algebr. Geom. Topol.},
  FJOURNAL = {Algebraic \& Geometric Topology},
    VOLUME = {17},
      YEAR = {2017},
    NUMBER = {1},
     PAGES = {561--566},
      ISSN = {1472-2747,1472-2739},
   MRCLASS = {57M07 (20F34)},
  MRNUMBER = {3604385},
MRREVIEWER = {Andrew\ Putman},
       DOI = {10.2140/agt.2017.17.561},
       URL = {https://doi.org/10.2140/agt.2017.17.561},
}

@article {ItoKawamuroPositive,
    AUTHOR = {Ito, Tetsuya and Kawamuro, Keiko},
     TITLE = {Positive factorizations of symmetric mapping classes},
   JOURNAL = {J. Math. Soc. Japan},
  FJOURNAL = {Journal of the Mathematical Society of Japan},
    VOLUME = {71},
      YEAR = {2019},
    NUMBER = {1},
     PAGES = {309--327},
      ISSN = {0025-5645,1881-1167},
   MRCLASS = {20F65 (53D35 57M07)},
  MRNUMBER = {3909923},
MRREVIEWER = {Alexander\ Fel\cprime shtyn},
       DOI = {10.2969/jmsj/78827882},
       URL = {https://doi.org/10.2969/jmsj/78827882},
}

@article {ItoKawamuroQuasi,
    AUTHOR = {Ito, Tetsuya and Kawamuro, Keiko},
     TITLE = {Quasi-right-veering braids and nonloose links},
   JOURNAL = {Algebr. Geom. Topol.},
  FJOURNAL = {Algebraic \& Geometric Topology},
    VOLUME = {19},
      YEAR = {2019},
    NUMBER = {6},
     PAGES = {2989--3032},
      ISSN = {1472-2747,1472-2739},
   MRCLASS = {57M50 (57K10 57K33)},
  MRNUMBER = {4023334},
MRREVIEWER = {Mehmetcik\ Pamuk},
       DOI = {10.2140/agt.2019.19.2989},
       URL = {https://doi.org/10.2140/agt.2019.19.2989},
}

@article {BaldwinCap,
    AUTHOR = {Baldwin, John A.},
     TITLE = {Capping off open books and the {O}zsv\'ath-{S}zab\'o{} contact invariant},
   JOURNAL = {J. Symplectic Geom.},
  FJOURNAL = {The Journal of Symplectic Geometry},
    VOLUME = {11},
      YEAR = {2013},
    NUMBER = {4},
     PAGES = {525--561},
      ISSN = {1527-5256,1540-2347},
   MRCLASS = {53D10},
  MRNUMBER = {3117058},
MRREVIEWER = {Paolo\ Ghiggini},
       DOI = {10.4310/jsg.2013.v11.n4.a2},
       URL = {https://doi.org/10.4310/jsg.2013.v11.n4.a2},
}

@article {BaldwinTorus,
    AUTHOR = {Baldwin, John A.},
     TITLE = {Tight contact structures and genus one fibered knots},
   JOURNAL = {Algebr. Geom. Topol.},
  FJOURNAL = {Algebraic \& Geometric Topology},
    VOLUME = {7},
      YEAR = {2007},
     PAGES = {701--735},
      ISSN = {1472-2747,1472-2739},
   MRCLASS = {57R17 (57M27 57R58)},
  MRNUMBER = {2308961},
MRREVIEWER = {Tam\'as\ K\'alm\'an},
       DOI = {10.2140/agt.2007.7.701},
       URL = {https://doi.org/10.2140/agt.2007.7.701},
}

@article{ThurstonWinkelnkemper,
 ISSN = {00029939, 10886826},
 URL = {http://www.jstor.org/stable/2040160},
 abstract = {Using an old theorem of Alexander, we give a short and elementary proof that every closed, orientable 3-manifold has a contact form.},
 author = {W. P. Thurston and H. E. Winkelnkemper},
 journal = {Proceedings of the American Mathematical Society},
 number = {1},
 pages = {345--347},
 publisher = {American Mathematical Society},
 title = {On the Existence of Contact Forms},
 volume = {52},
 year = {1975}
}

@article {StipsiczVertesi,
    AUTHOR = {Stipsicz, Andr\'as I. and V\'ertesi, Vera},
     TITLE = {On invariants for {L}egendrian knots},
   JOURNAL = {Pacific J. Math.},
  FJOURNAL = {Pacific Journal of Mathematics},
    VOLUME = {239},
      YEAR = {2009},
    NUMBER = {1},
     PAGES = {157--177},
      ISSN = {0030-8730,1945-5844},
   MRCLASS = {57M27 (57M50 57R17)},
  MRNUMBER = {2449016},
MRREVIEWER = {Lenhard\ L.\ Ng},
       DOI = {10.2140/pjm.2009.239.157},
       URL = {https://doi.org/10.2140/pjm.2009.239.157},
}

@article {HKMsuture,
    AUTHOR = {Honda, Ko and Kazez, William H. and Mati\'c, Gordana},
     TITLE = {The contact invariant in sutured {F}loer homology},
   JOURNAL = {Invent. Math.},
  FJOURNAL = {Inventiones Mathematicae},
    VOLUME = {176},
      YEAR = {2009},
    NUMBER = {3},
     PAGES = {637--676},
      ISSN = {0020-9910,1432-1297},
   MRCLASS = {57R58 (53D35 57R17)},
  MRNUMBER = {2501299},
MRREVIEWER = {Stanislav\ Jabuka},
       DOI = {10.1007/s00222-008-0173-3},
       URL = {https://doi.org/10.1007/s00222-008-0173-3},
}

@article {HKMRVI,
    AUTHOR = {Honda, Ko and Kazez, William H. and Mati\'c, Gordana},
     TITLE = {Right-veering diffeomorphisms of compact surfaces with
              boundary},
   JOURNAL = {Invent. Math.},
  FJOURNAL = {Inventiones Mathematicae},
    VOLUME = {169},
      YEAR = {2007},
    NUMBER = {2},
     PAGES = {427--449},
      ISSN = {0020-9910,1432-1297},
   MRCLASS = {57R17 (53D35)},
  MRNUMBER = {2318562},
MRREVIEWER = {John\ B.\ Etnyre},
       DOI = {10.1007/s00222-007-0051-4},
       URL = {https://doi.org/10.1007/s00222-007-0051-4},
}

@article {HKMRVII,
    AUTHOR = {Honda, Ko and Kazez, William H. and Mati\'c, Gordana},
     TITLE = {Right-veering diffeomorphisms of compact surfaces with
              boundary. {II}},
   JOURNAL = {Geom. Topol.},
  FJOURNAL = {Geometry \& Topology},
    VOLUME = {12},
      YEAR = {2008},
    NUMBER = {4},
     PAGES = {2057--2094},
      ISSN = {1465-3060,1364-0380},
   MRCLASS = {57R17 (53D35 57M50)},
  MRNUMBER = {2431016},
MRREVIEWER = {John\ B.\ Etnyre},
       DOI = {10.2140/gt.2008.12.2057},
       URL = {https://doi.org/10.2140/gt.2008.12.2057},
}

@article {HKM,
    AUTHOR = {Honda, Ko and Kazez, William H. and Mati\'c, Gordana},
     TITLE = {On the contact class in {H}eegaard {F}loer homology},
   JOURNAL = {J. Differential Geom.},
  FJOURNAL = {Journal of Differential Geometry},
    VOLUME = {83},
      YEAR = {2009},
    NUMBER = {2},
     PAGES = {289--311},
      ISSN = {0022-040X,1945-743X},
   MRCLASS = {57R17 (53D35 57R58)},
  MRNUMBER = {2577470},
MRREVIEWER = {Margaret\ F.\ Symington},
       URL = {http://projecteuclid.org/euclid.jdg/1261495333},
}

@article {LOSS,
    AUTHOR = {Lisca, Paolo and Ozsv\'ath, Peter and Stipsicz, Andr\'as I.
              and Szab\'o, Zolt\'an},
     TITLE = {Heegaard {F}loer invariants of {L}egendrian knots in contact
              three-manifolds},
   JOURNAL = {J. Eur. Math. Soc. (JEMS)},
  FJOURNAL = {Journal of the European Mathematical Society (JEMS)},
    VOLUME = {11},
      YEAR = {2009},
    NUMBER = {6},
     PAGES = {1307--1363},
      ISSN = {1435-9855,1435-9863},
   MRCLASS = {57M27 (57R17 57R58)},
  MRNUMBER = {2557137},
MRREVIEWER = {Burak\ Ozbagci},
       DOI = {10.4171/JEMS/183},
       URL = {https://doi.org/10.4171/JEMS/183},
}

@misc{Zarev,
      title={{Joining and gluing sutured Floer homology}}, 
      author={Rumen Zarev},
      year={2010},
      eprint={1010.3496},
      archivePrefix={arXiv},
}

@book {ZarevThesis,
    AUTHOR = {Zarev, Rumen},
     TITLE = {Bordered {S}utured {F}loer {H}omology},
      NOTE = {Thesis (Ph.D.)--Columbia University},
 PUBLISHER = {ProQuest LLC, Ann Arbor, MI},
      YEAR = {2011},
     PAGES = {189},
      ISBN = {978-1124-62727-4},
   MRCLASS = {99-05},
  MRNUMBER = {2941830},
       URL =
              {http://gateway.proquest.com/openurl?url_ver=Z39.88-2004&rft_val_fmt=info:ofi/fmt:kev:mtx:dissertation&res_dat=xri:pqdiss&rft_dat=xri:pqdiss:3454034},
}

@misc{MinVarvarezos,
      title={{On contact invariants in bordered Floer homology}}, 
      author={Hyunki Min and Konstantinos Varvarezos},
      year={2024},
      eprint={2410.05511},
      archivePrefix={arXiv},
}

@article {SarkarWang,
    AUTHOR = {Sarkar, Sucharit and Wang, Jiajun},
     TITLE = {An algorithm for computing some {H}eegaard {F}loer homologies},
   JOURNAL = {Ann. of Math. (2)},
  FJOURNAL = {Annals of Mathematics. Second Series},
    VOLUME = {171},
      YEAR = {2010},
    NUMBER = {2},
     PAGES = {1213--1236},
      ISSN = {0003-486X,1939-8980},
   MRCLASS = {57M27 (57R58)},
  MRNUMBER = {2630063},
MRREVIEWER = {Paolo\ Ghiggini},
       DOI = {10.4007/annals.2010.171.1213},
       URL = {https://doi.org/10.4007/annals.2010.171.1213},
}

@article {bordered,
    AUTHOR = {Lipshitz, Robert and Ozsvath, Peter S. and Thurston, Dylan P.},
     TITLE = {Bordered {H}eegaard {F}loer homology},
   JOURNAL = {Mem. Amer. Math. Soc.},
  FJOURNAL = {Memoirs of the American Mathematical Society},
    VOLUME = {254},
      YEAR = {2018},
    NUMBER = {1216},
     PAGES = {viii+279},
      ISSN = {0065-9266,1947-6221},
      ISBN = {978-1-4704-2888-4; 978-1-4704-4748-9},
   MRCLASS = {57R58 (53D40 57M27 57R57)},
  MRNUMBER = {3827056},
MRREVIEWER = {Paolo\ Ghiggini},
       DOI = {10.1090/memo/1216},
       URL = {https://doi.org/10.1090/memo/1216},
}

@article {factoring,
    AUTHOR = {Lipshitz, Robert and Ozsv\'ath, Peter S. and Thurston, Dylan
              P.},
     TITLE = {Computing {$\widehat{HF}$} by factoring mapping classes},
   JOURNAL = {Geom. Topol.},
  FJOURNAL = {Geometry \& Topology},
    VOLUME = {18},
      YEAR = {2014},
    NUMBER = {5},
     PAGES = {2547--2681},
      ISSN = {1465-3060,1364-0380},
   MRCLASS = {57M27 (53D40)},
  MRNUMBER = {3285222},
MRREVIEWER = {Andrew\ J.\ Lobb},
       DOI = {10.2140/gt.2014.18.2547},
       URL = {https://doi.org/10.2140/gt.2014.18.2547},
}

@article {bimodules,
    AUTHOR = {Lipshitz, Robert and Ozsv\'ath, Peter S. and Thurston, Dylan
              P.},
     TITLE = {Bimodules in bordered {H}eegaard {F}loer homology},
   JOURNAL = {Geom. Topol.},
  FJOURNAL = {Geometry \& Topology},
    VOLUME = {19},
      YEAR = {2015},
    NUMBER = {2},
     PAGES = {525--724},
      ISSN = {1465-3060,1364-0380},
   MRCLASS = {57R57 (53D40)},
  MRNUMBER = {3336273},
MRREVIEWER = {Brendan\ E.\ Owens},
       DOI = {10.2140/gt.2015.19.525},
       URL = {https://doi.org/10.2140/gt.2015.19.525},
}

@article {Levine,
    AUTHOR = {Levine, Adam Simon},
     TITLE = {Knot doubling operators and bordered {H}eegaard {F}loer
              homology},
   JOURNAL = {J. Topol.},
  FJOURNAL = {Journal of Topology},
    VOLUME = {5},
      YEAR = {2012},
    NUMBER = {3},
     PAGES = {651--712},
      ISSN = {1753-8416,1753-8424},
   MRCLASS = {57M25 (57M27 57R58)},
  MRNUMBER = {2971610},
MRREVIEWER = {Masakazu\ Teragaito},
       DOI = {10.1112/jtopol/jts021},
       URL = {https://doi.org/10.1112/jtopol/jts021},
}

@article {HeddenLevine,
    AUTHOR = {Hedden, Matthew and Levine, Adam Simon},
     TITLE = {Splicing knot complements and bordered {F}loer homology},
   JOURNAL = {J. Reine Angew. Math.},
  FJOURNAL = {Journal f\"ur die Reine und Angewandte Mathematik. [Crelle's
              Journal]},
    VOLUME = {720},
      YEAR = {2016},
     PAGES = {129--154},
      ISSN = {0075-4102,1435-5345},
   MRCLASS = {57M27 (57R58)},
  MRNUMBER = {3565971},
MRREVIEWER = {Nikolai\ N.\ Saveliev},
       DOI = {10.1515/crelle-2014-0064},
       URL = {https://doi.org/10.1515/crelle-2014-0064},
}

@article {calculus,
    AUTHOR = {Hanselman, Jonathan and Watson, Liam},
     TITLE = {A calculus for bordered {F}loer homology},
   JOURNAL = {Geom. Topol.},
  FJOURNAL = {Geometry \& Topology},
    VOLUME = {27},
      YEAR = {2023},
    NUMBER = {3},
     PAGES = {823--924},
      ISSN = {1465-3060,1364-0380},
   MRCLASS = {57K18},
  MRNUMBER = {4599308},
MRREVIEWER = {Yi\ Ni},
       DOI = {10.2140/gt.2023.27.823},
       URL = {https://doi.org/10.2140/gt.2023.27.823},
}

@article {properties,
    AUTHOR = {Hanselman, Jonathan and Rasmussen, Jacob and Watson, Liam},
     TITLE = {Heegaard {F}loer homology for manifolds with torus boundary:
              properties and examples},
   JOURNAL = {Proc. Lond. Math. Soc. (3)},
  FJOURNAL = {Proceedings of the London Mathematical Society. Third Series},
    VOLUME = {125},
      YEAR = {2022},
    NUMBER = {4},
     PAGES = {879--967},
      ISSN = {0024-6115,1460-244X},
   MRCLASS = {57K31 (57K18 57R58)},
  MRNUMBER = {4500201},
MRREVIEWER = {Claudius\ Bodo\ Zibrowius},
       DOI = {10.1112/plms.12473},
       URL = {https://doi.org/10.1112/plms.12473},
}

@inproceedings {Giroux,
    AUTHOR = {Giroux, Emmanuel},
     TITLE = {G\'eom\'etrie de contact: de la dimension trois vers les
              dimensions sup\'erieures},
 BOOKTITLE = {Proceedings of the {I}nternational {C}ongress of
              {M}athematicians, {V}ol. {II} ({B}eijing, 2002)},
     PAGES = {405--414},
 PUBLISHER = {Higher Ed. Press, Beijing},
      YEAR = {2002},
      ISBN = {7-04-008690-5},
   MRCLASS = {53D35 (57M50 57R17)},
  MRNUMBER = {1957051},
MRREVIEWER = {Hansj\"org\ Geiges},
}

@article {LicataVertesi1,
    AUTHOR = {Licata, Joan and V\'ertesi, Vera},
     TITLE = {Heegaard splittings and the tight {G}iroux correspondence},
   JOURNAL = {J. Symplectic Geom.},
  FJOURNAL = {The Journal of Symplectic Geometry},
    VOLUME = {22},
      YEAR = {2024},
    NUMBER = {6},
     PAGES = {1349--1396},
      ISSN = {1527-5256,1540-2347},
   MRCLASS = {57K33},
  MRNUMBER = {4853533},
MRREVIEWER = {Kun\ Du},
       DOI = {10.4310/jsg.250122021650},
       URL = {https://doi.org/10.4310/jsg.250122021650},
}

@misc{LicataVertesi2,
      title={{The Giroux Correspondence in dimension 3}}, 
      author={Joan Licata and Vera Vértesi},
      year={2024},
      eprint={2408.01079},
      archivePrefix={arXiv},
}

@misc{BHH,
      title={{The Giroux correspondence in arbitrary dimensions}}, 
      author={Joseph Breen and Ko Honda and Yang Huang},
      year={2024},
      eprint={2307.02317},
      archivePrefix={arXiv},
}

@article {OzsvathSzabo,
    AUTHOR = {Ozsv\'ath, Peter and Szab\'o, Zolt\'an},
     TITLE = {Heegaard {F}loer homology and contact structures},
   JOURNAL = {Duke Math. J.},
  FJOURNAL = {Duke Mathematical Journal},
    VOLUME = {129},
      YEAR = {2005},
    NUMBER = {1},
     PAGES = {39--61},
      ISSN = {0012-7094,1547-7398},
   MRCLASS = {57R58 (57R17)},
  MRNUMBER = {2153455},
MRREVIEWER = {Vicente\ Mu\~noz},
       DOI = {10.1215/S0012-7094-04-12912-4},
       URL = {https://doi.org/10.1215/S0012-7094-04-12912-4},
}

@article {AFHLPV,
    AUTHOR = {Alishahi, Akram and F\"oldv\'ari, Vikt\'oria and Hendricks,
              Kristen and Licata, Joan and Petkova, Ina and V\'ertesi, Vera},
     TITLE = {Bordered {F}loer homology and contact structures},
   JOURNAL = {Forum Math. Sigma},
  FJOURNAL = {Forum of Mathematics. Sigma},
    VOLUME = {11},
      YEAR = {2023},
     PAGES = {Paper No. e30, 44},
      ISSN = {2050-5094},
   MRCLASS = {57K33 (57R58)},
  MRNUMBER = {4575360},
MRREVIEWER = {Francesco\ Lin},
       DOI = {10.1017/fms.2023.19},
       URL = {https://doi.org/10.1017/fms.2023.19},
}

@article {BirmanMenascoI,
    AUTHOR = {Birman, Joan S. and Menasco, William W.},
     TITLE = {Studying links via closed braids. {I}. {A} finiteness theorem},
   JOURNAL = {Pacific J. Math.},
  FJOURNAL = {Pacific Journal of Mathematics},
    VOLUME = {154},
      YEAR = {1992},
    NUMBER = {1},
     PAGES = {17--36},
      ISSN = {0030-8730,1945-5844},
   MRCLASS = {57M25},
  MRNUMBER = {1154731},
MRREVIEWER = {Mark\ E.\ Kidwell},
       URL = {http://projecteuclid.org/euclid.pjm/1102635729},
}

@article {BirmanMenascoII,
    AUTHOR = {Birman, Joan S. and Menasco, William W.},
     TITLE = {Studying links via closed braids. {II}. {O}n a theorem of
              {B}ennequin},
   JOURNAL = {Topology Appl.},
  FJOURNAL = {Topology and its Applications},
    VOLUME = {40},
      YEAR = {1991},
    NUMBER = {1},
     PAGES = {71--82},
      ISSN = {0166-8641,1879-3207},
   MRCLASS = {57M25},
  MRNUMBER = {1114092},
MRREVIEWER = {Hugh\ Reynolds\ Morton},
       DOI = {10.1016/0166-8641(91)90059-U},
       URL = {https://doi.org/10.1016/0166-8641(91)90059-U},
}

@article {BirmanMenascoIII,
    AUTHOR = {Birman, Joan S. and Menasco, William W.},
     TITLE = {Studying links via closed braids. {III}. {C}lassifying links
              which are closed {$3$}-braids},
   JOURNAL = {Pacific J. Math.},
  FJOURNAL = {Pacific Journal of Mathematics},
    VOLUME = {161},
      YEAR = {1993},
    NUMBER = {1},
     PAGES = {25--113},
      ISSN = {0030-8730,1945-5844},
   MRCLASS = {57M25},
  MRNUMBER = {1237139},
MRREVIEWER = {Hugh\ Reynolds\ Morton},
       URL = {http://projecteuclid.org/euclid.pjm/1102623463},
}

@article {BirmanMenascoIV,
    AUTHOR = {Birman, Joan S. and Menasco, William W.},
     TITLE = {Studying links via closed braids. {IV}. {C}omposite links and
              split links},
   JOURNAL = {Invent. Math.},
  FJOURNAL = {Inventiones Mathematicae},
    VOLUME = {102},
      YEAR = {1990},
    NUMBER = {1},
     PAGES = {115--139},
      ISSN = {0020-9910,1432-1297},
   MRCLASS = {57M25 (20F36)},
  MRNUMBER = {1069243},
MRREVIEWER = {Hugh\ Reynolds\ Morton},
       DOI = {10.1007/BF01233423},
       URL = {https://doi.org/10.1007/BF01233423},
}

@article {BirmanMenascoV,
    AUTHOR = {Birman, Joan S. and Menasco, William W.},
     TITLE = {Studying links via closed braids. {V}. {T}he unlink},
   JOURNAL = {Trans. Amer. Math. Soc.},
  FJOURNAL = {Transactions of the American Mathematical Society},
    VOLUME = {329},
      YEAR = {1992},
    NUMBER = {2},
     PAGES = {585--606},
      ISSN = {0002-9947,1088-6850},
   MRCLASS = {57M25 (20F36)},
  MRNUMBER = {1030509},
MRREVIEWER = {Hugh\ Reynolds\ Morton},
       DOI = {10.2307/2153953},
       URL = {https://doi.org/10.2307/2153953},
}

@article {BirmanMenascoVI,
    AUTHOR = {Birman, Joan S. and Menasco, William W.},
     TITLE = {Studying links via closed braids. {VI}. {A} nonfiniteness
              theorem},
   JOURNAL = {Pacific J. Math.},
  FJOURNAL = {Pacific Journal of Mathematics},
    VOLUME = {156},
      YEAR = {1992},
    NUMBER = {2},
     PAGES = {265--285},
      ISSN = {0030-8730,1945-5844},
   MRCLASS = {57M25 (20F36)},
  MRNUMBER = {1186805},
MRREVIEWER = {Hugh\ Reynolds\ Morton},
       URL = {http://projecteuclid.org/euclid.pjm/1102634977},
}

@incollection {Bennequin,
    AUTHOR = {Bennequin, Daniel},
     TITLE = {Entrelacements et \'equations de {P}faff},
 BOOKTITLE = {Third {S}chnepfenried geometry conference, {V}ol. 1
              ({S}chnepfenried, 1982)},
    SERIES = {Ast\'erisque},
    VOLUME = {107-108},
     PAGES = {87--161},
 PUBLISHER = {Soc. Math. France, Paris},
      YEAR = {1983},
   MRCLASS = {58F18 (57R15)},
  MRNUMBER = {753131},
MRREVIEWER = {Yakov\ Eliashberg},
}

@book {primer,
    AUTHOR = {Farb, Benson and Margalit, Dan},
     TITLE = {A primer on mapping class groups},
    SERIES = {Princeton Mathematical Series},
    VOLUME = {49},
 PUBLISHER = {Princeton University Press, Princeton, NJ},
      YEAR = {2012},
     PAGES = {xiv+472},
      ISBN = {978-0-691-14794-9},
   MRCLASS = {57M50 (20F36 20F65 57M07 57N05)},
  MRNUMBER = {2850125},
MRREVIEWER = {Stephen\ P.\ Humphries},
}

@misc{computing,
      title={{Computing Heegaard Floer invariants of closed contact 3-manifolds from open books}}, 
      author={Cagatay Kutluhan and Gordana Matić and Jeremy Van Horn-Morris and Andy Wand},
      year={2025},
      eprint={arXiv:2502.02403}
}

@book {Pavelescu,
    AUTHOR = {Pavelescu, Elena},
     TITLE = {Braids and open book decompositions},
      NOTE = {Thesis (Ph.D.)--University of Pennsylvania},
 PUBLISHER = {ProQuest LLC, Ann Arbor, MI},
      YEAR = {2008},
     PAGES = {59},
      ISBN = {978-0549-57741-6},
   MRCLASS = {99-05},
  MRNUMBER = {2711619},
       URL =
              {http://gateway.proquest.com/openurl?url_ver=Z39.88-2004&rft_val_fmt=info:ofi/fmt:kev:mtx:dissertation&res_dat=xri:pqdiss&rft_dat=xri:pqdiss:3309401},
}
\end{document}